\documentclass[11pt]{amsart}
\usepackage{amssymb}
\usepackage{bm}
\usepackage[centertags]{amsmath}
\usepackage{amsfonts}
\usepackage{amsthm}
\usepackage{mathrsfs}
\usepackage[normalem]{ulem}
\usepackage{comment}
\usepackage{enumitem}
\usepackage{hyperref}
\usepackage[noabbrev]{cleveref}

\usepackage{xifthen}
\usepackage{ifthenx}
\usepackage{xargs}

\usepackage[table,svgnames,x11names]{xcolor}
\usepackage[colorinlistoftodos,prependcaption,textsize=tiny,textwidth=3.5cm]{todonotes}
\presetkeys{todonotes}{fancyline}{}
\newcommandx{\change}[2][1=]{\todo[linecolor=blue,backgroundcolor=blue!25,bordercolor=blue,#1]{#2}}
\newcommandx{\changein}[2][1=]{\change[inline, caption={change}, #1]{%
    \begin{minipage}{\textwidth-20pt}#2\end{minipage}}}
\newcommandx{\todoin}[2][1=]{\todo[inline, caption={todo}, #1]{%
    \begin{minipage}{\textwidth-20pt}#2\end{minipage}}}

\newcommandx{\remove}[2][1=]{\todo[linecolor=Plum,backgroundcolor=Plum!25,bordercolor=Plum,#1]{#2}}
\newcommandx{\removein}[2][1=]{\remove[inline, caption={todo}, #1]{%
    \begin{minipage}{\textwidth-20pt}#2\end{minipage}}}

\makeatletter
\def\l@subsection{\@tocline{2}{0pt}{2.5pc}{2.5pc}{}}%
\makeatother
\makeatletter
\def\l@subsubsection{\@tocline{3}{0pt}{5pc}{5pc}{}}%
\makeatother

\usepackage{hyperref}
\hypersetup{pdftex,colorlinks=true,allcolors=black}
\usepackage{hypcap}

\DeclareFontFamily{OT1}{pzc}{}
\DeclareFontShape{OT1}{pzc}{m}{it}{<-> s * [1.10] pzcmi7t}{}
\DeclareMathAlphabet{\mathpzc}{OT1}{pzc}{m}{it}

\theoremstyle{definition}
\newtheorem{thm}{Theorem}[section]
\newtheorem{dfn}[thm]{Definition}
\newtheorem{lem}[thm]{Lemma}
\newtheorem{prop}[thm]{Proposition}
\newtheorem{cor}[thm]{Corollary}
\newtheorem{rem}[thm]{Remark}
\newtheorem{exa}[thm]{Example}

\newtheorem{ntt}[thm]{Notation}

\newtheorem*{defn*}{Definition}

\newtheorem*{thm*}{Theorem}

\newtheorem*{cor*}{Corollary}

\newtheorem*{prp*}{Proposition}

\newtheorem{problem}{Problem}

\newtheorem{clm}{Claim}

\newtheorem{thmA}{Theorem}

\AddToHook{env/dfn/begin}{\crefalias{thm}{dfn}}
\AddToHook{env/lem/begin}{\crefalias{thm}{lem}}
\AddToHook{env/prop/begin}{\crefalias{thm}{prop}}
\AddToHook{env/cor/begin}{\crefalias{thm}{cor}}
\AddToHook{env/rem/begin}{\crefalias{thm}{rem}}
\AddToHook{env/exa/begin}{\crefalias{thm}{exa}}
\AddToHook{env/ass/begin}{\crefalias{thm}{ass}}
\AddToHook{env/ntt/begin}{\crefalias{thm}{ntt}}
\AddToHook{env/inst/begin}{\crefalias{thm}{inst}}
\AddToHook{env/obj/begin}{\crefalias{thm}{obj}}
\AddToHook{env/conj/begin}{\crefalias{thm}{conj}}
\AddToHook{env/question/begin}{\crefalias{thm}{question}}
\AddToHook{env/problem/begin}{\crefalias{thm}{problem}}
\AddToHook{env/rmks/begin}{\crefalias{thm}{rmks}}
\AddToHook{env/clm/begin}{\crefalias{thm}{clm}}

\newcommand{\N}{\mathbb{N}}

\newcommand{\trn}[1]{{\left\vert\kern-0.25ex\left\vert\kern-0.25ex\left\vert #1 
    \right\vert\kern-0.25ex\right\vert\kern-0.25ex\right\vert}}

\newcommand{\trnsmall}[1]{{\vert\kern-0.25ex\vert\kern-0.25ex\vert #1 
    \vert\kern-0.25ex\vert\kern-0.25ex\vert}}

\DeclareMathOperator{\supp}{supp}

\long\def\symbolfootnote[#1]#2{\begingroup%
\def\thefootnote{\fnsymbol{footnote}}\footnote[#1]{#2}\endgroup}

\makeatletter
\@namedef{subjclassname@2020}{%
  \textup{2020} Mathematics Subject Classification}
\makeatother

\allowdisplaybreaks

\begin{document}

\title[]{Bourgain-Rosenthal-Schechtman spaces in rearrangement-invariant and Hardy spaces}

\author[K. Konstantos]{Konstantinos Konstantos}

\address{Konstantinos Konstantos, Department of Mathematics and Statistics, York University, 4700 Keele Street, Toronto, Ontario, M3J 1P3, Canada}

\email{konstantoskostas95@gmail.com}

\author[P. Motakis]{Pavlos Motakis}

\address{Pavlos Motakis, Department of Mathematics and Statistics, York University, 4700 Keele Street, Toronto, Ontario, M3J 1P3, Canada}

\email{pmotakis@yorku.ca}

\author[Th. Speckhofer]{Thomas Speckhofer}

\address{Thomas Speckhofer, Department of Mathematics, Texas A\&M University, College Station, TX~77843, USA}

\email{speckhofer@tamu.edu}

\thanks{The first and second authors were supported by NSERC Grant RGPIN-2021-03639. The third author was supported by NSERC Grant RGPIN-2021-03639 and subsequently by the Austrian Science Fund (FWF), project 10.55776/J5020, under which this work was completed.}


\keywords{\hspace{-0.3pt}Bourgain-Rosenthal-Schechtman\hspace{-0.5pt} spaces, rearrangement-invariant\hspace{-0.5pt} spaces, Hardy\hspace{-0.5pt} spaces}

\subjclass[2020]{46B09, 46B25, 46B28, 46E30, 47A68}

\begin{abstract}
In 1981, Bourgain, Rosenthal, and Schechtman constructed uncountably many pairwise non-isomorphic complemented subspaces of $L_p$ for $1 < p < \infty$, $p \neq 2$. We extend their construction to rearrangement-invariant spaces and Hardy spaces. For any separable rearrangement-invariant space $X$ with non-trivial Boyd indices, we show that these subspaces are complemented, previously obtained by Ghawadrah via a different argument, and establish the following dichotomy: Either one can extract an uncountable family of pairwise non-isomorphic subspaces, or they are all isomorphic to $X$. To this end, we introduce an ordinal index that detects whether $X$ embeds complementably into an arbitrary separable Banach space. On the other hand, we give sufficient conditions which imply that in certain spaces close to $L_1$ and $H^1$, these subspaces fail to be complemented.
\end{abstract}

\maketitle


\vspace{-0.2cm}
\noindent\textbf{AI usage statement.}
This project did not make substantial use of large language models. ChatGPT was used for the creation of figures and for editorial and bibliographical purposes, such as assistance in formulating the text and locating references. The mathematical contributions of ChatGPT were limited to two specific tasks. First, it assisted with adapting a known argument to prove \Cref{BRS orthocomplemented or not at all}. Second, it was used to identify an example of an Orlicz space satisfying a condition established by the authors (see \Cref{rem:chat-gpt-orlicz}). In both cases, the resulting arguments and mathematical content were carefully checked by the authors.

\tableofcontents

\section{Introduction}

In~1981, Bourgain, Rosenthal, and Schechtman~\cite{bourgain:rosenthal:schechtman:1981} proved the existence of uncountably many pairwise non-isomorphic complemented subspaces of $L_p$ for $1 < p < \infty$, $p\ne 2$. This was achieved by inductively defining a transfinite collection of complemented subspaces of $L_p$, denoted by $R_\alpha^p$, $0 \le  \alpha < \omega_1$, alternately taking disjoint and independents sums of subspaces of $L_p$. To prove that it is indeed possible to extract uncountably many non-isomorphic spaces from this collection, they also introduced an ordinal index which detects whether $L_p$ embeds isomorphically into a given separable Banach space.
Another central part of their proof is a result on complemented embeddings of $L_p$ into spaces with an unconditional Schauder decomposition.
 The Bourgain-Rosenthal-Schechtman spaces also have immediate applications in the theory of operator ideals---since each of the spaces $R_\alpha^p$ with infinite $\alpha$ is isomorphic to its square, they yield uncountably many closed two-sided ideals in the algebra $\mathcal{B}(L_p)$ of bounded linear operators on~$L_p$ (see, e.g., \cite[Sections~2.2 and~5.3]{pietsch:1978}). 

The isomorphic structure of the Bourgain-Rosenthal-Schechtman spaces was further analyzed in the deep work of Alspach~\cite{alspach:1999} who, in particular, determined their isomorphism classes. 
Later, in~2016, Ghawadrah~\cite{ghawadrah:2016} extended the main result of Bourgain, Rosenthal, and Schechtman to reflexive Orlicz function spaces $L_{\Phi}[0,1]$. Such a space has uncountably many pairwise non-isomorphic complemented subspaces as soon as it is not isomorphic to $L_2$. To this end, Ghawadrah used interpolation results, descriptive set theory, and some structural results about modular sequence spaces.

More recently, the first and second authors of the present paper have proved factorization results for operators on the first infinite-dimensional Bourgain-Rosenthal-Schechtman space in~$L_p$~\cite{konstantos:motakis:2025}, and the first and third authors have extended this result to more general underlying spaces~\cite{MR5094444}. Furthermore, the first two authors in \cite{konstantos:motakis:brs} defined explicit unconditional finite-dimensional decompositions on all Bourgain-Rosenthal-Schechtman spaces $R_\alpha^p$, supporting a broad class of concrete distributional embeddings between them. This provides a general framework for operator reduction yielding specific operator-factorization results throughout the hierarchy. This framework was also used in~\cite{konstantos2026shaipropertybourgainrosenthalschechtmanspaces} to prove the SHAI property for the Bourgain-Rosenthal-Schechtman spaces.

In this paper, we further extend the work of Bourgain, Rosenthal, and Schechtman, as well as the results of Ghawadrah. In our results, the underlying space is either a \emph{Haar system space} $X$ or its associated \emph{Hardy space} $H_X$. Haar system spaces, first introduced in~\cite{lechner:motakis:mueller:schlumprecht:2022}, are essentially separable rearrangement-invariant function spaces on the unit interval, defined by a minimal collection of conditions. The Hardy space $H_X$ associated with a Haar system space $X$ consists of those functions whose martingale square function belongs to $X$. These classes of spaces were analyzed in \cite{lechner:speckhofer:2025} (see also \cite{speckhofer:2025:thesis} for further background). Note that for $X = L_1$, the corresponding Hardy space is the classical dyadic Hardy space $H^1$. For any Haar system space $X$ and $Y = X^0$ or $H_X$ (where $X^0$ denotes the mean-zero functions in~$X$), we define the Bourgain-Rosenthal-Schechtman spaces $R_\alpha^Y$, $0\le \alpha < \omega_1$, by transfinite recursion, using a language that is compatible with the martingale square function and thus also well suited to the Hardy case: For each successor ordinal $\alpha$, we take $R_\alpha^Y$ to be the sum of two compressed copies of its predecessor space, supported on $[0,\frac{1}{2})$ and $[\frac{1}{2},1)$, respectively. If $\alpha$ is a limit ordinal, we define $R_\alpha^Y$ to be a certain independent sum of all previously constructed spaces $R_\beta^Y$, each canonically realized as a space of $\sigma(\{ r_n : n\in N_\beta \})$-measurable functions, where  $(r_n)_{n\in N_\beta}$ are pairwise disjoint subsequences of the Rademacher sequence (see \Cref{types of sums} for the precise definition).
Similar to \cite{bourgain:rosenthal:schechtman:1981}, we also have a more general construction of subspaces of $Y$: For each tree $\mathcal{T}$ of finite subsets of $\mathbb{N}$ satisfying a property we call \emph{disjointly branching}, we define the tree space $R_{\mathcal{T}}^Y$ as the closed linear span of the subspaces of $\sigma(\{ r_n : n\in F \})$-measurable functions in $Y$, for $F\in \mathcal{T}$.
Each Bourgain-Rosenthal Schechtman space $R_\alpha^Y$ can be alternatively realized in this way.
Under the assumption that $X$ has non-trivial Boyd indices, we employ Stein's martingale inequality \cite[Proposition 9.5]{johnson:maurey:schechtman:tzafriri:1979} to prove that all these tree spaces $R_{\mathcal{T}}^Y$, and hence also the Bourgain-Rosenthal-Schechtman spaces, are complemented in~$Y$ via an orthogonal projection (see~\Cref{prop:complementation}).

Our main result on Haar system spaces is the following dichotomy:

\begin{thmA}\label{thmA:alternatives}
  Let $X$ be a Haar system space with the Fatou property and non-trivial Boyd indices, and let $Y = X^0$.
  Then one of the following exclusive alternatives holds:
  \begin{enumerate}[label=(\alph*),topsep=5pt]
    \item\label{thmA:alternatives:a} There exists a strictly increasing transfinite sequence of countable ordinals $(\alpha_\xi)_{\xi< \omega_1}$ such that the spaces $R_{\alpha_\xi}^{Y}$, $\xi < \omega_1$, are pairwise non-isomorphic.
    \item\label{thmA:alternatives:b} For every ordinal $\omega \le \alpha < \omega_1$, the space $R_\alpha^{Y}$ is isomorphic to $Y$.
  \end{enumerate}
\end{thmA}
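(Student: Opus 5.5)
The plan is to adapt the Bourgain--Rosenthal--Schechtman strategy, splitting according to whether $Y$ embeds complementably into the spaces $R_\alpha^Y$. I would introduce an ordinal index $I(Z)$ for separable Banach spaces $Z$, defined via trees of finite sequences in $Z$ that behave like the Haar system of $X$ (or like a tree of $\sigma(r_n: n\in F)$-measurable blocks) with uniformly bounded projections onto their spans. The index should detect complemented embeddings: $Y$ embeds complementably into $Z$ if and only if the associated tree is ill-founded. When it is well-founded, $I(Z)<\omega_1$ by the Kunen--Martin or boundedness principle, and $I$ is an isomorphic invariant.

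\textbf{Step 1: growth of the index.} I would first show that $I(R_\alpha^Y)\ge \alpha$, or at least that it is unbounded in $\omega_1$ as $\alpha$ varies. The natural route is induction along the recursive construction: disjoint sums of two compressed copies increase the "depth" by one, and independent sums at limit stages realize the supremum. This uses the complementation of tree spaces given by \Cref{prop:complementation}, which relies on the non-trivial Boyd indices, to keep the projection constants uniformly bounded.

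\textbf{Step 2: the dichotomy.} Suppose first that $Y$ fails to embed complementably into $R_\alpha^Y$ for every $\alpha<\omega_1$. Then each index $I(R_\alpha^Y)$ is a countable ordinal. Since the indices are unbounded, I would choose $\alpha_\xi$ by transfinite recursion so that
\[
I(R_{\alpha_\xi}^Y)>\sup_{\eta<\xi} I(R_{\alpha_\eta}^Y).
\]
The indices are then pairwise distinct, so the spaces are pairwise non-isomorphic, which is alternative (a).

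Otherwise, $Y$ embeds complementably into some $R_{\alpha_0}^Y$. Since $R_\beta^Y$ embeds complementably into $R_\alpha^Y$ for $\beta\le\alpha$ (they are nested, complemented tree spaces), $Y$ is complemented in $R_\alpha^Y$ for every $\alpha\ge\alpha_0$. The task is then to show that $Y$ is already complemented in $R_\omega^Y$. Granting this, we have $Y\sim R_\alpha^Y\oplus W$ and $R_\alpha^Y\sim Y\oplus V$. I would also establish $Y\sim Y\oplus Y$ (via dilations to halves) and $R_\alpha^Y\sim R_\alpha^Y\oplus R_\alpha^Y$ for infinite $\alpha$ (the successor step doubles, and limits absorb). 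Pełczyński's decomposition method then gives $R_\alpha^Y\sim Y$ for all $\omega\le\alpha<\omega_1$, which is alternative (b). The two alternatives are exclusive, since in (b) all the spaces are isomorphic.

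\textbf{Main obstacle.} The hardest part is showing that, in the second case, the ordinal $\alpha_0$ can be pushed down to $\omega$. I would try an argument analogous to BRS's theorem on complemented embeddings of $L_p$ into spaces with an unconditional Schauder decomposition. The decomposition of $R_{\alpha}^Y$ into its independent or disjoint summands is unconditional, again by Stein's inequality and the Boyd indices. The aim is to show that a complemented copy of $Y$ in such a sum yields a complemented copy of $Y$ inside a single summand, or inside $X$-sums of them. Running this by induction on $\alpha$ would let us descend from $\alpha_0$ down to finite levels and to $R_\omega^Y$.

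The Fatou property is presumably needed in this descent, to pass to limits of block bases and identify weak${}^*$-type limits in $X$. This descent step is where I expect most of the work, and it is also where the argument may break down.
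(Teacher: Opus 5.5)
\textbf{Overall.} Your architecture matches the paper's. It uses a complemented-copy index that is countable on $R^Y_\alpha$ unless $Y$ is complemented there. When all indices are countable, a transfinite recursion along strictly increasing index values gives (a). Once $Y$ is complemented in $R^Y_\omega$, Pe{\l}czy\'nski's decomposition method gives (b).

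\textbf{The gap: the descent to $R^Y_\omega$.} The genuine gap is exactly the step you flag, and it cannot be skipped. Without it you only get $R^Y_\alpha\simeq Y$ for $\alpha\ge\alpha_0$. Nothing is then known about the countably many spaces with $\omega\le\alpha<\alpha_0$, so neither (a) nor (b) follows. The induction you sketch also cannot run. Take $\alpha$ minimal such that $Y$ is complemented in $R^Y_\alpha$. Successors are excluded, since $R^Y_{\beta+1}\simeq\mathbb{R}\oplus R^Y_\beta\oplus R^Y_\beta$ and $Y$ has the primary factorization property. So $\alpha$ is a limit, and $R^Y_\alpha$ has the unconditional decomposition $(J^\alpha_\beta(R^Y_{\mathcal{T}_\beta}))_{\beta<\alpha}$, no piece of which contains a complemented copy of $Y$. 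The BRS-type theorem (which itself has to be re-proved for Haar system spaces) then yields only its second alternative. This is a block sequence $(x_n)$ spread over infinitely many summands, equivalent to the Haar system, with complemented span. There is no smaller ordinal to descend to, and finite levels cannot contain $Y$.

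\textbf{The missing transfer idea.} The missing idea is to \emph{transfer} this block sequence into $R^Y_\omega$.
\begin{itemize}
\item After a small perturbation, $x_n=\sum_{\beta\in\sigma_n}J^\alpha_\beta z_\beta$ with each $z_\beta$ a Haar polynomial. Hence $z_\beta\in Y_{k(\beta)-1}=R^Y_{k(\beta)}$ for pairwise distinct $k(\beta)$.
\item The map $U\colon J^\alpha_\beta z_\beta\mapsto J^\omega_{k(\beta)}z_\beta$ is an isometry into $R^Y_\omega$, because both sides are independent sums of distributional copies of the same functions.
\item Let $P$ project $R^Y_\alpha$ onto $[x_n]$. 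Let $V\colon R^Y_\omega\to Y$ send $J^\omega_k z$ to $J^\alpha_\beta z$ if $k=k(\beta)$ and to $0$ otherwise. Let $Q$ be a bounded projection of $Y$ onto $R^Y_\alpha$. Then $UPQV$ is a projection of $R^Y_\omega$ onto $[Ux_n]$.
\end{itemize}
Since $V$ lands only in $Y$, the complementation of $R^Y_\alpha$ in $Y$ is used essentially here. That complementation comes from Stein's inequality, i.e.\ from the non-trivial Boyd indices.

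\textbf{Where the Fatou property is used.} You have misplaced the Fatou property: the descent does not use it. It is needed for the index, to show that an infinite branch yields a \emph{projection} onto $T(Y)$ inside an arbitrary separable space. In the paper the projections must satisfy $\|P_k\|\le C$ and also the compatibility conditions
\[P_kP_m=P_{k\wedge m}\quad\text{and}\quad P_{k-1}T_k=T_{k-1}\mathbb{E}_{k-1}|_{Y_k}.\]
These make $(T_k^{-1}P_kw)_k$ a bounded dyadic martingale in $Y$, and its a.e.\ limit lies in $Y$ by the Fatou property. Your index leaves this limiting step unspecified, so its key detecting property is only asserted.

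\textbf{A minor point.} The bound $i(R^Y_\alpha)\ge\alpha$ does not need Stein's inequality. For $F\in\mathcal{T}$, the maps $\mathbb{E}_F$ are already norm-one projections of $R^Y_{\mathcal{T}}$ onto $Y_F$, and the $J_F$ are isometries.
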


This result will be proved in \Cref{thm:alternatives-ri}.
In particular, alternative~\ref{thmA:alternatives:a} implies that the algebra $\mathcal{B}(Y)$ has uncountably many closed ideals.

If $X = L_p$ for some $1 < p < \infty$, or if $X$ is a reflexive Orlicz function space on $[0,1]$, then the conclusion of \Cref{thmA:alternatives} follows from the work of Bourgain, Rosenthal, and Schechtman~\cite{bourgain:rosenthal:schechtman:1981}, and Ghawadrah~\cite{ghawadrah:2016}, respectively. Note that we cannot, in general, rule out alternative~\ref{thmA:alternatives:b}. In contrast, the authors of~\cite{bourgain:rosenthal:schechtman:1981} and~\cite{ghawadrah:2016} were able to prove that in their respective classes of spaces, alternative~\ref{thmA:alternatives:a} holds, unless the underlying space is isomorphic to~$L_2$. In the first case, this was achieved using
properties specific to $L_p$, in particular, the interplay between its subspaces $\ell_2\oplus \ell_p$ and the Rosenthal space $X_p$. The second proof~\cite{ghawadrah:2016} uses a characterization of Hilbert spaces (among rearrangement-invariant spaces) as subspaces of modular sequence spaces.
It would be of interest to identify properties of a general Haar system space $X$ that determine which alternative in the dichotomy holds. In fact, we do not know if there are any examples other than $L_2$ for which~\ref{thmA:alternatives:b} holds.

Note that $X^0 = H_X$ if and only if $X$ has non-trivial Boyd indices (see \cite[Theorem~1.d.6 and Theorem~2.c.6]{lindenstrauss:tzafriri:1979:partII}).
In the case when $X$ has a trivial Boyd index, we have the same dichotomy as in \Cref{thmA:alternatives} for the space $Y = H_X$, assuming that the orthogonal projections onto certain realizations of the Bourgain-Rosenthal-Schechtman spaces are already bounded  (see \Cref{thm:alternatives-hardy}). However, we do not know if the complementation of the Bourgain-Rosenthal-Schechtman spaces in $X^0$ or $H_X$ is in fact equivalent to $X$ having non-trivial Boyd indices.
On the other hand, we show that if $Y = L_1^0$ or $H^1$, or if $Y$ is a specific Orlicz function space close to $L_1$ (or the associated Hardy space close to $H^1$), then the space $R_\alpha^Y$ is in fact uncomplemented in $Y$ whenever $\alpha > \omega\cdot 2$ (see \Cref{thm:tree-spaces-uncomplemented}). This follows from the fact that the orthogonal projection onto $R_\alpha^Y$ is unbounded in~$Y$. For $Y = L_1^0$ or $H^1$, this is already alluded to in \cite[p.~223]{bourgain:rosenthal:schechtman:1981}, although no proof is provided there. We also provide general conditions on the fundamental function of a rearrangement-invariant space, which can be used to show that the aforementioned orthogonal projection is unbounded (see \Cref{prop:unboundedness}).
For $\omega \le \alpha < \omega\cdot 2$, the space $R_\alpha^Y$ may be complemented or uncomplemented in~$Y$, depending on the underlying space and choices made during the construction of $R_\alpha^Y$ (see \Cref{thm:tree-spaces-uncomplemented}). 

A new ingredient in our proof of \Cref{thmA:alternatives} is an ordinal $Y$-index that detects whether $Y$ embeds \emph{complementably} into a given separable Banach space~$W$; its value on $W$ is countable if and only if such an embedding exists.
By construction, for every countable ordinal $\alpha$, the $Y$-index of $R_\alpha^Y$ is at least $\alpha$.
The index is defined as the supremum of the heights of certain trees $\mathscr{T}(C)$, $C \ge 1$. Each tree $\mathscr{T}(C)$ consists of finite sequences of the form $\big((T_0,P_0),\ldots,(T_n,P_n)\big)$, with each $T_i$ being a $C$-embedding of the span of the first $i+1$ levels of the Haar system in $Y$ into $W$, and $P_i$ being a projection of norm at most $C$ onto its image. Each such sequence must satisfy certain compatibility conditions.
This index is a refinement of the original ordinal index of Bourgain, Rosenthal, and Schechtman~\cite{bourgain:rosenthal:schechtman:1981} and is necessary to circumvent assumptions such as $q$-concavity of~$Y$, which are used in~\cite{bourgain:rosenthal:schechtman:1981} and~\cite{ghawadrah:2016} to pass from subspaces of~$Y$ isomorphic to~$Y$ to \emph{complemented} subspaces isomorphic to~$Y$ (see \cite[Theorem~9.1]{johnson:maurey:schechtman:tzafriri:1979}).

Moreover, we extend the work of Bourgain, Rosenthal, and Schechtman on complemented embeddings of $L_p$ into spaces with unconditional Schauder decompositions \cite[Theorem~1.1]{bourgain:rosenthal:schechtman:1981}.
We prove that if the Haar system is unconditional in $Y = X^0$ or $H_X$, and $Y$ embeds complementably into a space with an unconditional Schauder decomposition, then there exists another complemented copy of $Y$ which is either contained in one of the decomposition's components or is spanned by a block copy of the Haar system relative to the decomposition.
This result was proved for rearrangement-invariant function spaces~$X$ with non-trivial Boyd indices by Ghawadrah \cite[Theorem~3.1]{ghawadrah:2016}, who used interpolation arguments
to translate the original proof in~\cite{bourgain:rosenthal:schechtman:1981} to this new setting. In contrast, while our proof still follows the general structure of \cite[Theorem~1.1]{bourgain:rosenthal:schechtman:1981}, we incorporate recent tools from factorization theory~\cite{lechner:speckhofer:2025} that operate directly in the space $X$ itself, thus avoiding the passage to $X(\ell_2)$ used in previous proofs, as well as interpolation. In particular, our approach also applies to the Hardy case, thus yielding a new result in this setting (see \Cref{thm:1.1}).

Our proof of \Cref{thmA:alternatives} can be outlined as follows. We distinguish two cases: If, for every $\alpha$, the $Y$-index of $R_\alpha^Y$ is countable, then we easily extract a cofinal subfamily of pairwise non-isomorphic spaces from $(R_\beta^{Y})_{\beta < \omega_1}$. On the other hand, if for some $\alpha$, the $Y$-index of $R_\alpha^Y$ is uncountable, then we apply the above unconditional Schauder decomposition theorem to conclude that this is in fact true for all spaces $R_\beta^Y$, $\omega \le \beta < \omega_1$. The uncountability of the $Y$-index guarantees the complementation of $Y$ in all these spaces. Using Pe{\l}czy\'nski's decomposition method for spaces isomorphic to their squares, we deduce that all spaces $R_\beta^Y$, $\omega \le \beta < \omega_1$, are isomorphic to~$Y$.

\section{Distributional embeddings and Haar systems}

We recall the necessary measure-theoretic notation surounding distributional embeddings and the notation for the Rademacher sequence and the Haar system.

\begin{ntt}
\label{measure theory notation}
For a measurable subset $A$ of $\mathbb{R}$, we denote by $|A|$ its Lebesgue measure. We denote by $L_0$ the space of equivalence classes of scalar-valued measurable functions on the unit interval. Equipped with convergence in measure, $L_0$ is a Polish topological vector space, induced, e.g., by the metric $d(f,g) = \|(f-g)(1+|f-g|)^{-1}\|_1$. We denote $L_1 = \{f\in L_0:\;\int|f|dx<\infty\}$ and $L_1^0 = \{f\in L_1: \int f dx = 0\}$. For $f\in L_0$, the distribution of $f$ is the probability measure on the Borel subsets of $\mathbb{R}$ such that $\mathrm{dist}(f)(A) = |[f\in A]|$. For a countably indexed collection $(f_i)_{i\in I}$ in $L_0$, its joint distribution is the probability measure on the Borel $\sigma$-algebra $\mathcal{B}(\mathbb{R}^I)$ given by $\mathrm{dist}((f_i)_{i\in I})(A) = |[(f_i)_{i\in I}\in A]|$. For a vector subspace $W$ of $L_0$, a linear map $T:W\to L_0$ is called a distributional embedding if, for every $f\in W$, $\mathrm{dist}(Tf) = \mathrm{dist}(f)$. Every distributional embedding $T\colon W \to L_0$ extends uniquely to a distributional embedding defined on the closure of~$W$ with respect to convergence in measure (since convergence in measure implies convergence in distribution). For such a map, letting $Z = T(W)$, we then write $Z\equiv^\mathrm{dist} W$ and say that $T$ is a distributional isomorphism onto $Z$. Two commonly indexed countable collections $(f_i)_{i\in I}$ and $(g_i)_{i\in I}$ in $L_0$ are called distributionally equivalent if they have the same joint distribution. By a standard probability theory fact based on the multivariable characteristic function, the previous statement is equivalent to the map $f_i\mapsto g_i$ extending to a distributional embedding from $\langle\{f_i:i\in I\}\rangle$ onto $\langle\{g_i:i\in I\}\rangle$.
\end{ntt}

For a $\sigma$-subalgebra $\mathcal{A}$ of the Lebesgue $\sigma$-algebra $\mathcal{L}([0,1])$, we denote by $L_0(\mathcal{A})$ the collection of all $f\in L_0$ that are essentially $\mathcal{A}$-measurable, i.e., they contain an $\mathcal{A}$-measurable representative. In reality, $L_0(\mathcal{A})$ depends only on the measure algebra of $\mathcal{A}$, that is, the image of $\mathcal{A}$ in the quotient of $\mathcal{L}([0,1])$ modulo null sets. The subspace $L_0(\mathcal{A})$ of $L_0$ is closed in the topology of convergence in measure. For a vector subspace $W$ of $L_0$ we denote by $\sigma(W)$ the $\sigma$-subalgebra of $\mathcal{L}([0,1])$ generated by all representatives of members of $W$.

The following is well-known; e.g., it is mentioned in \cite[page 212]{bourgain:rosenthal:schechtman:1981}. We sketch a proof because we could not locate a reference to one.

\begin{prop}
\label{distroembedding extension}
Let $T:W\to L_0$ be a distributional embedding on a vector subspace $W$ of $L_0$. Then $T$ extends uniquely to an onto distributional embedding $\tilde T:L_0(\sigma(W))\to L_0(\sigma(T[W]))$. Furthermore, $\tilde T$ is an algebra homomorphism.
\end{prop}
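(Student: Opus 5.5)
The plan is to reduce to a countable setting and then use the standard measure-theoretic fact that an equality of joint distributions induces an isomorphism of the generated measure algebras.

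\emph{Step 1: countable reduction.} Since $L_0$ is separable metrizable, $W$ contains a countable set $(f_i)_{i\in I}$ whose linear span $W_0$ is dense in $W$ for convergence in measure. Then $\sigma(W)=\sigma(W_0)$ modulo null sets, because each $f\in W$ is an a.e.\ limit of a subsequence from $W_0$. Put $g_i=Tf_i$. By the notation above, $(f_i)$ and $(g_i)$ have the same joint distribution $\mu$ on $\mathbb{R}^I$. Also, $T$ is continuous in measure on $W$: we have $d(Tf,Tg)=d(f,g)$, since $d(f,g)$ depends only on $\mathrm{dist}(f-g)=\mathrm{dist}(T(f-g))$. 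Hence $T(W)$ lies in the closure of $\langle g_i\rangle$, and $\sigma(T[W])=\sigma((g_i))$.

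\emph{Step 2: construct $\tilde T$.} Let $F=(f_i)_{i\in I}\colon[0,1]\to\mathbb{R}^I$ and $G=(g_i)_{i\in I}$. By Doob--Dynkin, every $h\in L_0(\sigma(W))$ has the form $h=\phi\circ F$ a.e.\ for some Borel $\phi\colon\mathbb{R}^I\to\mathbb{R}$. Define $\tilde T h=\phi\circ G$.
\begin{itemize}
\item This is well defined: if $\phi\circ F=\psi\circ F$ a.e., then $\phi=\psi$ $\mu$-a.e., so $\phi\circ G=\psi\circ G$ a.e.
\item For finitely many $h_1,\dots,h_k$, the vectors $(h_1,\dots,h_k)=\Phi\circ F$ and $(\tilde Th_1,\dots,\tilde Th_k)=\Phi\circ G$ have the same joint distribution, namely $\mu\circ\Phi^{-1}$.
\end{itemize}
It follows that $\tilde T$ preserves distributions, is linear, and is multiplicative (compose with $(x,y)\mapsto x+y$ and $(x,y)\mapsto xy$ and compare a.e.). So $\tilde T$ is an algebra homomorphism. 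It is onto $L_0(\sigma((g_i)))$ by Doob--Dynkin applied to $G$.

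\emph{Step 3: extension and uniqueness.} The main point is that $\tilde T$ extends $T$, and not merely $f_i\mapsto g_i$. The map $\tilde T$ agrees with $T$ on $W_0$ by linearity, since the $\phi$ are linear functions of coordinates. Both maps are isometries for $d$ (each is a distributional embedding) and hence continuous. So they agree on the closure of $W_0$, which contains $W$.

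For uniqueness, let $S$ be any distributional embedding extending $T$. Then $S$ is also a $d$-isometry, hence continuous in measure. I would then show that $S$ is multiplicative on $L_0(\sigma(W))$. The identity $4ab=(a+b)^2-(a-b)^2$ reduces this to showing $S(a^2)=(Sa)^2$. I would try to prove that by looking at the joint distribution of the pair $(a,a^2)$: distribution-preservation applied to $\lambda a+a^2$ for all $\lambda$ should determine the joint law of $(Sa,S(a^2))$ through characteristic functions.

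I expect this uniqueness step to be the main obstacle. If the characteristic-function approach stalls, I will interpret uniqueness in the class of algebra-homomorphism distributional embeddings. In that class the argument is easy: such an $S$ agrees with $\tilde T$ on polynomials in the $f_i$, and then on bounded continuous functions of finitely many $f_i$ via Stone--Weierstrass plus distribution-preservation (an $L_2$-type approximation argument). Finally, density in measure gives agreement on all of $L_0(\sigma(W))$.
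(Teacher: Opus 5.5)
Your argument is correct, and its existence half takes a different route from the paper's. The paper does not fix a countable generating family. It writes each set of $\sigma(W)$ as $[(f_n)_{n=1}^\infty\in A]$, with a sequence that depends on the set. It then checks that $[(f_n)\in A]\mapsto[(Tf_n)\in A]$ is a well-defined measure-algebra isomorphism by comparing measures of set differences. Finally it defines $\tilde T$ on simple functions via indicators and extends by $d$-isometry. You instead fix one countable dense family, write $h=\phi\circ F$ by Doob--Dynkin, and set $\tilde Th=\phi\circ G$. This makes linearity, multiplicativity, distribution preservation and surjectivity immediate from the single law $\mu$. You also verify explicitly that $\tilde T$ extends $T$, which the paper leaves implicit. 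The cost is the separability/continuity reduction of Step~1, and you handle it correctly.

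For uniqueness, the step you call the main obstacle is not one. The Cram\'er--Wold fact you already used in Step~1 applies verbatim to $S$ on $L_0(\sigma(W))$: a linear map preserving the distribution of each vector preserves joint distributions of countable families. So your route via $(a,a^2)$, polynomials and approximation in measure does go through.

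That detour is also unnecessary. Apply the same fact to $(h,f_1,f_2,\dots)$ with $h=\phi\circ F$. Then $(Sh,g_1,g_2,\dots)$ has a joint law concentrated on the Borel graph $\{(y,x):y=\phi(x)\}$, so $Sh=\phi\circ G=\tilde Th$ a.e. This is exactly the paper's uniqueness argument, which it runs for indicators $h=\chi_{[(f_n)\in A]}$.

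Drop the fallback of restricting uniqueness to algebra-homomorphism embeddings. It would prove strictly less than the proposition states.
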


\begin{proof}
We will use the fact that the $\sigma$-algebra generated by  a set of scalar-valued functions $\mathcal{F}$ on a common domain coincides with the collection
\begin{equation}
\label{distroembedding extension eq}
\Big\{\big[(f_n)_{n=1}^\infty\in A\big]:(f_n)_{n=1}^\infty\in\mathcal{F}^\mathbb{N}\text{ and }A\in\mathcal{B}(\mathbb{R}^\mathbb{N})\Big\}.
\end{equation}
In our case, $\mathcal{F}$ is the collection of all representatives from members of $W$. For any $(f_n)_{n=1}^\infty$, $(g_n)_{n=1}^\infty$ in $W$ and $A, B\in\mathcal{B}(\mathbb{R}^\mathbb{N})$, because $T$ preserves joint distributions,
\begin{align*}
\big|[(f_n)_{n=1}^\infty\in A]\setminus [(g_n)_{n=1}^\infty\in B]\big| &= \big|[(f_n)_{n=1}^\infty\times (g_n)_{n=1}^\infty \in A\times B^c]\big|\\
&= \big|[(Tf_n)_{n=1}^\infty\times (Tg_n)_{n=1}^\infty \in A\times B^c]\big|\\
&= \big|[(Tf_n)_{n=1}^\infty\in A]\setminus [(Tg_n)_{n=1}^\infty\in B]\big|.
\end{align*}
This implies that the map $\sigma(W)\ni \big[(f_n)_{n=1}^\infty\in A\big] = C\mapsto \varphi(C) = \big[(Tf_n)_{n=1}^\infty\in A\big]\in\sigma(T[W])$ is a well-defined measure algebra isomorphism, and it thus induces an onto distributional embedding $T:L_0(\sigma(W))\to L_0(\sigma(T[W]))$, via $\tilde T\chi_{C} = \chi_{\varphi(C)}$ ($\tilde T$ is first defined on simple functions, where it is an isometry with respect to the standard Polish metric of $L_0$ mentioned in \Cref{measure theory notation}, and then extended to $L_0(\sigma(W))$). It is easy to see that such a $\tilde T$ is a homomorphism. Uniqueness is verified as follows: if $S =L_0(\sigma(W))\to L_0$ is a distributional embedding extending $T$ and $[(f_n)_{n=1}^\infty\in A\big]$ is as in \eqref{distroembedding extension eq}, then, because $S$ preserves joint distributions,
\begin{align*}
\big|\big[S\chi_{[(f_n)_{n=1}^\infty\in A]} \neq \tilde T\chi_{[(f_n)_{n=1}^\infty\in A]}\big]\big| &= \big|\big[S\chi_{[(f_n)_{n=1}^\infty\in A]}\in\{1\}\big]\triangle\big[(Tf_n)_{n=1}^\infty\in A\big]\big|\\
&= \big|\big[S\chi_{[(f_n)_{n=1}^\infty\in A]}\in\{1\}\big]\triangle\big[(Sf_n)_{n=1}^\infty\in A\big]\big|\\
& = \big|\big[\chi_{[(f_n)_{n=1}^\infty\in A]}\in\{1\}\big]\triangle\big[(f_n)_{n=1}^\infty\in A\big]\big| = 0.\qedhere
\end{align*}
\end{proof}

\begin{ntt}
\label{dyadic haar rademacher notation}
We establish the notation for dyadic intervals, the Haar system, and the Rademacher sequence.
\begin{enumerate}[label=(\alph*),]

\item For $n\in\mathbb{N}_0$, we let
\begin{gather*}
\mathcal{D}^n = \Big\{\Big[\frac{i-1}{2^n},\frac{i}{2^n}\Big):1\leq i\leq 2^n\Big\},\quad \mathcal{D}_n = \cup_{k=0}^n\mathcal{D}^k,\quad \text{and}\quad \mathcal{D} = \cup_{k=0}^\infty\mathcal{D}^k.
\end{gather*}
We enumerate $\mathcal{D}$ according to the lexicographical order, defined by declaring $I<J$ whenever $|I|>|J|$, or $|I|=|J|$ and $\sup(I)\le \inf(J)$.
For any collection $\mathcal{B}$ of dyadic intervals, we write $\mathcal{B}^{*} = \cup_{I\in \mathcal{B}}I$.
        
\item For $n\in\mathbb{N}_0$ and $I = [(i-1)/2^n,i/2^n)\in\mathcal{D}^n$ we denote the left and right half of $I$ as follows:
\[I^+ = \Big[\frac{2i-2}{2^{n+1}},\frac{2i-1}{2^{n+1}}\Big)\text{ and } I^- = \Big[\frac{2i-1}{2^{n+1}},\frac{2i}{2^{n+1}}\Big).\]
For $n\in\mathbb{N}$ and $I\in\mathcal{D}^n$ we denote by $\pi(I)$ the unique member of $\mathcal{D}^{n-1}$ such that $I = \pi(I)^+$ or $\pi(I)^-$. In the former case, we define $\varepsilon(I) = 1$ and in the latter $\varepsilon(I) = -1$.

\item The standard Haar system $(h_I)_{I\in\mathcal{D}}$ is defined as follows: for $I\in\mathcal{D}$, let
\[h_I = \chi_{I^+} - \chi_{I^{-}}.\]

\item The standard Rademacher sequence $(r_n)_{n=0}^\infty$ is defined by letting, for $n\in\mathbb{N}_0$,
\[r_n = \sum_{I\in\mathcal{D}^n}h_I.\]

\end{enumerate}
\end{ntt}

\begin{ntt}
\label{notation of subsequences of the levels}
Fix a finite subset $F = \{k_0<\cdots<k_n\}$ of $\mathbb{N}_0$.
\begin{enumerate}[resume,label=(\alph*),]

\item\label{notation of subsequences of the levels a}
Let
\[\sigma_F = \sigma(r_{k_0},\ldots,r_{k_n})\]
and
\[Y_F = \big\{f\in L_1^0 : f \text{ is } \sigma_F\text{-measurable}\big\}.\]
We write
\[\mathbb{E}_F \colon L_1^0 \to Y_F\]
for the conditional expectation operator with respect to $\sigma_F$.

In the special case where $n\in\mathbb{N}$ and $F=\{0,1,\ldots,n\}$, we write $\sigma_n$, $Y_n$, and $\mathbb{E}_n$ for the corresponding objects.

\item\label{notation of subsequences of the levels b} We denote by $J_F:Y_n\to Y_F$ the unique distributional embedding given by extending the map $r_i \mapsto r_{k_i}$, $0\leq i\leq n$ (which exists by \Cref{distroembedding extension}). We then define the distributional copy of the Haar system $(b_I^F)_{I\in\mathcal{D}_n}$ by letting, for $I\in\mathcal{D}_n$,
\[b^F_I = J_Fh_I.\]
\end{enumerate}
The above are also defined for an infinite $F$ in the obvious way.
\end{ntt}

\begin{rem}
\label{expectations on subsets are a monoid}
For every $F,G\subset \mathbb{N}_0$, on $L_1^0$ we have $\mathbb{E}_F\mathbb{E}_G = \mathbb{E}_{F\cap G}$. This is easily checked on finite products of terms of the Rademacher sequence $(r_n)_{n=0}^\infty$, i.e., the Walsh system, which has a dense linear span in $L_1^0$ with $\|\cdot\|_1$.
\end{rem}

The following states that, following \Cref{notation of subsequences of the levels}, $(b_I^F)_{I\in\mathcal{D}_n}$ is in $\langle\{h_I:I\in\cup_{i=0}^n\mathcal{D}^{k_i}\}\rangle$.

\begin{prop}
\label{constructing associated haar system in the more familiar way}
Let $F = \{k_0<\cdots<k_n\}$ be a finite subset of $\mathbb{N}_0$. Define $\mathcal{B}^F_{[0,1)} = \mathcal{D}^{k_0}$ and, for $1\leq i\leq n$ and $I\in\mathcal{D}^i$, let $\mathcal{B}^F_I = \{L\in\mathcal{D}^{k_i}: L\subset [b^F_{\pi(I)} = \varepsilon(I)]\}.$ Then, for $0\leq i\leq n$ and $I\in\mathcal{D}^i$,
\begin{equation*}
b^F_I = \sum_{L\in \mathcal{B}^F_I}h_L,
\end{equation*}
and, in particular, $b_I^F$ is a linear combination of $(h_L)_{L\in\mathcal{D}^{k_i}}$.
\end{prop}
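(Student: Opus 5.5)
The plan is to compute $b^F_I$ explicitly by exploiting that $J_F$ is an algebra homomorphism that maps each Rademacher $r_i$ to $r_{k_i}$ (\Cref{distroembedding extension}). The key observation is that every Haar function of level $i$ is a product of indicator functions built from $r_0,\dots,r_{i-1}$ with $r_i$. Precisely, for $I\in\mathcal{D}^i$ we have $h_I = \chi_I\, r_i$. The interval $I$ itself is the set where $(r_0,\dots,r_{i-1})$ takes a prescribed sign pattern. Recursively, $\chi_I = \chi_{\pi(I)}\cdot\chi_{[h_{\pi(I)}=\varepsilon(I)]}$ for $i\ge 1$, since $\pi(I)^+=[h_{\pi(I)}=1]$ and $\pi(I)^-=[h_{\pi(I)}=-1]$.

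Applying the homomorphism $J_F$ gives $b^F_I = J_F(\chi_I)\, r_{k_i}$. From the recursion, $J_F(\chi_I) = \chi_{A_I}$, where $A_{[0,1)}=[0,1)$ and
\[
A_I = A_{\pi(I)}\cap [b^F_{\pi(I)}=\varepsilon(I)].
\]
Here I use that $J_F$ commutes with Borel functions of a single function: by distributional equality and the homomorphism property, $J_F\chi_{[f=c]} = \chi_{[J_Ff=c]}$. Moreover $[b^F_{\pi(I)}=\pm1]\subset A_{\pi(I)}$, because $b^F_{\pi(I)}=\chi_{A_{\pi(I)}}r_{k_{i-1}}$ vanishes off $A_{\pi(I)}$. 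Hence simply $A_I=[b^F_{\pi(I)}=\varepsilon(I)]$.

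Next I would argue by induction on $i$ that $A_I$ is a union of intervals in $\mathcal{D}^{k_{i-1}+1}$. This holds because $b^F_{\pi(I)}$ is a linear combination of $h_L$ with $L\in\mathcal{D}^{k_{i-1}}$, and each such $h_L$ is constant on the dyadic intervals of level $k_{i-1}+1$. Since $k_i\ge k_{i-1}+1$, the set $A_I$ is in particular a disjoint union of the intervals $L\in\mathcal{D}^{k_i}$ with $L\subset A_I$, i.e.\ of the intervals in $\mathcal{B}^F_I$. For $i=0$ we have $A_{[0,1)}=[0,1)$, which is the union of $\mathcal{D}^{k_0}=\mathcal{B}^F_{[0,1)}$.

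Finally, $b^F_I=\chi_{A_I}r_{k_i}=\sum_{L\in\mathcal{B}^F_I}\chi_L r_{k_i}=\sum_{L\in\mathcal{B}^F_I}h_L$, using $\chi_L r_{k_i}=h_L$ for $L\in\mathcal{D}^{k_i}$. This completes the induction, since the resulting expression is a combination of level-$k_i$ Haar functions, which feeds the next step.

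The only delicate point is justifying $J_F(\chi_I r_i)=J_F(\chi_I)\,J_F(r_i)$ and $J_F\chi_{[f=c]}=\chi_{[J_Ff=c]}$. Both follow from the extension $\tilde T$ in \Cref{distroembedding extension} being an algebra homomorphism induced by the measure algebra map $\varphi$, with $J_F$ being that extension restricted to $Y_n$. (To apply it one may first extend to all of $L_0(\sigma_n)$, which contains the indicators.)
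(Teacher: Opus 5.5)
Your proposal is correct and follows essentially the paper's argument: apply the homomorphism $J_F$ (through its extension from \Cref{distroembedding extension}) to $h_I = r_i\chi_{[h_{\pi(I)}=\varepsilon(I)]}$ to get $b^F_I = r_{k_i}\chi_{[b^F_{\pi(I)}=\varepsilon(I)]}$. Then split $r_{k_i}$ into level-$k_i$ Haar functions, using that $b^F_{\pi(I)}$ is constant on dyadic intervals of level $k_{i-1}+1\le k_i$. Your intermediate sets $A_I = A_{\pi(I)}\cap[b^F_{\pi(I)}=\varepsilon(I)]$ are a harmless detour, since already $[h_{\pi(I)}=\varepsilon(I)] = I$, so no recursion for $\chi_I$ is needed. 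Likewise, the measurability of $b^F_{\pi(I)}$ can be read off directly from its being a function of $r_{k_0},\dots,r_{k_{i-1}}$, without induction.
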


\begin{proof}
  For $1\leq i\leq n$ and $I\in\mathcal{D}^i$, $h_I = r_i\chi_{[h_{\pi(I)}=\varepsilon(I)]}$. Using that $J_F$ is a homomorphism,
  \begin{equation}\label{eq:22}
    b_I^F = J_Fh_I =  r_{k_i}\chi_{[b^F_{\pi(I)}=\varepsilon(I)]} =\Big(\sum_{L\in\mathcal{D}^{k_i}}h_L\Big) \chi_{[b^F_{\pi(I)}=\varepsilon(I)]} = \sum_{L\in\mathcal{B}^F_I}h_L,
  \end{equation}
where the last equality follows from $b_{\pi(I)}^F$ being $\sigma(\mathcal{D}^{k_i})$-measurable.
\end{proof}

For $F = \{ 1,4 \}$, the functions $b_I^F$ are illustrated in \Cref{fig:faithful-haar-system}.

\begin{figure}[t]
  \centering \includegraphics[width=0.65\textwidth]{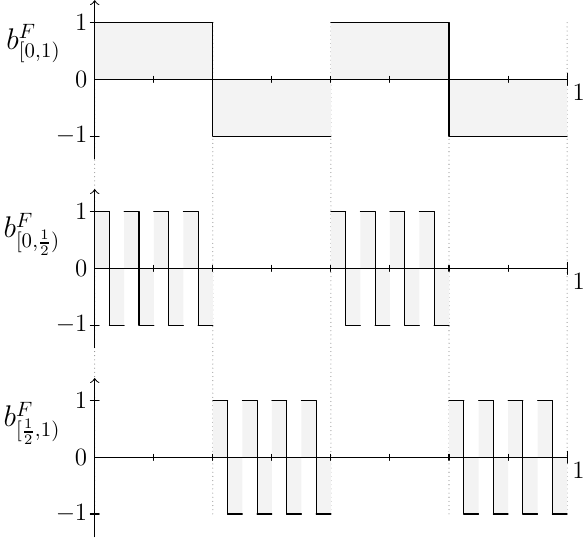}
  \caption{The functions $b_I^F$}
  \label{fig:faithful-haar-system}
\end{figure}

\begin{rem}
  A system $(b_I^F)_{I\in \mathcal{D}^i}$ of the form~\eqref{eq:22} is a special case of a so-called \emph{faithful Haar system}. This notion will be used extensively in \Cref{sec:complemented-embeddings}.
\end{rem}
\begin{rem}
\label{haar basis formulas}
Let $F = \{k_0<\cdots<k_n\}$ be a finite subset of $\mathbb{N}_0$.
\begin{enumerate}[label=(\alph*),]

\item\label{haar basis formulas a} For $g\in L_1^0$,
\[\mathbb{E}_Fg = \sum_{I\in\mathcal{D}_n}|I|^{-1}\langle b_I^F,g\rangle b_I^F.\]

\item\label{haar basis formulas b} For $g\in Y_n$,
\[J_Fg=  \sum_{I\in\mathcal{D}_n}|I|^{-1}\langle h_I,g\rangle b_I^{F}.\]

\end{enumerate}
\end{rem}

\begin{ntt}
  In the following, we will use the symbols $\lesssim$,$\gtrsim$, and $\asymp$ to suppress constants in inequalities: $a\lesssim b$ and $b\gtrsim a$ both mean that $a \le  C b$ for a constant $C > 0$, and $a\asymp b$ means that $a\lesssim b$ and $a \gtrsim b$.
\end{ntt}

\section{Haar system spaces and dyadic Hardy spaces}
We define Haar system spaces and explain the precise relationship between this definition and the various classical definitions of rearrangement-invariant spaces. We then define the associated Hardy spaces and introduce the notion of a square function-distributional embedding between their subspaces, which serves in this context as the analogue of a distributional embedding between subspaces of a Haar system space. As we point out, some of the strong properties of distributional embeddings do not carry over to the square-function analogue. Specifically, a square function-distributional copy of a complemented subspace of a Hardy space need not itself be complemented (see \Cref{rem:hardy}~\ref{rem:hardy:b}).

\begin{dfn}
\label{haar system space}
A Haar system space is the completion $X$ of the vector space $\langle\{ \chi_I:I\in\mathcal{D}\}\rangle = \langle\{\chi_{[0,1)}\}\cup\{h_I:I\in\mathcal{D}\}\rangle$ under a norm $\|\cdot\|_X$ satisfying:
\begin{enumerate}[label=(\alph*)]

\item $\|\chi_{[0,1)}\|_X = 1$ and

\item for $f,g\in \langle\{ \chi_I:I\in\mathcal{D}\}\rangle$ such that $\mathrm{dist}(|f|) = \mathrm{dist}(|g|)$, $\|f\|_X = \|g\|_X$.

\end{enumerate}
\end{dfn}

\begin{rem}
\label{comparing haar system to ri}
Haar system spaces whose norm is not equivalent to $\|\cdot\|_\infty$ provide a concise framework for separable rearrangement invariant spaces (over the unit interval), with the Haar system highlighted as the main underlying structure. While there is no universally accepted definition of this class, a minimal formulation is to define a rearrangement invariant space as an order ideal $X$ in $L_0$ containing $\chi_{[0,1)}$, equipped with a complete norm $\|\cdot\|_X$ such that $\|\chi_{[0,1)}\|_X = 1$ and whenever $f\in X$ and $g\in L_0$ has the same distribution as $f$, it follows that $g\in X$ and $\|g\|_X=\|f\|_X$ (see, e.g., \cite{rodin:semyonov:1975}). Henceforth, this is what we will mean by a rearrangement-invariant space. Given a Haar system space $X$, the identity map $id:(\langle\{\chi_I:I\in\mathcal{D}\}\rangle,\|\cdot\|_X)\to L_1\subset L_0$ extends to a continuous linear injection on $X$ (see, e.g., \cite[the introduction of Section 2.4]{speckhofer:2025:thesis}). We may thus naturally identify $X$ with a subspace of $L_0$. If $\|\cdot\|_X\not\asymp\|\cdot\|_\infty$, the image of $X$ in $L_0$ is a separable rearrangement invariant space (see, e.g., \cite[Theorem 2.4.3]{speckhofer:2025:thesis}). Conversely, every separable rearrangement invariant space admits $(\chi_{[0,1)})\cup(h_I)_{I\in\mathcal{D}}$ as a basis, and therefore coincides with a Haar system space whose norm is not equivalent to $\|\cdot\|_\infty$ (see \cite[Proposition~2.c.1]{lindenstrauss:tzafriri:1979:partII} or the proof of~\cite[Chapter~1, Theorem~5.5]{bennett:sharpley:1988}).

The Fatou property, which is commonly included in the definition of rearrangement invariant spaces (see \Cref{Fatou property definition}), is not included in \Cref{haar system space} because a large part of the theory of Haar system spaces can be developed without it (see, e.g., \cite{lechner:motakis:mueller:schlumprecht:2022}, \cite{lechner:speckhofer:2025}, \cite{speckhofer:2025}, and \cite{MR5094444}).
Haar system spaces with $\|\cdot\|_X\not\asymp\|\cdot\|_\infty$ coincide with \emph{minimal} rearrangement-invariant function spaces over~$[0,1]$ in the terminology of~\cite{lindenstrauss:tzafriri:1979:partII} (see \Cref{rem:lindenstrauss-tzafriri}).
On the other hand, Haar system spaces with $\|\cdot\|_X\not\asymp\|\cdot\|_\infty$ that have the Fatou property are \emph{maximal} rearrangement-invariant in the sense of~\cite{lindenstrauss:tzafriri:1979:partII}.
We note, purely for context, that a Haar system space $X$ satisfies the Fatou property if and only if it does not contain an isomorphic copy of the Banach space $c_0$.
\end{rem}

\begin{ntt}
  For a Haar system space $X$ we denote $X^0 = \overline{\langle h_I : I\in\mathcal{D}\rangle}^{\|\cdot\|_X} = X\cap L_1^0$.
\end{ntt}
Henceforth, we will mostly work with the space $X^0$ instead of $X$, the main exception being norm computations in the Hardy space associated with~$X$ (see \Cref{dfn:hardy-space}). In fact, any Haar system space $X$ is isomorphic to its hyperplane $X^0$ (see \cite[Proposition 4.10]{lechner:speckhofer:2025}), so this is just a choice of notational convenience.

In the following proposition, we collect some basic results on Haar system spaces which will be used frequently. For proofs, we refer to~\cite[Section~2.4]{lechner:motakis:mueller:schlumprecht:2022} and~\cite{lechner:speckhofer:2025} (see also~\cite[Chapter~2]{speckhofer:2025} for a more detailed presentation containing all results and proofs).

\begin{prop}\label{pro:HS-1d}
  Let $X$ be a Haar system space. Then the following hold:
  \begin{enumerate}[label=(\alph*)]
    \item\label{pro:HS-1d:i} For every $f\in H:= \langle \{ \chi_I : I\in \mathcal{D} \} \rangle$, we have
          $\|f\|_{L_1}\leq \|f\|_X\leq \|f\|_{L_\infty}$.
    \item\label{pro:HS-1d:v} For all $f,g\in H$ with $|f|\leq |g|$, we have $\|f\|_X \leq \|g\|_X$.
    \item\label{pro:HS-1d:ii} The Haar system $(\chi_{[0,1)})\cup (h_I)_{I\in \mathcal{D}}$, in the usual linear
          order, is a monotone Schauder basis of~$X$.
    \item \label{pro:HS-1d:iii} $H$ naturally coincides with a subspace of $X^{*}$ (where $f\in H$ acts on $g\in H$ by $\langle f,g \rangle = \int fg$), and its closure in $X^{*}$ is also a Haar system space.
  \end{enumerate}
\end{prop}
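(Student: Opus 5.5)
We prove the four items in the order (b), (a), (c), (d), since each uses the previous ones. Throughout, $H$ consists of step functions that are constant on the dyadic intervals of some fixed level $\mathcal{D}^N$. For such functions, equality of distributions of $|f|$ and $|g|$ just means the two vectors of values agree up to a permutation of the $2^N$ intervals.

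\textbf{Item (b): monotonicity.} Let $|f|\le|g|$ with $f,g\in H$, both constant on $\mathcal{D}^N$.
\begin{enumerate}[label=(\roman*)]
\item Sign changes do not change the norm, because $\mathrm{dist}(|\pm f|)$ is unchanged. So $\|f\|_X=\||f|\|_X$, and we may assume $0\le f\le g$.
\item It is enough to lower the value on a single interval $J\in\mathcal{D}^N$, from $b$ to some $a\in[-b,b]$, and then iterate.
\item Let $g'$ equal $g$ except that its value on $J$ is $-b$. Then $|g'|=|g|$, so $\|g'\|_X=\|g\|_X$.
\item The modified function $\tilde g$ (value $a$ on $J$) is the convex combination $\tfrac{1+t}{2}g+\tfrac{1-t}{2}g'$ with $t=a/b$.
\item The triangle inequality gives $\|\tilde g\|_X\le\|g\|_X$.
\end{enumerate}

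\textbf{Item (a): comparison with $L_1$ and $L_\infty$.}
\begin{enumerate}[label=(\roman*)]
\item For the upper bound, (b) gives $\|f\|_X\le\|\,\|f\|_\infty\chi_{[0,1)}\|_X=\|f\|_\infty$, using $\|\chi_{[0,1)}\|_X=1$.
\item For the lower bound, let $f$ be constant on $\mathcal{D}^N$. Average $|f|$ over all $(2^N)!$ permutations of the intervals of $\mathcal{D}^N$.
\item The average is the constant function $\|f\|_1\chi_{[0,1)}$.
\item Each permuted copy of $|f|$ has the same distribution as $|f|$, hence norm $\|f\|_X$ by rearrangement invariance.
\item The triangle inequality then gives $\|f\|_1\le\|f\|_X$.
\end{enumerate}

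\textbf{Item (c): monotone Schauder basis.} Let $P_n$ be the partial sum projections in the lexicographic order.
\begin{enumerate}[label=(\roman*)]
\item Passing from $P_{n+1}f$ to $P_nf$ removes one Haar term $h_I$. On the level of functions, this replaces the values of $P_{n+1}f$ on $I^+$ and $I^-$ by their average and leaves everything else unchanged.
\item That average equals $\tfrac12(g+g')$, where $g=P_{n+1}f$ and $g'$ is $g$ with its values on the two halves $I^+$ and $I^-$ swapped.
\item Swapping the values on two intervals of equal length preserves the distribution, so $\|g'\|_X=\|g\|_X$.
\item Hence $\|P_nf\|_X\le\|P_{n+1}f\|_X$ for every $f\in H$, and so $\|P_nf\|_X\le\|f\|_X$.
\item Since $H$ is dense in $X$ by definition, the Haar system is a monotone basis of $X$.
\end{enumerate}

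\textbf{Item (d): duality.} This is the step I expect to be the main obstacle.
\begin{enumerate}[label=(\roman*)]
\item By (a), $|\int fg|\le\|f\|_\infty\|g\|_1\le\|f\|_\infty\|g\|_X$. So each $f\in H$ defines a bounded functional on $X$.
\item The map $H\to X^*$ is injective, since the Haar functions and $\chi_{[0,1)}$ separate the points of $H$.
\item Equip $H$ with the dual norm $\|f\|_*=\sup\{|\int fg|:\|g\|_X\le1,\ g\in H\}$. We then check the axioms of a Haar system space for $\|\cdot\|_*$.
\item Rearrangement invariance: if $\mathrm{dist}(|f_1|)=\mathrm{dist}(|f_2|)$, choose $N$ with both constant on $\mathcal{D}^N$. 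Then $f_2=\varepsilon\cdot(f_1\circ\tau)$ for some sign function $\varepsilon$ constant on $\mathcal{D}^N$ and some measure-preserving $\tau$ permuting $\mathcal{D}^N$.
\item This transformation, and its inverse, maps the unit ball of $H\subset X$ onto itself, by (b) and rearrangement invariance. So $\|f_1\|_*=\|f_2\|_*$.
\item The subtle point is the supremum: a priori it ranges over all $g\in H$, not only over functions constant on $\mathcal{D}^N$. I would handle this by replacing $g$ with its conditional expectation $\mathbb{E}(g\,|\,\mathcal{D}^N)$. This does not change $\int f_1 g$, and it does not increase $\|g\|_X$ by (c), because it is a partial sum projection.
\item Normalization: by (a), $\|\chi_{[0,1)}\|_*\le\sup\{\|g\|_1:\|g\|_X\le1\}\le1$. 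Testing against $g=\chi_{[0,1)}$ gives equality, so $\|\chi_{[0,1)}\|_*=1$.
\item Therefore the closure of $H$ in $X^*$ is the completion of $(H,\|\cdot\|_*)$, which is a Haar system space.
\end{enumerate}
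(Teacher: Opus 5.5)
Your proof is correct and complete. The paper gives no argument of its own here, deferring to \cite[Section~2.4]{lechner:motakis:mueller:schlumprecht:2022} and \cite{lechner:speckhofer:2025}, and your convexity and rearrangement-averaging arguments are the standard elementary route to all four items.

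One small simplification: in (d) the reduction to $g$ constant on $\mathcal{D}^N$ via $\mathbb{E}(g\,|\,\mathcal{D}^N)$ is not needed. Since $\tau$ translates each interval of $\mathcal{D}^N$ onto another, the map $g\mapsto\varepsilon\cdot(g\circ\tau)$ already maps all of $H$ onto itself while preserving $\|\cdot\|_X$, and this uses only rearrangement invariance, not (b).
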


\begin{rem}\label{rem:lindenstrauss-tzafriri}
  The above properties allow us to show that the Haar system spaces $X$ with $\|\cdot\|_X\not\asymp\|\cdot\|_\infty$ coincide with the separable \emph{minimal} rearrangement-invariant function spaces on~$[0,1]$, as defined in~\cite[Definition~2.a.1]{lindenstrauss:tzafriri:1979:partII}. Indeed, if $X$ is a Haar system space with $\|\cdot\|_X\not\asymp\|\cdot\|_\infty$, then by \cite[Theorem~2.4.3]{speckhofer:2025:thesis}, $X$ is a K\"othe function space. It follows from \cite[Theorem~2.4.8 and Remark~2.4.9]{speckhofer:2025:thesis} that condition~(i) of \cite[Definition~2.a.1]{lindenstrauss:tzafriri:1979:partII} holds (i.e., invariance of~$X$ and its norm under automorphisms of the unit interval). Condition~(ii) holds because the Haar system is a monotone Schauder basis of~$X$, so its linear span in $X^{*}$ is norming and $X$ is the closed span of simple integrable functions in its K\"othe bidual~$X''$ (note that by \cite[Proposition 2.4.2]{speckhofer:2025:thesis}, $L_{\infty}\subset X$). Condition~(iii) follows from the discussion in \Cref{comparing haar system to ri}, \cite[Proposition 2.4.2]{speckhofer:2025:thesis}, and \Cref{pro:HS-1d}~\ref{pro:HS-1d:i}.
\end{rem}

\begin{rem}
  Let $X$ be a Haar system space and let $Z$ be a subspace of $X$. Any distributional embedding $J:Z\to X$ is an isometry. If $\|\cdot\|_X\not\asymp\|\cdot\|_\infty$, then, by \Cref{comparing haar system to ri}, any distributional embedding $J:Z\to L_0$ satisfies $J(Z)\subset X$.
\end{rem}

\begin{lem}\label{lem:cond-exp}
  Let $X$ be a Haar system space with $\|\cdot \|_X \not\asymp \|\cdot \|_{L_{\infty}}$. Let $\mathcal{F}$ be a sub-$\sigma$-algebra of~$\mathcal{L}([0,1))$. Then the conditional expectation $\mathbb{E}^{\mathcal{F}}$ maps $X$ into itself and has norm one on $X$.
\end{lem}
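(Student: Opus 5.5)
The plan is to combine the two endpoint estimates for conditional expectation, on $L_1$ and $L_\infty$, with the fact that $X$ is a separable rearrangement-invariant space. By \Cref{comparing haar system to ri}, the assumption $\|\cdot\|_X\not\asymp\|\cdot\|_\infty$ lets us identify $X$ with a separable rearrangement-invariant function space on $[0,1]$. Such a space is an interpolation space between $L_1$ and $L_\infty$, in fact an exact one. The cleanest route avoiding interpolation machinery is the Calderón--Mityagin / Hardy--Littlewood--Pólya majorization principle.

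\textbf{Majorization.} Conditional expectation is a positive contraction on both $L_1$ and $L_\infty$ (Jensen). Hence for every $f\in L_1$ and $t\in[0,1]$,
\[
\int_0^t (\mathbb{E}^{\mathcal F}f)^*(s)\,ds\le \int_0^t f^*(s)\,ds .
\]
To see this, use the $K$-functional: split $f=g+h$ with $g\in L_1$, $h\in L_\infty$. Then $\mathbb{E}^{\mathcal F}f=\mathbb{E}^{\mathcal F}g+\mathbb{E}^{\mathcal F}h$, and
\[
\|\mathbb{E}^{\mathcal F}g\|_1+t\|\mathbb{E}^{\mathcal F}h\|_\infty\le \|g\|_1+t\|h\|_\infty .
\]
Since $K(t,f;L_1,L_\infty)=\int_0^t f^*$, the inequality follows. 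Alternatively one can argue directly: take $A$ with $|A|=t$, and compute
\[
\int_A|\mathbb{E}^{\mathcal F}f|\le\int \mathbb{E}^{\mathcal F}|f|\,\chi_A=\int|f|\,\mathbb{E}^{\mathcal F}\chi_A .
\]
Here $0\le\mathbb{E}^{\mathcal F}\chi_A\le1$ and $\int\mathbb{E}^{\mathcal F}\chi_A=t$, so the right-hand side is at most $\int_0^t f^*$.

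\textbf{From majorization to a norm bound.} We need $\|g\|_X\le\|f\|_X$ whenever $g\prec f$. For separable rearrangement-invariant spaces this is the standard fact that the unit ball is closed under submajorization. I would prove it on the dense subspace $H$ and extend by continuity.

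Take $f\in H$, which is dyadic step. Then $\mathbb{E}^{\mathcal F}f$ is a bounded function, but generally not a dyadic step function. So first approximate: replace $\mathcal F$ by a finite sub-$\sigma$-algebra $\mathcal F_n$ increasing to $\mathcal F$ (it suffices to handle finite atoms, since $\mathbb{E}^{\mathcal F_n}f\to\mathbb{E}^{\mathcal F}f$ in $L_1$ and boundedly). After a measure-preserving rearrangement, $\mathbb{E}^{\mathcal F_n}f$ has the same distribution as an average of rearrangements of $f$. More concretely, a doubly-stochastic averaging of the finitely many values of $f^*$ can be written as a convex combination of permutations (Birkhoff), when atoms have dyadic rational measures; approximate the general case. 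Rearrangement invariance and the triangle inequality then give $\|\mathbb{E}^{\mathcal F_n}f\|_X\le\|f\|_X$.

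\textbf{Passing to the limit.} Finally pass to the limit. This uses that $\mathbb{E}^{\mathcal F_n}f\to\mathbb{E}^{\mathcal F}f$ in $L_\infty$-bounded, measure-convergent fashion. Separability of $X$ (order continuity) upgrades this to convergence in $X$, since bounded functions converging in measure converge in $X$ when $\|\cdot\|_X$ is absolutely continuous, which follows from $\|\chi_A\|_X\to0$ as $|A|\to0$ because $\|\cdot\|_X\not\asymp\|\cdot\|_\infty$. Density of $H$ in $X$, together with the $L_1$-continuity of $\mathbb{E}^{\mathcal F}$ and $X\hookrightarrow L_1$, then extends the contraction to all of $X$. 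Norm exactly one follows from $\mathbb{E}^{\mathcal F}\chi_{[0,1)}=\chi_{[0,1)}$.

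The main obstacle is the step turning majorization into $\|g\|_X\le\|f\|_X$ without assuming the Fatou property. The finite-atom Birkhoff approximation and the absolute-continuity argument are where the care goes. If this becomes messy, I would instead cite the known result that minimal rearrangement-invariant spaces are exact interpolation spaces for $(L_1,L_\infty)$ (via \Cref{rem:lindenstrauss-tzafriri} and \cite[Theorem~2.a.10]{lindenstrauss:tzafriri:1979:partII}).
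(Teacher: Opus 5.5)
Your proof is correct, but it takes a genuinely different route from the paper. The paper gives no argument of its own. It cites \cite[Theorem~2.a.4]{lindenstrauss:tzafriri:1979:partII} for minimal rearrangement-invariant spaces, and remarks that the argument can be run inside $X$ (rather than in the bidual $X''$) once one shows that simple $\mathcal{F}$-measurable functions are norming for $X\cap L_0(\mathcal{F})$. That is a duality argument: for such a $g$ one has $\langle g,\mathbb{E}^{\mathcal F}f\rangle=\langle g,f\rangle\le\|g\|\,\|f\|_X$. You avoid duality altogether. You use only the submajorization $\int_0^t(\mathbb{E}^{\mathcal F}f)^*\le\int_0^t f^*$, a Birkhoff/Hardy--Littlewood--P\'olya convexity step, and order continuity coming from $\varphi_X(t)\to 0$. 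Your route is self-contained and proves more: the unit ball of $X$ is closed under submajorization, so every operator that is contractive on both $L_1$ and $L_\infty$ is contractive on $X$. The paper's route is a one-line appeal to a standard theorem.

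Some bookkeeping in your version should be tightened.
\begin{enumerate}
\item The reduction to finite $\mathcal F_n\uparrow\mathcal F$ needs $\mathcal F$ to be countably generated modulo null sets. This is true, because the measure algebra of $[0,1)$ is separable, but you should say so.
\item Since $\int|\mathbb{E}^{\mathcal F}f|$ may be strictly smaller than $\int|f|$, you need the doubly substochastic form of Birkhoff together with lattice monotonicity. So $\mathbb{E}^{\mathcal F_n}f$ is \emph{dominated by}, not equidistributed with, an average of rearrangements of $f$.
\end{enumerate}
Both the finite-$\sigma$-algebra reduction and the vague ``approximate the general case'' step can be dropped. Your majorization already holds for arbitrary $\mathcal F$, so put $g=\mathbb{E}^{\mathcal F}f$ with $f\in H$ and replace $g^*$ by $\mathbb{E}^{\sigma(\mathcal{D}^k)}(g^*)$. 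This function has the following properties:
\begin{itemize}
\item it is decreasing and still submajorized by $f^*$;
\item for large $k$ it is constant on the same dyadic partition as $f^*$, so the Birkhoff step uses only condition~(b) of \Cref{haar system space} and \Cref{pro:HS-1d}~\ref{pro:HS-1d:v} on $H$;
\item it converges to $g^*$ boundedly in measure, hence in $X$.
\end{itemize}
The identification of $X$ with a rearrangement-invariant space from \Cref{comparing haar system to ri} is then needed only for the equality $\|g\|_X=\|g^*\|_X$.
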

\begin{proof}
 This follows from \cite[Theorem~2.a.4]{lindenstrauss:tzafriri:1979:partII} (recall that $X$ is a \emph{minimal} rearrangement-invariant space in the terminology used there). In fact, the proof can be adapted to working directly in the Haar system space~$X$ rather than passing to the maximal space~$X''$, after showing that the simple $\mathcal{F}$-measurable functions are norming in $(X\cap L_0(\mathcal{F}))^{*}$ (see \cite[Proposition~1.b.18]{lindenstrauss:tzafriri:1979:partII}).
\end{proof}

\begin{lem}\label{lem:distributional-copy-complemented}
  Let $X$ be a Haar system space with $\|\cdot \|_X \not\asymp \|\cdot \|_{L_{\infty}}$ and let $W$ be a $C$-complemented subspace of $X$ (or $X^0$) for some $C \ge 0$. Let $T\colon W\to L_0$ be a distributional embedding. Then $T[W]$ is also $C$-complemented in~$X$ (respectively, $X^0$).
\end{lem}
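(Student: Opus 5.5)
The plan is to transport the given projection along $T$ using conditional expectations. Set $\mathcal{A} = \sigma(W)$ and $\mathcal{B} = \sigma(T[W])$. By \Cref{distroembedding extension}, $T$ extends uniquely to an onto distributional embedding $\tilde T\colon L_0(\mathcal{A})\to L_0(\mathcal{B})$, and $\tilde T$ is an algebra homomorphism. Since distributional embeddings preserve distributions, and since $X$ is rearrangement-invariant when $\|\cdot\|_X\not\asymp\|\cdot\|_\infty$ (\Cref{comparing haar system to ri}), $\tilde T$ restricts to an isometry from $X\cap L_0(\mathcal{A})$ onto $X\cap L_0(\mathcal{B})$. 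It also preserves integrals, so it maps mean-zero functions to mean-zero functions. Let $P\colon X\to W$ be a projection with $\|P\|\le C$. The candidate projection onto $T[W]$ is
\[
Q = \tilde T\, P\, \tilde T^{-1}\, \mathbb{E}^{\mathcal{B}}.
\]

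The first step is to check that $Q$ is well defined and bounded. By \Cref{lem:cond-exp}, $\mathbb{E}^{\mathcal{B}}$ maps $X$ into $X\cap L_0(\mathcal{B})$ with norm one. Then $\tilde T^{-1}$ maps this space isometrically into $X\cap L_0(\mathcal{A})\subset X$. Applying $P$ lands in $W$, and $\tilde T$ restricted to $W$ equals $T$, which is isometric. Hence $\|Q\|\le C$. For the $X^0$ case, $\mathbb{E}^{\mathcal{B}}$ preserves the mean, so it maps $X^0$ into $X^0$, and $\tilde T^{-1}$ preserves mean zero as well. Thus every factor acts within the mean-zero subspaces, and $P$ need only be defined on $X^0$.

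The second step is to verify that $Q$ is a projection onto $T[W]$. Its range is contained in $T[W]$ by construction. Now take $w\in W$. Then $Tw$ is $\mathcal{B}$-measurable, so $\mathbb{E}^{\mathcal{B}}Tw = Tw$. Next, $\tilde T^{-1}Tw = w$, then $Pw = w$, and finally $\tilde T w = Tw$. Hence $Q$ is the identity on $T[W]$.

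The main obstacle is the first step: one must justify that $\tilde T$ restricted to $X\cap L_0(\mathcal{A})$ is onto $X\cap L_0(\mathcal{B})$ and isometric. This follows because $\tilde T$ is a bijection between $L_0(\mathcal{A})$ and $L_0(\mathcal{B})$ that preserves distributions, combined with rearrangement invariance of membership in $X$ and of the norm (\Cref{comparing haar system to ri}). One should also note that $T[W]$ is closed, since $T$ is an isometry on $W$. If $W$ is only a subspace of $X^0$, the same argument applies verbatim.
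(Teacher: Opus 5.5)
Your proposal is correct and matches the paper's proof: the paper also extends $T$ to $\tilde T\colon L_0(\sigma(W))\to L_0(\sigma(T[W]))$ via \Cref{distroembedding extension}, and it takes the projection $TP\tilde T^{-1}\mathbb{E}^{\sigma(T[W])}|_X$, with \Cref{lem:cond-exp} giving the bound $C$. As in your argument, the $X^0$ case follows because conditional expectations and distributional embeddings preserve mean zero; you have additionally spelled out the rearrangement-invariance facts (e.g.\ that $\tilde T^{-1}$ maps $X\cap L_0(\sigma(T[W]))$ isometrically into $X$) that the paper leaves implicit.
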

\begin{proof}
  Let $P\colon X\to X$ be a bounded linear projection onto $W$. By \Cref{distroembedding extension}, $T$ extends uniquely to an onto distributional embedding $\tilde{T}\colon L_0(\sigma(W))\to L_0(\mathcal{F})$, where $\mathcal{F} = \sigma(T[W])$. By \Cref{lem:cond-exp}, $Q = TP\tilde{T}^{-1}\mathbb{E}^{\mathcal{F}}|_X$ is a bounded linear projection from $X$ onto $T[W]$. The statement for $X^0$ follows from the same argument and the fact that $X^0$ is invariant under conditional expectation operators and distributional embeddings map subspaces of $X^0$ into $X^0$.
\end{proof}

Next, we introduce the (martingale) square function and use it to define the Hardy space over a Haar system space~$X$.
\begin{ntt}
Let $S:L_1^0\to L_0$ denote the square function, i.e., for $f = \sum_{I\in\mathcal{D}}a_Ih_I\in L_1^0$,
\[S(f) = \Big(\sum_{n=0}^\infty \big(\sum_{I\in\mathcal{D}^n}a_Ih_I\big)^2\Big)^{1/2} = \Big(\sum_{I\in\mathcal{D}}a_I^2\chi_I\Big)^{1/2}.\]
\end{ntt}

\begin{rem}
\label{square function continuity}
The square function satisfies the so-called weak type $(1,1)$ estimate
\begin{equation}
\label{weak type (1,1)}
\lambda\big|[S(f)\geq \lambda]\big| \leq \sqrt{e}\|f\|_1,
\end{equation}
where $f\in L_1^0$ and $\lambda >0$ (see, e.g., \cite[Theorem 8.2]{AdamOse:2012}). This suggests that $S:L_1^0\to L_0$ is well-defined and continuous, with the target space equipped with the topology of convergence in measure, because, for $f,g\in L_1^0$, $|S(f) - S(g)|\leq |S(f-g)|$ pointwise.
\end{rem}

\begin{dfn}\label{dfn:hardy-space}
For a Haar system space $X$, the dyadic Hardy space over $X$, denoted $H_X$, is the completion of $\langle\{h_I:I\in\mathcal{D}\}\rangle$ under the norm $\|f\|_{H_X} = \|S(f)\|_X$.
\end{dfn}

\begin{rem}
\label{haar system unconditional in hardy}
Let $X$ be a Haar system space.
\begin{enumerate}[label=(\alph*)]

\item\label{haar system unconditional in hardy a} The Haar system $(h_I)_{I\in\mathcal{D}}$ is, by definition, a 1-unconditional basis for $H_X$.

\item\label{haar system unconditional in hardy b} By \cite[Theorem 1.d.6 (ii)]{lindenstrauss:tzafriri:1979:partII}, $(h_I)_{I\in\mathcal{D}}$ is an unconditional basis for $X$ if and only the norms $\|\cdot\|_X$ and $\|\cdot\|_{H_X}$ are equivalent.

\end{enumerate}
\end{rem}

Next, we collect some fundamental properties of Haar system Hardy spaces. For proofs, we refer to~\cite{lechner:speckhofer:2025} and~\cite[Chapter~2]{speckhofer:2025}.

\begin{prop}\label{pro:HSHS-1d}
  Let $X$ be a Haar system space and $Y = X^0$ or $H_X$.
  Then the following hold:
  \begin{enumerate}[label=(\alph*)]
    \item $\langle \{ h_I : I\in \mathcal{D} \} \rangle$ naturally coincides with a subspace of $Y^{*}$, and $(h_I)_{I\in \mathcal{D}}$ is $1$-unconditional in $H_X^{*}$.
    \item\label{pro:HSHS-1d:iii} For every $I\in \mathcal{D}$, we have $\|h_I\|_Y\|h_I\|_{Y^{*}} = |I|$. In particular, $\|h_I\|_Y = \|h_I\|_X$ and $\|h_I\|_{Y^{*}} = \|h_I\|_{X^{*}}$.
    \item\label{pro:HSHS-1d:disjointly-supp} If $f$ is a finite linear combination of disjointly supported Haar functions, then $\|f\|_{H_X} = \|f\|_X$.
    \item\label{pro:HSHS-1d:v} If $\mathcal{B}$ is a finite collection of pairwise disjoint dyadic intervals and $(\theta_K)_{K\in \mathcal{B}}\in \{ \pm 1 \}^{\mathcal{B}}$, then $\|\sum_{K\in \mathcal{B}}\theta_Kh_K\|_{Y^{*}}\le 1$.
  \end{enumerate}
\end{prop}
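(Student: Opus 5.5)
The statement collects four separate claims, and I would prove each directly from the definitions, using \Cref{pro:HS-1d}, \Cref{lem:cond-exp} and the identity $S(f)=(\sum_I a_I^2\chi_I)^{1/2}$. Throughout, I write $f=\sum_I a_Ih_I$ for a finitely supported element of $\langle\{h_I\}\rangle$ and extend to $Y$ by density.

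First, two pointwise facts. The case $f=h_I$ of the square-function formula gives $S(h_I)=\chi_I=|h_I|$. For (c), if $f$ is a finite combination of Haar functions $h_K$, $K\in\mathcal{B}$, with $\mathcal{B}$ pairwise disjoint, then
\[S(f)=\Big(\sum_{K\in\mathcal{B}}a_K^2\chi_K\Big)^{1/2}=\sum_{K\in\mathcal{B}}|a_K|\chi_K=|f|.\]
Hence $\|f\|_{H_X}=\||f|\|_X=\|f\|_X$ by \Cref{haar system space}~(b), which proves (c). In particular $\|h_I\|_Y=\|\chi_I\|_X=\|h_I\|_X$ for both choices of $Y$.

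Next, the dual estimates giving (a) and (b). For $Y=H_X$, the key observation is $|a_I|\chi_I\le S(f)$. By \Cref{pro:HS-1d}~\ref{pro:HS-1d:v} (extended to $X$ via density and the fact that $S(f)$ can be approximated from below), this gives $|a_I|\,\|\chi_I\|_X\le\|f\|_{H_X}$, and therefore
\[|\langle h_I,f\rangle|=|a_I||I|\le \frac{|I|}{\|\chi_I\|_X}\,\|f\|_{H_X}.\]
So each $h_I$ is a bounded functional on $H_X$, and $\langle\{h_I\}\rangle$ embeds in $H_X^{*}$. The pairing is injective because $\langle h_J,h_I\rangle=|I|\delta_{IJ}$.

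For $Y=X^0$, I would write $|\langle h_I,f\rangle|\le\int_I|f|$. I would then apply \Cref{lem:cond-exp} to $|f|$ with the finite $\sigma$-algebra generated by $I$: the conditional expectation $\mathbb{E}(|f|)$ dominates $\big(|I|^{-1}\int_I|f|\big)\chi_I$, so
\[\Big(|I|^{-1}\int_I|f|\Big)\|\chi_I\|_X\le\|f\|_X.\]
If instead $\|\cdot\|_X\asymp\|\cdot\|_\infty$, then $\|\cdot\|_X=\|\cdot\|_\infty$ by \Cref{pro:HS-1d}~\ref{pro:HS-1d:i} together with rearrangement invariance, and the bound is trivial. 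Either way, $\|h_I\|_{Y^{*}}\le|I|/\|\chi_I\|_X$. The reverse inequality comes from testing against $h_I$ itself: $\langle h_I,h_I\rangle=|I|$ and $\|h_I\|_Y=\|\chi_I\|_X$. This gives $\|h_I\|_Y\|h_I\|_{Y^*}=|I|$, and the same computation in $X$ gives $\|h_I\|_{X^*}=|I|/\|\chi_I\|_X$, proving (b).

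The $1$-unconditionality in $H_X^{*}$ is by duality. Each sign change $\sum a_Ih_I\mapsto\sum\theta_Ia_Ih_I$ leaves $S(f)$ unchanged, so it is an isometry of $H_X$. Its adjoint is again an isometry and acts on $\langle\{h_I\}\rangle$ as the same sign change.

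For (d), let $g=\sum_{K\in\mathcal{B}}\theta_Kh_K$; then $|g|\le 1$ pointwise. In $X^0$ this gives $|\langle g,f\rangle|\le\|f\|_{L_1}\le\|f\|_X$ by \Cref{pro:HS-1d}~\ref{pro:HS-1d:i}. In $H_X$, using $|a_K|\chi_K\le S(f)$ and the disjointness of $\mathcal{B}$,
\[|\langle g,f\rangle|\le\sum_{K\in\mathcal{B}}|a_K||K|=\int\sum_{K\in\mathcal{B}}|a_K|\chi_K\le\int S(f)\le\|S(f)\|_X .\]

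The main technical obstacle I expect is that \Cref{pro:HS-1d}~\ref{pro:HS-1d:i},\ref{pro:HS-1d:v} are stated only on the dyadic step functions $H$, whereas $S(f)$ is generally not a dyadic step function. I would handle this by noting that for finitely supported $f$, $S(f)$ is a finite sum over finitely many dyadic intervals. Its values can be rounded down to rationals to get step functions in $H$ approximating it from below, uniformly and hence in $\|\cdot\|_X$. Alternatively, one can invoke the identification of $X$ with a rearrangement-invariant Köthe space from \Cref{comparing haar system to ri}. Once these inequalities hold on the relevant functions, all four parts pass from the dense span $\langle\{h_I\}\rangle$ to $Y$ by continuity.
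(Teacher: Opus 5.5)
\textbf{The gap: the case $\|\cdot\|_X\asymp\|\cdot\|_\infty$.} Here you claim that the norm must then equal $\|\cdot\|_\infty$. This is false. For example, $\|f\|_X=\max\{\tfrac12\|f\|_\infty,\|f\|_{L_1}\}$ satisfies both conditions of \Cref{haar system space} and is equivalent to $\|\cdot\|_\infty$, yet $\|\chi_{[0,1/4)}\|_X=\tfrac12$.

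Equivalence alone gives only a bound $\|h_I\|_{Y^{*}}\le C|I|$, with $C$ the equivalence constant, and not the exact value $|I|/\|\chi_I\|_X$. Moreover, \Cref{lem:cond-exp} is unavailable in this case. So for such $X$ you have not established two things:
\begin{itemize}
\item the identity in (b) for $Y=X^0$;
\item the formula $\|h_I\|_{X^{*}}=|I|/\|\chi_I\|_X$, which you need for the ``in particular'' clause for both choices of $Y$.
\end{itemize}
The proposition is stated for arbitrary Haar system spaces, so this case is covered.

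\textbf{The repair: symmetrize inside $H$ instead of using \Cref{lem:cond-exp}.} Let $g\in H$ be constant on the intervals of $\mathcal{D}^M$ and $I\in\mathcal{D}_M$. Average $g\circ\pi$ over all maps $\pi$ that permute the $\mathcal{D}^M$-subintervals of $I$ by translation and fix $[0,1)\setminus I$.
\begin{itemize}
\item Each $g\circ\pi$ lies in $H$ and has the same distribution as $g$, so it has the same norm.
\item The average equals $\bigl(|I|^{-1}\int_I g\bigr)\chi_I+g\chi_{[0,1)\setminus I}$.
\end{itemize}
Apply this to $g=|f|$ and use the triangle inequality together with \Cref{pro:HS-1d}~\ref{pro:HS-1d:v}. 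This gives $\bigl(|I|^{-1}\int_I|f|\bigr)\|\chi_I\|_X\le\|f\|_X$ for every Haar system space, with no case distinction.

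\textbf{The rest of your argument is correct.} This covers (c), (d), the $H_X$ estimate via $|a_I|\chi_I\le S(f)$, and the duality argument for $1$-unconditionality in $H_X^{*}$. With the repair, you have a self-contained proof; the paper does not prove this proposition and only refers to the literature.

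\textbf{The obstacle you anticipate does not exist.} If $f\in\langle h_J:J\in\mathcal{D}_N\rangle$, then $S(f)^2=\sum_J a_J^2\chi_J$ is constant on every interval of $\mathcal{D}^N$. Hence $S(f)$ itself belongs to $H$, and \Cref{pro:HS-1d} applies to it directly, with no approximation from below.
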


\begin{prop}
\label{hardy spaces without c_0}
For a Haar system space $X$, the identity map
\[id:\big(\langle\{h_I:I\in\mathcal{D}\}\rangle,\|\cdot\|_{H_X}\big)\to L_1^0\]
extends to a bounded linear injection on $H_X$. We may thus naturally identify $H_X$ with a subspace of $L_1^0$, and more precisely, we have
\begin{equation}
\label{Hardy space included}
H_X \subset \{f\in L_1^0: S(f) \in X\}.
\end{equation}
In particular, the defining formula for $\|\cdot\|_{H_X}$ extends to the completion of $\langle\{h_I:I\in\mathcal{D}\}\rangle$. That is for every $f\in H_X$,
\begin{equation}
\label{square function well-defined in completion}
\|f\|_{H_X} = \|S(f)\|_X.
\end{equation}
If, furthermore, $\|\cdot\|_X\not\asymp\|\cdot\|_\infty$, then
\[H_X = \{f\in L_1^0: S(f) \in X\}.\]
\end{prop}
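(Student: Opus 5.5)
The plan is to first extend the identity from the dense subspace $V=\langle\{h_I:I\in\mathcal{D}\}\rangle$ to a bounded map $H_X\to L_1^0$, then prove injectivity and the norm formula \eqref{square function well-defined in completion} together by passing to a.e.\ convergent subsequences, and finally handle the reverse inclusion when $\|\cdot\|_X\not\asymp\|\cdot\|_\infty$.

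\textbf{Boundedness.} For $f\in V$ we have $\|f\|_1\le \|S(f)\|_1\le\|S(f)\|_X=\|f\|_{H_X}$. The first inequality is the Davis / Burkholder--Gundy inequality $\|f\|_1\lesssim\|S(f)\|_1$ for dyadic martingales. The second is \Cref{pro:HS-1d}~\ref{pro:HS-1d:i}; it applies even though $S(f)$ need not lie in $H$, since for $f\in V$ the function $S(f)$ is a finite-valued dyadic step function and can be approximated from below. So the identity extends to a bounded operator $\iota\colon H_X\to L_1^0$.

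\textbf{Norm formula and injectivity.} Let $x\in H_X$ and pick $f_k\in V$ with $f_k\to x$ in $H_X$. Then $(S(f_k))$ is Cauchy in $X$, because $|S(f_k)-S(f_l)|\le S(f_k-f_l)$ pointwise and by monotonicity (\Cref{pro:HS-1d}~\ref{pro:HS-1d:v}, extended to $X$ by density). Let $g$ be its limit in $X$. Since $\iota(f_k)\to\iota(x)$ in $L_1^0$, \Cref{square function continuity} gives $S(f_k)\to S(\iota x)$ in measure. Convergence in $X$ implies $L_1$-convergence, so $g=S(\iota x)$ a.e. Hence $S(\iota x)\in X$ and $\|x\|_{H_X}=\lim\|S(f_k)\|_X=\|S(\iota x)\|_X$. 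Injectivity follows directly: if $\iota x=0$, then $S(\iota x)=0$, so $\|x\|_{H_X}=0$. This gives \eqref{Hardy space included} and \eqref{square function well-defined in completion}.

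\textbf{Reverse inclusion, the main obstacle.} Assume $\|\cdot\|_X\not\asymp\|\cdot\|_\infty$, so $X$ is a separable rearrangement-invariant space with order continuous norm (\Cref{comparing haar system to ri}). Take $f\in L_1^0$ with $S(f)\in X$, and let $f_n=\mathbb{E}_{\sigma(\mathcal{D}_n)}f\in V$. Then $S(f-f_n)\le S(f)$ pointwise and $S(f-f_n)\to0$ a.e., since the tail of $\sum a_I^2\chi_I$ tends to zero wherever $S(f)<\infty$. By order continuity (dominated convergence in $X$), $\|S(f-f_n)\|_X\to0$. Consequently $(f_n)$ is Cauchy in $H_X$; its limit $x$ satisfies $\iota x=\lim f_n=f$ in $L_1$ by martingale convergence, so $f\in H_X$. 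The delicate point is justifying order continuity and the lattice ideal property of $X$ inside $L_0$. For this I would rely on the identification of $X$ with a minimal rearrangement-invariant space in \Cref{rem:lindenstrauss-tzafriri}.
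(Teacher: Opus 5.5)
Your proposal is correct and follows essentially the same route as the paper: Davis's inequality combined with $\|\cdot\|_1\le\|\cdot\|_X$ for boundedness, the Cauchy property in $X$ of the square functions (from $|S(f)-S(g)|\le S(f-g)$) for \eqref{Hardy space included} and \eqref{square function well-defined in completion}, and order continuity of $X$ applied to the tail square functions $S(f-\mathbb{E}_n f)\downarrow 0$ for the reverse inclusion. The only differences are cosmetic---you obtain injectivity from the norm formula rather than from the Haar system being a basis of both $H_X$ and $L_1^0$, and you identify the limit of $(S(f_k))$ via continuity of $S$ in measure rather than via $S(\mathbb{E}_n f)\uparrow S(f)$ (note also that for $f$ in the span of the Haar system, $S(f)$ is a dyadic step function and so already lies in $H$, and your first boundedness inequality should read $\lesssim$ rather than $\le$).
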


\begin{proof}
For every $f\in L_1^0$, $\|f\|_1\lesssim \|S(f)\|_1$ (see \cite{davis:1970} or \cite[Theorem 8.7]{AdamOse:2012}) and, for every $g\in X$, $\|g\|_1\le \|g\|_X$ (see, e.g., \cite[Proposition 2.3.1]{speckhofer:2025:thesis}). Consequently, for every $f\in \langle\{h_I:I\in\mathcal{D}\}\rangle$, $\|f\|_1\lesssim \|S(f)\|_X$, which implies boundedness of $id$. Since $id$ maps the Haar system onto itself, and this is a Schauder basis for both $L_1^0$ and $H_X$, the extension of $id$ is injective. To obtain \eqref{Hardy space included}, observe that for every $f\in H_X$ and $m,n\in\mathbb{N}_0$,
$\|S(\mathbb{E}_m f)-S(\mathbb{E}_n f)\|_X \le \|S(\mathbb{E}_n f-\mathbb{E}_m f)\|_X$.
Thus $(S(\mathbb{E}_n f))$ is Cauchy in $\|\cdot\|_X$, and therefore its pointwise limit $S(f)$ belongs to $X$. The same argument yields $\|f\|_{H_X}=\|S(f)\|_X$. If, in addition, $\|\cdot\|_X\not\asymp\|\cdot\|_\infty$ and $f\in L_1^0$ satisfies $S(f)\in X$, then for every $n\in\mathbb{N}$ we have $S(\mathbb{E}_{(n,\infty)}f)\in X$, and this sequence decreases pointwise to zero. By \cite[Proposition 2.4.7]{speckhofer:2025:thesis}, it follows that $\|S(\mathbb{E}_{(n,\infty)}f)\|_X\to 0$. Hence $(\mathbb{E}_{n}f)_{n=0}^\infty$ is Cauchy in $H_X$, and therefore $f\in H_X$.
\end{proof}

Distributional embeddings do not respect the Hardy norm; the appropriate analogue is given by square function-distributional embeddings.
\begin{dfn}
Let $Z$ be a subspace of $L_1^0$. A linear operator $J:Z\to L_1^0$ is called a square function-distributional embedding if, for every $f\in Z$,
\[\mathrm{dist}\big(S(Jf)\big) = \mathrm{dist}(Sf).\]
Denoting $W = J(Z)$, we then write $Z\equiv^{S\text{-dist}}W$ and say that $J$ is a square function-distributional isomorphism onto $Z$.
\end{dfn}

\begin{rem}
Compositions and inverses of square function-distributional embeddings are also of this type. Furthermore, if $X$ is a Haar system space and $Z$ is a subspace of $H_X$, by \cref{square function well-defined in completion}, any square function-distributional embedding $J:Z\to H_X$ is an isometry. We note purely for context that, by \Cref{hardy spaces without c_0}, if $\|\cdot\|_X\not\asymp\|\cdot\|_\infty$, then any square function-distributional embedding $J:Z\to L_1^0$ automatically has image contained in $H_X$.
\end{rem}

\begin{rem}\phantom{a}\label{rem:hardy}
\begin{enumerate}[label=(\alph*)]

\item\label{rem:hardy:a} Square function-distributional embeddings differ from distributional embeddings. Two simple examples illustrate the gap. Let $(\mathcal{B}_I)_{I\in\mathcal{D}}$ be pairwise disjoint subsets of $\mathcal{D}$ such that, for each $I\in\mathcal{D}$, $\bigcup \mathcal{B}_I = I$. Then the map $h_I \mapsto \sum_{J\in \mathcal{B}_I} h_J$ defines a square function distributional embedding, but, unless $\mathcal{B}_I = \{I\}$ for all $I\in\mathcal{D}$, it is not a distributional embedding. Conversely, if $i\mapsto k_i$ is a not increasing injection on $\mathbb{N}_0$, then the extension $\tilde J$, in the sense of \Cref{distroembedding extension}, of the distributional embedding $r_i\stackrel{J}{\mapsto} r_{k_i}$ is not a square function-distributional embedding.

\item\label{rem:hardy:b} To our knowledge, there is no meaningful analogue of \Cref{distroembedding extension} for square function-distributional embeddings. Specifically, if $J:Z\to L_1^0$ is a square function-distributional embedding, one cannot in general expect the existence of a square function-distributional embedding $\tilde J:L_1^0(\sigma(Z))\to L_1^0$ extending $J$. This leads to the following phenomenon in a Haar system space $X$. If $Z$ is a complemented subspace of $X^0$, then any distributional copy of $Z$ in $X^0$ is also complemented (see \Cref{lem:distributional-copy-complemented}). However, there are situations, for instance when $X=L_1$, in which a subspace $Z\subset H_X$ is complemented, yet admits square function-distributional copies that are not complemented (see \Cref{thm:tree-spaces-uncomplemented}).

\end{enumerate}
\end{rem}

\begin{prop}
\label{standard square function distributional isomorphism}
Let $F = \{k_0,\ldots, k_n\}$ be a finite subset of $\mathbb{N}_0$. Then $J_F:Y_n\to Y_F$ is a square function-distributional embedding. If $F$ is infinite, the same is true for $J_F:L_1^0\to Y_F$.
\end{prop}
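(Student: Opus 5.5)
Fix $g\in Y_n$ and expand it in the Haar basis, $g=\sum_{I\in\mathcal{D}_n}a_Ih_I$ with $a_I=|I|^{-1}\langle h_I,g\rangle$. By \Cref{haar basis formulas}~\ref{haar basis formulas b}, $J_Fg=\sum_{I\in\mathcal{D}_n}a_Ib_I^F$. I plan to compute $S(J_Fg)$ level by level and show that
\[
S(J_Fg)=J_F\big(S(g)\big),
\]
where on the right $J_F$ denotes the algebra homomorphism and distributional embedding $\tilde J_F\colon L_0(\sigma_n)\to L_0(\sigma_F)$ supplied by \Cref{distroembedding extension}. Since $\tilde J_F$ preserves distributions, this identity gives $\mathrm{dist}(S(J_Fg))=\mathrm{dist}(Sg)$, which is the claim.

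\textbf{Step 1: the martingale differences of $J_Fg$.} By \Cref{constructing associated haar system in the more familiar way}, each $b_I^F$ with $I\in\mathcal{D}^i$ is a linear combination of the Haar functions $(h_L)_{L\in\mathcal{D}^{k_i}}$. Because $k_0<\cdots<k_n$ are distinct, the level-$m$ martingale difference of $J_Fg$ is
\[
d_{k_i}=J_F(\Delta_i g)\quad\text{where } \Delta_i g=\sum_{I\in\mathcal{D}^i}a_Ih_I,
\]
and $d_m=0$ for every $m\notin F$. Hence
\[
S(J_Fg)^2=\sum_{i=0}^n\big(J_F\Delta_ig\big)^2 .
\]

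\textbf{Step 2: pull $J_F$ outside.} Since $\tilde J_F$ is an algebra homomorphism, $(J_F\Delta_ig)^2=\tilde J_F\big((\Delta_ig)^2\big)$, and by linearity
\[
S(J_Fg)^2=\tilde J_F\Big(\sum_i(\Delta_ig)^2\Big)=\tilde J_F\big(S(g)^2\big).
\]
A homomorphism that preserves distributions commutes with the Borel function $t\mapsto\sqrt{t}$ on nonnegative functions. One can see this either by approximating $\sqrt{\cdot}$ by polynomials uniformly on the bounded range, since everything is finite-dimensional and hence bounded, or directly from the construction $\tilde J_F\chi_C=\chi_{\varphi(C)}$ on simple functions. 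This yields $S(J_Fg)=\tilde J_F(Sg)$.

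\textbf{Infinite $F$.} First apply the finite case to $F_n=\{k_0,\ldots,k_n\}$ and $g\in Y_n$. The embeddings $J_{F_n}$ are restrictions of $J_F$, so for Haar polynomials $g$ the identity $\mathrm{dist}(S(J_Fg))=\mathrm{dist}(Sg)$ holds. For general $g\in L_1^0$, take $g_n=\mathbb{E}_ng\to g$ in $L_1$. Then $J_Fg_n=\mathbb{E}_{F_n}J_Fg\to J_Fg$ in $L_1$, because $J_F$ is a distributional embedding and hence an $L_1$-isometry. By the continuity of $S\colon L_1^0\to L_0$ in \Cref{square function continuity}, $S(g_n)\to S(g)$ and $S(J_Fg_n)\to S(J_Fg)$ in measure. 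Convergence in measure implies convergence in distribution, so the equality of distributions passes to the limit.

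\textbf{Main obstacle.} The only delicate point is Step 1: identifying $J_F\Delta_ig$ as exactly the level-$k_i$ martingale difference. This requires that $b_I^F$ for $I\in\mathcal{D}^i$ lie in the span of level $k_i$ alone, which is precisely the content of \Cref{constructing associated haar system in the more familiar way}, combined with the fact that $i\mapsto k_i$ is increasing. Without monotonicity the identification fails, as noted in \Cref{rem:hardy}~\ref{rem:hardy:a}.
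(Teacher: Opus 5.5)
Your proof is correct and follows essentially the same route as the paper. The paper also proves the identity $S(J_Ff)=\tilde J_F(Sf)$ for $f\in Y_n$ using the single-level form of the $b_I^F$ and the homomorphism property of $\tilde J_F$. It works Haar function by Haar function via $\chi_{\mathrm{supp}(b_I^F)}=\tilde J_F\chi_{\mathrm{supp}(h_I)}$, where you work level by level with $(J_F\Delta_ig)^2=\tilde J_F((\Delta_ig)^2)$, and it handles infinite $F$ by the same continuity argument for $S\colon L_1^0\to L_0$ applied to $\mathbb{E}_nf\to f$. Your explicit justification for moving $\tilde J_F$ through the square root, and for $J_F\mathbb{E}_ng\to J_Fg$ in $L_1$, fills in steps the paper leaves implicit.
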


\begin{proof}
Let $F = \{k_0,\ldots, k_n\}$ and $\tilde J_F:L_0(\sigma(Y_n)) = \langle \chi_{[0,1)}\rangle+Y_n\to L_0$ denote the extension of $J_F$ defined in \Cref{distroembedding extension}. Then, for $f = \sum_{I\in\mathcal{D}_n}a_Ih_I\in Y_n$,
\begin{align*}
S(J_Ff) &= S\Big(\sum_{I\in\mathcal{D}_n}a_Ib^F_I\Big) = \Big(\sum_{I\in\mathcal{D}_n}a_I^2\chi_{\mathrm{supp}(b_I^F)}\Big)^{1/2} = \Big(\sum_{I\in\mathcal{D}_n}a_I^2\tilde J_F\chi_{\mathrm{supp}(h_I)}\Big)^{1/2}\\
        &=\tilde J_F\Big(\sum_{I\in\mathcal{D}_n}a_I^2\chi_{\mathrm{supp}(h_I)}\Big)^{1/2}\quad \text{(because $\tilde J_F$ is a homomorphism)}\\
  &= \tilde J_F\big(Sf\big).
\end{align*}
This yields $\mathrm{dist}\big(S(J_Ff)\big) = \mathrm{dist}(S(f))$.

The statement about infinite $F$ follows from the continuity of $S:L_1^0\to L_0$ and $f = \lim_n\mathbb{E}_nf$ in $L_1^0$.
\end{proof}

\begin{rem}
\label{norm-one projection and isometry}
Let $X$ be a Haar system space and $Y = X^0$ or $H_X$. Let $F = \{k_0<\cdots<k_n\}$ be a finite subset of $\mathbb{N}_0$. By \Cref{standard square function distributional isomorphism}, $J_F:Y_n\to Y_F$ is an isometry, and by \cite[Proposition 7.1]{lechner:speckhofer:2025}, $\mathbb{E}_F:Y\to Y$ is a norm-one projection onto $Y_F$. These also hold for infinite $F$.
\end{rem}

\section{Disjoint and independent sums in Haar system spaces and dyadic Hardy spaces}
\label{types of sums}

In this section, we introduce disjoint sums and independent sums of function spaces, which are used in the iterative construction of Bourgain-Rosenthal-Schechtman spaces. Because we work inside the centered space $L_1^0$, we use an augmented version of disjoint sums, as explained in \Cref{why augment}, and we define the corresponding analogue of independent sums in dyadic Hardy spaces.

\begin{dfn}
\label{def  compression}
For an interval $I = [a,b)\subset[0,1)$ we define $T_I:L_0\to L_0$ given by
\[\big(T_If\big)(t) = \left\{\begin{array}{ll}0&\text{ if }t\notin I\\
f\big(\frac{t-a}{b-a}\big)&\text{ if }t\in I.\end{array}\right.\]
\end{dfn}

\begin{rem}
\label{standard half compressions remark}
Let $f\in L_0$. Then, $T_{[0,1/2)}f$ and $T_{[1/2,1)}f$ are disjointly supported and
\[\mathrm{dist}(T_{[0,1/2)}f+T_{[1/2,1)}f) = \mathrm{dist}(f).\]
If $f\in L_1^0$, then, for $T = T_{[0,1/2)}$ or $T_{[1/2,1)}$, letting $S$ denote the square function operator,
\begin{equation}
\label{standard half compressions play nice with square function}
TS(f) = S(Tf).
\end{equation}
In particular, if $X$ is a Haar system space and $Y = X^0$ or $H_X$, then, for all $f\in Y$, $Tf\in Y$ and
\begin{equation}
\label{standard half compressions are 2-embeddings}
(1/2)\|f\|_Y\leq \|Tf\|_Y \leq \|f\|_Y.
\end{equation}
\end{rem}

We will later rely on the following observation, which holds both for Haar system spaces and for the associated Hardy spaces.
\begin{prop}\label{prop:isomorphic-to-square}
  Let $X$ be a Haar system space and $Y = X^0$ or $H_X$. Then $Y$ is isomorphic to its square and to its hyperplanes.
\end{prop}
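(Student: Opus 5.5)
The plan is to realize $Y\oplus Y$ and the hyperplanes of $Y$ concretely inside $Y$ using the two half-compressions $T_{[0,1/2)}$ and $T_{[1/2,1)}$ from \Cref{def  compression}.

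\textbf{Square.} Define $\Phi\colon Y\oplus Y\to Y$ by $\Phi(f,g)=T_{[0,1/2)}f+T_{[1/2,1)}g$. By \Cref{standard half compressions remark}, each compression maps $Y$ into $Y$ with $\|Tf\|_Y\le\|f\|_Y$, so $\|\Phi(f,g)\|_Y\le \|f\|_Y+\|g\|_Y$.

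\emph{Image of $\Phi$.} $T_{[0,1/2)}h_I=h_{I'}$, where $I'$ is the image of $I$ under $t\mapsto t/2$; similarly on the right half. Hence the image of $\Phi$ is the closed span $Z$ of all Haar functions $h_I$ with $I\ne[0,1)$. This is a codimension-one subspace of $Y$: the Haar system is a basis of $Y$ (\Cref{pro:HS-1d} for $X^0$, \Cref{haar system unconditional in hardy} for $H_X$).

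\emph{Lower bound.} For $u=\Phi(f,g)\in Z$, let $P_0$ and $P_1$ be multiplication by $\chi_{[0,1/2)}$ and $\chi_{[1/2,1)}$. These are the basis projections onto the Haar functions supported in the left and right halves.
\begin{itemize}
\item For $Y=H_X$ they have norm at most $1$, since $S(P_0u)=\chi_{[0,1/2)}S(u)\le S(u)$ pointwise (using $h_{[0,1)}$-coefficient zero) and \Cref{pro:HS-1d}\ref{pro:HS-1d:v}.
\item For $Y=X^0$, $|P_0u|\le|u|$ pointwise gives the same bound.
\end{itemize}
Then \eqref{standard half compressions are 2-embeddings} gives $\|f\|_Y\le 2\|T_{[0,1/2)}f\|_Y=2\|P_0u\|_Y\le 2\|u\|_Y$, and likewise $\|g\|_Y\le 2\|u\|_Y$. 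So $\Phi$ is an isomorphism of $Y\oplus Y$ onto the hyperplane $Z$.

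\textbf{Hyperplanes.} All hyperplanes of a Banach space are mutually isomorphic, so it suffices to show $Y\simeq Z$, i.e., $Y\simeq Y\oplus\mathbb{R}$. We already have $Z\simeq Y\oplus Y$ and $Y\simeq Z\oplus\mathbb{R}$, since $Z$ is complemented by the basis projection. The point needing care is to avoid circularity: the two facts $Y\oplus Y\simeq Z$ and $Y\simeq Z\oplus\mathbb{R}$ alone do not formally yield $Y\simeq Y\oplus\mathbb{R}$.

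I would instead show directly that $Z\simeq Z\oplus\mathbb{R}$ by iterating. Let $Z_1=T_{[0,1/2)}Z$, the span of Haar functions supported in $[0,1/2)$ other than $h_{[0,1/2)}$. Then $Z=Z_1\oplus\langle h_{[0,1/2)}\rangle\oplus T_{[1/2,1)}Y$, with the summands complemented as above. Since $T_{[0,1/2)}$ is an isomorphism from $Z$ onto $Z_1$ and from $Y$ onto its image, this gives
\[
Z\simeq Z\oplus\mathbb{R}\oplus Y\simeq Z\oplus\mathbb{R}\oplus Z\oplus\mathbb{R}.
\]
Combined with $Y\simeq Z\oplus\mathbb{R}$, this yields $Z\simeq Y\oplus Y$, which we know, and also $Z\simeq Z\oplus Y$.

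\textbf{Main obstacle.} The step I expect to be the real difficulty is closing the argument $Y\simeq Z$ cleanly. The plan is to invoke Pełczyński's decomposition method in the form for spaces isomorphic to their squares:
\begin{itemize}
\item $Z\simeq Z\oplus Z$, since $Z\simeq Y\oplus Y$ and hence $Z\oplus Z\simeq Y^4\simeq Z\oplus Z$ after substitutions.
\item Each of $Y$ and $Z$ is complemented in the other: $Z$ is a hyperplane of $Y$, and $Y\simeq T_{[0,1/2)}Y$ is complemented in $Z$.
\end{itemize}
I would verify the first bullet carefully. If $Z\simeq Z\oplus Z$ and $Y\oplus Y\simeq Z$, then $Y\oplus Y\simeq (Y\oplus Y)\oplus(Y\oplus Y)$, so the square $Y^2$ is isomorphic to its own square. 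Pełczyński's method then gives $Y\simeq Z$, hence $Y\simeq Y\oplus Y$ and $Y\simeq Y\oplus\mathbb{R}$.

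If this becomes circular, the fallback is a direct argument. Use the decomposition $Y=\langle h_{[0,1)}\rangle\oplus T_{[0,1/2)}Y\oplus T_{[1/2,1)}Y$ and iterate along the left-most dyadic intervals. This writes $Y$ as an unconditional-free infinite decomposition, and the constant can then be absorbed by shifting.
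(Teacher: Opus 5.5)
\textbf{There is a gap: the hyperplane step is not proved, and that step carries all the content.}

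Your first part is correct and uses the same decomposition as the paper. The map $\Phi$ identifies $Y\oplus Y$ with the hyperplane $Z=[h_I : I\neq[0,1)]$, so $Y\simeq\mathbb{R}\oplus Y\oplus Y$.

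Everything you derive afterwards is a formal consequence of $Z\simeq Y\oplus Y$ and $Y\simeq Z\oplus\mathbb{R}$, that is, of $Y\simeq Y\oplus Y\oplus\mathbb{R}$. As you note yourself, this relation gives neither $Y\simeq Y\oplus\mathbb{R}$ nor $Y\simeq Y\oplus Y$. Your way out is circular:
\begin{enumerate}
\item ``$Z\oplus Z\simeq Y^4\simeq Z\oplus Z$'' is a tautology. It does not show $Z\simeq Z\oplus Z$, which would mean $Y^2\simeq Y^4$, and you do not have that.
\item The claim $Z\simeq Z\oplus Y$ is also unsupported. Given what you have, it is equivalent to $Y^3\oplus\mathbb{R}\simeq Y^3$.
\item Even granting $Z\simeq Z\oplus Z$, Pe{\l}czy\'nski's decomposition method would still need $Y\simeq Y\oplus Y$, which is what you are trying to prove.
\end{enumerate}
The paper does not prove this step from scratch either. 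It imports $Y\simeq Y\oplus\mathbb{R}$ from \cite[Proposition~4.1]{lechner:speckhofer:2025}. After that, $Y\simeq\mathbb{R}\oplus Y\oplus Y\simeq Y\oplus Y$ follows exactly as in your first part.

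\textbf{How to close the gap.} Your fallback of shifting along the left-most dyadic intervals can be made to work, but as written it names no operator and gives no lower estimate. Here is a concrete version.
\begin{itemize}
\item Let $E$ be the projection onto $[h_{[0,2^{-n})} : n\ge 0]$. For $Y=H_X$ this is a coordinate projection of norm one.
\item For $Y=X^0$, $E$ is the conditional expectation onto the $\sigma$-algebra generated by the intervals $[2^{-n-1},2^{-n})$, $n\ge 0$. This is because every other $h_I$ is supported in one of these intervals and has mean zero there. So $E$ is bounded by \Cref{lem:cond-exp}, and trivially so if $\|\cdot\|_X\asymp\|\cdot\|_\infty$.
\item Put $\Sigma=T_{[0,1/2)}E+(I-E)$. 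Then $\Sigma h_{[0,2^{-n})}=h_{[0,2^{-n-1})}$, and $\Sigma h_I=h_I$ for all other $I$.
\end{itemize}
Since $E\Sigma=T_{[0,1/2)}E$ and $(I-E)\Sigma=I-E$, \eqref{standard half compressions are 2-embeddings} gives
\[
\|f\|\le 2\|T_{[0,1/2)}Ef\|+\|(I-E)f\|=2\|E\Sigma f\|+\|(I-E)\Sigma f\|\lesssim\|\Sigma f\|.
\]
Hence $\Sigma$ is bounded below, so its range is closed. The range contains every $h_I$ with $I\neq[0,1)$ and has zero $h_{[0,1)}$-coefficient, so $\Sigma$ is an isomorphism of $Y$ onto $Z$. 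This yields $Y\simeq Y\oplus\mathbb{R}$, and combined with your first part, $Y\simeq Y\oplus Y$.
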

\begin{proof}

  By~\cite[Proposition~4.1]{lechner:speckhofer:2025}, the spaces $X$, $X^0$, and $H_X$ are isomorphic to their hyperplanes. Now observe that $Y \simeq \mathbb{R} \oplus [h_I : I \subset [0,\frac{1}{2})] \oplus [h_I : I \subset [\frac{1}{2},1)]$, and the last two summands are isomorphic to~$Y$ by~\eqref{standard half compressions are 2-embeddings}.
\end{proof}

\begin{dfn}
\label{def augmented disjoint sum}
For two subspaces $Z$, $W$ of $L_0$ we define their disjoint sum as
\[(Z\oplus W)_{\mathrm{disj}} = T_{[0,1/2)}(Z)+T_{[1/2,1)}(W).\]
If $Z,W\subset L_1^0$, then $h_{[0,1)}\notin(Z\oplus W)_{\mathrm{disj}}$, and we define their augmented disjoint sum as
\[(Z\oplus W)_{\mathrm{disj}}^+ = \langle\{ h_{[0,1)}\}\rangle + (Z\oplus W)_{\mathrm{disj}},\]
which is a subspace of $L_1^0$.
\end{dfn}

\begin{rem}
\label{why augment}
In \cite{bourgain:rosenthal:schechtman:1981}, Bourgain-Rosenthal-Schechtman spaces are constructed using disjoint sums together with independent sums (formulated on different underlying measure spaces). In that setting, the spaces always contain the constant function and therefore are not subspaces of $L_1^0$. When working inside $L_1^0$, to preserve the distributional properties of the construction, disjoint sums must be replaced with  their augmented counterparts, since for any $Z,W\subset L_1$ containing the constant function,
\[(Z\oplus W)_{\mathrm{disj}}\cap L_1^0=\big((Z\cap L_1^0)\oplus (W\cap L_1^0)\big)_{\mathrm{disj}}^+.\]
The outcome of this adjustment is that if we denote by $R_\alpha^p$ a Bourgain-Rosenthal-Schechtman space constructed via the iteration in \cite{bourgain:rosenthal:schechtman:1981} (up to identifications of the underlying measure spaces) and by $R_\alpha^Y$, where $Y = L_p^0$, the corresponding space constructed in this paper, then $R_\alpha^p\cap L_1^0 = R_\alpha^Y$ (see \Cref{same spaces sans constant}).

Independent sums do not need to be adjusted to achieve $R_\alpha\cap L_1^0 = R_\alpha^0$.

\end{rem}

\begin{rem}
\label{disjoint sum is really a sum}
Let $X$ be a Haar system space, $Y = X^0$ or $H_X$, and $Z$, $W$ be subspaces of $Y$. \Cref{standard half compressions remark} yields
\[(Z\oplus W)^+_{\mathrm{disj}}\simeq \mathbb R\oplus Z\oplus W.\]
\end{rem}

\begin{rem}
If $Z\equiv^\mathrm {dist}Z'$ and $W\equiv^\mathrm {dist} W'$ are subspaces of $L_1^0$ then
\[(Z\oplus W)^+_{\mathrm{disj}} \equiv^\mathrm{dist} (Z'\oplus W')^+_{\mathrm{disj}}.\]
The same is true replacing ``$\equiv^\mathrm {dist}$'' with ``$\equiv^{S\text{-dist}}$'' (use, e.g., \eqref{standard half compressions play nice with square function}).
\end{rem}

\begin{dfn}\label{dfn:independence}
\label{independence}~
Let $(X_n)_{n=1}^\infty$ be a sequence of subspaces of $L_1^0$.
\begin{enumerate}[label=(\alph*)]

\item We call $(X_n)_{n=1}^\infty$ independent if the $\sigma$-algebras $\sigma(X_n)$, $n\in\mathbb{N}$, are independent.

\item\label{square function-independence} We call $(X_n)_{n=1}^\infty$ square function-independent if, for every eventually zero sequence $(f_n)_{n=1}^\infty\in\prod_{n=1}^\infty X_n$,
\begin{enumerate}[label=(\roman*)]

\item\label{square function-independence a} $(Sf_n)_{n=1}^\infty\text{ is independent and }$

\item\label{square function-independence b} $S\big(\sum_{n=1}^\infty f_n\big) = \Big(\sum_{n=1}^\infty S^2(f_n)\Big)^{1/2}.$

\end{enumerate}
\end{enumerate}
\end{dfn}

\begin{rem}
\label{also applies for summable}
By \Cref{square function continuity}, if $(X_n)_{n=1}^\infty$ is square function-independent, then conclusions \ref{square function-independence a} and \ref{square function-independence b} in \Cref{dfn:independence} also hold for any sequence $(f_n)_{n=1}^\infty\in\prod_{n=1}^\infty X_n$ that is summable in $L_1^0$, and, by \Cref{hardy spaces without c_0}, for any sequence that is summable in a dyadic Hardy space $H_X$ over a Haar system space $X$.
\end{rem}

\begin{rem}\label{rem:faithful-embeddings}
Let $(F_n)_{n=1}^\infty$ be a sequence of pairwise disjoint subsets of $\mathbb{N}_0$. The sequence $(Y_{F_n})_{n=1}^\infty$ is independent and square function-independent. The latter is true because for any $f\in Y_{F_n}$, $S(f)\in \langle\{\chi_{[0,1)}\}\rangle + Y_{F_n}$. In particular, the sequence of operators $(J_{F_n})_{n=1}^\infty$ has independent and square function-independent ranges.
\end{rem}

\begin{dfn}
\label{def independent sum}
Let $X$ be a Haar system space and let $Y=X^0$ or $H_X$. Let $(Z_n)_{n=1}^\infty$ be a sequence of subspaces of $Y$. Let $J_n:Z_n\to Y$, $n\in\mathbb{N}$, be embeddings such that:
\begin{enumerate}[label=(\alph*)]
\item if $Y=X^0$, the operators $J_n$ are distributional embeddings with independent ranges;
\item if $Y=H_X$, the operators $J_n$ are square function-distributional embeddings with square function-independent ranges.
\end{enumerate}
The independent sum of $(Z_n)_{n=1}^\infty$ is
\[\big(\oplus_{n=1}^\infty Z_n\big)_{Y,\mathrm{ind}} = \big[\big(J_n(Z_n)\big)_{n=1}^\infty\big].\]
\end{dfn}

\begin{rem}
\label{independent sums are unconditional sums}
Following \Cref{def independent sum}, $(J_n(Z_n))_{n=1}^\infty$ forms a suppression unconditional Schauder decomposition of $(\oplus_{n=1}^\infty Z_n)_{Y,\mathrm{ind}}$, and each $J_n(Z_n)$ is isometric to $Z_n$.
\end{rem}

\begin{prop}
\label{canonical properties of independent sums}
Let $X$ be a Haar system space and $Y = X^0$
\begin{enumerate}[label=(\alph*)]

\item\label{canonical properties of independent sums a} The choice of operators in \Cref{def independent sum} does not matter, up to distributional isomorphism: if $J_n'$, $n\in\N$, is another such sequence then
\[\big[\big(J_n(Z_n)\big)_{n=1}^\infty\big]\equiv^\mathrm{dist}\big[\big(J'_n(Z_n)\big)_{n=1}^\infty\big].\]

\item\label{canonical properties of independent sums b} If $(Z_n)_{n=1}^\infty$, $(Z_n')_{n=1}^\infty$ are subspaces of $Y$ such that, for all $n\in\mathbb{N}$, $Z_n\equiv^\mathrm{dist} Z_n'$, then
\[\big(\oplus_{n=1}^\infty Z_n\big)_{X,\mathrm{ind}}\equiv^\mathrm{dist} \big(\oplus_{n=1}^\infty Z_n'\big)_{X,\mathrm{ind}}.\]

\item\label{canonical properties of independent sums c} If $(Z_n)_{n=1}^\infty$ are subspaces of $Y$ and $\pi\colon \mathbb{N}\to \mathbb{N}$ is a bijection, then
\[\big(\oplus_{n=1}^\infty Z_n\big)_{X,\mathrm{ind}}\equiv^\mathrm{dist} \big(\oplus_{n=1}^\infty Z_{\pi(n)}\big)_{X,\mathrm{ind}}.\]

\item\label{canonical properties of independent sums d} If $(Z_m^n)_{m,n=1}^\infty$ are subspaces of $Y$, then
\[\Big(\oplus_{n=1}^\infty\big(\oplus_{m=1}^\infty Z_m^n\big)_{X,\mathrm{ind}}\Big)_{X,\mathrm{ind}}\equiv^\mathrm{dist} \big(\oplus_{m,n=1}^\infty Z_m^n\big)_{X,\mathrm{ind}},\]
where the enumeration of the sum on the right-hand side does not matter.

\end{enumerate}
If  instead $Y = H_X$, the above statements remain valid, replacing ``distributional isomorphism'' with ``square function-distributional isomorphism'' and ``$\equiv^\mathrm{dist}$'' with ``$\equiv^{S\text{-dist}}$''.
\end{prop}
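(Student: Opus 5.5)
The plan is to handle all four parts with a single joint-distribution principle, and to treat the Hardy case separately. For $Y=X^0$ the principle reads: if $(W_n)_{n}$ and $(W_n')_{n}$ are independent sequences of subspaces of $L_1^0$, and $T_n\colon W_n\to W_n'$ are onto distributional isomorphisms, then the map $\sum_n f_n\mapsto\sum_n T_nf_n$ (finite sums) is a distributional embedding. It therefore extends, by \Cref{distroembedding extension}, to a distributional isomorphism $[(W_n)_n]\to[(W_n')_n]$. The extension is automatically an isometry of $X$-norms, so it maps the norm closures onto each other.

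To prove the principle, fix finitely many $f_1,\dots,f_N$ with $f_n\in W_n$. Independence gives $\mathrm{dist}(f_1,\dots,f_N)=\bigotimes_n \mathrm{dist}(f_n)=\bigotimes_n\mathrm{dist}(T_nf_n)=\mathrm{dist}(T_1f_1,\dots,T_Nf_N)$. Applying the Borel map $(x_n)\mapsto\sum x_n$, we get $\mathrm{dist}(\sum f_n)=\mathrm{dist}(\sum T_nf_n)$. This also shows the map is well defined, since $\sum f_n=0$ forces $\sum T_nf_n=0$.

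The four parts then follow.
\begin{enumerate}[label=(\alph*)]
\item Take $T_n=J_n'J_n^{-1}$ on $J_n(Z_n)$.
\item Compose a distributional isomorphism $Z_n\to Z_n'$ with the embeddings defining both sums, and then apply (a).
\item The sum over the permuted family is the closed span of the same independent family $(J_{\pi(n)}(Z_{\pi(n)}))_n$. Hence the embeddings $J'_n:=J_{\pi(n)}$ define it, and (a) concludes.
\item Let $V_n=(\oplus_m Z_m^n)_{\mathrm{ind}}$ be realized by $J_m^n$, and let $K_n$ embed $V_n$ with independent ranges. By \Cref{distroembedding extension}, $K_n$ extends to $\tilde K_n$ on $L_0(\sigma(V_n))$. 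The $\sigma$-algebras $\sigma(\tilde K_nJ_m^n(Z_m^n))$, $m\in\mathbb N$, are independent, being images of independent ones under a measure algebra isomorphism. Across different $n$ they lie in the independent algebras $\sigma(K_nV_n)$. So the whole family $(\tilde K_nJ^n_m(Z^n_m))_{m,n}$ is independent, and its closed span is the left-hand side. The right-hand side is obtained by (a) and (c).
\end{enumerate}

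For $Y=H_X$ the principle must be proved via square functions, and this is the main obstacle, since \Cref{distroembedding extension} is unavailable (\Cref{rem:hardy}~\ref{rem:hardy:b}). I will argue directly. For square function-independent $(W_n)$ and square function-distributional $T_n$ with square function-independent ranges, \Cref{dfn:independence}~\ref{square function-independence b} gives $S(\sum f_n)=(\sum S^2f_n)^{1/2}$, and the $Sf_n$ are independent by \ref{square function-independence a}. Consequently $\mathrm{dist}(S(\sum f_n))$ is determined by $\bigotimes_n\mathrm{dist}(Sf_n)=\bigotimes_n\mathrm{dist}(S T_nf_n)$, so it equals $\mathrm{dist}(S(\sum T_nf_n))$. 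The map is thus an isometry in $H_X$ (in particular well defined) and extends to closures, using \Cref{also applies for summable} for infinite sums. Parts (a)–(c) then go through verbatim.

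Part (d) needs care: there is no extension $\tilde K_n$, so I cannot transport the inner sums. Instead I will verify directly that the combined family is square function-independent. I would try to construct $K_n$ concretely as $J_{F_n}$ with pairwise disjoint infinite $F_n$. By \Cref{standard square function distributional isomorphism}, the $J_{F_n}$ are square function-distributional on all of $L_1^0$. They also satisfy $S(J_{F_n}g)=\tilde J_{F_n}S(g)$ with $\tilde J_{F_n}$ a homomorphism, which carries independence of $(S(J^n_mf_m))_m$ to independence of $(S(J_{F_n}J^n_mf_m))_m$. The identity \ref{square function-independence b} transfers in the same way. Across different $n$, \Cref{rem:faithful-embeddings} gives square function-independence. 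This identifies the left side, via (a) applied to the outer sum, with the independent sum of the family $(J_{F_n}J_m^n(Z_m^n))_{m,n}$.
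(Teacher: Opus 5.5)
Your proposal is correct. For (a)--(c) it is essentially the paper's argument: independent families with matching marginals have matching joint distributions, and in the Hardy case one uses $S(\sum_n J_n'f_n)=(\sum_n S^2(J_n'f_n))^{1/2}$ together with independence of the $S(J_n'f_n)$; (b) and (c) are consequences of (a).

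The genuine difference is in (d).

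\emph{The paper's route.} It never transports a given realization of the inner sums through outer embeddings.
\begin{enumerate}
\item It fixes embeddings $J^n_m$ whose ranges are (square function-)independent jointly in $(m,n)$; these realize the right-hand side.
\item It realizes each inner sum as $W_n=[(J^n_m(Z^n_m))_{m}]$, using the same embeddings.
\item It checks via \Cref{also applies for summable} that $(W_n)_n$ is again (square function-)independent.
\item It takes the identity maps $W_n\to Y$ as outer embeddings.
\end{enumerate}
Both sides are then literally the same subspace, and (a) and (b) absorb all choices. This is shorter, uniform in $X^0$ and $H_X$, and needs neither \Cref{distroembedding extension} nor any global homomorphism property of the embeddings.

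\emph{Your route.} You go the other way: starting from arbitrary inner realizations, you push the inner independence structure through the outer embeddings.
\begin{enumerate}
\item In $X^0$ this requires the measure-algebra extension $\tilde K_n$.
\item In $H_X$, where, as you rightly note, no such extension is available, you must specialize the outer embeddings to $J_{F_n}$. You then use $S(J_{F_n}g)=\tilde J_{F_n}S(g)$ on all of $L_1^0$. This follows from the finite case in the proof of \Cref{standard square function distributional isomorphism} by the same continuity argument. It also gives the $\sigma_{F_n}$-measurability of $S(J_{F_n}g)$, which you need to combine within-$n$ and across-$n$ independence.
\end{enumerate}

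Both routes are legitimate, since (a) and (b) allow any realization. Yours yields a bit more in the $X^0$ case: composing any inner realization with any outer embeddings realizes the double sum. The paper's choice of a compatible realization instead makes the Hardy-space obstacle disappear rather than having to be worked around.
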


\begin{proof}
The statements when $Y = X^0$ are based on the following well-known probability fact: lf $(f_n)_{n=1}^\infty$ and $(f_n')_{n=1}^\infty$ are two sequences of random variables, each of which is independent, and, for all $n\in\mathbb{N}$, $\mathrm{dist}(f_n) = \mathrm{dist}(f_n')$, then $\mathrm{dist}((f_n)_{n=1}^\infty) = \mathrm{dist}((f_n')_{n=1}^\infty)$. In particular, if $(Z_n)_{n=1}^\infty$ is a independent sequence of subspaces of $X^0$ and  $J_n:Z_n\to X^0$, $n\in\mathbb{N}$, are distributional embeddings with independent ranges, then the operator $J:[(Z_n)_{n=1}^\infty]\to X^0$ given by
\[J\Big(\sum_{n=1}^\infty f_n\Big) = \sum_{n=1}^\infty J_nf_n\]
is a distributional embedding. We will elaborate on the details of the statements for $Y=H_X$, starting with \ref{canonical properties of independent sums a}. By the second part of \Cref{independence} \ref{square function-independence} it follows that each of the sequences of spaces $(J_n(Z_n)\big)_{n=1}^\infty$ and $(J'_n(Z_n)\big)_{n=1}^\infty$ is linearly independent, and thus the map $J:\big\langle \big(J_n(Z_n)\big)_{n=1}^\infty\big\rangle\to \big\langle\big(J'_n(Z_n)\big)_{n=1}^\infty\big\rangle$ given by $J_nf_n \mapsto J_n'f_n$ is a linear surjection. Using both statements in \Cref{independence} \ref{square function-independence}, we get that for any eventually zero sequence $(f_n)_{n=1}^\infty\in\prod_{n=1}^\infty X_n$,
\begin{align*}
S\Big(J\big(\sum_{n=1}^\infty J_nf_n\big)\Big) &= S\Big(\sum_{n=1}^\infty J_n'f_n\Big)  = \Big(\sum_{n=1}^\infty S^2(J_n'f_n)\Big)^{1/2}\equiv^\mathrm{dist} \Big(\sum_{n=1}^\infty S^2(J_nf_n)\Big)^{1/2}\\
& = S\Big(\sum_{n=1}^\infty J_nf_n\Big).
\end{align*}
Therefore, $J$ extends to a square function-distributional isomorphism. Statements~\ref{canonical properties of independent sums b} and~\ref{canonical properties of independent sums c} are easy applications of~\ref{canonical properties of independent sums a}, we thus prove~\ref{canonical properties of independent sums d}. We fix collections of subspaces $(Z_m^n)_{m,n=1}^\infty$ of $H_X$ and square function-distributional isomorphisms $J_m^n:Z_m^n\to H_X$ with square function-independent ranges to define
\[\big(\oplus_{m,n=1}^\infty Z_m^n\big)_{Y,\mathrm{ind}} = \big[\big(J_m^n(Z_m^n)\big)_{m,n=1}^\infty\big].\]
We can also define, for each $n\in\mathbb{N}$,
\[W_n:=\big(\oplus_{m=1}^\infty Z_m^n\big)_{Y,\mathrm{ind}} = [(J_m^n(Z^n_m))_{m=1}^\infty].\]
Using \Cref{also applies for summable} it is straightforward to show that the spaces $(W_n)_{n=1}^\infty$ are square function-independent, and we may therefore use the identity maps $I:W_n\to H_X$ to define
\[\big(\oplus_{n=1}^\infty W_n\big)_{Y,\mathrm{ind}} = [(W_n)_{n=1}^\infty].\]
We have therefore shown,
\[\Big(\oplus_{n=1}^\infty\big(\oplus_{m=1}^\infty Z_m^n\big)_{X,\mathrm{ind}}\Big)_{Y,\mathrm{ind}} = [(W_n)_{n=1}^\infty] = \big[\big(J_m^n(Z_m^n)\big)_{m,n=1}^\infty\big] = \big(\oplus_{m,n=1}^\infty Z_m^n\big)_{Y,\mathrm{ind}}.\]
The equality holds because of the embeddings we chose to define the independent sums.
\end{proof}

\section{Bourgain-Rosenthal-Schechtman spaces as sums}

Using disjoint and complemented sums, we define the Bourgain-Rosenthal-Schechtman spaces by transfinite induction and show that they are isomorphic to their squares and hyperplanes. This formulation is equivalent to that of \cite{bourgain:rosenthal:schechtman:1981}, but follows more closely the language of \cite{konstantos:motakis:2025} (see \Cref{same spaces sans constant}), with the notational adjustments introduced in \Cref{types of sums} to accommodate both Haar system and dyadic Hardy spaces.

\begin{dfn}
\label{BRS definition sums}
Let $X$ be a Haar system space and $Y = X^0$ or $H_X$. We define the subspaces $R_\alpha^{Y}$, $0\leq \alpha<\omega_1$, of $Y$, by transfinite recursion.
\begin{enumerate}[label=(\alph*)]

\item In the base case, we let $R_0^{Y} = \{0\}$.

\item In the successor case $\alpha = \beta+1$, we let
\[R^{Y}_\alpha = \big(R^{Y}_\beta\oplus R^{Y}_\beta\big)^+_{\mathrm{disj}}.\]

\item In the limit case we let
\[R^{Y}_\alpha = \big(\oplus_{\beta<\alpha}R^{Y}_\beta\big)_{Y,\mathrm{ind}}.\]

\end{enumerate}
\end{dfn}

\begin{rem}\label{rem:nth-successor-BRS-space}
  Note that if $\gamma$ is a countable ordinal, then, for every $n\in \mathbb{N}$,
  \begin{equation*}
    R_{\gamma + n}^{Y} = \langle \{ h_I : I\in \mathcal{D}_{n-1} \} \rangle + \sum_{I\in \mathcal{D}^n} T_I(R_\gamma^{Y}),
  \end{equation*}
  where $T_I$ is as in \Cref{def  compression}. The Bourgain-Rosenthal-Schechtman spaces $R_\alpha^Y$ for $\omega\le \alpha\le \omega\cdot 2$ are illustrated in \Cref{fig:BRS-infinite}.
\end{rem}

\begin{figure}[t]
  \centering \includegraphics[width=\textwidth]{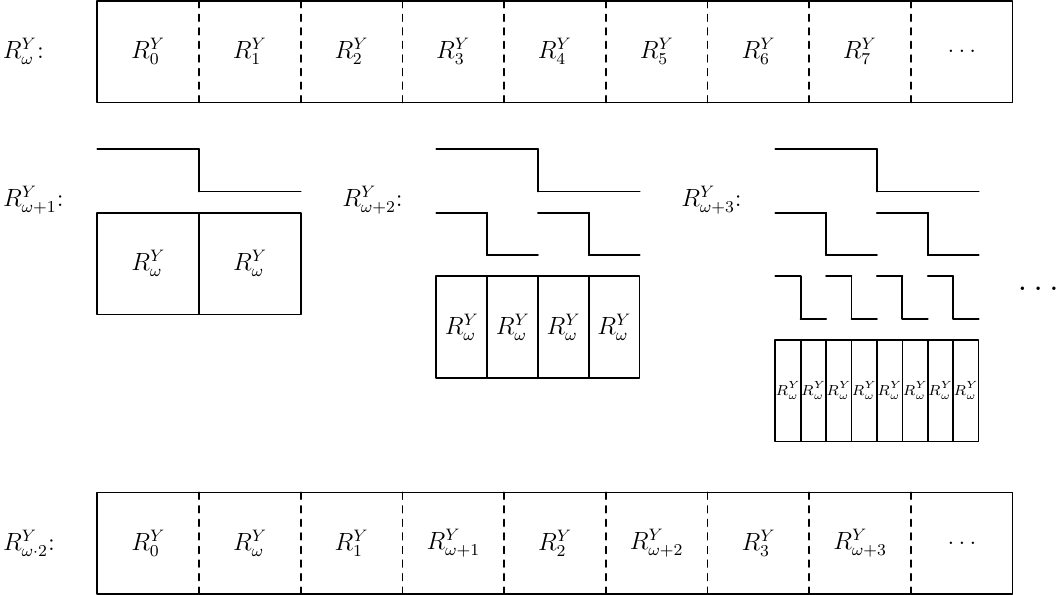}
  \caption{The first infinite-dimensional Bourgain-Rosenthal-Schechtman spaces. Disjoint and independent sums are represented by collections of boxes.}
  \label{fig:BRS-infinite}
\end{figure}

\begin{rem}
\label{same spaces sans constant}
By \cite[Remark 3.12]{konstantos:motakis:2025} (stated there for $X=L_p$), if for $0\le \alpha<\omega_1$ we denote by $R_\alpha^p$ the original Bourgain-Rosenthal-Schechtman space including the constant function obtained via the iterative construction in \cite{bourgain:rosenthal:schechtman:1981} (after identifying the underlying measure spaces, as described in \cite{konstantos:motakis:2025}), and by $R_\alpha^{L_p^0}$ the corresponding space constructed inside $L_1^0$, then $R_\alpha^p \cap L_1^0 = R_\alpha^{L_p^0}$.
\end{rem}

Our next goal is to prove that the spaces $R_\alpha^{Y}$ constructed above are isomorphic to their squares if $\alpha$ is infinite. We will need the following lemma.

 \begin{lem}\label{lem:R-omega-Y-0-isomorphic-to-its-square}
  Let $X$ be a Haar system space and $Y = X^0$ or $H_X$. Let $\gamma$ be a countable ordinal and suppose that $R_\gamma^{Y}$ is isomorphic to its square. Then the space $\bigl(\oplus_{n=0}^{\infty} R_{\gamma + n}^{Y} \bigr)_{Y,\mathrm{ind}}$ is isomorphic to its square and to its hyperplanes. In particular, this is true for $R_{\omega}^{Y} = \bigl(\oplus_{n=0}^{\infty} R_n^{Y} \bigr)_{Y,\mathrm{ind}}$.
\end{lem}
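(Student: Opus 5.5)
Write $W = \bigl(\oplus_{n=0}^{\infty} R_{\gamma+n}^{Y}\bigr)_{Y,\mathrm{ind}}$. By \Cref{independent sums are unconditional sums}, the spaces $J_n(R_{\gamma+n}^Y)$ form an unconditional Schauder decomposition of $W$, and each summand is isometric to $R_{\gamma+n}^Y$. The first step is to rearrange the summands. By \Cref{canonical properties of independent sums}~\ref{canonical properties of independent sums c} and~\ref{canonical properties of independent sums d}, we may split $\mathbb{N}_0$ into two infinite sets $E=\{0,2,4,\dots\}$ and $O=\{1,3,5,\dots\}$. This gives an isometric (square function-)distributional identification
\[
W \equiv \Bigl(\oplus_{n\in E} R_{\gamma+n}^Y\Bigr)_{Y,\mathrm{ind}} \oplus \Bigl(\oplus_{n\in O} R_{\gamma+n}^Y\Bigr)_{Y,\mathrm{ind}},
\]
where the sum on the right is realized as an unconditional sum of two complementary pieces of the decomposition of $W$. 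So $W\simeq W_E\oplus W_O$, and it remains to show $W_E\simeq W\simeq W_O$.

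For this we compare $R^Y_{\gamma+n}$ with $R^Y_{\gamma+m}$ for different $n,m$. By \Cref{disjoint sum is really a sum} and induction, $R_{\gamma+n}^Y\simeq \mathbb{R}^{2^n-1}\oplus (R_\gamma^Y)^{2^n}$, with isomorphism constant depending on $n$. Since $R_\gamma^Y\simeq (R_\gamma^Y)^2$, this gives $R_{\gamma+n}^Y\simeq \mathbb{R}^{k}\oplus R_\gamma^Y$. Taken termwise, however, these constants are not uniformly bounded, so they cannot be summed directly in the unconditional decomposition.

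The uniform step is the crux, and I would handle it with distributional embeddings instead. For $m\le n$, the space $R_{\gamma+m}^Y$ embeds in $R_{\gamma+n}^Y$ via $f\mapsto T_{[0,2^{-(n-m)})}$-type compressions. This is awkward, however: compressions are not distributional and lose norm (\eqref{standard half compressions are 2-embeddings}). A better route is to note that $R_{\gamma+m}^Y$ is, up to distributional (respectively square-function-distributional) isometry, a $1$-complemented subspace of $R_{\gamma+n}^Y$. The embedding comes from \Cref{rem:nth-successor-BRS-space}: take the Haar span on $\mathcal{D}_{m-1}$ together with $T_I(R_\gamma^Y)$ for $I\in\mathcal{D}^m$, and inside $R^Y_{\gamma+n}$ realize $T_I(R_\gamma^Y)$ via $R_\gamma^Y\hookrightarrow R_{\gamma+n-m}^Y$ (the first summand $T_{[0,1/2)}$ iterated is not distributional, so instead use that $R_\gamma^Y$ is isomorphic to its square and embed diagonally). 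If this cannot be made uniform, I would fall back on Pełczyński's decomposition. Each $R^Y_{\gamma+n}$ is uniformly complemented in $R^Y_{\gamma+n+1}$, through a contractive conditional-expectation-type projection restricted to $T_{[0,1/2)}$ pieces and renormalized (constants $\le 2$, but only once). From this, $W_E$ is complemented in $W$ and $W$ is complemented in $W_E$ via the shift $n\mapsto 2n$ using the embeddings $R_{\gamma+n}\to R_{\gamma+2n}$. One also shows $W\simeq W\oplus W$ directly by splitting each $R^Y_{\gamma+n}$ with $n\ge1$ as $\mathbb{R}\oplus T_{[0,1/2)}R^Y_{\gamma+n-1}\oplus T_{[1/2,1)}R^Y_{\gamma+n-1}$ with uniform constants. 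The halves' independent sums then give two copies of $\bigl(\oplus R^Y_{\gamma+n-1}\bigr)_{\mathrm{ind}}$, because compressions commute with square functions, \eqref{standard half compressions play nice with square function}, and the factor $1/2$ is uniform. The leftover $\mathbb{R}$'s form a copy of $\ell$-something spanned by the independent $h_{[0,1)}$'s; this is a Rademacher-type sequence, complemented, and absorbed into $W$ since $W$ contains such a sequence complementably. This gives $W\simeq W_1\oplus W_1\oplus U$ with $W_1\simeq W$ up to the shift, and Pełczyński's method finishes the argument. Hyperplanes follow from $W\simeq W\oplus\mathbb{R}$, using the $\mathbb{R}$ obtained from one $h_{[0,1)}$. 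The main obstacle is keeping all constants uniform in $n$ while absorbing the Rademacher part.

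For $R_\omega^Y$ we take $\gamma=0$: $R_0^Y=\{0\}$ is trivially isomorphic to its square, and $R_n^Y$ is finite dimensional, so the same argument applies.
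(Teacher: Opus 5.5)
Your final route (split each $R^Y_{\gamma+n}$, $n\ge 1$, into its top function, its left-half piece and its right-half piece, and identify the half-pieces with copies of the whole sum) is the paper's decomposition. However, the step that carries the lemma is only asserted.

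You justify the ``uniform constants'' by the single-summand facts \eqref{standard half compressions are 2-embeddings} and $TS(f)=S(Tf)$. These do not pass to the independent sum. The norm of $\sum_n J_{F_n}f_n$ is not determined by the norms $\|f_n\|_Y$: already in $L_p$, $p>2$, Rosenthal's inequality also involves the $L_2$-norms of the summands. So operators on the summands that are uniformly bounded above and below need not induce a bounded operator, let alone an embedding, on $[(J_{F_n}(R^Y_{\gamma+n}))_{n}]$.

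Nor can the commutation with $S$ be used globally. The map $\widetilde{T}_{[0,\frac{1}{2})}\colon \sum_n J_{F_n}f_n\mapsto \sum_n J_{F_n}T_{[0,\frac{1}{2})}f_n$ places the $n$-th summand on $[r_{\min F_n}=1]$, a different independent half for each $n$. Hence $\widetilde{T}_{[0,\frac{1}{2})}f$ is not a compression of $f$, and its square function is not a compression of $S(f)$.

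The same problem affects the splitting projections when $Y=X^0$. The lemma covers every Haar system space, including $L_1$, where the Haar system is conditional, so boundedness of these projections on the whole sum is not automatic. For $H_X$ it does follow from $1$-unconditionality.

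What is missing is the paper's global distributional argument. First, if $a^n_J=0$ for all $J\subsetneq I$, then $f+a^n_Ib^{F_n}_I$ and $f-a^n_Ib^{F_n}_I$ are equidistributed, which gives \eqref{eq:5}. Removing Haar terms from the bottom up then yields bounded projections and $\|\widetilde{T}_{[0,\frac{1}{2})}f\|_Y\le\|\widetilde{T}_{[0,\frac{1}{2})}f+\widetilde{T}_{[\frac{1}{2},1)}f\|_Y=\|f\|_Y$. Second, $\widetilde{T}_{[0,\frac{1}{2})}f$ and $\widetilde{T}_{[\frac{1}{2},1)}f$ are equidistributed, as are their square functions, and their sum is equidistributed with $f$ (also at the level of $S$). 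This yields $\|f\|_Y\le 2\|\widetilde{T}_{[0,\frac{1}{2})}f\|_Y$.

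There are further problems in the bookkeeping and in your first route.

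\begin{itemize}
\item Your final decomposition drops the summand $n=0$. It should read $W\simeq R^Y_\gamma\oplus U\oplus W\oplus W$. Absorbing $R^Y_\gamma$ (i.e.\ $W\simeq W\oplus R^Y_\gamma$, since $R^Y_\gamma$ is complemented in $W$ and isomorphic to its square) is exactly where the hypothesis of the lemma is used.
\item Absorbing $U$ likewise needs $U\simeq U\oplus U$. This holds because $U$ is spanned by an independent sequence of symmetric $\pm1$-valued functions, not merely because $U$ is complemented in $W$.
\item Pe{\l}czy\'nski's method applied to $W_E$ and $W$ presupposes that these spaces are isomorphic to their squares, which is what you are proving. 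So the first route is circular.
\item Composing $n$ one-step embeddings $R^Y_{\gamma+k}\hookrightarrow R^Y_{\gamma+k+1}$ with constant $2$ gives constants up to $2^n$. A single compression $T_{[0,2^{-n})}$ is in general not close to a multiple of an isometry, so nothing supports ``only once''.
\end{itemize}

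Uniform embeddings do exist, but distributionally rather than by compression. For $m\ge n$ and $M=\{0,\dots,n-1\}\cup\{m,m+1,\dots\}$, the map $J_M$ carries $R^Y_{\gamma+n}$ isometrically into $R^Y_{\gamma+m}$, and $\mathbb{E}_M$ restricts to a norm-one projection of $R^Y_{\gamma+m}$ onto the image. Using these to embed $(\oplus_{k\in\mathbb{N}}W)_{Y,\mathrm{ind}}$ complementably into $W$, together with $(\oplus_{k\in\mathbb{N}}W)_{Y,\mathrm{ind}}\simeq W\oplus(\oplus_{k\in\mathbb{N}}W)_{Y,\mathrm{ind}}$, would make that route work.
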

\begin{proof}
  Let $Z = (\oplus_{n=0}^{\infty} R_{\gamma + n}^{Y})_{Y,\mathrm{ind}}$.
  We can write
  \begin{equation*}
     Z = \bigl[\bigl(J_{F_n}(R_{\gamma + n}^{Y})\bigr)_{n=0}^{\infty}\bigr],
  \end{equation*}
  where $(F_n)_{n=0}^{\infty}$ is a sequence of pairwise disjoint infinite subsets of $\mathbb{N}_0$ and the embeddings $J_{F_n}$ are as in \Cref{notation of subsequences of the levels}~\ref{notation of subsequences of the levels b}. Now we define four projections on $\langle \{ b^{F_n}_I : I\in \mathcal{D}, n\in \mathbb{N}_0 \}  \rangle$ by putting, for $f = \sum_{n=0}^N \sum_{I\in \mathcal{D}} a_I^nb_I^{F_n}$,
  \begin{align*}
    P_0 f &= \sum_{I\in \mathcal{D}} a_I^0 b_I^{F_0},
    & P_1 f &= \sum_{n=1}^N a_{[0,1)}^n b_{[0,1)}^{F_n}, \\
    P_2 f &= \sum_{n=0}^N \sum_{\substack{I\in \mathcal{D}\\ I\subset [0,\frac{1}{2})}} a_I^n b_I^{F_n},
    & P_3 f &= \sum_{n=0}^N \sum_{\substack{I\in \mathcal{D}\\ I\subset [\frac{1}{2},1)}} a_I^n b_I^{F_n}.
  \end{align*}
  We will show that the projections $P_0$, $P_1$, $P_2$, and $P_3$ are bounded. If $Y = H_X$, this follows from the $1$-unconditionality of the Haar system. If $Y = X^0$, then it follows by repeatedly applying the following basic observation: If $n\in \mathbb{N}$ and $I\in \mathcal{D}$ are such that $a_J^n = 0$ for all $J \subset I$, then $f + a_I^n b_I^{F_n}$ and $f - a_I^nb_I^{F_n}$ have the same distribution, so
  \begin{equation}\label{eq:5}
    \|f\|_X \le \frac{1}{2}\bigl( \|f + a_I^nb_I^{F_n}\|_X + \|f - a_I^n b_I^{F_n}\|_X \bigr) = \|f + a_I^n b_I^{F_n}\|_X.
  \end{equation}
  Thus, $P_1$, $P_2$, $P_3$, and $P_4$ extend to bounded linear projections on $[(J_{F_n}(Y))_{n=0}^{\infty}]$ that map $Z$ into itself (this follows from \Cref{rem:nth-successor-BRS-space}).
  Put $Z_0 = P_0(Z)$, $U = P_1(Z)$, $V = P_2(Z)$, and $W = P_3(Z)$.
  Then $Z\simeq Z_0 \oplus U \oplus V \oplus W$, where $Z_0$ is isometric to~$R_\gamma^{Y}$.

  We will now show that $U\simeq U \oplus U$, and $V\simeq W\simeq Z$.
  The first statement holds because $U = [r_{\min(F_n)} : n\in \mathbb{N}]$. In particular, we have $U \simeq U \oplus \mathbb{R}$, and hence, $Z\simeq Z \oplus \mathbb{R}$. To prove that $V\simeq W\simeq Z$, we define, for $I = [0,\frac{1}{2})$ or $[\frac{1}{2},1)$, the operator $\widetilde{T}_I$ on $\langle \{ b_I^{F_n} : I\in \mathcal{D}, n\in \mathbb{N}_0 \} \rangle$ by
  \begin{equation*}
    \widetilde{T}_I\Bigl( \sum_{n=0}^N J_{F_n}f_n \Bigr) = \sum_{n=0}^N J_{F_n}\bigl( T_I f_n\bigr)
  \end{equation*}
  for $f_0,\dots,f_N\in \langle \{ h_I : I\in \mathcal{D} \} \rangle$. Let $f = \sum_{n=1}^N J_{F_n} f_n$, and observe that $\widetilde{T}_{[0,\frac{1}{2})}f$ and $\widetilde{T}_{[\frac{1}{2},1)}f$ have the same distribution. The same is true for their square functions $S(\widetilde{T}_{[0,\frac{1}{2})}f)$ and $S(\widetilde{T}_{[\frac{1}{2},1)}f)$ (see \Cref{standard square function distributional isomorphism}). Thus,
  \begin{equation*}
    \|\widetilde{T}_{[0,\frac{1}{2})}f\|_Y
    = \frac{1}{2} \bigl( \|\widetilde{T}_{[0,\frac{1}{2})}f\|_Y + \|\widetilde{T}_{[\frac{1}{2},1)}f\|_Y \bigr)
    \ge \frac{1}{2} \|\widetilde{T}_{[0,\frac{1}{2})}f + \widetilde{T}_{[\frac{1}{2},1)}f\|_Y.
  \end{equation*}
  On the other hand, \eqref{eq:5} implies that
  \begin{equation*}
    \|\widetilde{T}_{[0,\frac{1}{2})}f\|_Y \le \|\widetilde{T}_{[0,\frac{1}{2})}f + \widetilde{T}_{[\frac{1}{2},1)}f\|_Y.
  \end{equation*}
  Note that $\widetilde{T}_{[0,\frac{1}{2})}f + \widetilde{T}_{[\frac{1}{2},1)}f$ has the same distribution as $f$, and the same is true for their square functions. Hence,
  \begin{equation*}
    \|f\|_Y = \|\widetilde{T}_{[0,\frac{1}{2})}f + \widetilde{T}_{[\frac{1}{2},1)}f\|_Y \asymp \|\widetilde{T}_{[0,\frac{1}{2})}f\|_Y,
  \end{equation*}
  and so, $\widetilde{T}_{[0,\frac{1}{2})}$ extends to an embedding of $[(J_{F_n}(Y))_{n=0}^{\infty}]$ into itself. The same is true for $\widetilde{T}_{[\frac{1}{2},1)}$.
  Now observe that $\widetilde{T}_{[0,\frac{1}{2})}(Z)$ isometric to~$V$ (via the shift $b_I^n\mapsto b_I^{n+1}$), and $\widetilde{T}_{[\frac{1}{2},1)}(Z)$ isometric to~$W$.

  Finally, since $Z_0$ and $U$ are complemented in $Z$ and isomorphic to their squares, we conclude that $Z \simeq Z \oplus U$ and $Z \simeq Z \oplus Z_0$. Hence,
  \begin{equation*}
    Z \simeq Z_0 \oplus U \oplus V \oplus W \simeq Z_0 \oplus U\oplus Z \oplus Z \simeq Z \oplus Z.\qedhere
  \end{equation*}
\end{proof}

\begin{prop}\label{prop:R-alpha-Y-0-isomorphic-to-its-square}
  Let $X$ be a Haar system space and $Y = X^0$ or $H_X$.
  Then, for every countable ordinal $\omega\le \alpha < \omega_1$ the space $R_\alpha^{Y}$ is isomorphic to its square and to its hyperplanes.
\end{prop}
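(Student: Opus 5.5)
We argue by transfinite induction on $\alpha\ge\omega$, proving that $R_\alpha^Y$ is isomorphic to its square and to its hyperplanes. The base case $\alpha=\omega$ is the last assertion of \Cref{lem:R-omega-Y-0-isomorphic-to-its-square}.

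For the successor case $\alpha=\beta+1$ with $\beta\ge\omega$, \Cref{disjoint sum is really a sum} gives
\[
R_\alpha^Y\simeq \mathbb{R}\oplus R_\beta^Y\oplus R_\beta^Y .
\]
By the induction hypothesis $R_\beta^Y\simeq R_\beta^Y\oplus R_\beta^Y$ and $R_\beta^Y\simeq\mathbb{R}\oplus R_\beta^Y$. Hence $R_\alpha^Y\simeq R_\beta^Y$, and both properties transfer to $R_\alpha^Y$.

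The main case is a limit ordinal $\alpha>\omega$. Write $\alpha=\gamma+\omega$ for some $\gamma\ge\omega$ and split $\alpha$ into the set $\{\beta:\beta<\gamma\}$ and the cofinal block $\{\gamma+n:n\in\mathbb{N}_0\}$. By \Cref{canonical properties of independent sums}~\ref{canonical properties of independent sums c} and~\ref{canonical properties of independent sums d}, reindexing and regrouping the independent sum yields
\[
R_\alpha^Y\equiv \Big(\big(\oplus_{\beta<\gamma}R_\beta^Y\big)_{Y,\mathrm{ind}}\oplus Z\Big)_{Y,\mathrm{ind}},
\qquad
Z=\big(\oplus_{n=0}^\infty R^Y_{\gamma+n}\big)_{Y,\mathrm{ind}} .
\]
Here $\equiv$ denotes $\equiv^{\mathrm{dist}}$ or $\equiv^{S\text{-dist}}$ as appropriate, and both are isometries. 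If $\gamma$ is a limit ordinal, the first factor is $R_\gamma^Y$. If $\gamma$ is a successor, it is only some independent sum $W$; this distinction turns out not to matter.

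Since independent sums form a suppression-unconditional decomposition (\Cref{independent sums are unconditional sums}), we get $R_\alpha^Y\simeq W\oplus Z$ for some space $W$. Because $\gamma\ge\omega$, the induction hypothesis says $R_\gamma^Y$ is isomorphic to its square. \Cref{lem:R-omega-Y-0-isomorphic-to-its-square} then shows that $Z$ is isomorphic to its square and to its hyperplanes. It follows that
\[
R_\alpha^Y\simeq W\oplus Z\simeq W\oplus Z\oplus Z\simeq R_\alpha^Y\oplus Z,
\qquad
R_\alpha^Y\oplus\mathbb{R}\simeq W\oplus Z\oplus\mathbb{R}\simeq W\oplus Z\simeq R_\alpha^Y .
\]
This settles the hyperplane assertion.

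For the square, apply the same regrouping to $R_\alpha^Y\oplus R_\alpha^Y$. This sum is isomorphic to an independent sum over two disjoint copies of the index set $\alpha$, which \Cref{canonical properties of independent sums}~\ref{canonical properties of independent sums d} allows. Reorder the two copies so that one copy is interleaved into the tail $(\gamma+n)_n$ of the other; for the order type this uses $\alpha+\alpha$-type reindexing being a bijection of a countable set. One then checks that the combined family $\{R_\beta^Y\}$, counted with multiplicity, is a rearrangement of a family of the form $(R_\beta^Y)_{\beta<\alpha}$ together with extra copies that Pełczyński decomposition absorbs.

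The step I expect to be the obstacle is this bookkeeping: \Cref{canonical properties of independent sums}~\ref{canonical properties of independent sums c} only permits bijective reindexing, not duplication. To avoid it I would instead use a Pełczyński argument. $R_\alpha^Y$ contains $Z$ complementably, $Z\simeq Z\oplus Z$, and $Z$ contains $R_\beta^Y$ complementably for each $\beta<\alpha$. One then aims to show $R_\alpha^Y\oplus R_\alpha^Y$ embeds complementably into $R_\alpha^Y$ via the tail, by splitting the index set $\{\gamma+n\}$ into even and odd $n$. This requires the $\gamma+n$ for odd $n$ to dominate all $\beta<\alpha$, which holds using $R_{\gamma+n}^Y\simeq R_\gamma^Y$ from the successor step. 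Combining this with $R_\alpha^Y$ being complemented in its square and the suppression-unconditionality completes the proof.
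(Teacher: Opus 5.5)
\textbf{Gap: the limit case misses limit ordinals that are limits of limit ordinals.} Your whole limit case rests on writing $\alpha=\gamma+\omega$. That is not always possible: a limit ordinal $\alpha>\omega$ has this form only if it is not a limit of limit ordinals. Take $\alpha=\omega^2$. Any $\gamma<\omega^2$ equals $\omega\cdot n+m$ with $n,m$ finite, and then $\gamma+\omega=\omega\cdot(n+1)<\omega^2$. For such $\alpha$ (also $\omega^\omega$, $\omega^2\cdot 2$, \dots) there is no cofinal block $(\gamma+n)_{n}$. So neither your hyperplane argument nor your square argument applies, and the induction stalls at $\omega^2$.

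This case needs a different idea, and it is the one the paper uses. Choose limit ordinals $\alpha_1<\alpha_2<\cdots$ with supremum $\alpha$. Use \Cref{canonical properties of independent sums}~\ref{canonical properties of independent sums b} and~\ref{canonical properties of independent sums d} to unpack each summand $R^Y_{\alpha_n}$ into $(\oplus_{\beta<\alpha_n}R^Y_\beta)_{Y,\mathrm{ind}}$. Then every $R^Y_\beta$ with $\beta<\alpha$ occurs infinitely often. A bijective reindexing therefore gives $R^Y_\alpha\equiv(\oplus_{(\beta,n)\in\alpha\times\mathbb{N}}R^Y_\beta)_{Y,\mathrm{ind}}\equiv(R^Y_\alpha\oplus R^Y_\alpha)_{Y,\mathrm{ind}}\simeq R^Y_\alpha\oplus R^Y_\alpha$. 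So the ``duplication'' you thought unavailable comes for free from unpacking limit summands, not from~\ref{canonical properties of independent sums c}. For hyperplanes the paper avoids the case split entirely: $R^Y_\omega$ is complemented in every $R^Y_\alpha$ with $\alpha\ge\omega$, and $R^Y_\omega\simeq R^Y_\omega\oplus\mathbb{R}$.

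\textbf{The square step when $\alpha=\gamma+\omega$.} Here your argument is more complicated than needed and, as written, not justified. Since $\delta+k+\omega=\delta+\omega$, you may always take $\gamma$ to be a limit ordinal. Then the first factor is $R^Y_\gamma$, which is isomorphic to its square by the induction hypothesis. Together with $Z\simeq Z\oplus Z$ from \Cref{lem:R-omega-Y-0-isomorphic-to-its-square}, this gives $R^Y_\alpha\simeq R^Y_\gamma\oplus Z\simeq(R^Y_\gamma\oplus Z)\oplus(R^Y_\gamma\oplus Z)$ at once; this is the paper's first case.

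Your Pe{\l}czy\'nski replacement ends by combining two facts: $R^Y_\alpha\oplus R^Y_\alpha$ is complemented in $R^Y_\alpha$, and $R^Y_\alpha$ is complemented in its square. Mutual complementation of $V$ and $V\oplus V$ does not imply $V\simeq V\oplus V$ in general. Gowers constructed a space $V$ with $V\simeq V\oplus V\oplus V$ but $V\not\simeq V\oplus V$. So that final step would need an additional argument.
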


\begin{proof}
  It can be easily shown by transfinite induction that for every $\omega \le  \alpha < \omega_1$, $R_\alpha^{Y}$ contains a complemented subspace isomorphic to $R_\omega^{Y}$. By \Cref{lem:R-omega-Y-0-isomorphic-to-its-square}, $R_\omega^{Y}\simeq R_\omega^{Y}\oplus \mathbb{R}$, hence, $R_\alpha^{Y}$ is isomorphic to its hyperplanes as well.

  Now we use transfinite induction to show that $R_\alpha^{Y}$ is isomorphic to its square for all $\omega \le \alpha < \omega_1$. For $\alpha = \omega$, this is proved in \Cref{lem:R-omega-Y-0-isomorphic-to-its-square}, so we assume that $\alpha > \omega$ and that the statement holds for all infinite countable ordinals $\beta < \alpha$.
  In the following, replace ``$\equiv^\mathrm {dist}$'' with ``$\equiv^{S\text{-dist}}$'' if $Y=H_X$.

  \noindent \textbf{Case 1.} $\alpha = \gamma + \omega$ for some limit ordinal $\gamma < \omega_1$.
  Using \Cref{canonical properties of independent sums}~\ref{canonical properties of independent sums d}, we can write
  \begin{align*}
    R_\alpha^{Y} &= \bigl(\oplus_{\beta < \alpha} R_\beta^{Y}\bigr)_{Y,\mathrm{ind}}\\
    &\equiv^\mathrm{dist} \Bigl( \bigl(\oplus_{\beta < \gamma} R_\beta^{Y}\bigr)_{Y,\mathrm{ind}} \oplus \bigl(\oplus_{n=0}^{\infty} R_{\gamma + n}^{Y} \bigr)_{Y,\mathrm{ind}} \Bigr)_{Y,\mathrm{ind}}\\
    &\simeq  \bigl(\oplus_{\beta < \gamma} R_\beta^{Y}\bigr)_{Y,\mathrm{ind}} \oplus \bigl(\oplus_{n=0}^{\infty} R_{\gamma + n}^{Y} \bigr)_{Y,\mathrm{ind}} \\
    &\simeq R_\gamma^{Y} \oplus \bigl(\oplus_{n=0}^{\infty} R_{\gamma + n}^{Y} \bigr)_{Y,\mathrm{ind}}.
  \end{align*}
  By the induction hypothesis, the space $R_\gamma^{Y}$ is isomorphic to its square, and by \Cref{lem:R-omega-Y-0-isomorphic-to-its-square}, the same is true for $\bigl(\oplus_{n=0}^{\infty} R_{\gamma + n}^{Y} \bigr)_{Y,\mathrm{ind}}$. Thus, $ R_\alpha^{Y} \simeq  R_\alpha^{Y} \oplus R_\alpha^{Y} $.

  \noindent \textbf{Case 2.} $\alpha = \sup\{ \beta < \alpha : \beta \text{ is a limit ordinal} \}$. Then we can pick a strictly increasing sequence $(\alpha_n)_{n=1}^{\infty}$ of countable limit ordinals with supremum $\alpha$. Let $B = \{ \beta < \alpha : \beta\ne \alpha_n \text{ for all }n\in \mathbb{N} \}$. Using \Cref{canonical properties of independent sums}~\ref{canonical properties of independent sums d} and~\ref{canonical properties of independent sums b}, we obtain
  \begin{align*}
    R_\alpha^{Y} = \bigl(\oplus_{\beta < \alpha} R_\beta^{Y}\bigr)_{Y,\mathrm{ind}}
    &\equiv^\mathrm{dist} \Bigl( \bigl(\oplus_{n=1}^{\infty} R_{\alpha_n}^{Y} \bigr)_{Y,\mathrm{ind}} \oplus \bigl(\oplus_{\beta\in B} R_{\beta}^{Y} \bigr)_{Y,\mathrm{ind}} \Bigr)_{Y,\mathrm{ind}}\\
    &\equiv^\mathrm{dist} \Bigl( \bigl(\oplus_{n=1,\beta < \alpha_n}^{\infty} R_\beta^{Y}\bigr)_{Y,\mathrm{ind}} \oplus \bigl(\oplus_{\beta\in B} R_{\beta}^{Y} \bigr)_{Y,\mathrm{ind}} \Bigr)_{Y,\mathrm{ind}}.
  \end{align*}
  In the first independent sum, each space $R_\beta^{Y}$, $\beta < \alpha$, appears infinitely many times. Thus, by \Cref{canonical properties of independent sums}~\ref{canonical properties of independent sums d} and~\ref{canonical properties of independent sums c}, we have
  \begin{equation}\label{eq:6}
    R_\alpha^{Y} \equiv^\mathrm{dist} \bigl(\oplus_{n=1,\beta < \alpha}^{\infty} R_\beta^{Y}\bigr)_{Y,\mathrm{ind}}.
  \end{equation}
  This implies that $R_\alpha^{Y} \equiv^\mathrm{dist} \bigl(R_\alpha^{Y} \oplus R_\alpha^{Y} \bigr)_{Y,\mathrm{ind}}$, so $R_\alpha^{Y}$ is isomorphic to its square.

  \noindent \textbf{Case 3.} $\alpha = \beta + 1$ is a successor ordinal. Then, by \Cref{disjoint sum is really a sum} and the induction hypothesis,
  \begin{align*}
    R^{Y}_\alpha = \big(R^{Y}_\beta\oplus R^{Y}_\beta\big)^+_{\mathrm{disj}}
    &\simeq \mathbb{R} \oplus R^{Y}_\beta\oplus R^{Y}_\beta\\
    &\simeq \mathbb{R} \oplus (R^{Y}_\beta\oplus R^{Y}_\beta)\oplus (\mathbb{R} \oplus  R^{Y}_\beta\oplus R^{Y}_\beta).\\
    &\simeq R^{Y}_\alpha \oplus R^{Y}_\alpha.\qedhere
  \end{align*}
\end{proof}

\section{Bourgain-Rosenthal-Schechtman spaces over disjointly branching trees}

We define the special classes of trees $\mathcal{T}$ used in the construction of the Bourgain-Rosenthal-Schechtman tree spaces $R_\mathcal{T}^{Y}$ and show that, for every $\alpha<\omega_1$, the space $R_\alpha^{Y}$ admits such a representation. This realization yields the same spaces as in \cite[Section 3]{bourgain:rosenthal:schechtman:1981}, where the tree spaces are defined over a different underlying measure space, while maintaining notational consistency with the present setting and allowing a direct visualization on $[0,1)$. Next, we show that, if $X$ is a Haar system space with non-trivial Boyd indices, then for every $\mathcal{T}$ the orthogonal projection onto $R_\mathcal{T}^{Y}$ is bounded. This was already proved by Ghawadrah in \cite{ghawadrah:2016}. In contrast to that paper, our argument does not proceed via the boundedness of the orthogonal projection on the Bourgain-Rosenthal-Schechtman spaces in $L_p$ combined with interpolation, but instead appeals to Stein's inequality \cite[Proposition 9.5, p.~243]{johnson:maurey:schechtman:tzafriri:1979} on $X$, whose proof itself relies on interpolation methods. Thus interpolation is not avoided entirely, but enters indirectly through Stein's inequality rather than through complementation in $L_p$. The overall approach remains close in spirit to the original ideas of Bourgain, Rosenthal, and Schechtman \cite{bourgain:rosenthal:schechtman:1981}. The Bourgain-Rosenthal-Schechtman tree spaces also appear in \Cref{complemented ordinal section}, where we introduce and study an ordinal index characterizing the complementation of $Y=X$ or $H_X$ in an arbitrary separable Banach space, under the assumption that $X$ satisfies the Fatou property.

\begin{ntt}
We denote $\mathbb{N} = \{1,2,\ldots\}$ and $\mathbb{N}_0 = \{0,1,2,\ldots\}$. Consider the collection $[\mathbb{N}_0]^{<\infty}$ of finite subsets of $\mathbb{N}_0$ with the partial order ``$\sqsubseteq$'' of initial segments, i.e., $A\sqsubseteq B$ means $A\subset B$ and $\min(B\setminus A)>\max(A)$ (using the conventions $\min(\emptyset)=\infty$ and $\max(\emptyset) = -\infty$). Note that $([\mathbb{N}_0]^{< \infty},\sqsubseteq)$ is a tree (i.e., a partially ordered set such that the set of predecessors of any element is finite and linearly ordered).
\end{ntt}

\begin{dfn}
\label{definition disjointly branching}
A non-empty subcollection $\mathcal{T}$ of $[\mathbb{N}_0]^{<\infty}$ is called a disjointly branching tree if it satisfies the following:
\begin{enumerate}[label=(\alph*),]

\item\label{definition disjointly branching a} For every $A\in\mathcal{T}$ and $B\sqsubseteq A$, $B$ is in $\mathcal{T}$, i.e., $\mathcal{T}$ is downward closed.

\item\label{definition disjointly branching b}  For every $A,B\in\mathcal{T}$, $A\cap B\sqsubseteq A$.

\end{enumerate}
\end{dfn}
The former property yields, among other things, $\emptyset\in\mathcal{T}$. Given \ref{definition disjointly branching a}, the latter property can be equivalently formulated as follows: for every $A\in\mathcal{T}$, denote $S_A = \{l\in\mathbb{N}_0\setminus A: A\cup\{l\}\in\mathcal{T}\}$. Then, for every $A\in\mathcal{T}$, $\min(S_A)>\max(A)$ and the ``branching'' sets $S_A$, $A\in\mathcal{T}$, are pairwise disjoint.

\begin{prop}
\label{what tree isomorphisms look like}
Let $\mathcal{T}$, $\mathcal{S}$ be disjointly branching trees. The following are equivalent:
\begin{enumerate}[label=(\alph*),]

\item\label{what tree isomorphisms look like a} The partially ordered sets $\mathcal{T}$ and $\mathcal{S}$ are isomorphic, i.e., there exists a ``$\sqsubseteq$''-preserving bijection $\Phi:\mathcal{T}\to\mathcal{S}$ whose inverse is also ``$\sqsubseteq$''-preserving.

\item\label{what tree isomorphisms look like b} There exists a bijection $\phi:\cup\mathcal{T}\to\cup\mathcal{S}$ such that, for all $A\in\mathcal{T}$, $\phi|_A:A\to\phi(A)$ is increasing.

\end{enumerate}
More precisely, there exists a correspondence $\Phi\leftrightarrow\phi$ between the set of all isomorphisms from $\mathcal{T}$ to $\mathcal{S}$ and the set of all function as in \ref{what tree isomorphisms look like b}.
\end{prop}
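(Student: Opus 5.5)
The plan is to write down both directions of the correspondence explicitly and check that they are mutually inverse. The key structural fact is a description of $\mathcal{T}$ in terms of its elements. By \Cref{definition disjointly branching}\ref{definition disjointly branching a}, every $A=\{a_0<\dots<a_k\}\in\mathcal{T}$ has its initial segments $A_j=\{a_0,\dots,a_{j-1}\}$ in $\mathcal{T}$. By the equivalent reformulation via branching sets, each $l\in\cup\mathcal{T}$ belongs to exactly one branching set $S_B$. Hence there is a unique $A_l\in\mathcal{T}$ with $\max A_l=l$, namely $A_l=B\cup\{l\}$. Uniqueness holds because if $A,A'\in\mathcal{T}$ both have maximum $l$, then $l\in S_{A\setminus\{l\}}\cap S_{A'\setminus\{l\}}$, and disjointness forces $A=A'$. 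Thus $l\mapsto A_l$ is a bijection from $\cup\mathcal{T}$ onto $\mathcal{T}\setminus\{\emptyset\}$. Moreover, $A\sqsubseteq A'$ holds iff $\max A\in A'$ (for nonempty $A$), since $A'$ contains $A_{\max A}$ as an initial segment whenever it contains $\max A$.

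\textbf{(a)$\Rightarrow$(b).} Given an isomorphism $\Phi$, set $\phi(l)=\max\Phi(A_l)$. This is a bijection $\cup\mathcal{T}\to\cup\mathcal{S}$, because it is the composite of the bijections $l\mapsto A_l$, $\Phi$ (restricted to nonempty sets, since $\Phi(\emptyset)=\emptyset$ as $\emptyset$ is the unique minimum), and $B\mapsto\max B$. An order isomorphism preserves the height, i.e.\ the number of predecessors, which for $A\in\mathcal{T}$ is $|A|$. It also maps the chain of initial segments of $A$ onto that of $\Phi(A)$. Hence, for $A=\{a_0<\dots<a_k\}$, we get $\Phi(A_{a_j})=$ the $(j+1)$-element initial segment of $\Phi(A)$. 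So $\phi(a_j)$ is the $j$-th smallest element of $\Phi(A)$, which gives $\phi(A)=\Phi(A)$ and shows that $\phi|_A$ is increasing.

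\textbf{(b)$\Rightarrow$(a).} Given $\phi$, define $\Phi(A)=\phi(A)$. The main point, and the step I expect to be the obstacle, is showing that $\phi(A)\in\mathcal{S}$ for $A\in\mathcal{T}$; the hypothesis only says $\phi$ is a bijection on unions that is monotone on each $A$. I would argue by induction on $|A|$, using the symmetric fact that $\phi^{-1}|_{B}$ is increasing for $B\in\mathcal{S}$. Since $\phi$ is a bijection, $\phi^{-1}$ is also a bijection, but its monotonicity on members of $\mathcal{S}$ must be derived.

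Let $l=\max A$, and let $B=A_{\phi(l)}^{\mathcal{S}}$ be the unique element of $\mathcal{S}$ with maximum $\phi(l)$. I would compare $\phi^{-1}(B)$ with $A$. Take the largest $m\in B$ with $m<\phi(l)$, and let $A'=A^{\mathcal{T}}_{\phi^{-1}(m)}\cup\{\dots\}$; concretely, use that $\phi^{-1}(m)$ lies in a unique element of $\mathcal{T}$. Then count via the height: the chain of initial segments of $A$ maps injectively into increasing sequences in $\cup\mathcal{S}$ ending at $\phi(l)$, and the only such chain inside $\mathcal{S}$ is the one of $B$. If this direct route becomes messy, I would instead show $|\phi(A)|$-matching by symmetric induction on both trees simultaneously, using that (b) is symmetric once one proves that $\phi^{-1}$ is also increasing on members of $\mathcal{S}$.

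Once $\Phi(\mathcal{T})\subset\mathcal{S}$ and symmetrically $\phi^{-1}(\mathcal{S})\subset\mathcal{T}$, the map $\Phi$ is a bijection with inverse $B\mapsto\phi^{-1}(B)$. Monotonicity of $\phi$ on each $A$ turns initial segments of $A$ into initial segments of $\phi(A)$, so both $\Phi$ and $\Phi^{-1}$ preserve $\sqsubseteq$.

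Finally, the two constructions are inverse to each other. Starting from $\phi$, we get $\max\phi(A_l)=\phi(l)$ because $\phi|_{A_l}$ is increasing. Starting from $\Phi$, the step (a)$\Rightarrow$(b) already showed $\phi(A)=\Phi(A)$. This establishes the claimed correspondence $\Phi\leftrightarrow\phi$.
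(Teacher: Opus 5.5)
Your direction (a)$\Rightarrow$(b) is correct and coincides with the paper's ($\phi(l)=\max\Phi(A_l)$). It even yields the extra fact $\phi(A)=\Phi(A)\in\mathcal{S}$.

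The gap is exactly at the step you flagged in (b)$\Rightarrow$(a), namely showing $\phi(A)\in\mathcal{S}$ for $A\in\mathcal{T}$. Your sketch treats the images of the initial segments of $A$ as ``the only such chain inside $\mathcal{S}$'', which presupposes that those images lie in $\mathcal{S}$. In fact no argument can work here, because (b) as literally stated does not imply (a). Take $\mathcal{T}=\{\emptyset,\{0\},\{1\}\}$, $\mathcal{S}=\{\emptyset,\{0\},\{0,1\}\}$ and $\phi=\mathrm{id}_{\{0,1\}}$:
\begin{itemize}
\item both trees are disjointly branching, and $\phi$ is a bijection $\cup\mathcal{T}\to\cup\mathcal{S}$;
\item $\phi$ is trivially increasing on every member of $\mathcal{T}$ (all are singletons);
\item $\phi^{-1}$ is increasing on every member of $\mathcal{S}$ too;
\item yet $\mathcal{S}$ is a chain and $\mathcal{T}$ is not.
\end{itemize}
So your fallback of first deriving the symmetric condition for $\phi^{-1}$ cannot rescue the step either. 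The claimed correspondence also fails for isomorphic trees. Take $\mathcal{T}=\mathcal{S}=\{\emptyset,\{0\},\{0,1\},\{2\}\}$, which has only the trivial automorphism. The bijection exchanging $1$ and $2$ is increasing on every member of $\mathcal{T}$, but it sends $\{0,1\}$ to $\{0,2\}\notin\mathcal{S}$.

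What is missing is a hypothesis, not an idea. Condition (b) has to be read as also requiring $\phi(A)\in\mathcal{S}$ for every $A\in\mathcal{T}$. That is what (a)$\Rightarrow$(b) produces, and it is what the later use $\phi(F(k))=\hat F(\phi(k))$ relies on. The paper's own proof of (b)$\Rightarrow$(a) just sets $\Phi(A)=\phi(A)$ and passes over the same point silently.

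Under that reading your last two paragraphs go through, and you do not need $\phi^{-1}(\mathcal{S})\subset\mathcal{T}$ as a separate input. $\Phi$ is injective since $\phi$ is. For surjectivity, let $B\in\mathcal{S}$ be nonempty with $\max B=\phi(l)$. Then $\phi(A_l)\in\mathcal{S}$ has maximum $\phi(l)$ by monotonicity on $A_l$, so it equals $B$ by uniqueness of the element of $\mathcal{S}$ with a given maximum. Preservation of $\sqsubseteq$ in both directions then follows, as you say, from $\phi|_{A'}$ being an increasing bijection onto $\phi(A')$, together with injectivity of $\phi$.
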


\begin{proof}
Assume that there exists a ``$\sqsubseteq$''-preserving bijection $\Phi:\mathcal{T}\to\mathcal{S}$. It is not very hard to see that, for every $n\in\cup\mathcal{T}$, there exists a unique $A\in\mathcal{T}$ such that $n= \max(A)$. We simply define $\phi(n) = \max(\Phi(A))$. For the converse, if a $\phi$ such as in \ref{what tree isomorphisms look like b} exists, we define $\Phi:\mathcal{T}\to\mathcal{S}$ given by $\Phi(A) = \{\phi(k):k\in A\}$.
\end{proof}

\begin{dfn}
\label{disjointly branching tree operations}
We define two operations for constructing disjointly branching trees.
\begin{enumerate}[label=(\alph*),]

\item\label{disjointly branching tree operations a} For a disjointly branching tree $\mathcal{T}$, we define
\[\mathcal{T}^+ = \big\{\{0\}\cup\{k+1:k\in F\}:F\in\mathcal{T}\big\}\cup\{\emptyset\},\]
which is again a disjointly branching tree.

\item\label{disjointly branching tree operations b}

Let $S$ be a countable set, and let $(\mathcal{T}_s)_{s\in S}$ be a family of disjointly branching trees. We define their disjoint union, which is again a disjointly branching tree, as follows. For each $s\in S$, let $\mathcal{T}_s'$ be a disjointly branching tree isomorphic to $\mathcal{T}_s$, chosen so that the sets $\cup \mathcal{T}_s'$, $s\in S$, are pairwise disjoint, and let
\[\sqcup_{s\in S}\mathcal{T}_s = \cup_{s\in S} \mathcal{T}_s'.\]
For notational simplicity, we suppress the dependence of the disjoint union on the particular choice of copies $(\mathcal{T}'_s)_{s\in S}$, which, for the most part, is immaterial (see \Cref{remark on disjoint union}~\ref{remark on disjoint union b} and \Cref{tree BRS spaces include sum BRS spaces}). 

\end{enumerate}
\end{dfn}

\begin{rem}\phantom{a}
\label{remark on disjoint union}
\begin{enumerate}[label=(\alph*)]

\item\label{remark on disjoint union a} The construction in \Cref{disjointly branching tree operations} \ref{disjointly branching tree operations b} is always possible. Fix a pairwise disjoint family $(M_s)_{s\in S}$ of subsets of $\mathbb{N}_0$ such that $\# M_s = \#\cup \mathcal{T}_s$ for all $s\in S$. Let $\phi_s:\cup \mathcal{T}_s\to M_s$ denote the order preserving bijection and define $\mathcal{T}_s'=\{\phi_s(F):F\in\mathcal{T}_s\}$.

\item\label{remark on disjoint union b} The construction of $\sqcup_{s\in S}\mathcal{T}_s$ does not depend on the specific choice of copies $(\mathcal{T}'_s)_{s\in S}$ in the following sense: choosing different copies $(\mathcal{T}''_s)_{s\in S}$ produces a disjoint union which is as a partially ordered set isomorphic to the original one, as follows easily, e.g., from \Cref{what tree isomorphisms look like}.

\end{enumerate}
\end{rem}

\begin{dfn}
\label{the specific tree hierarchy}
For every ordinal number $0\leq \alpha<\omega_1$ we recursively define a disjointly branching tree $\mathcal{T}_\alpha$ as follows:
\begin{enumerate}[,label=(\alph*)]

\item\label{the specific tree hierarchy a} $\mathcal{T}_0 =\{\emptyset\}$,

\item\label{the specific tree hierarchy b} For $0\leq \alpha<\omega_1$, $\mathcal{T}_{\alpha+1} = \mathcal{T}_\alpha^+$,

\item\label{the specific tree hierarchy c} For a limit ordinal number $\alpha<\omega_1$, we define
\[\mathcal{T}_\alpha = \sqcup_{0\leq \beta<\alpha}\mathcal{T}_\beta.\]

\end{enumerate}
\end{dfn}

\begin{exa}\label{exa:ordinal-trees}
\[\mathcal{T}_0 = \{\emptyset\},\;\mathcal{T}_1 = \{\emptyset,\{0\}\},\text{ and }\mathcal{T}_n = \{\emptyset,\{0\},\{0,1\},\ldots,\{0,1,\ldots,n-1\}\},\;n\in\mathbb{N}.\]
The tree $\mathcal{T}_\omega$ depends on the choice of copies $(\mathcal{T}'_n)_{n\in\mathbb{N}_0}$. If we let $M_0 = \emptyset$, $M_1 = \{0\}$, $M_2 = \{1,2\}$, $M_3 = \{3,4,5\}$, $M_4 = \{6,7,8,9\}$\ldots, and follow \Cref{remark on disjoint union} \ref{remark on disjoint union a} then
\begin{align*}
\mathcal{T}_\omega = &\{\emptyset\}\cup\big\{\{0\}\big\}\cup\big\{\{1\},\{1,2\}\big\}\cup\big\{\{3\},\{3,4\},\{3,4,5\}\big\}\cup\cdots,\\
\mathcal{T}_{\omega+1} = &\{\emptyset\}\cup\big\{\{0\}\big\}\cup\big\{\{0,1\}\big\}\cup\big\{\{0,2\},\{0,2,3\}\big\}\cup\big\{\{0,4\},\{0,4,5\},\{0,4,5,6\}\big\}\cup\cdots,\text{ and}\\
\mathcal{T}_{\omega+2} =& \{\emptyset\}\cup\big\{\{0\}\big\}\cup\big\{\{0,1\}\big\}\cup\big\{\{0,1,2\}\big\}\cup\big\{\{0,1,3\},\{0,1,3,4\}\big\}\cup\\
&\big\{\{0,1,5\},\{0,1,5,6\},\{0,1,5,6,7\}\big\}\cup\cdots.
\end{align*}
This is illustrated in \Cref{fig:trees-T-alpha}.
\vspace{5mm}

\begin{figure}[t]
  \centering
 \begin{tikzpicture}[
  vertex/.style={circle, fill, inner sep=0pt, minimum size=1.8mm},
  edge/.style={draw, thick},
  plain/.style={draw=none, fill=none, inner sep=0pt, minimum size=0pt},
  every label/.style={plain, font=\small},
]

\node[plain] at (1.3,0.7) {$\mathcal{T}_\omega$};
\node[vertex, label={[xshift=2pt]right:$0$}] (a0) at (0,0) {};
\node[vertex, label={[xshift=2pt]right:$1$}] (a1) at (1.1,0) {};
\node[vertex, label={[xshift=2pt]right:$2$}] (a2) at (1.1,-0.7) {};
\node[vertex, label={[xshift=2pt]right:$3$}] (a3) at (2.2,0) {};
\node[vertex, label={[xshift=2pt]right:$4$}] (a4) at (2.2,-0.7) {};
\node[vertex, label={[xshift=2pt]right:$5$}] (a5) at (2.2,-1.4) {};
\draw[edge] (a1) -- (a2);
\draw[edge] (a3) -- (a4) -- (a5);
\node[plain] at (3.3,-0.35) {$\cdots$};

\node[plain] at (6.1,0.7) {$\mathcal{T}_{\omega+1}$};
\node[vertex, label={[xshift=2pt,yshift=0pt]above right:$0$}] (b0) at (6.1,0) {};
\node[vertex, label={[xshift=2pt,yshift=0pt]below right:$1$}] (b1) at (4.9,-0.7) {};
\node[vertex, label={[xshift=2pt]right:$2$}] (b2) at (6.1,-0.7) {};
\node[vertex, label={[xshift=2pt]right:$3$}] (b3) at (6.1,-1.4) {};
\node[vertex, label={[xshift=2pt]right:$4$}] (b4) at (7.3,-0.7) {};
\node[vertex, label={[xshift=2pt]right:$5$}] (b5) at (7.3,-1.4) {};
\node[vertex, label={[xshift=2pt]right:$6$}] (b6) at (7.3,-2.1) {};
\coordinate (bdots) at (8.5,-0.7);
\draw[edge] (b0) -- (b1);
\draw[edge] (b0) -- (b2) -- (b3);
\draw[edge] (b0) -- (b4) -- (b5) -- (b6);
\draw[edge] (b0) -- (bdots);
\node[plain] at (8.5,-1.1) {$\ddots$};

\node[plain] at (11.7,0.7) {$\mathcal{T}_{\omega+2}$};
\node[vertex, label={[xshift=2pt]right:$0$}] (c0) at (11.7,0) {};
\node[vertex, label={[xshift=2pt,yshift=0pt]above right:$1$}] (c1) at (11.7,-0.7) {};
\node[vertex, label={[xshift=2pt,yshift=0pt]below right:$2$}] (c2) at (10.5,-1.4) {};
\node[vertex, label={[xshift=2pt]right:$3$}] (c3) at (11.7,-1.4) {};
\node[vertex, label={[xshift=2pt]right:$4$}] (c4) at (11.7,-2.1) {};
\node[vertex, label={[xshift=2pt]right:$5$}] (c5) at (12.9,-1.4) {};
\node[vertex, label={[xshift=2pt]right:$6$}] (c6) at (12.9,-2.1) {};
\node[vertex, label={[xshift=2pt]right:$7$}] (c7) at (12.9,-2.8) {};
\coordinate (cdots) at (14.1,-1.4);
\draw[edge] (c0) -- (c1);
\draw[edge] (c1) -- (c2);
\draw[edge] (c1) -- (c3) -- (c4);
\draw[edge] (c1) -- (c5) -- (c6) -- (c7);
\draw[edge] (c1) -- (cdots);
\node[plain] at (14.1,-1.8) {$\ddots$};

\end{tikzpicture}
 
  \caption{The trees $\mathcal{T}_\alpha$ for infinite $\alpha$. The labels along each finite branch form an element of the tree.}
  \label{fig:trees-T-alpha}
\end{figure}

\end{exa}

\begin{rem}
\label{pushing up}
For every $0\leq\alpha<\omega_1$ and $n\in\mathbb{N}$,
\[\mathcal{T}_{\alpha+n} = \mathcal{T}_n\cup\Big\{\{0,\ldots,n-1\}\cup\{k+n:k\in F\}:F\in\mathcal{T}_\alpha\Big\}.\]
\end{rem}

\begin{rem}
The construction of the hierarchy $(\mathcal{T}_\alpha)_{0\leq \alpha<\omega_1}$ depends on a choice made at each limit step. However, different choices produce a hierarchy $(\mathcal{T}'_\alpha)_{0\leq \alpha<\omega_1}$ such that, for every $0\leq \alpha<\omega_1$, the trees $\mathcal{T}_\alpha'$ and $\mathcal{T}_\alpha$ are isomorphic as partially ordered sets.
\end{rem}

For a disjointly branching tree, we now define the associated Bourgain-Rosenthal-Schechtman tree space in a Haar system or dyadic Hardy space and prove that the spaces $R_\alpha^{Y}$ can be realized in this form.

\begin{dfn}
\label{BRS definition trees}
Let $X$ be a Haar system space and $Y = X^0$ or $H_X$. For every disjointly branching tree $\mathcal{T}$, we define the subspace of $Y$
\[R^{Y}_\mathcal{T} = \big[(Y_F)_{F\in\mathcal{T}}\big].\]
\end{dfn}

\begin{prop}
\label{showing these are really BRS spaces}
Let $X$ be a Haar system space. Then, for every $0\leq \alpha<\omega_1$,
\begin{equation*}
R_{\alpha}^{X^0} \equiv^\mathrm{dist} R_{\mathcal{T}_\alpha}^{X^0}\text{ and }R_{\alpha}^{H_X} \equiv^{S\text{-}\mathrm{dist}} R_{\mathcal{T}_\alpha}^{H_X}.
\end{equation*}
\end{prop}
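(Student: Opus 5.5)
We argue by transfinite induction on $\alpha$, proving the two statements simultaneously; ``$\equiv$'' below denotes $\equiv^{\mathrm{dist}}$ when $Y=X^0$ and $\equiv^{S\text{-dist}}$ when $Y=H_X$. We first record two invariance facts.

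\emph{Fact 1.} If $\mathcal{T}\cong\mathcal{S}$ are isomorphic disjointly branching trees, then $R^Y_\mathcal{T}\equiv R^Y_\mathcal{S}$. Take the bijection $\phi\colon\cup\mathcal{T}\to\cup\mathcal{S}$ from \Cref{what tree isomorphisms look like}, extend it to an injection of $\mathbb{N}_0$, and consider the distributional embedding $r_k\mapsto r_{\phi(k)}$, extended via \Cref{distroembedding extension}. It maps $Y_F$ onto $Y_{\phi(F)}$ for every $F\in\mathcal{T}$, so it maps $R^Y_\mathcal{T}$ onto $R^Y_\mathcal{S}$.

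The Hardy case of Fact 1 is the delicate point, because by \Cref{rem:hardy} a non-monotone rearrangement of the Rademachers is not a square function-distributional embedding. The remedy is to use the structure of the tree. Write a general element as $f=\sum_{F\in\mathcal{T}}f_F$ with $f_F\in Y_F\ominus\sum_{G\sqsubset F}Y_G$, the ``new'' part at node $F$, i.e.\ the span of the $b^F_I$ with $I\in\mathcal{D}^{|F|-1}$. By \Cref{constructing associated haar system in the more familiar way}, such a part lives on the Haar level $\max F$. It therefore suffices to show that $S(f)^2$ is a fixed Borel function of the random variables $(r_k)_{k\in\cup\mathcal{T}}$, depending only on the tree structure, which $\phi$ transports. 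Concretely, $S(f)^2=\sum_F \bigl(|\text{new coefficient at }F|\bigr)^2$ evaluated along the branch, and this expression involves only $r_k$ for $k\in F$. Since $\phi$ is increasing on each $F$, \Cref{standard square function distributional isomorphism} applied branchwise gives the required identity of distributions, first on finite sums and then by density and \Cref{square function continuity}.

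\emph{Base and successor steps.} For $\alpha=0$ both spaces are $\{0\}$. For the successor step, suppose $R^Y_\beta\equiv R^Y_{\mathcal{T}_\beta}$. We show that $R^Y_{\mathcal{T}^+}=(R^Y_\mathcal{T}\oplus R^Y_\mathcal{T})^+_{\mathrm{disj}}$ up to $\equiv$. Let $\tilde J$ be the shift distributional embedding $r_k\mapsto r_{k+1}$, which is increasing and hence also square-function-distributional. Then $Y_{\{0\}\cup(F+1)}$ is spanned by $h_{[0,1)}$ together with $T_{[0,1/2)}\tilde J^{-1}$-images and $T_{[1/2,1)}\tilde J^{-1}$-images of $Y_F$. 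Equivalently, a $\sigma(r_0,r_{k+1}:k\in F)$-measurable mean-zero function is $c\,r_0$ plus a function whose restrictions to each half are compressions of elements of $Y_F$. One checks this using $r_{k+1}|_{[0,1/2)}=T_{[0,1/2)}r_k$, and similarly on the other half. Summing over $F\in\mathcal{T}$ gives the identity of spaces. Combining this with the remark that $\equiv$ is preserved by augmented disjoint sums, and with \eqref{standard half compressions play nice with square function} in the Hardy case, completes the step.

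\emph{Limit step.} Let $\alpha$ be a limit ordinal, and let $\mathcal{T}_\alpha=\cup_\beta\mathcal{T}'_\beta$ with $M_\beta=\cup\mathcal{T}'_\beta$ pairwise disjoint. Then $R^Y_{\mathcal{T}_\alpha}=[(R^Y_{\mathcal{T}'_\beta})_{\beta<\alpha}]$. The spaces $R^Y_{\mathcal{T}'_\beta}\subset Y_{M_\beta}$ are independent and square function-independent by \Cref{rem:faithful-embeddings}. By Fact 1 and the induction hypothesis, there are embeddings $R^Y_\beta\to R^Y_{\mathcal{T}'_\beta}$ of the required type. These form admissible $J_n$ in \Cref{def independent sum}, so $R^Y_{\mathcal{T}_\alpha}$ is a realization of $(\oplus_{\beta<\alpha}R^Y_\beta)_{Y,\mathrm{ind}}$. \Cref{canonical properties of independent sums}~\ref{canonical properties of independent sums a} then gives $R^Y_{\mathcal{T}_\alpha}\equiv R^Y_\alpha$.

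The main obstacle is the Hardy-space version of Fact 1, that is, justifying that tree isomorphisms, which are increasing only along branches, still induce square function-distributional embeddings. If the branchwise argument becomes messy, I would avoid Fact 1 altogether: at each limit step choose $M_\beta$ and build the copies $\mathcal{T}'_\beta$ by monotone shifts only, so that only increasing maps $J_F$ are ever used, and at the end compare different choices only through \Cref{canonical properties of independent sums}.
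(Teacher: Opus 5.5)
Your proposal is correct and follows essentially the paper's route. The successor step is the paper's identity $Y_G=\langle h_{[0,1)}\rangle+T_{[0,1/2)}(Y_F)+T_{[1/2,1)}(Y_F)$ for $G=\{0\}\cup\{k+1:k\in F\}$, and the limit step is the independence argument plus \Cref{canonical properties of independent sums}~\ref{canonical properties of independent sums a}, which the paper only calls a similar verification. Your Fact~1, including the branchwise square-function computation, is precisely \Cref{tree BRS spaces include sum BRS spaces}; the paper proves it right afterwards and implicitly relies on it for arbitrary realizations of $\mathcal{T}_\alpha$.

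There are two small slips, neither of which is a gap:
\begin{enumerate}
\item Extending $\phi$ to an injection of $\mathbb{N}_0$ is unnecessary and can be impossible, for instance when $\cup\mathcal{S}=\mathbb{N}_0$ but $\cup\mathcal{T}\neq\mathbb{N}_0$. It suffices to define the embedding on $L_0(\sigma(r_k:k\in\cup\mathcal{T}))$.
\item In the successor step, the correct relation is $T_{[0,1/2)}g=\chi_{[0,1/2)}\tilde{J}g$, and similarly on $[1/2,1)$. It is not a $\tilde{J}^{-1}$-image of $Y_F$; your ``equivalently'' sentence in fact uses the correct relation.
\end{enumerate}
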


\begin{proof}
We focus on the case $Y = H_X$, as the other case is proved identically by modifying the notation. Because $\mathcal{T}_0 = \{\emptyset\}$, we clearly have $R_{\mathcal{T}_0}^{Y} = R^{Y}_0$. We will show the following, which yields the desired conclusion from \Cref{BRS definition sums}:
\begin{enumerate}[label=(\alph*)]

\item\label{showing these are really BRS spaces proof a} For every $\alpha<\omega_1$,
\[R_{\mathcal{T}_{\alpha+1}}^{Y} = \big(R^{Y}_{\mathcal{T}_{\alpha}}\oplus R^{Y}_{\mathcal{T}_{\alpha}}\big)^+_{\mathrm{disj}}.\]

\item\label{showing these are really BRS spaces proof b} For every limit ordinal $\alpha<\omega_1$,
\[R^Y_{\mathcal{T}_{\alpha}}\equiv^{S\text{-}\mathrm{dist}}\big(\oplus_{\beta<\alpha}R^{Y}_{\mathcal{T}_{\beta}}\big)_{Y,\mathrm{ind}}.\]

\end{enumerate}
We will prove the following: for any finite subset $F$ of $\mathbb{N}_0$, letting $G = \{0\}\cup\{k+1:k\in F\}$,
\begin{equation}
\label{showing these are really BRS spaces eq1}
Y_G = \langle h_{[0,1)}\rangle + T_{[0,1/2)}(Y_F) + T_{[1/2,1)}(Y_F).
\end{equation}
Using \eqref{showing these are really BRS spaces eq1}, \cref{showing these are really BRS spaces proof a} is a rather straightforward verification based on \Cref{disjointly branching tree operations} \ref{disjointly branching tree operations a} and \Cref{BRS definition trees}. \Cref{showing these are really BRS spaces proof b} is a similar verification using the corresponding definitions. It, therefore, suffices to prove \eqref{showing these are really BRS spaces eq1}. Because the dimensions of both sides of \eqref{showing these are really BRS spaces eq1} are equal, it suffices to show that the right side is included in the left. Because $0\in G$, we have $h_{[0,1)} = r_0\in Y_G$. It is enough to show that $T_{[0,1/2)}Y_F\subset Y_G$, since the argument for $T_{[1/2,1)}Y_F\subset Y_G$ is identical. As $T_{[0,1/2)}$ is a homomorphism, it suffices to verify that $T_{[0,1/2)}r_k\in Y_G$ for each $k\in F$. Now, $T_{[0,1/2)}r_k = r_{k+1}\chi_{[0,1/2)}$ and this function is $\sigma_G$-measurable because $k+1\in G$, the set $[0,1/2)$ is $r_0$-measurable, and $0\in G$.
\end{proof}

\begin{rem}\label{rem:BRS-equals-tree-BRS}
  Note that in the above proof, specifically in \cref{showing these are really BRS spaces proof b}, the right-hand side can in fact be taken as the definition of $R_\alpha^{Y}$. This shows that, with appropriate choices at limit steps, one may ensure that for all $0 \leq \alpha < \omega_1$, we have $R_\alpha^{Y} = R_{\mathcal{T}_\alpha}^{Y}$. {
In particular, if $\alpha$ is a limit ordinal, we may assume that $R_{\mathcal{T}_\alpha}^{Y} = [(J^{\alpha}_\beta(R_{\mathcal{T}_\beta}^{Y}))_{\beta < \alpha}]$, where $J^{\alpha}_\beta\colon Y\to Y$ are (square function-) distributional embeddings with (square function-) independent ranges, given by $J^{\alpha}_\beta = J_{F^{\alpha}_\beta}$ for some sequence $(F^{\alpha}_\beta)_{\beta < \alpha}$ of pairwise disjoint subsets of~$\mathbb{N}_0$ (see \Cref{rem:faithful-embeddings}).}
\end{rem}

The following notation assists us prove that whenever the trees $\mathcal{T}$ and $\mathcal{S}$ are isomorphic then so are the spaces $R_\mathcal{T}^{Y}$ and $R_\mathcal{S}^{Y}$.

\begin{ntt}
\label{simple BRS basis}
Let $\mathcal{T}$ be a disjointly branching tree and let $k\in\cup\mathcal{T}$.
\begin{enumerate}[label=(\alph*),]

\item Denote by $F(k)$ the unique $F\in\mathcal{T}$ with maximum $k$ and define $h(k) = |F(k)|-1$.

\item For $I\in\mathcal{D}^{h(k)}$ define $b_I^k = b_I^{F(k)}$.

\end{enumerate}
\end{ntt}

\begin{prop}\label{prop:basis-of-tree-space}
Let $X$ be a Haar system space and $Y = X^0$ or $H_X$. For any disjointly branching tree  $\mathcal{T}$,
\[R^{Y}_\mathcal{T} = \big[\big((b_I^k)_{I\in\mathcal{D}^{h(k)}}\big)_{k\in\cup\mathcal{T}}\big].\]
\end{prop}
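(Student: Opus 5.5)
We prove both inclusions by working inside the finite-dimensional spaces $Y_F$, $F\in\mathcal{T}$. The key fact is that for a finite set $F=\{k_0<\cdots<k_n\}$ the space $Y_F$ is the image of $Y_n$ under the distributional embedding $J_F$. By \Cref{haar basis formulas}~\ref{haar basis formulas b}, $Y_F$ is spanned by $(b_I^F)_{I\in\mathcal{D}_n}$. These functions are $2^{n+1}-1=\dim Y_F$ in number and nonzero with disjoint supports on each level, so they form a basis of $Y_F$. Moreover, by \Cref{constructing associated haar system in the more familiar way}, each $b_I^F$ with $I\in\mathcal{D}^i$ lives on level $k_i$.

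The main step is a compatibility observation. Let $G\sqsubseteq F$ with $G=\{k_0<\cdots<k_i\}$. Then for $I\in\mathcal{D}_i$ we have $b_I^G=b_I^F$. To see this, note that the recursion \eqref{eq:22} defining $b_I^F$ for $I\in\mathcal{D}^j$ only involves $r_{k_j}$ and the functions $b_{\pi(I)}^F$ at earlier levels. Hence induction on $j\le i$ shows that $b_I^F$ depends only on $k_0,\dots,k_j$.

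Now take $F\in\mathcal{T}$ and $0\le i\le n$. Set $G=\{k_0,\dots,k_i\}$, which lies in $\mathcal{T}$ by downward closedness and is the unique element of $\mathcal{T}$ with maximum $k_i$. So $G=F(k_i)$ and $h(k_i)=i$, and the compatibility fact gives $b_I^F=b_I^{k_i}$ for every $I\in\mathcal{D}^i$. Consequently, $Y_F$ is spanned by $\bigcup_{k\in F}\{b_I^k:I\in\mathcal{D}^{h(k)}\}$. This gives the inclusion ``$\subset$'' after taking closed spans over $F\in\mathcal{T}$.

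For the reverse inclusion, let $k\in\cup\mathcal{T}$. Then $b_I^k=b_I^{F(k)}$ belongs to $Y_{F(k)}$, and $F(k)\in\mathcal{T}$. The existence and uniqueness of $F(k)$ is noted in the proof of \Cref{what tree isomorphisms look like} and follows from the disjoint branching property~\ref{definition disjointly branching b}: if $A,B\in\mathcal{T}$ both have maximum $k$, then $A\cap B\sqsubseteq A$ and $A\cap B\sqsubseteq B$ both contain $k$, which forces $A=A\cap B=B$. Taking closed linear spans of both sides in $Y$ yields the stated equality.

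The only delicate point is the compatibility $b_I^G=b_I^F$ for initial segments $G\sqsubseteq F$. The rest is routine bookkeeping.
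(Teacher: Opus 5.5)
Your proof is correct and takes the same route as the paper. The inclusion $\supset$ comes from $b_I^k=b_I^{F(k)}\in Y_{F(k)}$, and the inclusion $\subset$ comes from $Y_F=[(b_I^F)_{I\in\mathcal{D}_{|F|-1}}]=[((b_I^k)_{I\in\mathcal{D}^{h(k)}})_{k\in F}]$ for $F\in\mathcal{T}$. The paper calls this last identity an ``easy fact''; your compatibility argument $b_I^G=b_I^F$ for initial segments $G\sqsubseteq F$, via the recursion \eqref{eq:22}, is exactly the detail behind it.
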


\begin{proof}
For every $k\in\mathcal{T}$ and $I\in\mathcal{D}^{h(k)}$, $b_I^k = b_I^{F(k)}\in Y_{F(k)}\subset R^{H_X}_\mathcal{T}$, which yields ``$\supset$''. The inverse inclusion follows from the easy fact that, for all $F\in\mathcal{T}$, $Y_F = [(b_I^F)_{I\in\mathcal{D}_{|F|-1}}] = [((b_I^k)_{I\in\mathcal{D}^{h(k)}})_{k\in F}]$.
\end{proof}

\begin{prop}
\label{tree BRS spaces include sum BRS spaces}
Let $X$ be a Haar system space. If $\mathcal{T}$ and $\mathcal{S}$ are isomorphic disjointly branching trees then
\[R_\mathcal{T}^{X^0} \equiv^{\mathrm{dist}}R_\mathcal{S}^{X^0}\text{ and }R_\mathcal{T}^{H_X} \equiv^{S\text{-}\mathrm{dist}}R_\mathcal{S}^{H_X}.\]
\end{prop}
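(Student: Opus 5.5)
By \Cref{what tree isomorphisms look like}, an isomorphism $\Phi\colon\mathcal{T}\to\mathcal{S}$ comes from a bijection $\phi\colon\cup\mathcal{T}\to\cup\mathcal{S}$ that is increasing on every $A\in\mathcal{T}$, with $\Phi(A)=\phi(A)$. The plan is to use $\phi$ to send the basis of \Cref{prop:basis-of-tree-space} onto the corresponding basis of $R^Y_\mathcal{S}$. Concretely, for $k\in\cup\mathcal{T}$ we have $F(\phi(k))=\phi(F(k))$, hence $h(\phi(k))=h(k)$. We define $J$ on $\langle b_I^k : k\in\cup\mathcal{T},\ I\in\mathcal{D}^{h(k)}\rangle$ by $Jb_I^{k}=b_I^{\phi(k)}$. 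Since the $b_I^k$ are linearly independent (they are distinct members of a biorthogonal-type system arising from Walsh products; alternatively this follows a posteriori from preservation of distributions), $J$ is well defined. It then suffices to show that $J$ preserves distributions (case $X^0$), respectively square function distributions (case $H_X$). Then $J$ is an isometry and extends to a surjective isomorphism of closures, which is what the statement requires.

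\textbf{Case $Y=X^0$.} I would show that the map $r_k\mapsto r_{\phi(k)}$, $k\in\cup\mathcal{T}$, is distributional, since $(r_k)_{k\in\cup\mathcal{T}}$ and $(r_{\phi(k)})$ are both i.i.d.\ Rademacher families. By \Cref{distroembedding extension}, it extends to a distributional homomorphism $\tilde J\colon L_0(\sigma(r_k:k\in\cup\mathcal{T}))\to L_0$. For $F=\{k_0<\dots<k_n\}\in\mathcal{T}$, the monotonicity of $\phi$ on $F$ gives $\phi(F)=\{\phi(k_0)<\dots<\phi(k_n)\}$. Both $\tilde J\circ J_F$ and $J_{\phi(F)}$ are distributional embeddings on $Y_n$ sending $r_i\mapsto r_{\phi(k_i)}$, so by the uniqueness part of \Cref{distroembedding extension} they coincide. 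Hence $\tilde J b_I^F=b_I^{\phi(F)}$, and in particular $\tilde J$ extends $J$. Since $\tilde J$ is distributional, so is $J$.

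\textbf{Case $Y=H_X$.} Here $\tilde J$ is not square-function preserving in general: by \Cref{rem:hardy}, the map $\phi$ need not be increasing globally. So I would compute $S$ directly. For $f=\sum_{k}\sum_{I\in\mathcal{D}^{h(k)}}a_I^kb_I^k$, the functions $b_I^k$ lie in level $k$ of the Haar system and distinct $k$ give distinct levels. Therefore the martingale differences of $f$ are $d_k=\sum_I a_I^kb_I^k$ and
\[
S(f)^2=\sum_{k}\sum_{I\in\mathcal{D}^{h(k)}}(a_I^k)^2\chi_{\supp b_I^k},
\]
because the $b_I^k$ with fixed $k$ have disjoint supports. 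The key point is that $\chi_{\supp b_I^{F}}=\tilde J_F\chi_I$ is a function of $(r_j)_{j\in F}$, and indeed is $\sigma(r_j: j\in F\setminus\{\max F\})$-measurable. So $S(f)^2=\Psi((r_j)_{j\in\cup\mathcal{T}})$ for a Borel $\Psi$ built from the supports, and likewise $S(Jf)^2$ is obtained by applying the same formula to $\chi_{\supp b_I^{\phi(k)}}=\tilde J\chi_{\supp b_I^k}$, by the $X^0$ case. Thus $S(Jf)^2=\tilde J(S(f)^2)$. Because $\tilde J$ is a distributional homomorphism, this identity gives $\mathrm{dist}(S(Jf))=\mathrm{dist}(S(f))$.

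The main obstacle is this Hardy case: one must verify carefully that the square function formula above is correct, namely that different $k$ really give different martingale levels and that there is no cross term within a level. This requires the disjoint supports of $(b_I^{F})_{I\in\mathcal{D}^{h(k)}}$ from \Cref{constructing associated haar system in the more familiar way}, and that no two tree nodes share a maximum. Applying $\tilde J$ only to the support indicators, rather than to $f$ itself, is what avoids the failure described in \Cref{rem:hardy}.
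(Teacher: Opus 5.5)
Your proposal is correct and follows essentially the same route as the paper. Both transport via the bijection $\phi$, extend $r_k\mapsto r_{\phi(k)}$ to the distributional homomorphism $\tilde J$, and identify $\tilde J$ with $J_{\phi(F)}J_F^{-1}$ on each $Y_F$ (you via uniqueness of the extension, the paper via the homomorphism property); in the Hardy case both use that distinct $k$ occupy distinct Haar levels to get $S(Jf)^2=\tilde J\bigl(S(f)^2\bigr)$. The only soft spot is your linear-independence justification: it is more simply seen from the $b_I^k$ being pairwise orthogonal (distinct levels or disjoint supports), or avoided altogether by defining $J$ as the restriction of $\tilde J$.
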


\begin{proof}
Let $((b^k_I)_{I\in\mathcal{D}^{h(k)}})_{k\in\cup\mathcal{T}}$ and $((\hat b^k_I)_{I\in\mathcal{D}^{\hat h(k)}})_{k\in\cup\mathcal{S}}$ be the systems from \Cref{simple BRS basis} associated with $\mathcal{T}$ and $\mathcal{S}$. For $k\in\cup \mathcal{T}$ and $m\in\cup\mathcal{S}$, let $F(k)$ and $\hat F(m)$ denote  the unique members of $\mathcal{T}$ and $\mathcal{S}$ with maxima $k$ and $m$.  Let $\phi:\cup\mathcal{T}\to\cup\mathcal{S}$ witness the isomorphism of $\mathcal{T}$ and $\mathcal{S}$ as in \Cref{what tree isomorphisms look like} \ref{what tree isomorphisms look like b}. Then, for every $k\in\cup\mathcal{T}$, $\phi(F(k)) = \hat F(\phi(k))$, and in particular,
\[h(k) = |F(k)| - 1 = |\hat F(\phi(k))| - 1 = \hat h(\phi(k)).\]
Hence, $((\hat b^k_I)_{I\in\mathcal{D}^{\hat h(k)}})_{k\in\cup\mathcal{S}} = ((\hat b^{\phi(k)}_I)_{I\in\mathcal{D}^{h(k)}})_{k\in\cup\mathcal{T}}$ and the correspondence  $b^k_I\mapsto \hat b^{\phi(k)}_I$ extends to a linear bijection between the linear spans of these systems. We will next show that it is a distributional isomorphism.

Let $J:L_0(\sigma(r_k:k\in\cup\mathcal{T}))\to L_0$ denote the distributional embedding given by $r_k\mapsto r_{\phi(k)}$. It is worth noting that, since $\phi$ may fail to be globally increasing, $J$ is a priori not a square function-distributional embedding. However, for every $F\in\mathcal{T}$ and $k\in F$, the restriction $\phi|_F$ is increasing, so
\[Jr_k = r_{\phi(k)} = J_{\phi(F)}J_F^{-1}r_k.\]
Because distributional isomorphisms are homomorphisms, we deduce $J|_{Y_F} = J_{\phi(F)}J_F^{-1}|_{Y_F}$. In particular, for every $k\in\cup\mathcal{T}$ and $I\in \mathcal{D}_{h(k)}$,
\[Jb_I^k = J_{\hat F(\phi(k))}J_{F(k)}^{-1}b_I^k = \hat b_I^{\phi(k)},\]
and thus, $b^k_I\mapsto \hat b^{\phi(k)}_I$ extends to a distributional embedding. To conclude, we will show that it extends to a square function-distributional isomorphism as well.

Let us fix a collection of scalars $((a^k_I)_{I\in\mathcal{D}^{h(k)}})_{k\in\cup\mathcal{T}}$, with only finitely many non-zero terms and let $f = \sum_{k\in\cup\mathcal{T}}\sum_{I\in\mathcal{D}^{h(k)}}a_I^k b_I^k$. Recall that $J$ is a homomorphism and, by the proof of \Cref{standard square function distributional isomorphism}, for every $F\in\mathcal{T}$, $SJ_F = \tilde J_FS$:
\begin{align*}
S(Jf) &= \Big(\sum_{k\in\cup\mathcal{T}}S^2\big(\sum_{I\in\mathcal{D}^{h(k)}}a_I^k \hat b_I^{\phi(k)}\big)\Big)^{1/2} = \Big(\sum_{k\in\cup\mathcal{T}}S^2\big(J_{F(k)}\sum_{I\in\mathcal{D}^{h(k)}}a_I^k b_I^{k}\big)\Big)^{1/2}\\
&= \Big(\sum_{k\in\cup\mathcal{T}}\tilde J_{F(k)}S^2\big(\sum_{I\in\mathcal{D}^{h(k)}}a_I^k b_I^{k}\big)\Big)^{1/2} = \Big(\sum_{k\in\cup\mathcal{T}}\tilde JS^2\big(\sum_{I\in\mathcal{D}^{h(k)}}a_I^k b_I^{k}\big)\Big)^{1/2}\\
&=\tilde J\Big(\sum_{k\in\cup\mathcal{T}}S^2\big(\sum_{I\in\mathcal{D}^{h(k)}}a_I^k b_I^{k}\big)\Big)^{1/2} = \tilde JS(f),
\end{align*}
which yields $\mathrm{dist}(S(Jf)) = \mathrm{dist}(S(f))$.
\end{proof}

\begin{rem}
 In the Hardy case $Y = H_X$, although isomorphic trees $\mathcal{T}$ and $\mathcal{T}'$ give rise to square function-distributionally isomorphic subspaces $R_{\mathcal{T}}^Y$ and $R_{\mathcal{T}'}^Y$, the complementation of these spaces in~$Y$ may actually depend on the trees (see \Cref{rem:hardy}~\ref{rem:hardy:b} and \Cref{thm:tree-spaces-uncomplemented}).
\end{rem}

\begin{ntt}
\label{orthogonal projection of BRS}
Let $\mathcal{T}$ be a disjointly branching tree. For every non-empty $F\in\mathcal{T}$, let $F' = F\setminus\{\max(F)\}$. We define the orthogonal projection $P_\mathcal{T}:\langle h_I:I\in\mathcal{D}\rangle \to \langle Y_F:F\in \mathcal{T} \rangle$ by letting
\[P_\mathcal{T}g = \sum_{F\in\mathcal{T}\setminus\{\emptyset\}}\Big(\mathbb{E}_F - \mathbb{E}_{F'}\Big)(g),\]
where $\mathbb{E}_\emptyset$ is the zero operator.
\end{ntt}

\begin{rem}
\label{orthoprojection formulas}
Taking $X = L_2$, $P_\mathcal{T}$ indeed extends to the orthogonal projection onto $R_\mathcal{T}^{X^0}$, and
\[P_\mathcal{T}g  = \sum_{k\in\cup\mathcal{T}}\mathbb{E}_{F(k)}\Big(\mathbb{E}_k-\mathbb{E}_{k-1}\Big)(g) = \sum_{k\in\cup\mathcal{T}}\sum_{I\in\mathcal{D}^{h(k)}}|I|^{-1}\langle b_I^k,g\rangle b_I^k.\]
The first expression follows from expanding $g = \sum_{k=0}^\infty (\mathbb{E}_k - \mathbb{E}_{k-1})g$ (here, $\mathbb{E}_{-1}$ is the zero operator), and, for each $k\in\cup\mathcal{T}$, the existence of a unique $F(k)\in\mathcal{T}$ with $k=\max(F(k))$. The second expression follows from  \Cref{haar basis formulas}~\ref{haar basis formulas a}.
\end{rem}

The following is essentially stated for $H^1$ in \cite[Remark 4, page 223]{bourgain:rosenthal:schechtman:1981}, and could be considered standard (see, e.g., \cite[Theorem 1]{rudin:1962} and \cite{MR114110}). We used ChatGPT~5.4 to retreive the original argument and assist in adapting it to our setting. We include a proof for completeness, although some details are left to the reader. The argument is based on $R_\mathcal{T}^{Y}$ being translation invariant with respect to the Walsh group structure on $[0,1]$.

\begin{prop}
\label{BRS orthocomplemented or not at all}
Let $X$ be a Haar system space, $Y = X^0$ or $H_X$, and $\mathcal{T}$ be a disjointly branching tree. Then $R_\mathcal{T}^{Y}$ is complemented in $Y$ if and only if $P_\mathcal{T}$ extends to a bounded linear operator on $Y$. More precisely, $\|P_\mathcal{T}\| \leq \|Q\|$, for every linear projection $Q:Y\to Y$ onto $R_\mathcal{T}^{Y}$.
\end{prop}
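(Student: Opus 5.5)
The plan is the classical averaging argument (Rudin's theorem on invariant projections), run over the Walsh group. We view $[0,1)$, via binary expansions, as the compact abelian group $G=\{0,1\}^{\mathbb{N}_0}$ with Haar measure equal to Lebesgue measure. For $t\in G$ let $\tau_t$ be the (a.e.\ defined) translation $x\mapsto x\oplus t$ and $U_t f = f\circ\tau_t$. Each Rademacher function is a character: $r_k\circ\tau_t = r_k(t) r_k$. Hence every Walsh function $w_A=\prod_{k\in A}r_k$ satisfies $U_t w_A = w_A(t) w_A$.

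First I check that each $U_t$ is an isometry of $Y$. For $Y=X^0$ this is immediate, since $\tau_t$ is measure preserving, so $U_t$ is a distributional embedding. For $Y=H_X$, $U_t$ permutes dyadic intervals within each level and sends $h_I$ to $\pm h_{I'}$, where $I\mapsto I'$ is induced by $\tau_t$. It follows that $S(U_tf)=S(f)\circ\tau_t$, so $\mathrm{dist}(S(U_tf))=\mathrm{dist}(S f)$ and $\|U_tf\|_{H_X}=\|f\|_{H_X}$. Second, $R_\mathcal{T}^Y$ is $U_t$-invariant: each $Y_F$ is spanned by Walsh functions $w_A$ with $\emptyset\ne A\subset F$, and these are eigenvectors of $U_t$. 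Moreover, $P_\mathcal{T}$ commutes with every $U_t$. Indeed, on the Walsh basis $P_\mathcal{T}$ acts diagonally: by \Cref{orthoprojection formulas}, $P_\mathcal{T} w_A = w_A$ if $A\subset F$ for some $F\in\mathcal{T}$ (equivalently $A\subset F(\max A)$), and $P_\mathcal{T}w_A=0$ otherwise. Being a Fourier multiplier, it commutes with translations.

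Now let $Q$ be any bounded projection onto $R_\mathcal{T}^Y$. I will define the average
\[\tilde Q f=\int_G U_t^{-1}QU_t f\,dt,\]
interpreting it first on Walsh polynomials $f$. Its boundedness $\|\tilde Qf\|_Y\le\|Q\|\|f\|_Y$ needs measurability of $t\mapsto U_t^{-1}QU_tf$ and a Bochner or weak integral. The convenient route is to note that $U_tf$ depends only on finitely many coordinates of $t$ when $f$ is a Walsh polynomial. Thus $t\mapsto U_tf$ is a simple function, and the integral reduces, on each fixed finite-dimensional range, to a finite average over $\{0,1\}^N$ for $N$ large. To avoid infinite-dimensional issues on the $Q$ side, I will compute Walsh coefficients instead: for $g\in Y^*$ a Walsh polynomial, the coefficient functional $\langle w_B,\cdot\rangle$ is continuous on $Y$ (via \Cref{hardy spaces without c_0}), so it suffices to identify $\langle w_B,\tilde Q w_A\rangle$.

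The key computation is
\[\langle w_B, U_t^{-1}QU_tw_A\rangle = w_A(t)w_B(t)\langle w_B,Qw_A\rangle .\]
Integrating in $t$ and using orthogonality of characters gives $\langle w_B,\tilde Q w_A\rangle=\delta_{AB}\langle w_A,Qw_A\rangle$. So $\tilde Q$ is diagonal in the Walsh system. If $w_A\in R_\mathcal{T}^Y$, then $Qw_A=w_A$ and the diagonal entry is $1$. If $w_A\notin R_\mathcal{T}^Y$, then $Qw_A\in R_\mathcal{T}^Y$, whose Walsh coefficients vanish off the admissible sets $A$ (coefficient functionals are continuous and vanish on each $Y_F$ off admissible sets), so the entry is $0$. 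Hence $\tilde Q=P_\mathcal{T}$ on Walsh polynomials, and $\|P_\mathcal{T}\|\le\|Q\|$. The converse direction is trivial: if $P_\mathcal{T}$ is bounded, its extension is a projection onto $R_\mathcal{T}^Y$, by density of $\langle Y_F\rangle$ and the fact that $P_\mathcal{T}$ fixes each $Y_F$.

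The main obstacle is making the averaging rigorous, specifically the norm bound $\|\tilde Qf\|\le\|Q\|\|f\|$ when $Y$ is not reflexive or the integral is only weak. I would handle it by truncation. Fix a Walsh polynomial $f$ depending on coordinates below $N$, and replace the integral by the finite average over $t\in\{0,1\}^M$, $M\ge N$. This gives an operator $Q_M$ with $\|Q_Mf\|\le\|Q\|\|f\|$, whose Walsh coefficients at sets $B\subset[0,M)$ agree with those of $P_\mathcal{T}f$. Since $P_\mathcal{T}f$ is a finite Walsh polynomial, we get $\mathbb{E}_{[0,M)}Q_Mf=P_\mathcal{T}f$. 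Finally, $\mathbb{E}_{[0,M)}$ has norm one on $Y$ by \Cref{norm-one projection and isometry}, which yields the bound.
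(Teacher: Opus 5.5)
Your proposal is correct and follows essentially the same route as the paper. Both arguments average $Q$ over the dyadic (Walsh-group) translations: your $U_t$ for $t\in\{0,1\}^M$ are the paper's operators $T_A$, $A\subset\{0,\dots,M-1\}$. These are isometries of $Y$ leaving $R_\mathcal{T}^Y$ invariant, and since Walsh functions are characters, the average is diagonal in the Walsh basis with $0/1$ entries.

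The only difference is the final step. The paper passes to a strong-operator limit of the averages $P_n$, leaving the convergence details to the reader, and then shows each $w_A$ is an eigenvector. You instead compress the finite average by the norm-one projection $\mathbb{E}_{[0,M)}$ and read off $\mathbb{E}_{[0,M)}Q_Mf=P_\mathcal{T}f$ directly from the Walsh coefficients. This avoids the limiting argument and is a slightly cleaner way to obtain $\|P_\mathcal{T}\|\le\|Q\|$.
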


\begin{proof}
This proof is more naturally stated in the context of the measure space $\{-1,1\}^\mathbb{N}$, instead of $[0,1]$. Nevertheless, we use the language of the latter for consistency. We focus on the case $Y = H_X$, because the case $Y = X^0$ is nearly identical. Assume that $Q:Y\to Y$ is an arbitrary bounded linear projection onto $R_\mathcal{T}^{Y}$.

Let $k\in\mathbb{N}_0$ and let $T_k:\langle\{h_I:I\in\mathcal{D}\}\rangle \to \langle\{h_I:I\in\mathcal{D}\}\rangle$ denote the linear operator such that, for every $I\in\mathcal{D}^k$, $Tf$ swaps $f|_{I^+}$ with $f|_{I^-}$. Then $T_k$ is an onto distributional isomorphism with $T_k^2 = id$. Note that, for $k,k'\in\mathbb{N}_0$, $T_kT_{k'} = T_{k'}T_k$ and  furthermore, for every finite $F\subset\mathbb{N}_0$, $\mathbb{E}_FT_k = T_k\mathbb{E}_F$. Combined with $T_k$  being a distributional isomorphism, this implies that $T_k$ is a square function-distributional embedding and thus extends to an onto isometry $T_k:Y\to Y$. For every finite $A\subset\mathbb{N}_0$ we define the operator $T_A = \prod_{k\in A}T_k$. Note that, for any finite $A,B\subset \mathbb{N}_0$, $T_AT_B = T_{A\triangle B}$, and in particular $T_A^2 = id$.

For every $n\in\mathbb{N}$, let
\[P_n = \frac{1}{2^n}\sum_{A\subset\{0,\ldots,n-1\}}T_AQT_A.\]
Noting that, for all finite $A\subset\mathbb{N}_0$, $R_\mathcal{T}^{Y}$ is $T_A$-invariant, we obtain that $P_n$ is a bounded linear projection onto $R_\mathcal{T}^{Y}$ of norm at most $\|Q\|$. It can be shown that for every $I\in\mathcal{D}$, $(P_nh_I)_{n\in\mathbb{N}}$ is Cauchy (this is based on the set $\{T_Ah_I:A\in[\mathbb{N}_0]^{<\infty}\}$  being finite and $T_A\mathbb{E}_m = T_{A\cap \{0,\ldots,m\}}\mathbb{E}_m$), and therefore $(P_n)_{n=1}^\infty$ converges in the strong operator topology to a bounded linear projection  $P:Y\to R_\mathcal{T}^{Y}$, with $\|P\|\leq \|Q\|$. For a finite non-empty $A\subset\mathbb{N}_0$, consider the Walsh function $w_A = \prod_{k\in A}r_k$. We will show that, for every such $A$, $w_A$ is an eigenvector of $P$ (with eigenvalue $\lambda_A = 0$ or $1$, because $P$ is a projection). Because the Walsh functions are an orthonormal basis of $L_2^0$ (and their linear span is dense in $Y$), this suggests that $P$ is the orthogonal projection onto the closed linear span of $\{w_A:\lambda_A = 1\}$ (which coincides with $[\{w_A:\emptyset\neq A\subset F, F\in\mathcal{T}\}] = R_\mathcal{T}^{Y}$).

By approximation, it suffices to show that, for a finite non-empty $B\subset\mathbb{N}_0$ and fixed $\max(B)\leq m<n\in\mathbb{N}$,  the Walsh function $w_B$ and is an eigenvector of $P_{m,n} = \mathbb{E}_mP_n\mathbb{E}_m$. The Walsh functions $w_A$, $\emptyset\neq A\subset\{0,\ldots,m\}$, form an orthonormal basis of $\langle\{h_I:I\in\mathcal{D}_{m-1}\}\rangle$. Expand
\begin{align}
\label{BRS orthocomplemented or not at all eq1}
P_{m,n}w_B = \sum_{\emptyset\neq A\subset\{0,\ldots,m\}}\lambda_Aw_A,
\end{align}
for some unique collection of coefficients $\lambda_A$, $\emptyset\neq A\subset\{0,\ldots,m\}$. For a fixed, but arbitrary, $k\leq m$, note that $T_kP_{m,n} = P_{m,n}T_k$ and, for $\emptyset\neq A\subset\{0,\ldots,m\}$, $T_kw_A = (-1)^{\delta_k(A)}w_A$. Applying $T_k$ to both sides of \eqref{BRS orthocomplemented or not at all eq1}, we obtain
\[P_{m,n}w_B = \sum_{\emptyset\neq A\subset\{0,\ldots,m\}}\lambda_A(-1)^{\delta_k(A)-\delta_k(B)}w_A.\]
By the uniqueness of the coefficients, $\lambda_A\neq 0$ implies $\delta_k(A) = \delta_k(B)$. Because $k\leq m$ was arbitrary, $A = B\cap \{0,\ldots,m\} =B$. In conclusion, $P_{m,n}w_B = \lambda_{B}w_{B}$.
\end{proof}

\begin{dfn}
\label{boyd def}
For a Haar system space $X$, the lower and upper Boyd indices of $X$ are
\begin{align*}
p_X = \lim_{n\to\infty} \frac{n}{\log_2(\|T^{-1}_{[0,2^{-n})}\|)}\text{ and }q_X = \lim_{n\to\infty} \frac{n}{\log_2(\|T_{[0,2^{-n})}\|^{-1})},
\end{align*}
where the appearing operators are $T_{[0,2^{-n})}:X\to X$ and $T^{-1}_{[0,2^{-n})}:T_{[0,2^{-n})}(X)\to X$ from \Cref{def  compression}.
\end{dfn}

\begin{rem}\phantom{a}
\label{boyd vs unconditionality}
\begin{enumerate}[label=(\alph*)]

\item\label{boyd vs unconditionality a} The Boyd indices are usually defined using ${[0,\theta)}$ with $\theta\to 0^+$ rather than the discretized limits involving ${[0,2^{-n})}$. This modification does not affect the indices (see, e.g., the \cite[page 131]{lindenstrauss:tzafriri:1979:partII}). Our formulation also accommodates the case $\|\cdot\|_X\asymp \|\cdot\|_\infty$, when $T_{[0,\theta)}$ may be ill-defined.

\item\label{boyd vs unconditionality c} By \cite[Theorem 2.c.6]{lindenstrauss:tzafriri:1979:partII}, the Haar system is an unconditional basis for $X$ if and only if $p_X,q_X\in(1,\infty)$.

\end{enumerate}
\end{rem}

\begin{prop}\label{prop:complementation}
Let $X$ be a Haar system space with non-trivial Boyd indices, i.e., $p_X,q_X\in(1,\infty)$. Then, there exists a constant $C$ such that for every disjointly branching tree $\mathcal{T}$, $\|P_\mathcal{T}\|\leq C$, on $Y = X^0$ or $H_X$.
\end{prop}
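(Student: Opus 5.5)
Since $X$ has non-trivial Boyd indices, the Haar system is unconditional in $X$ by \Cref{boyd vs unconditionality}~\ref{boyd vs unconditionality c}. Hence $\|\cdot\|_X\asymp\|\cdot\|_{H_X}$ on $X^0$ by \Cref{haar system unconditional in hardy}~\ref{haar system unconditional in hardy b}, with constants depending only on $X$. It therefore suffices to bound $P_\mathcal{T}$ on $H_X$, where norms are computed via the square function. The plan is to write $P_\mathcal{T}$ as a sum of martingale differences of a single sequence of conditional expectations and to apply Stein's inequality.

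First, decompose $g=\sum_{k\ge0} d_k g$ with $d_k=\mathbb{E}_k-\mathbb{E}_{k-1}$. By \Cref{orthoprojection formulas},
\[
P_\mathcal{T} g=\sum_{k\in\cup\mathcal{T}}\mathbb{E}_{F(k)} d_k g .
\]
Next, check that $\mathbb{E}_{F(k)}d_kg$ is exactly the $k$-th martingale difference (with respect to the dyadic filtration $(\sigma_k)$) of $P_\mathcal{T}g$. It is $\sigma_k$-measurable, since $F(k)\subset\{0,\dots,k\}$. Moreover, by \Cref{expectations on subsets are a monoid},
\[
\mathbb{E}_{k-1}\mathbb{E}_{F(k)}d_k g=\mathbb{E}_{F(k)'}\,d_kg .
\]
This vanishes because $\mathbb{E}_{F(k)'}=\mathbb{E}_{F(k)'}\mathbb{E}_{k-1}$ and $\mathbb{E}_{k-1}d_k=0$. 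Consequently,
\[
S(P_\mathcal{T}g)=\Big(\sum_{k\in\cup\mathcal{T}}|\mathbb{E}_{F(k)}d_kg|^2\Big)^{1/2}.
\]

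Then apply Stein's inequality \cite[Proposition 9.5]{johnson:maurey:schechtman:tzafriri:1979} in $X$. For any sequence of sub-$\sigma$-algebras $(\mathcal{A}_k)$ and functions $(f_k)$,
\[
\Big\|\Big(\sum_k|\mathbb{E}^{\mathcal{A}_k}f_k|^2\Big)^{1/2}\Big\|_X\le C_X\Big\|\Big(\sum_k|f_k|^2\Big)^{1/2}\Big\|_X .
\]
This holds when $X$ has non-trivial Boyd indices, and the statement requires no monotonicity of $(\mathcal{A}_k)$ beyond what JMST allows. If JMST requires an increasing filtration, handle that separately, as discussed below. Take $f_k=d_kg$ and $\mathcal{A}_k=\sigma_{F(k)}$. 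This gives
\[
\|P_\mathcal{T}g\|_{H_X}\le C_X\|S(g)\|_X=C_X\|g\|_{H_X},
\]
with $C_X$ independent of $\mathcal{T}$. Density of $\langle h_I\rangle$ then completes the argument.

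The main obstacle is the form of Stein's inequality. The $\sigma$-algebras $\sigma_{F(k)}$ are not increasing in $k$, because different branches of $\mathcal{T}$ are incomparable. The standard Stein inequality is stated for an increasing filtration. To repair this, I would first try embedding into an increasing filtration. For each $k$, we have $\sigma_{F(k)}\subset\sigma_k$, and $d_kg$ is $\sigma_k$-measurable. The key identity is
\[
\mathbb{E}_{F(k)}d_kg=\mathbb{E}_{F(k)'}\big(r_k\,\mathbb{E}_{F(k)\cup\{k\}}(r_kd_kg)\big),
\]
where the factor $r_k$ is independent of $\sigma_{F(k)'}$. I would try to use this to reduce $\mathbb{E}_{F(k)}$ to an expectation with respect to a $\sigma$-algebra generated by coordinates outside $\{k\}$, so that the Rademacher factor decouples. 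Failing that, I would invoke the version of Stein's inequality for arbitrary sequences of conditional expectations in $L_p$ (Bourgain's extension; it holds via duality with Doob in $L_p$, $1<p<\infty$) and transfer it to $X$ by Boyd interpolation. This is exactly the route through which JMST obtain Proposition 9.5.
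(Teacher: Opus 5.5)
Your reduction to $H_X$ is correct and matches the paper's starting point. So is your check that $\mathbb{E}_{F(k)}d_kg$ is the $k$-th martingale difference of $P_\mathcal{T}g$, which gives $S(P_\mathcal{T}g)^2=\sum_{k\in\cup\mathcal{T}}|\mathbb{E}_{F(k)}d_kg|^2$.

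The gap is the step that carries the whole proof. Stein's inequality \cite[Proposition~9.5]{johnson:maurey:schechtman:tzafriri:1979} is stated for an increasing filtration, and, as you observe, $(\sigma_{F(k)})_k$ is not monotone. Neither of your repairs works.

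\textbf{The identity.} Your displayed identity is wrong as written. Since $k=\max F(k)$, we have $F(k)\cup\{k\}=F(k)$. Writing $d_kg=r_k\phi$ with $\phi$ $\sigma_{k-1}$-measurable, its right-hand side equals $\mathbb{E}_{F(k)'}\big(r_k\,\mathbb{E}_{F(k)'}\phi\big)=0$. The correct version, $\mathbb{E}_{F(k)}d_kg=r_k\,\mathbb{E}_{F(k)'}(r_kd_kg)$, only trades $\sigma_{F(k)}$ for $\sigma_{F(k)'}$, which is just as non-monotone.

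\textbf{The fallback.} Stein's inequality for arbitrary sequences of $\sigma$-algebras fails in $L_p$ for every $p\neq 2$; the Doob-type estimates behind it need a filtration. For $p>2$, split $[0,1)\setminus E$ into sets $B_1,\dots,B_N$ with $|B_k|=|E|=1/(N+1)$, and take $f_k=\chi_{B_k}$, $\mathcal{A}_k=\sigma(B_k\cup E)$. Then $\mathbb{E}^{\mathcal{A}_k}f_k=\frac12\chi_{B_k\cup E}$. So the left-hand side is at least $\frac{\sqrt N}{2}\|\chi_E\|_p\approx\frac12 N^{1/2-1/p}\to\infty$, while the right-hand side is at most $1$. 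Since the operator $(f_k)\mapsto(\mathbb{E}^{\mathcal{A}_k}f_k)$ is self-adjoint, the case $p<2$ fails as well. There is therefore nothing to transfer to $X$ by Boyd interpolation.

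\textbf{The missing idea.} You need to factor $\mathbb{E}_{F(k)}$ through two genuine filtrations, using the tree structure. The paper defines two depth-first total orders on $\cup\mathcal{T}$:
\begin{itemize}
\item $m\preccurlyeq k$ if $F(m)\sqsubseteq F(k)$, or the two branches split with $\min(F(m)\setminus F(k))<\min(F(k)\setminus F(m))$;
\item $m\curlyeqprec k$ likewise, with the last inequality reversed.
\end{itemize}
Set $A(k)=\{m:m\preccurlyeq k\}$ and $B(k)=\{m:m\curlyeqprec k\}$. Each family is a chain, and $A(k)\cap B(k)=F(k)$. Indeed, a node incomparable with $k$ precedes $k$ in exactly one of the two orders, and descendants of $k$ follow it in both. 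Hence $\mathbb{E}_{F(k)}=\mathbb{E}_{A(k)}\mathbb{E}_{B(k)}$ by \Cref{expectations on subsets are a monoid}.

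A sum of squares does not depend on the order of summation. So you may apply Stein's inequality first along $\preccurlyeq$, with filtration $\sigma_{A(k)}$ and functions $\mathbb{E}_{B(k)}d_kg$. Then apply it along $\curlyeqprec$, with filtration $\sigma_{B(k)}$ and functions $d_kg$. This gives $\|P_\mathcal{T}g\|_{H_X}\le C^2\|g\|_{H_X}$, where $C$ is the Stein constant of $X$, independent of $\mathcal{T}$.
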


\begin{proof}
By \Cref{haar system unconditional in hardy} \ref{haar system unconditional in hardy b} and \Cref{boyd vs unconditionality} \ref{boyd vs unconditionality c}, $\|\cdot\|_X\asymp\|\cdot\|_{H_X}$ and therefore, it suffices to prove the statement for the latter norm. By \cite[Proposition~9.5, p.~243]{johnson:maurey:schechtman:tzafriri:1979}, Stein's inequality holds in $X$, i.e., there exists a constant $C$ such that, for any $f_1,\ldots,f_n$ in $X$ and filtration $\mathcal{B}_1,\ldots,\mathcal{B}_n$ we have
\[\Big\|\Big(\sum_{k=1}^n\big(\mathbb{E}(f_k|\mathcal{B}_k)\big)^2\Big)^{1/2}\Big\|_X \leq C\Big\|\Big(\sum_{k=1}^nf_k^2\Big)^{1/2}\Big\|_X.\]
We will prove that $P_\mathcal{T}$ has norm at most $C^2$.

We will define two total orders on $\cup\mathcal{T}$, ``$\preccurlyeq$'' and ``$\curlyeqprec$''. Let $m,n\in\cup\mathcal{T}$. We will write $m\preccurlyeq n$ if one of the following holds:
\begin{equation}
\begin{gathered}
F(m)\sqsubseteq F(n)\text{ or}\\
F(m)\text{ and }F(n)\text{ are ``$\sqsubseteq$'' incomparable and }\min(F(m)\setminus F(n))<\min(F(n)\setminus F(m)).
\end{gathered}
\end{equation}
On the other hand, we will write $m\curlyeqprec n$ if one of the following holds:
\begin{equation}
\begin{gathered}
F(m)\sqsubseteq F(n)\text{ or}\\
F(m)\text{ and }F(n)\text{ are ``$\sqsubseteq$'' incomparable and }\min(F(m)\setminus F(n))>\min(F(n)\setminus F(m)).
\end{gathered}
\end{equation}
While the transitivity of ``$\preccurlyeq$'' and ``$\curlyeqprec$'' is not entirely obvious, one can convince oneself with a visual representation.

For every $k\in\cup\mathcal{T}$, we let $A(k) = \{m\in\cup\mathcal{T}:m\preccurlyeq k\}$ and $B(k) =  \{m\in\cup\mathcal{T}:m\curlyeqprec k\}$. Some of these sets may be infinite, but $(A(k))_{k\in\cup\mathcal{T}}$ and $(B(k))_{k\in\cup\mathcal{T}}$ are chains of subsets of $\mathbb{N}_0$, such that, for all $k\in\cup\mathcal{T}$, $A(k)\cap B(k) = F(k)$. To conclude, for $g\in  \langle h_I : I\in \mathcal{D} \rangle \subset  H_X$,
\begin{align*}
\|P_\mathcal{T}g\|_{H_X} &= \Big\|\Big(\sum_{k\in\cup\mathcal{T}}\big(\mathbb{E}_{F(k)}\big(\mathbb{E}_k-\mathbb{E}_{k-1}\big)(g)\big)^2\Big)^{1/2}\Big\|_X\\
&= \Big\|\Big(\sum_{k\in\cup\mathcal{T}}\big(\mathbb{E}_{A(k)}\mathbb{E}_{B(k)}\big(\mathbb{E}_k-\mathbb{E}_{k-1}\big)(g)\big)^2\Big)^{1/2}\Big\|_X\\
&\leq C\Big\|\Big(\sum_{k\in\cup\mathcal{T}}\big(\mathbb{E}_{B(k)}\big(\mathbb{E}_k-\mathbb{E}_{k-1}\big)(g)\big)^2\Big)^{1/2}\Big\|_X
\leq C^2\Big\|\Big(\sum_{k\in\cup\mathcal{T}}\big(\big(\mathbb{E}_k-\mathbb{E}_{k-1}\big)(g)\big)^2\Big)^{1/2}\Big\|_X\\
&\leq C^2\|g\|_{H_X}.\qedhere
\end{align*}
\end{proof}

\begin{rem}
In the space $X=L_p$, $1<p<\infty$, Stein's inequality holds with constant $p^{*\,(p^*-2)/(2p^*-2)} \le p^{*\,1/2}$, where $p^* = \max\{p,p/(p-1)\}$. This can be seen by tracking the proof of Stein's theorem, for example, in \cite[Proposition 9.5, p.~243]{johnson:maurey:schechtman:tzafriri:1979}. The argument is carried out for $1<p\leq 2$ and relies on Doob's maximal inequality \cite[Chapter VII.3, Theorem 3.4]{doob:1953}, which introduces the factor $p^*$, together with the vector-valued Riesz-Thorin interpolation theorem \cite[Theorem 2, p.~316]{benedek:panzone:1961}, which produces the exponent $\theta=(p^*-2)/(2p^*-2)$ chosen so that $1/2=(1-\theta)/p$. Consequently, in $H_p$ we obtain $\|P_\mathcal{T}\|_{H_p\to H_p} \leq p^{*\,(p^*-2)/(p^*-1)}\leq  p^*$, and in $L_p$, using \cite[Theorem 8.6]{AdamOse:2012},
\begin{equation}
\label{orthogonal projection norm bound in Lp}
\|P_\mathcal{T}\|_{L_p\to L_p} \leq (p^*-1)^2p^{*\,(p^*-2)/(p^*-1)} \leq p^{*\,3}.
\end{equation}
As $p^*\to \infty$, the ratio between the refined bound and the simpler bound converges to one. The special feature of the refined estimate is that it equals one when $p^*=2$. Independent of this distinction, the bound in \eqref{orthogonal projection norm bound in Lp} improve the estimate obtained in \cite[p.~222, bottom]{bourgain:rosenthal:schechtman:1981} by a factor of order $p^{*\,1/2}$, reflecting the elimination of a minor redundancy in the core argument.

For an arbitrary Haar system space $X$, with $p_X,q_X\in(1,\infty)$, the constant in Stein's inequality is not, a priori, determined solely by $p_X$ and $q_X$.
\end{rem}

\section{An ordinal index detecting complementation}
\label{complemented ordinal section}

For a Haar system space $X$ and $Y = X^0$ or $H_X$, we define an ordinal index $i_Y[\cdot]:\mathcal{SB}\to[0,\omega_1]$ on the class of separable Banach spaces with the following property: for every such $W$, the inequality $i_Y[W]<\omega_1$ implies that $W$ contains no complemented subspace isomorphic to $Y$. If, in addition, $X$ satisfies the Fatou property, then $i_Y[W]=\omega_1$ guarantees the existence in $W$ of a complemented copy of $Y$. The definition of $i_Y[\cdot]$ is conceptually related to the local $L_p$ index from \cite{bourgain:rosenthal:schechtman:1981}, although the formulation differs slightly in its technical presentation. More importantly, $i_Y[\cdot]$ incorporates refinements that allow it to detect complemented copies of $Y$ without imposing structural assumptions on the ambient space $W$. This should be contrasted with the situation where $X=L_p$, or more generally, a reflexive Orlicz function space, and $W$ is a subspace of $Y$, in which case the mere isomorphic embedding of $Y$ into $W$ already yields a complemented copy. Using this, Ghawadrah proved in \cite{ghawadrah:2016} that the Bourgain-Rosenthal-Schechtman subspaces in reflexive Orlicz function spaces give rise to a transfinite hierarchy of pairwise non-isomorphic complemented subspaces. The complemented $Y$-index $i_Y[\cdot]$ allows us to obtain an analogous result, namely \Cref{thm:alternatives-ri}, for the broader class of Haar system spaces with non-trivial Boyd indices satisfying the Fatou property.

The definition of the complemented $Y$-index is based on the height of certain trees in a Banach space $W$ that encode the complexity of the existence of complemented copies of the subspaces $Y_n$ of $Y$ inside $W$. Recall that, for every $n\in\mathbb{N}_0$, $Y_n = \langle\{h_I:I\in\mathcal{D}_n\}\rangle$ and $\mathbb{E}_n:Y\to Y_n$ is the conditional expectation operator.

\begin{dfn}
\label{tree detecting complemented copy definition}
Let $X$ be a Haar system space, $Y = X^0$ or $H_X$, and $W$ be a Banach space. For $C\geq 1$, we define $\mathscr{T}(Y,W,C)$ as the collection of all finite sequences
\[\big((T_0,P_0),\ldots,(T_n,P_n)\big)\]
satisfying the following:
\begin{enumerate}[label=(\alph*)]

\item\label{tree detecting complemented copy definition a} For $0\leq k\leq n$, $T_k:Y_k\to W$ is a linear operator such that, for all $f\in Y_k$,
\[C^{-1}\|f\|\leq \|T_kf\| \leq C\|f\|.\]

\item\label{tree detecting complemented copy definition b} For $1\leq k\leq n$, $T_k|_{Y_{k-1}} = T_{k-1}$.

\item\label{tree detecting complemented copy definition c} For $0\leq k\leq n$, $P_k:W\to W$ is a projection onto $T_k(Y_k)$ of norm at most $C$.

\item\label{tree detecting complemented copy definition d} For all $0\leq k,m\leq n$, $P_kP_m = P_{k\wedge m}$.

\item\label{tree detecting complemented copy definition e} For $1\leq k\leq n$, $P_{k-1}T_k = T_{k-1}\mathbb{E}_{k-1}|_{Y_k}$.

\end{enumerate}
\end{dfn}

We also include the empty sequence $\emptyset$ in $\mathscr{T}(Y,W,C)$ for consistency with disjointly branching trees.
Condition~\ref{tree detecting complemented copy definition d} asserts, in part, that $(P_k|_{T_n(Y_n)})_{k=0}^n$ induces a decomposition of $T_n(Y_n)$ and~\ref{tree detecting complemented copy definition e} that this decomposition mirrors the one generated by $(\mathbb{E}_k)_{k=0}^n$ on $Y_n$.

\begin{rem}
While the formulation differs slightly, if we were to restrict the definition to sequences $(T_0,\ldots,T_n)$ satisfying only \Cref{tree detecting complemented copy definition}~\ref{tree detecting complemented copy definition a} and~\ref{tree detecting complemented copy definition b}, then, upon applying the ensuing \Cref{complemented Y index def} with the obvious modifications, we would recover precisely the local $L_p$ index of \cite{bourgain:rosenthal:schechtman:1981}. The inclusion of projections, together with conditions~\ref{tree detecting complemented copy definition c}--\ref{tree detecting complemented copy definition e}, allows the detection of complemented copies of $Y$ in $W$.
\end{rem}

\begin{ntt}
The collection $\mathscr{T}=\mathscr{T}(Y,W,C)$ in \Cref{tree detecting complemented copy definition} is a tree under the partial order of initial segments. Following \cite{bourgain:1980}, let $\mathscr{T}^{(0)}=\mathscr{T}$, given $\alpha$, let $\mathscr{T}^{(\alpha+1)}$ be the set of all non-maximal elements of $\mathscr{T}^{(\alpha)}$, and for a limit ordinal $\alpha$, set
\[\mathscr{T}^{(\alpha)}=\bigcap_{\beta<\alpha}\mathscr{T}^{(\beta)}.\]
If $\mathscr{T}$ is well-founded, meaning it contains no infinite chains, this transfinite derivation exhausts $\mathscr{T}$ at some ordinal stage. The least such ordinal is a successor ordinal, and its predecessor is called the order of $\mathscr{T}$, denoted by $o[\mathscr{T}]$, i.e.,
\[o[\mathscr{T}]=\min\bigl\{\alpha:\mathscr{T}^{(\alpha)}=\{ \emptyset \}\bigr\}.\]
If $\mathscr{T}$ is not well-founded, it is called ill-founded.
\end{ntt}

\begin{rem}
    If we had not allowed the empty sequence in $\mathscr{T}$, we would define $o[\mathscr{T}]=\min\bigl\{\alpha:\mathscr{T}^{(\alpha)}= \emptyset \bigr\}$ as in~\cite{bourgain:1980} to compensate for a shift by one. These changes do not affect whether $\mathscr{T}$ is well-founded; hence, \cite[Proposition~3]{bourgain:1980}, which will play an important role in the next proof, remains valid in our setting.
\end{rem}

\begin{prop}
\label{well-founded iff countable ordinal}
Let $X$ be a Haar system space, $Y = X^0$ or $H_X$, $W$ be a separable Banach space, and $C\geq 1$. If $\mathscr{T}(Y,W,C)$ is well-founded, then $o[\mathscr{T}(Y,W,C)] <\omega_1$.
\end{prop}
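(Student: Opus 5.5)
We will reduce the statement to the Kunen--Martin type result \cite[Proposition~3]{bourgain:1980}: if $E$ is a Polish space and $\mathscr{S}$ is a well-founded tree on $E$ which is closed (or more generally analytic) as a subset of $\bigcup_n E^n$, then $o[\mathscr{S}]<\omega_1$. The work is therefore to code $\mathscr{T}(Y,W,C)$ as a closed tree on a Polish space, in such a way that well-foundedness and order are preserved.

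\emph{Choice of coding.} Fix a countable dense $\mathbb{Q}$-linear subset $D=\{w_j\}$ of $W$. Since each $Y_k$ is finite-dimensional with basis $(h_I)_{I\in\mathcal{D}_k}$, the operator $T_k$ is determined by the finite vector $(T_kh_I)_{I\in\mathcal{D}_k}\in W^{\mathcal{D}_k}$. A projection $P_k$ onto the finite-dimensional space $T_k(Y_k)$ is determined by $P_k=\sum_{I\in\mathcal{D}_k}\phi^k_I\otimes T_kh_I$, where the functionals $\phi^k_I\in W^{*}$ satisfy $\phi^k_I(T_kh_J)=\delta_{IJ}$. The difficulty is that $W^{*}$ is not Polish in norm. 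We therefore take the ball $B_{W^{*}}(r)$ with the weak$^{*}$ topology, which is compact metrizable since $W$ is separable; here $r$ can be bounded in terms of $C$, $\|h_I\|$ and the embedding constants, because $\|\phi^k_I\|\le \|P_k\|\,\|T_k^{-1}\|\,\|h_I^{*}\|_{Y_k^{*}}$. We set $E=\bigcup_k \big(W^{\mathcal{D}_k}\times (B_{W^{*}}(r_k),w^{*})^{\mathcal{D}_k}\big)$, a Polish space (a countable disjoint union), and identify each node of $\mathscr{T}(Y,W,C)$ with a finite sequence in $E$ via $(T_k,P_k)\mapsto \big((T_kh_I)_I,(\phi^k_I)_I\big)$. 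This map is injective and preserves initial segments, so it is an isomorphism of trees onto its image $\mathscr{S}$; it therefore preserves well-foundedness and the order.

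\emph{Closedness.} We then check that each condition \ref{tree detecting complemented copy definition a}--\ref{tree detecting complemented copy definition e} defines a closed subset of $E^{n+1}$. Condition \ref{tree detecting complemented copy definition a} is a countable intersection over $f$ in a countable dense subset of $Y_k$ of closed norm inequalities. Condition \ref{tree detecting complemented copy definition b} is equality of vectors. The biorthogonality relation $\phi^k_I(T_kh_J)=\delta_{IJ}$ is jointly continuous for norm times weak$^{*}$ on bounded sets, which is where the restriction to balls is needed. The norm bound $\|P_k\|\le C$ means $\|\sum_I\phi^k_I(w)T_kh_I\|\le C\|w\|$ for all $w\in D$, which is a closed condition for the same reason. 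Conditions \ref{tree detecting complemented copy definition d} and \ref{tree detecting complemented copy definition e} reduce to finitely many scalar identities $\phi^k_I(T_mh_J)=(\text{known coefficient})$ and identities of the form $\sum_I\phi^k_I(w)\,\phi^m_J(T_kh_I)=\phi^m_J(w)$ for $w\in D$, all of which are continuous in the chosen topology. Hence $\mathscr{S}$ is closed. If $\mathscr{T}(Y,W,C)$ is well-founded, so is $\mathscr{S}$, and \cite[Proposition~3]{bourgain:1980} gives $o[\mathscr{T}(Y,W,C)]=o[\mathscr{S}]<\omega_1$.

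\emph{Main obstacle.} The delicate step is the closedness, specifically handling the projection data: we must ensure that the a priori bound on the functionals is uniform (so weak$^{*}$-compact balls suffice), and that products such as $\phi(T h)$ are continuous on those bounded sets. If full closedness proves awkward, the fallback is to show that $\mathscr{S}$ is merely analytic, for instance as a continuous image of a Borel set. The Kunen--Martin boundedness in \cite{bourgain:1980} still applies in that case.
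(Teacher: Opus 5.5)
Your proof is correct and follows the same route as the paper: realize $\mathscr{T}(Y,W,C)$ as a tree on a Polish space whose levels are closed, then invoke \cite[Proposition~3]{bourgain:1980}. The one difference is how the projections are coded. The paper takes $P_k\in CB_{\mathcal{L}(W)}$ with the strong operator topology and uses joint continuity of composition on bounded sets. You instead factor $P_k$ through biorthogonal functionals in weak$^*$-compact balls of $W^*$, which turns conditions (c)--(e) into scalar identities and makes the range condition automatic.
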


\begin{proof}
For a fixed $k\in\mathbb{N}_0$, consider the space $\mathcal{L}(Y_k,W) = \{T:Y_k\to W\text{ linear and bounded}\}$, equipped with the norm topology, and the set $B_{\mathcal{L}(W)} = \{T:W\to W\text{ linear and }\|T\|\leq 1\}$, equipped with the strong operator topology. Since $Y_k$ is finite-dimensional and $W$ is separable, both spaces are Polish, and thus, so is their product
\[S_k =  \mathcal{L}(Y_k,W) \times CB_{\mathcal{L}(W)}.\]
By \cite[Proposition 3]{bourgain:1980}, it suffices to show that, for an arbitrary fixed $n\in\mathbb{N}_0$, the set $\mathscr{T}_n$ consisting of all $\big((T_0,P_0),\ldots,(T_n,P_n)\big)\in\mathscr{T}(Y,W,C)$ is closed in the product space $\prod_{k=0}^n S_k$. Recalling that composition is jointly continuous in the strong operator topology on bounded sets, it follows readily that if $\big((T^i_0,P^i_0),\ldots,(T^i_n,P^i_n)\big)_{i=1}^\infty$ in $\mathscr{T}_n$ converges to $\big((T_0,P_0),\ldots,(T_n,P_n)\big)$, then the limit also belongs to $\mathscr{T}_n$.
\end{proof}

\begin{dfn}
\label{complemented Y index def}
Let $X$ be a Haar system space, $Y = X^0$ or $H_X$, and $W$ be a separable Banach space. The complemented $Y$-index of $W$ is defined as follows:
\begin{equation*}
i_Y[W] = \left\{\begin{array}{ll}\omega_1&\text{ if, for some $C\geq 1$, } \mathscr{T}(Y,W,C)\text{ is ill-founded,}\\
\sup_{C\geq 1}o[\mathscr{T}(Y,W,C)]&\text{ otherwise.}\end{array}\right.
\end{equation*}
\end{dfn}

\begin{rem}\label{rem:index-omega1-ill-founded}
If $1\le C\le D$, then $\mathscr{T}(Y,W,C)\subset \mathscr{T}(Y,W,D)$ and hence $o[\mathscr{T}(Y,W,C)]\le o[\mathscr{T}(Y,W,D)]$. Consequently $i_Y(W)\le \omega_1$, with equality if and only if $\mathscr{T}(Y,W,C)$ is ill-founded for some $C\ge 1$.
\end{rem}

\begin{dfn}
\label{Fatou property definition}
We say that a Haar system space $X$ satisfies the Fatou property if whenever $(f_n)_{n=1}^{\infty}$ is a bounded sequence in $X$ and $0\le f_n \uparrow f$ almost everywhere for some $f\in L_0$, then $f\in X$ and $\|f_n\|_X\uparrow \|f\|_X$.
\end{dfn}

Let us note that if $\|\cdot\|_X \asymp \|\cdot\|_\infty$, then the Haar system space $X$ fails the Fatou property, as demonstrated, for example, by the sequence $f_n=\sum_{i=1}^n \chi_{[2^{-2i},\,2^{-2i+1})}$, $n\in\mathbb{N}$. More generally, a Haar system space $X$ fails the Fatou property if and only if it contains an isomorphic copy of the Banach space $c_0$. We do not prove this fact as it is not used in this paper.

\begin{prop}
\label{Doob's and Hardy analogue}
Let $X$ be a Haar system space satisfying Fatou's property.
\begin{enumerate}[label=(\alph*)]

\item  If $(f_n)_{n=1}^{\infty}$ is a supermartingale in $X$ with $\sup_n \|f_n\|_X < \infty$, then it converges almost everywhere to a function $f\in X$, and $\|f\|_X\le \liminf_n \|f_n\|_X$.

\item The Haar system is a boundedly complete basis for $H_X$.

\end{enumerate}
In particular, if $Y = X^0$ or $H_X$ and $(f_n)_{n=1}^\infty$ is a dyadic martingale bounded with respect to $Y$, then it converges almost everywhere to an $f\in Y$, and $\|f\|_Y\le \liminf_n \|f_n\|_Y$.
\end{prop}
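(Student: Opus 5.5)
For part (a), I would reduce to the classical supermartingale convergence theorem in $L_1$. By \Cref{pro:HS-1d}~\ref{pro:HS-1d:i} (extended to $X$ via the Fatou property and the identification of $X$ with a subspace of $L_0$), we have $\|g\|_1\le\|g\|_X$. So $\sup_n\|f_n\|_1<\infty$, and Doob's theorem gives a.e.\ convergence $f_n\to f$ with $f\in L_0$. To place $f$ in $X$, I would apply the Fatou property to $g_n=\inf_{m\ge n}|f_m|$. These are nonnegative and increase a.e.\ to $|f|$. Each $g_n$ is dominated by $|f_n|$ and lies in $X$: truncated infima $\min_{n\le m\le N}|f_m|$ belong to $X$ by the lattice/ideal property (\Cref{pro:HS-1d}~\ref{pro:HS-1d:v} and rearrangement invariance), and decrease to $g_n$, with norm at most $\|f_n\|_X$. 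Some care is needed in passing to the decreasing limit; I would instead use the Fatou property on increasing functions built from $\min(|f_m|,k)\chi$ type truncations, or simply note $g_n\le |f_m|$ for each $m\ge n$ so that, by the ideal property in the maximal space, $\|g_n\|_X\le\inf_{m\ge n}\|f_m\|_X$. Then Fatou gives $|f|\in X$ and $\|f\|_X=\lim\|g_n\|_X\le\liminf_n\|f_n\|_X$.

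For part (b), let $(a_I)$ be scalars with $\sup_N\|\sum_{I\in\mathcal{D}_N}a_Ih_I\|_{H_X}<\infty$, and set $f_N=\sum_{I\in\mathcal{D}_N}a_Ih_I$. Then $S(f_N)$ increases pointwise to $\sigma=(\sum_Ia_I^2\chi_I)^{1/2}$, and $\|S(f_N)\|_X$ is bounded. The Fatou property gives $\sigma\in X$. The martingale $(f_N)$ is bounded in $L_1$ by Davis' inequality $\|f_N\|_1\lesssim\|S(f_N)\|_1\le\|S(f_N)\|_X$, so it converges a.e.\ to some $f\in L_1^0$ with $S(f)=\sigma$. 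I must check that $f_N\to f$ in $L_1$, so that $f$ has Haar coefficients $a_I$; this should follow from Davis' inequality applied to the tail, since $\|S(f-f_N)\|_1\to 0$ by dominated convergence ($\sigma\in L_1$). Bounded completeness means $\sum a_Ih_I$ converges in $H_X$. If $\|\cdot\|_X\not\asymp\|\cdot\|_\infty$, \Cref{hardy spaces without c_0} yields $f\in H_X$ and convergence of the expansion. The case $\|\cdot\|_X\asymp\|\cdot\|_\infty$ cannot occur, because such $X$ fails Fatou, as noted above. This identification is the main obstacle: the last part of \Cref{hardy spaces without c_0} uses \cite[Proposition 2.4.7]{speckhofer:2025:thesis} (order continuity), which is what makes the partial sums converge in norm.

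For the final statement with $Y=H_X$, the argument of (b) gives a.e.\ convergence to $f\in H_X$. Moreover, $S(f)=\sup_nS(f_n)$, so Fatou gives $\|f\|_{H_X}=\lim\|f_n\|_{H_X}$. For $Y=X^0$, a dyadic martingale is in particular a supermartingale (and so is $(-f_n)$), so (a) applies directly. The limit has mean zero by $L_1$-convergence, which follows from uniform integrability: $(f_n)$ is bounded in $X$ with $X\subsetneq L_1$ having a non-trivial fundamental function. Alternatively, the limit is $\sigma_\infty$-measurable with $\mathbb{E}_nf=f_n$ by the conditional expectation being norm one (\Cref{lem:cond-exp}), giving $\int f=\int f_1=0$.
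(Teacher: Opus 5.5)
Your arguments for (a) and (b) are fine and essentially the paper's. In (a) you reprove, via $g_n=\inf_{m\ge n}|f_m|$, the Fatou lemma in $X$ that the paper quotes from Bennett--Sharpley. In (b) you pass through the $L_1$-limit and the identity $H_X=\{f\in L_1^0:S(f)\in X\}$ from \Cref{hardy spaces without c_0}, whereas the paper shows directly that the partial sums are Cauchy in $H_X$; both rest on the same order-continuity fact. The $H_X$ half of the final clause is also fine.

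The gap is the mean-zero claim for $Y=X^0$. Your first justification presupposes $X\subsetneq L_1$, which is not a hypothesis. Your second is wrong: $\|\mathbb{E}^{\mathcal{F}}\|\le 1$ does not give $\mathbb{E}_nf=f_n$. That identity says the martingale is closed by its a.e.\ limit, which amounts to uniform integrability.

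In fact no argument can work in the stated generality. $X=L_1$ has the Fatou property, and
\[
f_n=\sum_{k=0}^{n}2^kh_{[0,2^{-k})}=2^{n+1}\chi_{[0,2^{-n-1})}-1
\]
is a dyadic martingale in $L_1^0$ with $\|f_n\|_1<2$ that converges a.e.\ to $-1\notin L_1^0$. So the $X^0$ case of the final clause fails for $X=L_1$, and for any $X$ equal to $L_1$ up to an equivalent norm. The paper's proof treats only (a) and (b) and does not address this clause.

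What your uniform-integrability argument does prove is the clause under the extra assumption $\varphi_X(t)/t\to\infty$ as $t\to0$. This holds, for instance, when the Boyd indices are non-trivial, which is the setting of \Cref{thm:alternatives-ri}. For measurable $A$ one then has $\int_A|f_n|\le\|f_n\|_X\,|A|/\varphi_X(|A|)$, using $\varphi_X(t)\varphi_{X'}(t)=t$. Hence $(f_n)$ is uniformly integrable, $f_n\to f$ in $L_1$ by Vitali's theorem, and $\int f=\lim_n\int f_n=0$.
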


\begin{proof}
By Doob's convergence theorem for supermartingales bounded in $L_1$, the sequence $(f_n)_{n=1}^{\infty}$ converges almost everywhere to a function $f\in L_1$. Now we apply \cite[Chapter~1, Lemma~1.15~(ii)]{bennett:sharpley:1988} (where Banach function spaces are assumed, by definition, to satisfy the Fatou property), i.e., Fatou's lemma in $X$, to conclude that $f\in X$ and $\|f\|_X \le \liminf_n \|f_n\|_X$.

Enumerate $\mathcal{D}$ lexicographically as $I_1<I_2<\cdots$, and let $(a_n)_{n=1}^\infty$ be scalars such that, letting $f_n = \sum_{k=1}^na_kh_{I_k}$, $\sup\|f_n\|_{H_X}<\infty$. Then, $(S(f_n))_{n=1}^\infty$ is a bounded and increasing sequence in $X$. Because $X$ satisfies Fatou's property, its limit, call it $g$, is in $X$. Because $\|\cdot\|_X\not\asymp\|\cdot\|_\infty$, by \cite[Proposition 2.4.7]{speckhofer:2025:thesis}, $\lim_n\|S(f_n)-g\|_X=0$. By the same argument, for every $m\in\mathbb{N}$, the increasing limit of $(S(f_n-f_m))_{n=1}^\infty$, namely $\sqrt{g^2 - S^2(f_m)}$, is in $X$ and $\lim_n\|S(f_n-f_m) - \sqrt{g^2 - S^2(f_m)}\|_X = 0$. Similarly, $\lim_m\|\sqrt{g^2 - S^2(f_m)}\|_X = 0$. In conclusion, $\lim_m\lim_n\|S(f_n-f_m)\|_X = 0$, i.e., $(f_n)_{n=1}^\infty$ is Cauchy in $H_X$.
\end{proof}

\begin{thm}
\label{index main theorem}
Let $X$ be a Haar system space  and let $Y = X^0$ or $H_X$.
\begin{enumerate}[label=(\alph*)]

\item\label{index main theorem a} If a separable Banach space $W$ contains a complemented copy of a Banach space $V$ then
\[i_Y[V]\leq i_Y[W].\]

\item\label{index main theorem b} If $X$ satisfies the Fatou property then a separable Banach space $W$ contains a complemented copy of $Y$ if and only if
\[i_Y[W] = \omega_1.\]

\end{enumerate}
\end{thm}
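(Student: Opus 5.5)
For \ref{index main theorem a}, I would transport trees from $V$ to $W$. Fix an isomorphic embedding $A\colon V\to W$ and a bounded projection $Q\colon W\to W$ onto $A(V)$. Let $B = A^{-1}Q\colon W\to V$, so that $BA=\mathrm{id}_V$. To each sequence $((T_k,P_k))_{k\le n}\in\mathscr{T}(Y,V,C)$ I associate
\[
\bigl((AT_k,\,AP_kB)\bigr)_{k\le n}.
\]
I would check the conditions one at a time:
\begin{itemize}
\item Conditions (a) and (b) hold with constant $C\|A\|\|A^{-1}\|$.
\item $AP_kB$ is a projection, since $AP_kBAP_kB=AP_k^2B$. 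Its range is $AT_k(Y_k)$ because $B$ is onto $V$. Its norm is at most $C\|A\|\|B\|$.
\item Condition (d) holds because $AP_kBAP_mB=AP_kP_mB$.
\item Condition (e) holds because $AP_{k-1}BAT_k=AP_{k-1}T_k=AT_{k-1}\mathbb{E}_{k-1}$.
\end{itemize}
This gives a map $\mathscr{T}(Y,V,C)\to\mathscr{T}(Y,W,D)$ with $D=C\|A\|\max(\|A^{-1}\|,\|B\|)$. The map preserves the initial-segment order and is injective on chains. By induction on $\alpha$, elements of $\mathscr{T}(Y,V,C)^{(\alpha)}$ land in $\mathscr{T}(Y,W,D)^{(\alpha)}$. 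Consequently $o[\mathscr{T}(Y,V,C)]\le o[\mathscr{T}(Y,W,D)]$, and ill-foundedness transfers to $W$. Taking suprema over $C$ gives $i_Y[V]\le i_Y[W]$.

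For \ref{index main theorem b}, the forward direction follows from \ref{index main theorem a} once I show $i_Y[Y]=\omega_1$. For that I use the infinite branch $((\mathrm{id}_{Y_k},\mathbb{E}_k))_{k\ge0}$ with $C=1$. By \Cref{norm-one projection and isometry} each $\mathbb{E}_k$ is a norm-one projection onto $Y_k$. Condition (d) is \Cref{expectations on subsets are a monoid}, and condition (e) is immediate. So $\mathscr{T}(Y,Y,1)$ is ill-founded.

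For the converse, suppose $\mathscr{T}(Y,W,C)$ is ill-founded, and take an infinite branch $((T_k,P_k))_{k\ge0}$. By (b), the $T_k$ glue to a map $T$ on $\bigcup_kY_k=\langle h_I\rangle$ that is a $C$-isomorphism, so $T$ extends to a $C$-embedding of $Y$. Let $Z$ be its image. I then need a bounded projection onto $Z$. The natural candidate is $Pw=T\bigl(\lim_k T_k^{-1}P_kw\bigr)$. Set $g_k=T_k^{-1}P_kw\in Y_k$, so $\|g_k\|_Y\le C^2\|w\|$. By (d) and (e),
\[
T_{k-1}\mathbb{E}_{k-1}g_k=P_{k-1}T_kg_k=P_{k-1}P_kw=P_{k-1}w=T_{k-1}g_{k-1},
\]
so $\mathbb{E}_{k-1}g_k=g_{k-1}$ and $(g_k)$ is a dyadic martingale bounded in $Y$.

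This is the main obstacle: $(g_k)$ need not converge in norm. Fatou is exactly what handles it. By \Cref{Doob's and Hardy analogue}, $g_k\to g$ almost everywhere with $g\in Y$ and $\|g\|_Y\le C^2\|w\|$. Then $\mathbb{E}_kg=g_k$, obtained by passing to the limit in $L_1$, using that $Y$ embeds into $L_1$ and that the martingale is uniformly integrable. I then define $Pw=Tg$, which is linear with $\|P\|\le C^3$. If $w=Tf$ with $f\in Y_m$, then $g_k=f$ for all $k\ge m$, so $Pw=w$. Since $P$ is continuous, $P$ is the identity on $Z$. Hence $Z$ is $C^3$-complemented, and $Z$ is isomorphic to $Y$.

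The delicate points are the identification $\mathbb{E}_kg=g_k$, so that the limit is consistent, and checking that $P$ is bounded. The latter needs only the liminf norm estimate, not norm convergence.
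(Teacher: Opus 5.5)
Your proof follows the paper. The converse in (b) is the same argument: the martingale $g_k=T_k^{-1}P_kw$, its a.e.\ limit via \Cref{Doob's and Hardy analogue}, $Pw=Tg$, and $\|P\|\le C^3$. Your forward direction, transporting the branch $(\mathrm{id}_{Y_k},\mathbb{E}_k)_{k}$ through (a), produces exactly the paper's chain $T_k=T|_{Y_k}$, $P_k=T\mathbb{E}_kT^{-1}P$. The paper omits (a) as straightforward, and your transport argument is the natural one. One constant needs fixing: condition (a) requires $D\ge C\max(\|A\|,\|A^{-1}\|)$, which $C\|A\|\|A^{-1}\|$ need not dominate unless you normalize $A$. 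Any finite $D$ suffices, so this is only bookkeeping.

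Drop the claim that $\mathbb{E}_kg=g_k$ ``by uniform integrability''. It is never used: to see that $P$ fixes $T(Y)$ you only need $g_k=f$ for $k\ge m$ when $w=Tf$ with $f\in Y_m$, plus continuity. It is also false for $X=L_1$, which has the Fatou property.

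For a counterexample, take $g_k=2^{k+1}\chi_{[0,2^{-k-1})}-\chi_{[0,1)}$. This is a mean-zero dyadic martingale with $\|g_k\|_1\le 2$, and it converges a.e.\ to $-1$. It arises in the index setting. Let $W=L_1\oplus\mathbb{R}\delta_0$, a separable subspace of the measures on $[0,1)$. Let the $T_k$ be the inclusions $Y_k\hookrightarrow W$, and let $P_k\mu=\sum_{I\in\mathcal{D}^{k+1}}|I|^{-1}\mu(I)\chi_I-\mu([0,1))$. This gives an infinite chain in $\mathscr{T}(L_1^0,W,2)$ with $T_k^{-1}P_k\delta_0=g_k$.

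So for $Y=L_1^0$ the a.e.\ limit need not even lie in $Y$. The ``in particular'' clause of \Cref{Doob's and Hardy analogue}, which you and the paper both invoke, needs this caveat. The fix is to set $Pw=T\bigl(g-\int_0^1 g(t)\,dt\bigr)$. This map is still linear, has norm at most $2C^3$, maps into $T(Y)$, and fixes $T(Y)$.

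Your claim is true for $Y=H_X$, where bounded completeness gives norm convergence. It is also true whenever $\varphi_X(t)/t\to\infty$, since then bounded subsets of $X$ are uniformly integrable. Even in those cases the proof does not need it.
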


\begin{proof}
The first assertion is straightforward, and we omit the proof. By \Cref{rem:index-omega1-ill-founded}, the second assertion is equivalent to the ill-foundedness of $\mathscr{T}(Y,W,C)$ for some $C\ge 1$.

Assume there exist an isomorphic embedding $T:Y\to W$ and a bounded linear projection $P:W\to W$ onto $T[Y]$. For $k\in\mathbb{N}_0$ define $T_k=T|_{Y_k}$ and $P_k=T\mathbb{E}_kT^{-1}P$. Then $(T_k,P_k)_{k\in\mathbb{N}_0}$ forms an infinite chain in $\mathscr{T}(Y,W,C)$ with $C=\|T\|\|T^{-1}\|\|P\|$.

Conversely, let $C\ge 1$ and suppose $\mathscr{T}(Y,W,C)$ contains an infinite chain $(T_k,P_k)_{k\in\mathbb{N}_0}$. By \Cref{tree detecting complemented copy definition} \ref{tree detecting complemented copy definition b}, the operators $T_k$ extend each other on domains whose union is dense in $Y$, and by \Cref{tree detecting complemented copy definition} \ref{tree detecting complemented copy definition a}, they define an isomorphic embedding $T:Y\to W$.

We construct a bounded projection $P:W\to W$ onto $T(Y)$. For $w\in W$ set $f_k=T_k^{-1}P_kw$. By \Cref{tree detecting complemented copy definition} \ref{tree detecting complemented copy definition d} and \ref{tree detecting complemented copy definition e}, $\mathbb{E}_k f_{k+1}=f_k$, $k\in\mathbb{N}_0$, which is immediate upon rewriting \ref{tree detecting complemented copy definition e} as $T_{k-1}^{-1}P_{k-1}T_k=\mathbb{E}_{k-1}|_{Y_k}$. Thus $(f_k)_{k\in\mathbb{N}_0}$ is a bounded dyadic martingale, and by \Cref{Doob's and Hardy analogue}, there exists $f\in Y$ such that $f_k\to f$ almost everywhere and $\|f\|_Y\leq C^2\|w\|$. Define $Pw=Tf$. Then $P:W\to W$ is a linear projection onto $T(Y)$ of norm at most $C^3$.
\end{proof}

Analogously to the case of trees of finite sequences, for a disjointly branching tree $\mathcal{T}$ we define recursively
$\mathcal{T}^{(0)}=\mathcal{T}$,
$\mathcal{T}^{(\alpha+1)}=\{F: F \in\mathcal{T}\text{ is non-maximal}\}$,
and, for limit ordinals $\alpha$,
$\mathcal{T}^{(\alpha)}=\cap_{\beta<\alpha}\mathcal{T}^{(\beta)}$.
We then set
$o[\mathcal{T}]=\min\{\alpha:\mathcal{T}^{(\alpha)}=\{\emptyset\}\}$.
\begin{rem}\label{rem:order-of-ordinal-trees}
  It can be shown by a straightforward transfinite induction that $o[\mathcal{T}_\alpha] = \alpha$ for all $0\le \alpha < \omega_1$.
\end{rem}

\begin{prop}
\label{complementex Y index of tree BRS spaces}
Let $X$ be a Haar system space, $Y = X^0$ or $H_X$, and $\mathcal{T}$ be a disjointly branching tree. Then,
\[i_Y[R_\mathcal{T}^{Y}] \geq o[\mathcal{T}].\]
In particular, for every $0\leq \alpha<\omega_1$, $i_Y[R_\alpha^{Y}] \geq \alpha$.
\end{prop}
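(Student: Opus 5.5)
Let $W=R_\mathcal{T}^Y$ and $\mathscr{T}=\mathscr{T}(Y,W,1)$. The plan is to define an order-preserving map $\Phi$ from the tree $\mathcal{T}$ into $\mathscr{T}$, with $\Phi$ sending $\emptyset$ to $\emptyset$ and sending a set of size $n+1$ to a sequence of length $n+1$. A standard transfinite induction then shows $\Phi(\mathcal{T}^{(\alpha)})\subset\mathscr{T}^{(\alpha)}$: if $F$ is non-maximal in $\mathcal{T}^{(\alpha)}$, it has an extension there, whose image is a proper extension of $\Phi(F)$ in $\mathscr{T}^{(\alpha)}$. From this we get $o[\mathscr{T}]\ge o[\mathcal{T}]$, and hence $i_Y[R^Y_\mathcal{T}]\ge o[\mathcal{T}]$; if $\mathscr{T}$ is ill-founded the index is $\omega_1$ and there is nothing to prove. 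The second assertion then follows from \Cref{showing these are really BRS spaces} together with \Cref{rem:order-of-ordinal-trees}, because distributional and $S$-distributional isomorphisms are isometries, so $i_Y$ agrees on $R_\alpha^Y$ and $R_{\mathcal{T}_\alpha}^Y$.

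To build $\Phi$, take $F=\{k_0<\dots<k_n\}\in\mathcal{T}$ and its initial segments $F_j=\{k_0,\dots,k_j\}$. Set $\Phi(F)=((T_0,P_0),\dots,(T_n,P_n))$ with
\[
T_j=J_{F_j}\colon Y_j\to Y_{F_j}\subset R^Y_\mathcal{T},\qquad P_j=\mathbb{E}_{F_j}|_{R^Y_\mathcal{T}}.
\]
This choice is well defined, since $P_j$ depends only on $F_j$ and not on $F$, so $\Phi$ preserves initial segments. We then check the conditions of \Cref{tree detecting complemented copy definition} in turn.

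\begin{enumerate}[label=(\alph*)]
\item Each $T_j$ is an isometry by \Cref{norm-one projection and isometry}.
\item Since $J_{F_j}$ maps $r_i\mapsto r_{k_i}$ for $i\le j$, it restricts to $J_{F_{j-1}}$ on $Y_{j-1}$.
\item $\mathbb{E}_{F_j}$ has norm one on $Y$ by \Cref{norm-one projection and isometry}. It maps $R^Y_\mathcal{T}$ into itself: for $G\in\mathcal{T}$, \Cref{expectations on subsets are a monoid} gives $\mathbb{E}_{F_j}Y_G=Y_{F_j\cap G}$, and $F_j\cap G\sqsubseteq G$ lies in $\mathcal{T}$ by the disjoint branching property. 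Its range is $Y_{F_j}=T_j(Y_j)$ and it acts as the identity there, so it is a norm-one projection.
\item $P_jP_m=\mathbb{E}_{F_j\cap F_m}=P_{j\wedge m}$, again by \Cref{expectations on subsets are a monoid}.
\item We need $\mathbb{E}_{F_{j-1}}J_{F_j}=J_{F_{j-1}}\mathbb{E}_{j-1}$ on $Y_j$. By \Cref{distroembedding extension}, $J_{F_j}$ is an algebra homomorphism, so it suffices to check this on Walsh products $w_A=\prod_{i\in A}r_i$ with $A\subset\{0,\dots,j\}$. The left side gives $\prod_{i\in A}r_{k_i}$ if $j\notin A$ and $0$ otherwise, and the right side gives the same.
\end{enumerate}

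The step most likely to cause trouble is (c), the invariance of $R^Y_\mathcal{T}$ under $\mathbb{E}_{F_j}$, which needs $\mathbb{E}_{F_j}$ to be bounded on $Y$ and the intersection property of disjointly branching trees. Invariance on the dense span $\langle Y_G : G\in\mathcal{T}\rangle$ extends to the closure because $\mathbb{E}_{F_j}$ is bounded. In the Hardy case, the norm-one claim for $\mathbb{E}_F$ on $H_X$ comes from \Cref{norm-one projection and isometry}, which cites \cite{lechner:speckhofer:2025}.
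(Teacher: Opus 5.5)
Your proposal is correct and follows the paper's proof: you use the same order-preserving map $F\mapsto\bigl((J_{F_j},\mathbb{E}_{F_j})\bigr)_{j=0}^{n}$ into $\mathscr{T}(Y,R_\mathcal{T}^Y,1)$, and you deduce the ``in particular'' from the isometric identification of $R_\alpha^Y$ with $R_{\mathcal{T}_\alpha}^Y$ together with $o[\mathcal{T}_\alpha]=\alpha$, simply spelling out the derivation comparison and the checks of (b)--(e) that the paper calls routine. One small simplification: the invariance of $R_\mathcal{T}^Y$ in (c) needs no intersection argument, since $\mathbb{E}_{F_j}$ maps all of $Y$ onto $Y_{F_j}$, and $Y_{F_j}$ already lies in $R_\mathcal{T}^Y$ because $F_j\sqsubseteq F$ and $\mathcal{T}$ is downward closed.
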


\begin{proof}
We define an order-preserving injection from $\mathcal{T}$ into $\mathscr{T}(Y,R_\mathcal{T}^{Y},1)$ as follows. Let $\emptyset\mapsto \emptyset$, and for $F=\{k_0<\cdots<k_n\}\in\mathcal{T}$, set
\[F \mapsto \big((T_0^F,P_0^F),\ldots,(T_n^F,P_n^F)\big),\]
where for $0\le m\le n$,
\begin{align*}
T_m^F &= J_{\{k_0,\ldots,k_m\}}:Y_m\to Y_{\{k_0,\ldots,k_m\}}\subset R_\mathcal{T}^{Y}\text{ and }\\
P_m^F &= \mathbb{E}_{\{k_0,\ldots,k_m\}}:R_\mathcal{T}^{Y}\to Y_{\{k_0,\ldots,k_m\}}.
\end{align*}

By \Cref{norm-one projection and isometry}, each $T_m^F$ is an isometry and each $P_m^F$ is a norm-one projection. Verification of \Cref{tree detecting complemented copy definition}~\ref{tree detecting complemented copy definition a}--\ref{tree detecting complemented copy definition e} is routine. For instance, condition \ref{tree detecting complemented copy definition e} reduces, for $1\le m\le n$, to
\[\mathbb{E}_{\{k_0,\ldots,k_{m-1}\}} J_{\{k_0,\ldots,k_m\}}=J_{\{k_0,\ldots,k_m\}} \mathbb{E}_{m-1}|_{Y_m},\]
which is immediate on $(h_I)_{I\in\mathcal{D}_m}$.

The ``in particular'' statement follows from the fact that $R_\alpha^{Y}$ is isometric to $R_{\mathcal{T}_\alpha}^{Y}$ (see \Cref{tree BRS spaces include sum BRS spaces}), together with \Cref{rem:order-of-ordinal-trees}.
\end{proof}

\begin{rem}
Let $X$ be a Haar system space and $Y = X$ or $H_X$. Then $(i_Y[R_\alpha^{Y}])_{\alpha<\omega_1}$ is a non-decreasing transfinite sequence of ordinals with supremum $\omega_1$. Indeed, a simple transfinite induction easily yields that, for every $\beta<\alpha<\omega_1$, $R_\alpha^{Y}$ contains a complemented subspace isomorphic to $R_\beta^{Y}$, and therefore, by \Cref{index main theorem} \ref{index main theorem a}, $i_Y[R_\beta^{Y}] \leq i_Y[R_\alpha^{Y}]$. The second part is from \Cref{complementex Y index of tree BRS spaces}.
\end{rem}

\section{Complemented embeddings into spaces with unconditional Schauder decompositions}
\label{sec:complemented-embeddings}

The aim of this section is to extend the result of Bourgain, Rosenthal, and Schechtman on complemented embeddings of $L_p$ into spaces with unconditional Schauder decompositions \cite[Theorem~1.1]{bourgain:rosenthal:schechtman:1981} to the setting of Haar system spaces and their associated Hardy spaces.
Recall that an unconditional Schauder decomposition of a Banach space~$Z$ is a sequence $(Z_n)_{n=1}^{\infty}$ of closed subspaces of~$Z$ such that every~$z\in Z$ admits a unique norm-convergent representation $z = \sum_{n=1}^{\infty} z_n$, $z_n\in Z_n$, and this series converges unconditionally.
We will prove the following theorem.

\begin{thm}\label{thm:1.1}
  Let $X$ be a Haar system space and $Y = X^0$ or $H_X$. Suppose that the Haar system is unconditional in $Y$.
  If $Y$ is isomorphic to a complemented subspace of a Banach space~$Z$ with an unconditional Schauder decomposition $(Z_i)_{i=1}^{\infty}$, then one of the following holds:
\begin{enumerate}[label=(\alph*),]
    \item\label{thm:1.1:a} There is an $i$ such that $Y$ is isomorphic to a complemented subspace of $Z_i$.
    \item A block basic sequence of the $Z_i$'s is equivalent to the Haar system in $Y$ and has closed linear span complemented in $Z$.
  \end{enumerate}
\end{thm}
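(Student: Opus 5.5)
We follow the structure of \cite[Theorem~1.1]{bourgain:rosenthal:schechtman:1981}, working directly in $Y$ with the factorization tools of \cite{lechner:speckhofer:2025} instead of passing to $X(\ell_2)$.

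\textbf{Reduction.} Fix an isomorphic embedding $T\colon Y\to Z$ and a bounded projection $P$ of $Z$ onto $T(Y)$. Let $Q_A$, for $A\subset\mathbb{N}$, denote the coordinate projections of the decomposition; by unconditionality, $\sup_A\|Q_A\|=K<\infty$. Put $S=T^{-1}P\colon Z\to Y$, so that $ST=\mathrm{id}_Y$. The idea is to factor the identity of $Y$ through pieces of the decomposition. For $A\subset\mathbb{N}$ consider the operators $U_A=SQ_AT$ on $Y$. These are uniformly bounded and satisfy $U_A+U_{A^c}=\mathrm{id}_Y$.

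\textbf{Key tool: a dichotomy for operators on $Y$.} From \cite{lechner:speckhofer:2025} we use that the identity factors through any operator on $Y$ which has large diagonal on a faithful Haar system. Concretely, the two inputs are:
\begin{enumerate}[label=(\roman*)]
\item For any bounded $U$ on $Y$ and $\delta>0$, there is a faithful Haar system $(\tilde h_I)$ (a block Haar system spanning a complemented copy of $Y$, with bounded projection) on which $U$ is, up to $\delta$, almost diagonal: $U\tilde h_I\approx c_I\tilde h_I+\text{small}$.
\item If $\inf_I|c_I|>0$, then $\mathrm{id}_Y$ factors through $U$.
\end{enumerate}
Since $U_A+U_{A^c}=\mathrm{id}$, for each $A$ at least one of $U_A$, $U_{A^c}$ has diagonal at least $1/2$ in absolute value along suitable stabilized systems. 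By a stabilization/Ramsey argument on the coefficients $c_I$ (passing to faithful subsystems that stabilize the relevant signs), for every partition $\mathbb{N}=A\cup A^c$ the identity factors, with uniformly controlled constants, through $Q_AT$ or through $Q_{A^c}T$.

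\textbf{Gliding hump construction.} We build a faithful Haar system $(\tilde h_I)_{I\in\mathcal{D}}$ in the lexicographic order, together with finite intervals of indices $E_1<E_2<\cdots$, as follows.
\begin{enumerate}[label=(\arabic*)]
\item \emph{Case (a).} If at some stage, for some finite $N$, the identity factors through $Q_{\{1,\dots,N\}}T$ (or, on all further faithful subsystems, the large diagonal keeps occurring on a single $Q_{\{i\}}T$), then $Y$ embeds complementably into $\bigoplus_{i\le N}Z_i$. Since $Y\simeq Y\oplus Y$ and $Y$ is primary in the relevant sense, one needs a further splitting argument, using unconditionality of the Haar system, to land in a single $Z_i$. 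This gives alternative~\ref{thm:1.1:a}.
\item \emph{Case (b).} Otherwise, at every stage the ``tail'' $Q_{(N,\infty)}T$ keeps large diagonal on some further faithful subsystem. We then choose each $\tilde h_{I_k}$ and a finite block $E_k$ with $\|Q_{E_k}T\tilde h_{I_k}-T\tilde h_{I_k}\|$ small and $SQ_{E_k}T\tilde h_{I_k}\approx c\,\tilde h_{I_k}$. The vectors $z_k=Q_{E_k}T\tilde h_{I_k}$ form a block sequence of the decomposition.
\end{enumerate}
By unconditionality of $(Z_i)$ and perturbation, $(z_k)$ is equivalent to $(\tilde h_I)$, and hence to the Haar system. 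The map $R=\sum_k Q_{E_k}\,T\,\tilde{\mathbb{P}}\,S\,Q_{E_k}$, where $\tilde{\mathbb{P}}$ is the projection onto $[\tilde h_I]$ composed with the coordinate projections onto the $\tilde h_{I_k}$, gives, after renormalizing by $c$, a bounded projection onto $[z_k]$. Its boundedness uses $\sup_A\|Q_A\|<\infty$ together with unconditionality of the Haar system in $Y$.

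\textbf{Main obstacle.} The hardest step is the uniform dichotomy: showing that the identity factors through $Q_AT$ or $Q_{A^c}T$ with constants independent of $A$, and then carrying this out coherently along the tree of dyadic intervals while stabilizing faithful Haar systems at each step. I would try first to apply the ``large diagonal implies factorization'' theorem of \cite{lechner:speckhofer:2025} to $U_A$, using the almost-diagonalization on faithful Haar systems together with the unconditionality hypothesis to control off-diagonal errors. This replaces the $X(\ell_2)$ and interpolation arguments used in \cite{bourgain:rosenthal:schechtman:1981} and \cite{ghawadrah:2016}.
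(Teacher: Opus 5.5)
The skeleton you describe matches the paper's: split $I_Y=SQ_{[1,n)}T+SQ_{[n,\infty)}T$, run a gliding hump along an almost faithful Haar system, and use Tong's lemma at the end. However, the step that makes Case~(b) work is missing, and the step you put in its place cannot be carried out. At stage $k$, the new vector $\tilde h_{I_k}$ must satisfy three things: it is built from small Haar functions inside the set $\Gamma=[\tilde h_{\pi(I_k)}=\varepsilon(I_k)]$ fixed by earlier choices; it fills most of $\Gamma$; and the tail $SQ_{[n,\infty)}T$, with $n$ fixed by the previous blocks, has a large diagonal entry on it.

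Your global dichotomy (the identity factors through $Q_AT$ or $Q_{A^c}T$, via some stabilized faithful subsystem) does not provide this. A factorization through the tail says nothing about its Haar diagonal; for instance, the isometry $f\mapsto f(1-\cdot)$ has zero Haar diagonal except at $[0,1)$. Moreover, a subsystem found after the fact cannot be forced into $\Gamma$ without destroying the earlier choices.

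The missing ingredient is the localized factorization theorem, \Cref{thm:factorization-property++}: the identity factors through $T$ as soon as $\limsup\{I:|\langle h_I,Th_I\rangle|\ge\delta|I|\}$ has positive measure. It is proved by building an $(\eta_I)$-almost faithful Haar system inside that $\limsup$ set. Apply it to the head $SQ_{[1,n)}T$ together with the primary factorization property. If (a) fails, then for every $n$ the set $\{I:\langle h_I,SQ_{[n,\infty)}Th_I\rangle>\frac12|I|\}$ has $\limsup$ of \emph{full} measure (\Cref{lem:sublemma-1-copy}). This is exactly what allows \Cref{lem:choice-of-B_I} to place $\mathcal{B}_{I_k}$ inside every $\Gamma$, after which a random choice of signs yields the large diagonal entry. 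No uniformity over arbitrary partitions $A\cup A^c$ is needed, so the obstacle you single out is not the real one.

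Second, your requirement that $\|Q_{E_k}T\tilde h_{I_k}-T\tilde h_{I_k}\|$ be small is not achievable in general. Without it, your perturbation argument for the equivalence of $(z_k)$ with the Haar system fails. Here is a counterexample:
\begin{itemize}
\item let $Y=L_p^0$ with $1<p<\infty$, and $W=C[0,1]$, which contains an isometric copy $J(Y)$ of $Y$ but, having the Dunford--Pettis property, no complemented one;
\item let $Z=Y\oplus_\infty W$ with $Z_1=\{0\}\oplus W$ and $Z_{i+1}=\mathrm{span}\{(h_{I_i},0)\}$;
\item let $Tf=(f,Jf)$ and $P(g,w)=(g,Jg)$.
\end{itemize}
Then $\|Q_ETf-Tf\|\ge\|Jf\|=\|f\|$ whenever $1\notin E$. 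So your condition fails for all but at most one $k$, even though alternative~(b) holds.

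Since $Q_{[1,n)}$ has infinite-dimensional range, only matrix entries can be controlled. The paper requires $\langle\tilde h_{I_k},SQ_{E_k}T\tilde h_{I_k}\rangle>\frac12|\mathcal{B}_{I_k}^{*}|$ and small off-diagonal entries $\langle\tilde h_{I_j},SQ_{E_k}T\tilde h_{I_k}\rangle$ for $j\ne k$. These come from weak nullness for the new vector and then choosing $\max E_k$ large. Equivalence and complementation are then obtained simultaneously:
\begin{itemize}
\item take $V=\sum_kQ_{E_k}TP_k$, where $(P_k)$ is an unconditional decomposition of $Y$ with $P_k\tilde h_{I_j}=\delta_{jk}\tilde h_{I_j}$; $V$ is bounded by \Cref{lem:1.3};
\item with $A,B$ as in \Cref{thm:operators-AB-copy}, $ASVB$ is a small perturbation of an invertible Haar multiplier by \Cref{lem:close-to-diagonal-operator};
\item hence $I_Y$ factors through $SVB$, so $(VBh_{I_k})=(Q_{E_k}T\tilde h_{I_k})$ is equivalent to the Haar system and has complemented span.
\end{itemize}
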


Note that if $Y = H_X$, then the Haar system is always unconditional in~$Y$.
The above theorem has been proved in~\cite{bourgain:rosenthal:schechtman:1981} for the spaces $L_p$, $1 < p < \infty$, using structural properties of the vector-valued space $L_p(\ell_2)$, which is isomorphic to $L_p$. Ghawadrah~\cite{ghawadrah:2016} later obtained the same result for rearrangement-invariant spaces with non-trivial Boyd indices, using interpolation.
While we still follow the overarching proof structure of \cite{bourgain:rosenthal:schechtman:1981}, instead of passing to $X(\ell_2)$, our proof relies only on the space $Y$ and the Haar system itself, and moreover, it does not use interpolation. We employ methods that were originally developed to prove factorization results~\cite{lechner:speckhofer:2025} and are applicable to Haar system spaces and their Hardy versions alike.
The assumption that the Haar system is unconditional in $Y$ will only be needed in the final step of the proof, when applying \Cref{lem:1.3} below. The other intermediate results hold in a more general setting (see \Cref{rem:assumptions-factorization-property++}).

In~\cite{bourgain:rosenthal:schechtman:1981}, it is mentioned that \Cref{thm:1.1} can also be proved for the space $L_1$, in which case only alternative~\ref{thm:1.1:a} occurs. The proof is based on the work of Enflo and Starbird~\cite{enflo:starbird:1979}. However, these methods are quite specific to the $L_1$ setting, and it is unclear whether the unconditionality of the Haar system can be dropped entirely in \Cref{thm:1.1}.

We will use the following diagonalization result, stated in \cite[Lemma~1.3]{bourgain:rosenthal:schechtman:1981} and follows from Tong's diagonalization theorem~\cite{MR262855} (see also \cite[Proposition 1.c.8]{lindenstrauss:tzafriri:1977}).

\begin{lem}\label{lem:1.3}
  Let $Y$ and $Z$ be Banach spaces with unconditional Schauder decompositions $(Y_i)_{i=1}^{\infty}$ and $(Z_i)_{i=1}^{\infty}$, respectively, and let $(P_i)_{i=1}^{\infty}$ (resp.\ $(Q_i)_{i=1}^{\infty}$ be the natural projections from $Y$ (resp.\ $Z$) onto $Y_i$ (resp.\ $Z_i$). Then, if $T\colon Y\to Z$ is a bounded linear operator, so is $\widetilde{T} := \sum_{i=1}^{\infty} Q_i T P_i$. (In other words, there is a $K < \infty$ so that for all $x\in Y$, $\sum_{i=1}^{\infty} Q_i T P_i x$ converges and $\|\sum_{i=1}^{\infty} Q_i T P_i x \|\le  K \| x\|$. Here, $K$ is the product of the unconditional constants of $(Y_i)_{i=1}^{\infty}$ and $(Z_i)_{i=1}^{\infty}$.)
\end{lem}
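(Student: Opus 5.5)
The plan is to reduce to a finite-dimensional estimate and then pass to the limit. By the triangle inequality it suffices to treat finitely supported $x$, so fix $x=\sum_{i\le N}x_i$ with $x_i\in Y_i$ and put $\widetilde T_N=\sum_{i=1}^N Q_iTP_i$. We then aim for the uniform bound $\|\widetilde T_N x\|\le K\|x\|$, where $K=K_YK_Z$ is the product of the unconditional constants of $(Y_i)$ and $(Z_i)$.

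The main step, and the one I expect to be the heart of the matter, is a sign-averaging identity. For $\varepsilon=(\varepsilon_i)_{i\le N}\in\{\pm1\}^N$ define the sign operators $M_\varepsilon=\sum_{i\le N}\varepsilon_iP_i+\sum_{i>N}P_i$ on $Y$, and $L_\varepsilon$ analogously on $Z$ using the $Q_i$ (with the tail $\sum_{i>N}Q_i=I-\sum_{i\le N}Q_i$). By unconditionality, $\|M_\varepsilon\|\le K_Y$ and $\|L_\varepsilon\|\le K_Z$. Averaging over all $\varepsilon$ with the Rademacher orthogonality $\mathbb{E}\,\varepsilon_i\varepsilon_j=\delta_{ij}$ gives
\[
\mathbb{E}_\varepsilon\, (I-R_N+L_\varepsilon)\,T\,M_\varepsilon x \;=\; \ldots,
\]
where $R_N=\sum_{i\le N}Q_i$. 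The cleaner route is to compute
\[
\mathbb{E}_\varepsilon\, \Big(\sum_{j\le N}\varepsilon_jQ_j\Big)T\Big(\sum_{i\le N}\varepsilon_iP_i\Big)x=\sum_{i\le N}Q_iTP_ix .
\]
Now $\sum_{j\le N}\varepsilon_jQ_j=\tfrac12(L_\varepsilon-L_{-\varepsilon})$ and $\sum_{i\le N}\varepsilon_iP_ix=M_\varepsilon x - (I-\sum_{i\le N}P_i)x = M_\varepsilon x$, the last equality holding because $x$ is supported in $\{1,\dots,N\}$. Hence each term has norm at most $K_Z\|T\|K_Y\|x\|$. Averaging preserves this bound, so $\|\widetilde T_Nx\|\le K_YK_Z\|T\|\|x\|$. (Some care is needed with the exact constant: either absorb $\|T\|$ into $K$ as the statement intends, or note that the $\tfrac12(L_\varepsilon-L_{-\varepsilon})$ trick costs nothing since $\|\sum_{j\le N}\varepsilon_jQ_j\|\le K_Z$ directly by the suppression form of unconditionality.)

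For general $x\in Y$, write $x=\lim_N \sum_{i\le N}P_ix$. The partial sums $\sum_{i\in A}Q_iTP_ix$ over finite $A$ satisfy the same bound applied to $x_A=\sum_{i\in A}P_ix$; in particular the increment over the block $(N,M]$ equals $\widetilde T$ applied to $\sum_{N<i\le M}P_ix$, whose norm tends to $0$. So the series is Cauchy, hence it converges, and the bound passes to the limit. This is the same argument as in Tong's diagonalization theorem, to which we could simply appeal instead. The only real obstacle is the bookkeeping of the tails in the sign operators, and restricting to finitely supported $x$ removes that issue.
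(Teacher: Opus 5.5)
Your proof is correct and matches the paper's approach: the averaging identity $\sum_{i\le N}Q_iTP_ix=\mathbb{E}_\varepsilon\bigl(\sum_{j\le N}\varepsilon_jQ_j\bigr)T\bigl(\sum_{i\le N}\varepsilon_iP_i\bigr)x$, combined with the Cauchy estimate on blocks, is the standard proof of Tong's diagonalization theorem, which the paper does not reprove but only cites (via \cite[Lemma~1.3]{bourgain:rosenthal:schechtman:1981} and \cite[Proposition~1.c.8]{lindenstrauss:tzafriri:1977}). You are also right that the constant is $K_YK_Z\|T\|$, so the statement's $K$ tacitly includes the factor $\|T\|$; just delete the abandoned display involving $(I-R_N+L_\varepsilon)$.
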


In remainder of this chapter, we will use terminology from factorization theory of operators on Banach spaces.
\begin{dfn}
    Let $S,T$ be bounded linear operators on a Banach space~$X$. We say that $S$ factors through~$T$ if there exist bounded linear operators $A,B\colon X\to X$ such that $S = ATB$.
  \end{dfn}
\begin{rem}\label{rem:factorization-subspace}
  Note that if the identity $I_X$ on a Banach space~$X$ factors through a bounded linear operator $T\colon X\to X$, then there exists a subspace~$Y$ of~$X$ with $Y$ isomorphic to~$X$, $T|_Y$ an isomorphism, and $T(Y)$ complemented in~$X$.
\end{rem}
\begin{dfn}
    We say that a Banach space~$X$ has the primary factorization property if for every bounded linear operator~$T\colon X\to X$, the identity~$I_X$ factors either through~$T$ or through~$I_X - T$.
\end{dfn}
\begin{dfn}
    Let $(e_n)_{n=1}^{\infty}$ be a Schauder basis of a Banach space~$X$ with biorthogonal functionals $(e_n^{*})_{n=1}^{\infty}$. We say that~$(e_j)_{j=1}^{\infty}$ has the factorization property if the identity~$I_X$ factors through every bounded linear operator~$T\colon X\to X$ which has large diagonal with respect to~$(e_n)_{n=1}^{\infty}$, i.e., $\inf_n |\langle e_n^{*}, Te_n \rangle| > 0$.
\end{dfn}

The next result is proved in \cite[Theorem~1.9]{bourgain:rosenthal:schechtman:1981} for $Y = L_p$, but in fact, the same proof works for every Banach space $Y$ which has the primary factorization property.

\begin{prop}\label{prop:pfp-complemented-subspace-direct-sum}
  Let $Y,Z_1,\dots,Z_n$ be Banach spaces. Suppose that $Y$ has the primary factorization property and is isomorphic to a complemented subspace of $Z_1\oplus \dots \oplus Z_n$. Then $Y$ is isomorphic to a complemented subspace of $Z_i$ for some $i$.
\end{prop}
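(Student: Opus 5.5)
By induction on $n$ it suffices to treat $n=2$, since $Z_1\oplus\dots\oplus Z_n = (Z_1\oplus\dots\oplus Z_{n-1})\oplus Z_n$. If $Y$ is isomorphic to a complemented subspace of $Z_1\oplus Z_2$ and this always forces a complemented copy of $Y$ in $Z_1$ or in $Z_2$, then applying the case $n=2$ to the first summand $Z_1\oplus\dots\oplus Z_{n-1}$ and using the induction hypothesis finishes the argument. So fix $n=2$.

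For $n=2$, I set up the operators as follows. Let $J\colon Y\to Z_1\oplus Z_2$ be an isomorphic embedding and $P$ a bounded projection of $Z_1\oplus Z_2$ onto $J(Y)$. Write $Q\colon Z_1\oplus Z_2\to Y$ for $Q = J^{-1}P$, so that $QJ = I_Y$. Let $\pi_i\colon Z_1\oplus Z_2\to Z_i$ be the coordinate projections and $\iota_i\colon Z_i\to Z_1\oplus Z_2$ the inclusions. Then $\iota_1\pi_1+\iota_2\pi_2$ is the identity of $Z_1\oplus Z_2$, and hence
\[I_Y = QJ = Q\iota_1\pi_1J + Q\iota_2\pi_2 J.\]
Put $T = Q\iota_1\pi_1 J\colon Y\to Y$, so that $I_Y - T = Q\iota_2\pi_2J$.

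By the primary factorization property, $I_Y$ factors through $T$ or through $I_Y-T$. Say $I_Y = ATB$ with $A,B$ bounded on $Y$; the other case is symmetric with index $2$. Then
\[I_Y = (AQ\iota_1)(\pi_1 J B),\]
where $U := \pi_1 J B\colon Y\to Z_1$ and $V := AQ\iota_1\colon Z_1\to Y$ satisfy $VU = I_Y$. Consequently $U$ is an isomorphic embedding, since $\|y\|\le\|V\|\|Uy\|$. Moreover $UV$ is a bounded idempotent on $Z_1$ with range $U(Y)$, because $(UV)^2 = U(VU)V = UV$ and $UV(Uy) = Uy$. Thus $U(Y)$ is a complemented subspace of $Z_1$ isomorphic to $Y$.

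I see no real obstacle. The only point needing care is the bookkeeping that keeps the factorization in the form ``bounded operator on $Y$, composed with a map into $Z_i$, composed with a map back''. This is exactly what lets $VU=I_Y$ produce the complemented copy, and it is the content of \Cref{rem:factorization-subspace} transported to $Z_i$.
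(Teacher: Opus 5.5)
Your proof is correct and is essentially the argument the paper has in mind: the paper gives no proof of its own but defers to the proof of \cite[Theorem~1.9]{bourgain:rosenthal:schechtman:1981}. That proof likewise reduces to two summands, splits $I_Y = Q\iota_1\pi_1J + Q\iota_2\pi_2J$, and applies the primary factorization property to one of the two pieces. Working directly with the factorization $I_Y = ATB$, rather than with the subspace formulation of \Cref{rem:factorization-subspace} as the paper does in the proof of \Cref{lem:sublemma-1-copy}, is only a cosmetic difference.
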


\begin{dfn}\label{dfn:faithful-HS}
  For every $I\in \mathcal{D}$, let $\mathcal{B}_I$ be a finite subcollection of $\mathcal{D}$. Moreover, let $(\theta_K)_{K\in \mathcal{D}}\in \{ \pm 1 \}^{\mathcal{D}}$ be a family of signs, and let $(\eta_I)_{I\in \mathcal{D}}$ be a family of positive numbers with $0\le \eta_I < 1$ for all $I$. For $I\in \mathcal{D}$, put $\tilde{h}_I = \sum_{K\in \mathcal{B}_I} \theta_Kh_K$. We say that $(\tilde{h}_I)_{I\in \mathcal{D}}$ is an $(\eta_I)_{I\in \mathcal{D}}$-faithful Haar system if the following conditions are satisfied:
\begin{enumerate}[label=(\alph*),]
    \item\label{cond:faithful-i-copy} For every $I\in \mathcal{D}$, the collection $\mathcal{B}_I$ consists of pairwise disjoint dyadic intervals, and $\mathcal{B}_I\cap \mathcal{B}_J = \emptyset$ for all $I\ne J\in \mathcal{D}$.
    \item\label{cond:faithful-ii-copy} For every $I\in \mathcal{D}$, we have $\mathcal{B}_{I^{\pm }}^{*}\subset [\tilde{h}_I = \pm 1]$ and $|\mathcal{B}_{I^{\pm }}^{*}|\ge (1-\eta_I) \cdot \tfrac12 |\mathcal{B}_I^{*}|$.
  \end{enumerate}
  If $\eta_I = 0$ for all $I\in \mathcal{D}$, we call $(\tilde{h}_I)_{I\in \mathcal{D}}$ a faithful Haar system.
  See \Cref{fig:almost-faithful-and-faithful} for examples.
\end{dfn}

\begin{prop}\label{thm:operators-AB-copy}
  Let $X$ be a Haar system space and $Y = X^0$ or $H_X$, and suppose that $(\tilde{h}_I)_{I\in \mathcal{D}}$ is an $(\eta_I)_{I\in \mathcal{D}}$-faithful Haar system for some family of real numbers $0 \le\eta_I < 1$ such that
  \begin{equation}\label{eq:3}
    \sigma := \sum_{J\in \mathcal{D}} \eta_I < 1.
  \end{equation}
  Then the operators $A,B\colon Y\to Y$ given by
  \begin{equation}\label{eq:9}
    Bx = \sum_{I\in \mathcal{D}} \frac{\langle h_I, x \rangle}{|I|} \tilde{h}_I\qquad \text{and} \qquad
    Ax = \sum_{I\in \mathcal{D}} \frac{\langle \tilde{h}_I, x \rangle}{|\mathcal{B}_I^{*}|} h_I
  \end{equation}
  are well-defined and bounded, and they satisfy $AB = I_Y$. In particular, $(\tilde{h}_I)_{I\in \mathcal{D}}$ is equivalent to $(h_I)_{I\in \mathcal{D}}$ in $Y$, and $[\tilde{h}_I]_{I\in \mathcal{D}}$ is complemented in $Y$ by the projection $BA$.
  Moreover, we have $|\mathcal{B}_I^{*}| \ge (1-\sigma)|\mathcal{B}_{[0,1)}^{*}| |I|$ for all $I\in \mathcal{D}$.
\end{prop}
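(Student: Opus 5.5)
The plan is to prove the measure estimate first, then bound $B$ and $A$ separately, and finally verify $AB=I_Y$ on Haar functions.

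\emph{Measure estimate.} Let $I\in\mathcal{D}$ with ancestors $[0,1)=I_0\supset I_1\supset\cdots\supset I_m=I$. Condition~\ref{cond:faithful-ii-copy} gives $|\mathcal{B}_{I_{j+1}}^*|\ge(1-\eta_{I_j})\tfrac12|\mathcal{B}_{I_j}^*|$. Iterating,
\[
|\mathcal{B}_I^*|\ge |I|\,|\mathcal{B}_{[0,1)}^*|\prod_j(1-\eta_{I_j})\ge(1-\sigma)|I|\,|\mathcal{B}_{[0,1)}^*|,
\]
using $\prod(1-\eta_j)\ge 1-\sum\eta_j$. This is the ``moreover'' statement. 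We also have the upper bound $|\mathcal{B}_{I^\pm}^*|\le\tfrac12|\mathcal{B}_I^*|$, since the two halves are disjoint subsets of $\mathcal{B}_I^*$. Hence $|\mathcal{B}_I^*|\le|I|\,|\mathcal{B}_{[0,1)}^*|$.

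\emph{Bounding $B$ and $A$.} The approach is to compare with an exactly faithful system. Define $\hat h_I=\tilde h_I\,\chi_{\Omega_I}$, where $\Omega_I$ is obtained by successively intersecting with $[\tilde h_{I_j}=\varepsilon_j]$ along the ancestors. More cleanly, I would construct an honest faithful Haar system (the case $\eta_I=0$) as follows. Let $B_0$ be the nested intersection of the sets $\bigcup_{I\in\mathcal{D}^n}\mathcal{B}_I^*$ over $n$. Its measure is at least $(1-\sigma)|\mathcal{B}_{[0,1)}^*|$, and on it the system $(\tilde h_I\chi_{B_0})$ is distributionally a Haar system on a rescaled interval.

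For $\eta\equiv0$, the map $B$ is exactly a distributional embedding composed with a compression. This follows in the spirit of Proposition~\ref{constructing associated haar system in the more familiar way} and Proposition~\ref{standard square function distributional isomorphism}: the joint distribution of $(\tilde h_I)$ equals that of $(T_{[0,\delta)}h_I)$, where $\delta=|\mathcal{B}_{[0,1)}^*|$. So $\|Bx\|_Y=\|T_{[0,\delta)}x\|_Y\le\|x\|_Y$, and the same holds for square functions in the $H_X$ case, since $S(Bx)$ is distributed like $T_{[0,\delta)}S(x)$.

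For $A$ I would use duality. By Proposition~\ref{pro:HSHS-1d}\ref{pro:HSHS-1d:v} and the same distributional argument in $Y^*$ (itself a Haar system space or its dual Hardy version, by Proposition~\ref{pro:HS-1d}\ref{pro:HS-1d:iii}), $A$ is the adjoint of the analogous map $x^*\mapsto\sum|\mathcal{B}_I^*|^{-1}\langle h_I,x^*\rangle\tilde h_I$. This adjoint is bounded by the lower measure estimate together with the inverse compression bound $\|T^{-1}_{[0,\delta)}\|$, with $\delta$ controlled from below.

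\emph{The general case and the main obstacle.} For general $\eta$ the main obstacle is the defect between $\tilde h_I$ and its restriction to the ``good'' set. I would write $\tilde h_I=\tilde h_I\chi_{G_I}+\tilde h_I\chi_{G_I^c}$, where $G_I=\bigcup_{n}\bigcap(\cdot)$ is the part of $\mathcal{B}_I^*$ that stays faithful through all descendants. The first parts form a faithful system and are handled as above. For the error parts, the measure of $\mathcal{B}_I^*\setminus G_I$ is at most $\sum_{J\subseteq I}\eta_J|\mathcal{B}_J^*|$. I would try to bound the error operator $x\mapsto\sum_I\frac{\langle h_I,x\rangle}{|I|}\tilde h_I\chi_{G_I^c}$ by estimating each term via $|\langle h_I,x\rangle|/|I|\le\|x\|_Y\|h_I\|_{Y^*}/|I|=\|x\|_Y/\|h_I\|_Y$ (Proposition~\ref{pro:HSHS-1d}\ref{pro:HSHS-1d:iii}). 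Then $\|\tilde h_I\chi_{G_I^c}\|\le\|\chi_{\text{small set}}\|$, and summing should give a geometric-type bound in $\sigma$. The delicate point is whether this summation yields a finite constant using only the fundamental function; if not, I would instead absorb the defect by perturbation, showing that the error operator has norm $<1$ relative to the faithful part, after possibly first passing to a tail.

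\emph{Conclusion.} Finally, $AB h_J=\sum_I\frac{\langle\tilde h_I,\tilde h_J\rangle}{|\mathcal{B}_I^*|}h_I=h_J$ by the disjointness of the collections $\mathcal{B}_I$ and the orthogonality of Haar functions. Hence $AB=I_Y$, and $BA$ is a projection onto $[\tilde h_I]$.
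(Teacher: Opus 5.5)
Your measure estimate, the identity $ABh_J=h_J$, and the exactly faithful case for $Y=X^0$ ($B$ a distributional copy of a compression, $A$ by duality) are fine. The gap is the passage to $(\eta_I)$-faithful systems, and both halves of your splitting fail.

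First, $(\tilde h_I\chi_{B_0})_{I\in\mathcal{D}}$ is not faithful, let alone distributed like a compressed Haar system. The mass lost inside $[\tilde h_I=1]$ and inside $[\tilde h_I=-1]$ is governed by different $\eta_J$'s, so the sets $[\tilde h_I=\pm1]\cap B_0$ have different measures in general. For example, take $\mathcal{B}_{[0,1)}=\{[0,1)\}$, $\eta_{[0,\frac12)}=\frac12$, and all other $\eta_J=0$. Then one can have $B_0=[0,\frac18)\cup[\frac14,\frac38)\cup[\frac12,1)$, so $\tilde h_{[0,1)}\chi_{B_0}$ does not even have mean zero.

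Second, your term-by-term bound for the error operator is $\|x\|_Y\sum_I\varphi_X(|\mathcal{B}_I^*\setminus G_I|)/\varphi_X(|I|)$. This sum is infinite for admissible families already in $L_p$, $p>1$. Take $\mathcal{B}_{[0,1)}=\{[0,1)\}$ and $\eta_J=c\,2^{-n}(n+1)^{-2}$ for $J\in\mathcal{D}^n$, so that $\sigma<1$ for small $c>0$. Then $|\mathcal{B}_I^*\setminus G_I|\asymp 4^{-k}(k+1)^{-2}$ for $I\in\mathcal{D}^k$, and your sum is $\asymp\sum_k 2^{k(1-1/p)}(k+1)^{-2/p}=\infty$. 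The fallback (perturbation ``after passing to a tail'') is not available either: the statement concerns the given system, and no bound on the error operator has been obtained.

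The missing idea is to enlarge rather than shrink, which is what the paper does. Inductively complete each $\mathcal{B}_I$ to a finite collection $\hat{\mathcal{B}}_I\supset\mathcal{B}_I$ by adding small fresh dyadic intervals that exactly cover $[\hat h_{\pi(I)}=\varepsilon(I)]\setminus\mathcal{B}_I^*$ (and $[0,1)\setminus\mathcal{B}_{[0,1)}^*$ at the top). Keep the signs $\theta_K$ for $K\in\mathcal{B}_I$. This gives a genuinely faithful Haar system $(\hat h_I)$ with $|\hat{\mathcal{B}}_I^*|=|I|$, to whose operators $\hat A,\hat B$ your faithful-case argument applies.

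Now let $R$ be the Haar coordinate projection onto $[h_K:K\in\bigcup_I\mathcal{B}_I]$. Then $R\hat h_I=\tilde h_I$, hence $B=R\hat B$ and $A=M\hat A R$, where $Mh_I=|I|\,|\mathcal{B}_I^*|^{-1}h_I$. Your measure estimate gives $\sup_I|I|/|\mathcal{B}_I^*|\le(1-\sigma)^{-1}|\mathcal{B}_{[0,1)}^*|^{-1}$. So the defect is absorbed by two Haar multipliers rather than estimated as an error term. Their boundedness uses unconditionality of the Haar system in $Y$.

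That unconditional case with $Y=X^0$ is the one the paper proves directly. The general statement, covering conditional $X^0$ such as $L_1^0$ and the case $Y=H_X$, is taken from \cite{lechner:speckhofer:2025}.

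Two smaller points. Your duality step for $A$ when $Y=H_X$ is unjustified as written, since $H_X^*$ is in general neither a Haar system space nor a Hardy space over one; for instance, $(H^1)^*$ is dyadic $\mathrm{BMO}$. Also, the bound $|\mathcal{B}_{I^\pm}^*|\le\frac12|\mathcal{B}_I^*|$ follows from $\mathcal{B}_{I^\pm}^*\subset[\tilde h_I=\pm1]$, not from disjointness of the two halves.
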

\begin{proof}
  This is proved in \cite[Theorem~7.3]{lechner:speckhofer:2025} (see also \cite[Theorem 3.5.3]{speckhofer:2025:thesis} for more details). In our main results, the most important case is $Y = X^0$, where $X$ is a Haar system space in which the Haar system is unconditional. In this case, the proof is much shorter, and we have chosen to include it for convenience.

  First, we assume that $(\tilde{h}_I)_{I\in \mathcal{D}}$ is a faithful Haar system.
  Then $B$ is a distributional isomorphism on $\langle\{ \chi_I : I\in \mathcal{D} \}  \rangle$ and hence an isometry on~$X^0$.
  We can extend the operators $B$ and~$A$ from $\langle \{ h_I : I\in \mathcal{D} \} \rangle$ to $\langle \{ \chi_I : I\in \mathcal{D} \} \rangle$ as follows: For $\alpha\in \mathbb{R}$, $x_0\in \langle \{ h_I : I\in \mathcal{D} \} \rangle$, and $x = \alpha \chi_{[0,1)} + x_0$, we put $\bar{B}x = \alpha \chi_{[0,1)} + Bx_0$ and $\bar{A}x = \alpha \chi_{[0,1)} + Ax_0$. Then the boundedness of $\bar{A}$, and hence of $A$, follows by duality: By \Cref{pro:HS-1d}~\ref{pro:HS-1d:iii}, $\bar{B}$ is also an isometry in the $X^{*}$-norm,
  and for all $x,y\in \langle \{ \chi_{[0,1)} \}\cup  \{ h_I : I\in \mathcal{D} \} \rangle$, we have $\langle y, \bar{A}x \rangle = \langle \bar{B}y, x \rangle$.

  If $(\tilde{h}_I)_{I\in \mathcal{D}}$ is not faithful, then we can find a faithful Haar system $(\hat{h}_I)_{I\in \mathcal{D}}$ with Haar supports~$\hat{\mathcal{B}}_I$ such that $\mathcal{B}_I\subset \hat{\mathcal{B}}_I$ for all $I\in \mathcal{D}$ (such a system can be constructed inductively: if $\hat{h}_{\pi(I)}$ has already been defined for some $I\in \mathcal{D}$, we extend the collection $\mathcal{B}_I$ to $\hat{\mathcal{B}}_I$ by inserting sufficiently small, disjoint dyadic intervals which precisely cover the set $[\hat{h}_{\pi(I)} = \varepsilon(I)]\setminus \mathcal{B}_I^{*}$, and which do not yet appear in any previously constructed collection; see \Cref{fig:almost-faithful-and-faithful}).
  Now let $\hat{A},\hat{B}$ denote the operators associated with $(\hat{h}_I)_{I\in \mathcal{D}}$. We already know that they are bounded. Moreover, let $\mathcal{C} = \bigcup_{I\in \mathcal{D}} \mathcal{B}_I$, and let $R$ denote the Haar multiplier defined by $R h_K = h_K$ for $K\in \mathcal{C}$ and $Rh_k = 0$ for $K\in \mathcal{D}\setminus \mathcal{C}$. Thus, $R \hat{h}_I = \tilde{h}_I$ for all $I\in \mathcal{D}$, and by unconditionality, $R$ is bounded on $X$. Now we can write $B = R \hat{B}$ and $A = M \hat{A} R$, where $M$ is the linear extension of
  \begin{equation*}
    Mh_I = \frac{|I|}{|\mathcal{B}_I^{*}|} h_I,\qquad I\in \mathcal{D}.
  \end{equation*}
  Hence, it only remains to prove that $\sup_{I\in \mathcal{D}} |I|/|\mathcal{B}_I^{*}| < \infty$.
  This follows by repeatedly applying \Cref{dfn:faithful-HS}~\ref{cond:faithful-ii-copy}: For $I\in \mathcal{D}$ with $|I| = 2^{-n}$, we have
  \begin{equation*}
    |\mathcal{B}_I^{*}| \ge \Bigl( \prod_{J\in \mathcal{D}: J\supsetneq I} (1-\eta_J) \Bigr) \frac{|\mathcal{B}_{[0,1)}^{*}|}{2^n}
    \ge \Bigl( 1 - \sum_{J\in \mathcal{D}: J\supsetneq I} \eta_J \Bigr) \frac{|\mathcal{B}_{[0,1)}^{*}|}{2^n} \ge (1-\sigma) |\mathcal{B}_{[0,1)}^{*}| |I|.
  \end{equation*}
  Thus, $A$ and $B$ extend to bounded linear operators on $X^0$, given by the two norm-convergent series in~\eqref{eq:9}.
\end{proof}

\begin{figure}[t]
  \centering \includegraphics[width=\textwidth]{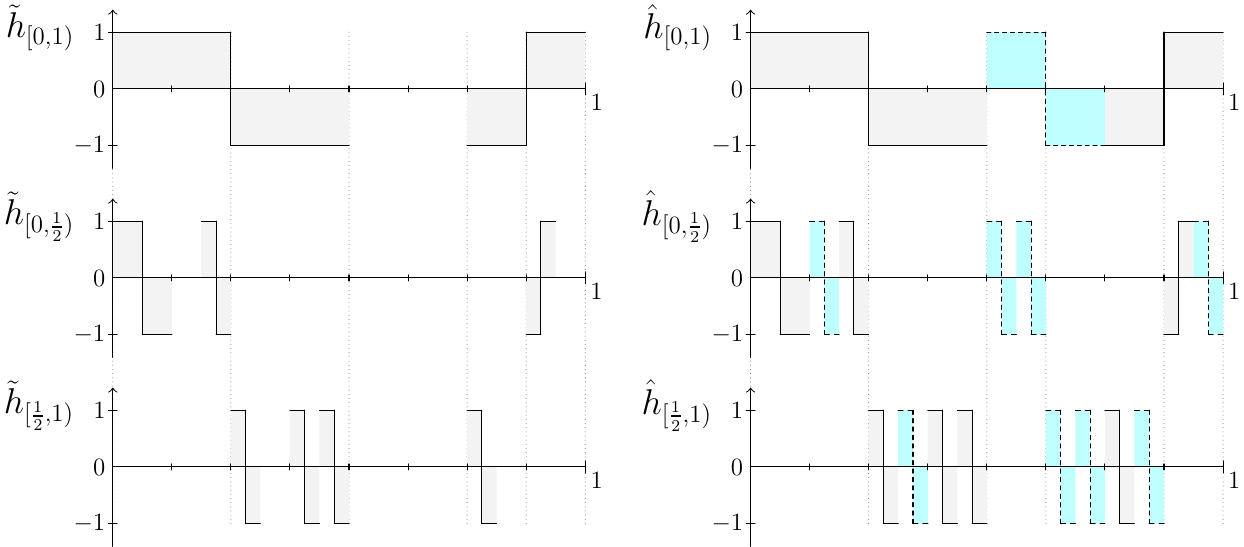}
  \caption{An $(\eta_I)_{I\in \mathcal{D}}$-faithful Haar system (left) is extended to a faithful Haar system (right).}
  \label{fig:almost-faithful-and-faithful}
\end{figure}

In order to construct an almost faithful Haar system that almost diagonalizes a given operator, we will need the following lemma.
\begin{lem}\label{lem:close-to-diagonal-operator}
  Let $X$ be a Haar system space and $Y = X^0$ or $H_X$, and let $(\tilde{h}_I)_{I\in \mathcal{D}}$ be a $(\eta_I)_{I\in \mathcal{D}}$-faithful Haar system for some $(\eta_I)_{I\in \mathcal{D}}$, with Haar supports $\mathcal{B}_I$. Suppose that $\sigma < 1$, where $\sigma$ is defined as in~\eqref{eq:3}. Moreover, let $\varepsilon > 0$, and let $(\varepsilon_{I,J})_{I,J\in \mathcal{D}}$ be a family of positive numbers such that
  \begin{equation}\label{eq:13}
    \sum_{I,J\in \mathcal{D}} \frac{\varepsilon_{I,J}}{|I| |J|} < \varepsilon.
  \end{equation}
  If $T\colon Y\to Y$ is a bounded linear operator and $|\langle \tilde{h}_I, T \tilde{h}_J \rangle|\le \varepsilon_{I,J}$ for all $I\ne J\in \mathcal{D}$, then the linear extension of $Dh_I = |\mathcal{B}_I^{*}|^{-1}\langle \tilde{h}_I, T \tilde{h}_I\rangle  h_I$, $I\in \mathcal{D}$,
  defines a bounded Haar multiplier on~$Y$ such that $\|ATB - D\| \le \varepsilon (1-\sigma)^{-1}|\mathcal{B}_{[0,1)}^{*}|^{-1}$, where $A,B$ are the associated operators from \Cref{thm:operators-AB-copy}.
\end{lem}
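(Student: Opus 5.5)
The plan is to expand $ATB-D$ in the Haar basis and bound it by a sum of rank-one operators whose norms add up to the stated quantity. For $x\in\langle h_I : I\in\mathcal{D}\rangle$, the definitions of $A$ and $B$ in \eqref{eq:9} give
\[
ATBx=\sum_{I\in\mathcal{D}}\sum_{J\in\mathcal{D}}\frac{\langle h_J,x\rangle}{|J|}\,\frac{\langle \tilde h_I,T\tilde h_J\rangle}{|\mathcal{B}_I^{*}|}\,h_I .
\]
The diagonal terms $I=J$ are exactly $Dx$. Hence, at least formally,
\[
ATB-D=\sum_{I\ne J}\frac{\langle \tilde h_I,T\tilde h_J\rangle}{|J|\,|\mathcal{B}_I^{*}|}\; h_J^{*}\otimes h_I ,
\]
where $h_J^{*}\otimes h_I$ denotes the rank-one map $x\mapsto\langle h_J,x\rangle h_I$. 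This expression is valid on the dense subspace of finite Haar expansions, where all sums are finite for each fixed $x$ in the $J$ index.

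Next I bound each rank-one piece. By \Cref{pro:HSHS-1d}~\ref{pro:HSHS-1d:iii}, $\|h_J^{*}\otimes h_I\|\le \|h_J\|_{Y^{*}}\|h_I\|_Y$. For this to be useful, I need the product to be at most $1$. In $Y$, the norm $\|h_I\|_Y=\|\chi_I\|_X$ is at most $1$ by \Cref{pro:HS-1d}~\ref{pro:HS-1d:i}. On the dual side, $\|h_J\|_{Y^{*}}\le 1$ by \Cref{pro:HSHS-1d}~\ref{pro:HSHS-1d:v}, applied with the single interval $\mathcal{B}=\{J\}$. So each rank-one piece has norm at most $1$.

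For the denominator, I use the last assertion of \Cref{thm:operators-AB-copy}, namely $|\mathcal{B}_I^{*}|\ge (1-\sigma)|\mathcal{B}_{[0,1)}^{*}|\,|I|$. Combining this with the hypothesis $|\langle\tilde h_I,T\tilde h_J\rangle|\le\varepsilon_{I,J}$, the coefficient of the $(I,J)$ term has modulus at most
\[
\frac{\varepsilon_{I,J}}{(1-\sigma)\,|\mathcal{B}_{[0,1)}^{*}|\,|I|\,|J|}.
\]
Summing over $I\ne J$ and using \eqref{eq:13}, the series of operator norms is bounded by $\varepsilon(1-\sigma)^{-1}|\mathcal{B}_{[0,1)}^{*}|^{-1}$. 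The series is therefore absolutely convergent in operator norm. It defines a bounded operator $E$ with $\|E\|$ at most that bound, and $E=ATB-D$ on the dense subspace.

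It follows that $D=ATB-E$ on the dense span. Since $ATB$ is bounded by \Cref{thm:operators-AB-copy} and $E$ is bounded, $D$ extends to a bounded Haar multiplier on $Y$, and $\|ATB-D\|=\|E\|$ gives the claimed estimate. I expect the only delicate point to be justifying the interchange of the double sum. Because I check the identity only on finite Haar expansions, where $ATBx$ is computed term by term as a norm-convergent series in $I$, and the off-diagonal series converges absolutely, this interchange should be routine.
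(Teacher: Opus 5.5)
Your argument is correct and is essentially the paper's proof. The paper first bounds $\|(ATB-D)h_J\|\le\sum_{I\ne J}\varepsilon_{I,J}/|\mathcal{B}_I^{*}|$ using $\|h_I\|_Y\le 1$. It then sums over $J$ for $x=\sum_J a_Jh_J$, using $|a_J|\le |J|^{-1}\|x\|$ (from $\|h_J\|_{Y^{*}}\le 1$) and $|\mathcal{B}_I^{*}|\ge(1-\sigma)|\mathcal{B}_{[0,1)}^{*}||I|$, which is your rank-one decomposition organized column by column; note that the rank-one estimate $\|h_J^{*}\otimes h_I\|\le\|h_J\|_{Y^{*}}\|h_I\|_Y$ is elementary and does not need \Cref{pro:HSHS-1d}~\ref{pro:HSHS-1d:iii}.
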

\begin{proof}
  For every $J\in \mathcal{D}$, we have
  \begin{equation*}
    \|(ATB - D) h_J\| = \Bigl\| \sum_{\substack{I\in \mathcal{D}\\ I\ne J}} \frac{\langle \tilde{h}_I, T \tilde{h}_J \rangle}{|\mathcal{B}_I^{*}|}h_I \Bigr\|
    \le \sum_{\substack{I\in \mathcal{D}\\ I\ne J}} \frac{|\langle \tilde{h}_I, T \tilde{h}_J \rangle|}{|\mathcal{B}_I^{*}|} \|h_I\|
    \le \sum_{\substack{I\in \mathcal{D}\\ I\ne J}} \frac{\varepsilon_{I,J}}{|\mathcal{B}_I^{*}|}.
  \end{equation*}
  For every finite linear combination $x = \sum_{J\in \mathcal{D}} a_Jh_J\in X$, we have $|a_J| = |J|^{-1} |\langle h_J,x \rangle|\le |J|^{-1}\|x\|$ for all $J$ (note that $\|h_J\|_{Y^{*}}\le 1$ by \Cref{pro:HSHS-1d}~\ref{pro:HSHS-1d:v}). Thus, we obtain
  \begin{equation*}
    \|(ATB - D)x\|
    \le \sum_{J\in \mathcal{D}} |a_J|\|(ATB - D)h_J\|
    \le \|x\| \sum_{I,J\in \mathcal{D}} \frac{\varepsilon_{I,J}}{|\mathcal{B}_I^{*}| |J|}
    \le \frac{\varepsilon \|x\|}{(1-\sigma)|\mathcal{B}_{[0,1)}^{*}|},
  \end{equation*}
  where we used~\eqref{eq:13} and the last statement of \Cref{thm:operators-AB-copy}.
\end{proof}

Next, we establish a slightly stronger version of the factorization property of the Haar basis of~$Y$, along with some other preliminary results needed to prove \Cref{thm:1.1}.

\begin{dfn}
  Let $\mathcal{A}\subset \mathcal{D}$ be a collection of dyadic intervals.
  By $\limsup \mathcal{A}$ we denote the set of all points $t\in [0,1)$ which are contained in infinitely many intervals $I\in \mathcal{A}$.
  Moreover, we define
  \begin{equation*}
    \mathcal{G}_0(\mathcal{A}) = \{ I\in \mathcal{D} : I \text{ is maximal with respect to inclusion} \}
  \end{equation*}
  and, for $n\ge 1$,
  \begin{equation*}
    \mathcal{G}_n(\mathcal{A}) = \mathcal{G}_0\Bigl( \mathcal{A}\setminus \bigcup_{k=0}^{n-1}\mathcal{G}_k(\mathcal{A}) \Bigr).
  \end{equation*}
  Note that the ``generations'' $\mathcal{G}_n(\mathcal{A})$ are pairwise disjoint collections and each of them consists of pairwise disjoint intervals. If $n\ge 1$ and $I\in \mathcal{G}_n(\mathcal{A})$, then there exists a unique interval $J\in \mathcal{G}_{n-1}(\mathcal{A})$ such that $I\subset J^+$ or $I\subset J^-$. In particular, $\mathcal{G}_0(\mathcal{A})^{*}\supset \mathcal{G}_1(\mathcal{A})^{*}\supset \cdots$.
  See \Cref{fig:generations} for an example.
  Also note that $\mathcal{G}_n(\mathcal{A})^{*}$ is the set of points in $[0,1)$ contained in at least $n+1$ intervals from $\mathcal{A}$, hence the intersection of all sets $\mathcal{G}_n(\mathcal{A})^{*}$ is $\limsup(\mathcal{A})$.
\end{dfn}

\begin{figure}[t]
  \centering \includegraphics[width=0.85\textwidth]{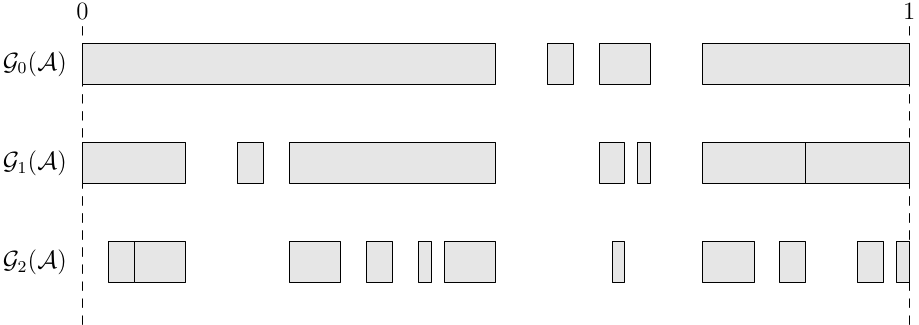}
  \caption{The collections $\mathcal{G}_n(\mathcal{A})$}
  \label{fig:generations}
\end{figure}

The next lemma contains the main inductive step in the construction of an almost faithful Haar system and will be applied twice in the remainder of this section. Essentially, it asserts that it is always possible to construct a block of the Haar system (with constraints on its support and Haar support) that almost annihilates finitely many vectors and functionals.
\begin{lem}\label{lem:choice-of-B_I}
  Let $X$ be a Haar system space and $Y = X^0$ or $H_X$. Suppose that the sequence of Rademacher functions is weakly null in $Y$. Let $\mathcal{A}\subset \mathcal{D}$, let $S = \limsup(\mathcal{A})$, and let $\Gamma \subset [0,1)$ be a finite union of dyadic intervals such that $|\Gamma \cap S| > (1 - \tilde{\eta})|\Gamma|$ for some $0 < \tilde{\eta} < 1$. Moreover, let $\mathcal{E}\subset \mathcal{D}$, $F\subset Y$ and $G\subset Y^{*}$ be finite, and let $\eta,\varepsilon > 0$. Then there exists a finite collection $\mathcal{B}\subset \mathcal{A}\setminus \mathcal{E}$ of pairwise disjoint dyadic intervals with the following properties:
\begin{enumerate}[label=(\alph*),]
    \item\label{lem:choice-of-B_I:a} $\mathcal{B}^{*}\subset \Gamma$
    \item\label{lem:choice-of-B_I:b} $|\mathcal{B}^{*}| > (1 - \tilde{\eta}) |\Gamma|$
    \item\label{lem:choice-of-B_I:c} $|\mathcal{B}^{*}\cap S| > (1 - \eta) |\mathcal{B}^{*}|$
    \item\label{lem:choice-of-B_I:d} For every choice of signs $\theta\in \{ \pm 1 \}^{\mathcal{B}}$, the function $\tilde{h} = \sum_{K\in \mathcal{B}} \theta_Kh_K$ satisfies
  \begin{equation}
    \max_{g\in G} |\langle g, \tilde{h} \rangle| < \varepsilon
    \qquad \text{and} \qquad
    \max_{f\in F} |\langle \tilde{h}, f \rangle| < \varepsilon.
  \end{equation}
  \end{enumerate}
\end{lem}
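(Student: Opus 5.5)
The plan is to build $\mathcal{B}$ from intervals of $\mathcal{A}$ that lie deep inside $\Gamma$, in a high generation, so that it is essentially a signed Rademacher-type block at very fine scales. We then use weak nullity of Rademacher blocks to kill the finitely many functionals and vectors. Since $\mathcal{E}$ is finite and every point of $S$ lies in infinitely many intervals of $\mathcal{A}$, we first pick $N$ so large that all intervals in $\mathcal{E}$ have length greater than $2^{-N}$ and $\Gamma$ is a union of intervals from $\mathcal{D}^N$.

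\textbf{Step 1: covering.} For $M \ge N$, let $\mathcal{A}_M = \{I\in \mathcal{A} : |I|\le 2^{-M},\ I\subset \Gamma\}$. Every $t\in S\cap\Gamma$ lies in some $I\in\mathcal{A}_M$: among the infinitely many intervals of $\mathcal{A}$ containing $t$, all but finitely many are small, and a small dyadic interval containing $t$ sits inside the level-$N$ interval of $\Gamma$ containing $t$. Take the maximal intervals of $\mathcal{A}_M$. These are pairwise disjoint, avoid $\mathcal{E}$, and their union $U_M$ contains $S\cap\Gamma$. Choose finitely many of them, forming $\mathcal{B}_M$, so that:
\begin{itemize}
\item $|\mathcal{B}_M^*\cap S|>(1-\tilde\eta)|\Gamma|$, which is possible because $|S\cap\Gamma|>(1-\tilde\eta)|\Gamma|$ is a strict inequality;
\item $|\mathcal{B}_M^*\setminus S|<\eta'$ for a small $\eta'$.
\end{itemize}
For the second bullet, note that $U_M$ decreases in $M$, since $\mathcal{A}_M$ shrinks. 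By the generation description, $\bigcap_M U_M \subset \limsup\mathcal{A} = S$, because a point in $U_M$ for every $M$ lies in arbitrarily small intervals of $\mathcal{A}$. Hence $|U_M\setminus S|\to 0$. Then (a), (b), (c) hold once $\eta'$ is small relative to $(1-\tilde\eta)|\Gamma|\eta$. This works because $|\mathcal{B}^*|\ge|\mathcal{B}^*\cap S|$ and $|\mathcal{B}^*\setminus S|\le\eta'$.

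\textbf{Step 2: orthogonality, property (d).} By density and finiteness, we may approximate each $f\in F$ and $g\in G$ by finite Haar polynomials, up to errors $\varepsilon/4$, in $Y$ and $Y^*$ respectively. This uses \Cref{pro:HSHS-1d} to bound $\|\tilde h\|_Y\le 1$ and $\|\tilde h\|_{Y^*}\le 1$ for disjoint signed Haar sums. The bound in $Y$ follows since $\tilde h$ is a disjointly supported $\pm1$-valued function on a set of measure at most one, so $\|\tilde h\|_Y\le\|\chi_{[0,1)}\|=1$. The bound in $Y^*$ is \Cref{pro:HSHS-1d}~\ref{pro:HSHS-1d:v}.

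Once $M$ exceeds the maximal level appearing in these Haar polynomials, every $K\in\mathcal{B}_M$ has $|K|\le2^{-M}$. Such $h_K$ are orthogonal to all of those polynomials, so the pairings are below $\varepsilon/4$ uniformly in the signs $\theta$.

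In this approach the weak-nullity hypothesis on the Rademacher functions enters only implicitly. Its actual role is to guarantee that the Haar system is a basis with dense span in $Y^*$ in the relevant sense. If the functionals in $G$ are not approximable by Haar polynomials in $Y^*$, which can happen, for instance, when $Y^*$ is not the closed span of the Haar functionals, the approximation above fails. In that case I would instead argue as follows. Let $\mathcal{B}_M=\bigcup_j\mathcal{C}_j$, grouped by level $j\ge M$, and note that
\[
\sum_{K\in\mathcal{C}_j}\theta_Kh_K = r_j\,\phi_j
\]
with $\phi_j$ a $\pm1$-valued function on $\mathcal{C}_j^*$ measurable at coarser levels. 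Weak nullity of $(r_j)$, combined with the $\sigma$-algebra structure (multiplication by coarse-measurable bounded functions, uniform over the finitely many sign patterns), then gives smallness.

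\textbf{Main obstacle.} The main obstacle is exactly this uniformity in $\theta$ over possibly infinitely many levels and unboundedly many sign patterns. I would handle it by truncating the construction to finitely many levels $M\le j\le M'$ and applying weak nullity to the finitely many functions of the form $r_j\chi_L$, with $L$ ranging over dyadic intervals of a fixed coarse level. This is achieved by choosing $M$ only after the measure approximations have been made, at coarse level resolution.
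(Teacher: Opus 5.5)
Your Step~1 is correct and is essentially the paper's argument: the paper takes a late generation $\mathcal{G}_n(\mathcal{C})$ of $\mathcal{C}=\{K\in\mathcal{A}:K\subset\Gamma\}$, and your maximal intervals of $\mathcal{A}_M$ play the same role. The $F$-half of (d) is also fine, by density of Haar polynomials in $Y$ and $\|\tilde h\|_{Y^*}\le 1$.

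The gap is the $G$-half of (d). The lemma is applied with $G=\{T^*\tilde h_J\}$, i.e.\ to arbitrary elements of $Y^*$. For $Y=L_1^0$ or $H^1$, whose duals are non-separable, these cannot be approximated by Haar polynomials, and this is exactly the case the lemma is designed for. Weak nullity of $(r_n)$ has nothing to do with such density: the Rademachers are weakly null in $L_1$.

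Your fallback does not close the gap. In $\sum_{K\in\mathcal{C}_j}\theta_Kh_K=r_j\phi_j$, the multiplier $\phi_j=\sum_{K\in\mathcal{C}_j}\theta_K\chi_K$ is only $\sigma(\mathcal{D}^j)$-measurable. Both the set $\mathcal{C}_j^*$ (dictated by $\mathcal{A}$) and the arbitrary signs live at level $j$. So $r_j\phi_j$ is not a bounded combination of finitely many $r_j\chi_L$ with $L$ of a fixed coarse level. Weak nullity of finitely many fixed sequences then gives no bound that is uniform over the $2^{|\mathcal{B}|}$ sign patterns. Truncating to levels $M\le j\le M'$ does not help either, since a single level already carries unboundedly many patterns.

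Since $\max_\theta\bigl|\bigl\langle g,\sum_{K\in\mathcal{B}}\theta_Kh_K\bigr\rangle\bigr|=\sum_{K\in\mathcal{B}}|\langle g,h_K\rangle|$, what you actually need is, for each $g\in Y^*$,
\[
\lim_{n\to\infty}\,\sup\Bigl\{\sum_{K\in\mathcal{B}}|\langle g,h_K\rangle| : \mathcal{B}\subset\mathcal{D}\text{ finite, pairwise disjoint, } |K|\le 2^{-n}\text{ for all }K\in\mathcal{B}\Bigr\}=0.
\]
The paper imports this as Lemma~4.3 of Lechner--Motakis--M\"uller--Schlumprecht.

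Here is one way to prove it. Suppose it fails. Then you can pick finite disjoint collections $\mathcal{B}_k$ with pairwise separated level ranges such that $d_k=\sum_{K\in\mathcal{B}_k}\mathrm{sgn}(\langle g,h_K\rangle)h_K$ satisfies $\langle g,d_k\rangle\ge\delta>0$. Hence $\langle g,\tfrac1N\sum_{k\le N}d_k\rangle\ge\delta$ for all $N$. On the other hand, $\|\tfrac1N\sum_{k\le N}d_k\|_Y\to0$:
\begin{itemize}
\item For $Y=H_X$, this holds because $S(\sum_{k\le N}d_k)\le\sqrt N$.
\item For $Y=X^0$, $(d_k)$ is a martingale difference sequence with $|d_k|\le1$. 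The Azuma--Hoeffding inequality gives $\bigl|\bigl[\,|\tfrac1N\sum_{k\le N}d_k|\ge t\,\bigr]\bigr|\le 2e^{-Nt^2/2}$. Hence $\|\tfrac1N\sum_{k\le N}d_k\|_X\le t+\varphi_X(2e^{-Nt^2/2})$, which is eventually below $2t$ because $\varphi_X(0^+)=0$.
\end{itemize}
The condition $\varphi_X(0^+)=0$ is equivalent to $\|\cdot\|_X\not\asymp\|\cdot\|_\infty$, and hence to the weak nullity of the Rademachers. This is where that hypothesis genuinely enters, not through density in $Y^*$. With this uniform statement in hand, choosing $M$ large in your Step~1 gives (d) for all sign choices at once.
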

\begin{proof}
  The collection $\mathcal{C} = \{ K\in \mathcal{A} : K\subset \Gamma \}$ satisfies $\limsup(\mathcal{C}) = \Gamma \cap S$, so for all $n\in \mathbb{N}_0$, we have
  \begin{equation}\label{eq:1}
    |\mathcal{G}_n(\mathcal{C})^{*}| \ge |\limsup \mathcal{C}| = |\Gamma \cap S| > (1 - \tilde{\eta})|\Gamma|.
  \end{equation}
  Moreover, for all sufficiently large $n$, we have $|\mathcal{G}_n(\mathcal{C})\cap S| > (1 - \eta) |\mathcal{G}_n(\mathcal{C})$.
  If $Y = X^0$, then by \cite[Lemma~4.3]{lechner:motakis:mueller:schlumprecht:2022}, there exists $K_0\in \mathcal{D}$ such that for any finite collection $\mathcal{B}\subset \mathcal{D}$ of pairwise disjoint dyadic intervals with $K\ge K_0$ for all $K\in \mathcal{B}$ and every choice of signs $\theta\in \{ \pm 1 \}^{\mathcal{B}}$, we have
 \begin{equation}\label{eq:2}
    \max_{g\in G}\,  \Bigl| \Bigl\langle g, \sum_{K\in \mathcal{B}} \theta_Kh_K \Bigr\rangle \Bigr| <  \varepsilon
    \qquad \text{and} \qquad
    \max_{f\in F}\,  \Bigl| \Bigl\langle \sum_{K\in \mathcal{B}} \theta_Kh_K, f \Bigr\rangle \Bigr| <  \varepsilon.
  \end{equation}
  The proof in~\cite{lechner:motakis:mueller:schlumprecht:2022} also carries over to the case $Y = H_X$.
  By choosing $n$ large enough, we may assume that $K \ge K_0$ for all $K\in \mathcal{G}_n(\mathcal{C})$ and that $\mathcal{G}_n(\mathcal{C})\subset \mathcal{A}\setminus \mathcal{E}$. Now we take $\mathcal{B}$ to be a finite subcollection of $\mathcal{G}_n(\mathcal{C})$ which still satisfies $|\mathcal{B}^{*} \cap S| > (1 - \eta) |\mathcal{B}^{*}|$ as well as $|\mathcal{B}^{*}| > (1 - \tilde{\eta}) |\Gamma|$. Then properties~\ref{lem:choice-of-B_I:a}--\ref{lem:choice-of-B_I:c} are satisfied, and~\ref{lem:choice-of-B_I:d} follows from~\eqref{eq:2}.
\end{proof}

\begin{rem}
  The assumption that the sequence of Rademacher functions is weakly null in~$Y$ is always satisfied if $Y = H_X$. If $Y = X^0$, then this assumption is equivalent to $\|\cdot \|_X \not\asymp \|\cdot \|_{L_\infty}$ (see \cite[Proposition~2.3.2]{speckhofer:2025:thesis}).
\end{rem}

The next result is a slightly stronger version of the factorization property of the Haar basis in~$Y$.

\begin{thm}\label{thm:factorization-property++}
  Let $X$ be a Haar system space and $Y = X^0$ or $H_X$, and suppose that the Haar system is unconditional in~$Y$. Let
  $T\colon Y\to Y$ be a given operator, and suppose that there exists $\delta > 0$ such that if
  \begin{equation*}
    \mathcal{A} = \{ I\in \mathcal{D} : |\langle h_I, T h_I \rangle| \ge \delta |I| \},
  \end{equation*}
  then $\limsup \mathcal{A}$ has positive Lebesgue measure.
  Then the identity on $Y$ factors through $T$. In particular, there exists a subspace $Z$ of $Y$ with $Z$ isomorphic to $Y$, $T|_Z$ an isomorphism, and $T(Z)$ complemented in $Y$.
\end{thm}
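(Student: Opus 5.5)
We construct inductively, in the lexicographic order of $\mathcal{D}$, an $(\eta_I)_{I\in\mathcal{D}}$-faithful Haar system $(\tilde h_I)_{I\in\mathcal{D}}$ whose Haar supports $\mathcal{B}_I$ lie inside $\mathcal{A}$ and which almost diagonalizes $T$. Then we use \Cref{lem:close-to-diagonal-operator} to write $ATB$ as a small perturbation of a Haar multiplier $D$ with large entries, and invert $D$.

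First normalize. Set $S=\limsup\mathcal{A}$. By the Lebesgue density theorem we can pick a dyadic interval $\Gamma_0$ with $|\Gamma_0\cap S|>(1-\tilde\eta)|\Gamma_0|$, where $\tilde\eta$ is small. Fix numbers $\eta_I>0$ with $\sigma=\sum_I\eta_I<1$ and numbers $\varepsilon_{I,J}>0$ satisfying \eqref{eq:13} for some small $\varepsilon$; choose $\varepsilon$ only at the end, relative to $\delta$ and $|\mathcal{B}^*_{[0,1)}|\ge (1-\tilde\eta)|\Gamma_0|$.

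\textbf{Construction.}
\begin{itemize}
\item For $I=[0,1)$, apply \Cref{lem:choice-of-B_I} with $\Gamma=\Gamma_0$ to get $\mathcal{B}_{[0,1)}\subset\mathcal{A}$ with $|\mathcal{B}^*\cap S|>(1-\eta)|\mathcal{B}^*|$.
\item For a general $I$, set $\Gamma=[\tilde h_{\pi(I)}=\varepsilon(I)]$, a finite union of dyadic intervals. The density property carried along in \Cref{lem:choice-of-B_I}~\ref{lem:choice-of-B_I:c} gives $|\Gamma\cap S|>(1-\eta_{\pi(I)})|\Gamma|$, provided the parameter $\eta$ was chosen small at the parent step. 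This is because $\tilde h_{\pi(I)}$ is a signed sum over disjoint intervals, so each sign set is exactly half of each support interval, and $S$ is dense in $\mathcal{B}_{\pi(I)}^*$. One should double-check that density is inherited by halves: this needs the choice $\eta\ll\eta_{\pi(I)}$, and then both halves keep density above $1-2\eta$.
\item Apply \Cref{lem:choice-of-B_I} with $\mathcal{E}$ equal to all intervals used so far, $F=\{T\tilde h_J,\ T^*\tilde h_J : J<I\}$ (with $T^*\tilde h_J$ placed in $G$), $G$ the corresponding functionals, and tolerance $\min_{J<I}\varepsilon_{I,J}$. This yields $\mathcal{B}_I\subset\mathcal{A}$ satisfying the faithfulness conditions \ref{cond:faithful-i-copy} and \ref{cond:faithful-ii-copy}, together with $|\langle\tilde h_I,T\tilde h_J\rangle|,|\langle\tilde h_J,T\tilde h_I\rangle|<\varepsilon_{I,J}$ for all $J<I$.
\end{itemize}

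\textbf{Choosing the signs (main obstacle).} We still need $|\langle\tilde h_I,T\tilde h_I\rangle|\ge\delta'|\mathcal{B}_I^*|$. Expanding,
\[
\langle\tilde h_I,T\tilde h_I\rangle=\sum_{K}\langle h_K,Th_K\rangle+\sum_{K\ne L}\theta_K\theta_L\langle h_K,Th_L\rangle .
\]
Average over independent random signs $\theta$: the cross terms vanish in expectation, so some choice of signs gives
\[
\Bigl|\langle\tilde h_I,T\tilde h_I\rangle\Bigr|\ \ge\ \Bigl|\sum_{K\in\mathcal{B}_I}\langle h_K,Th_K\rangle\Bigr| .
\]
This does not finish the job, because the diagonal entries $\langle h_K,Th_K\rangle$ may have different signs, and then their sum can cancel even though each has size $\ge\delta|K|$. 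The fix is to split $\mathcal{A}$ into $\mathcal{A}^+$ and $\mathcal{A}^-$ according to the sign of $\langle h_K,Th_K\rangle$ (real scalars). Since $\limsup\mathcal{A}=\limsup\mathcal{A}^+\cup\limsup\mathcal{A}^-$, one of these has positive measure; replace $\mathcal{A}$ by it at the start. Then
\[
\Bigl|\sum_{K\in\mathcal{B}_I}\langle h_K,Th_K\rangle\Bigr|\ \ge\ \delta|\mathcal{B}_I^*| .
\]
Property \ref{lem:choice-of-B_I:d} holds for all sign choices, so picking the signs afterwards is harmless.

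\textbf{Conclusion.} \Cref{thm:operators-AB-copy} gives bounded $A,B$ with $AB=I_Y$, and \Cref{lem:close-to-diagonal-operator} gives
\[
\|ATB-D\|\le \varepsilon(1-\sigma)^{-1}|\mathcal{B}^*_{[0,1)}|^{-1},
\]
where the entries $d_I$ of $D$ satisfy $|d_I|\ge\delta$. By unconditionality of the Haar system in $Y$, the multiplier $D^{-1}$, with entries $d_I^{-1}$, is bounded with norm $\le K_u/\delta$ provided $\sup_I|d_I|\le\|A\|\|T\|\|B\|<\infty$. Choosing $\varepsilon$ small makes $\|D^{-1}(ATB-D)\|<1$. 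Hence $D^{-1}ATB$ is invertible, and $I_Y=(D^{-1}ATB)^{-1}D^{-1}A\,T\,B$ factors through $T$. The final assertion then follows from \Cref{rem:factorization-subspace}.
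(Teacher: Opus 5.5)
Your proposal is correct and follows essentially the same route as the paper: split $\mathcal{A}$ by the sign of the diagonal entries, build an $(\eta_I)$-faithful Haar system inside the chosen half via \Cref{lem:choice-of-B_I} with density parameter $\eta_I/2$, fix the signs by averaging, and then invert the diagonal multiplier $D$ obtained from \Cref{lem:close-to-diagonal-operator}. The differences are cosmetic: at the root the paper simply takes $\Gamma=[0,1)$ with $\tilde\eta=1-|S|/2$ instead of a Lebesgue density interval, and it passes to $-T$ instead of treating both signs. One small fix: at step $I$ you should use the tolerance $\min(\varepsilon_{I,J},\varepsilon_{J,I})$, so that both off-diagonal bounds required by \Cref{lem:close-to-diagonal-operator} hold.
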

\begin{proof}
  Define
  \begin{equation*}
    \mathcal{A}_1 = \{ I\in \mathcal{D} : \langle h_I, T h_I \rangle \ge \delta |I| \}
     \qquad\text{and}\qquad
    \mathcal{A}_2 = \{ I\in \mathcal{D} : \langle h_I, T h_I \rangle \le -\delta |I| \}.
  \end{equation*}
  Then $\mathcal{A}_1\cup \mathcal{A}_2 = \mathcal{A}$, so either $\limsup \mathcal{A}_1$ or $\limsup \mathcal{A}_2$ has positive measure. We assume that $|\limsup \mathcal{A}_1| > 0$ (otherwise, replace $T$ by $-T$), and put $S = \limsup \mathcal{A}_1$. Let $(\eta_I)_{I\in \mathcal{D}}$ be a family in $[0,1)$ with $\sigma < 1$, where $\sigma$ is defined as in~\eqref{eq:3}. We will construct an $(\eta_I)_{I\in \mathcal{D}}$-faithful Haar system from the Haar functions $(h_K)_{K\in \mathcal{A}_1}$ which almost diagonalizes~$T$. To this end, we also fix $0 < \varepsilon < \delta C^{-1}(1-\sigma)|S|/2$
  (where $C$ is the unconditional constant of the Haar system in $Y$) and a family $(\varepsilon_{I,J})_{I,J\in \mathcal{D}}$ of positive numbers satisfying~\eqref{eq:13}. For each $I\in \mathcal{D}$, we choose $\varepsilon_I > 0$ such that $\varepsilon_I < \max(\varepsilon_{I,J},\varepsilon_{J,I})$ for all $J\in \mathcal{D}$ with $J < I$.
  First, we explain how the Haar supports $\mathcal{B}_I$ of $\tilde{h}_I$ are defined:
  \begin{itemize}
    \item If $I = [0,1)$, we apply \Cref{lem:choice-of-B_I} to $\mathcal{A} = \mathcal{A}_1$, $\Gamma = [0,1)$, $\tilde{\eta} = 1 - |S|/2$, $\mathcal{E} = F = G = \emptyset$, $\eta = \eta_{[0,1)}/2$, and $\varepsilon = 1$. This yields a finite collection $\mathcal{B}_{[0,1)}\subset \mathcal{A}_1$ with $|\mathcal{B}_{[0,1)}^{*}| \ge |S|/2$ and $|\mathcal{B}_{[0,1)}^{*}\cap S| > (1 - \eta_{[0,1)}/2)|\mathcal{B}_{[0,1)}^{*}|$.
    \item If $I\ne [0,1)$, we assume that for every $J<I$, we have already constructed $\mathcal{B}_J$ and $\tilde{h}_J$ such that $|\mathcal{B}_J^{*}\cap S| >  (1-\eta_J/2)|\mathcal{B}_J^{*}|$.
          Put $\Gamma = [\tilde{h}_{\pi(I)} = \varepsilon(I)]$. Then the above inequality implies $|\Gamma\cap S| > (1-\eta_{\pi(I)})|\Gamma|$.
          Now we apply \Cref{lem:choice-of-B_I} to $\mathcal{A} = \mathcal{A}_1$, $\Gamma$ as defined above, $\tilde{\eta} = \eta_{\pi(I)}$, $\mathcal{E} = \bigcup_{J < I}\mathcal{B}_J$, $F = \{ T \tilde{h}_J : J < I \}$, $G = \{ T^{*} \tilde{h}_J : J < I \}$, $\eta = \eta_I/2$, and $\varepsilon = \varepsilon_I$. This yields a finite collection $\mathcal{B}_I\subset \mathcal{A}_1\setminus \bigcup_{J < I}\mathcal{B}_J$ with $\mathcal{B}_I^{*}\subset \Gamma$ such that $|\mathcal{B}_I^{*}\cap S| > (1-\eta_I/2)|\mathcal{B}_I^{*}|$ as well as $|\mathcal{B}_I^{*}| > (1-\eta_{\pi(I)}) \cdot \frac{1}{2}|\mathcal{B}_{\pi(I)}^{*}|$. Hence, conditions~\ref{cond:faithful-i-copy},~\ref{cond:faithful-ii-copy} of \Cref{dfn:faithful-HS} are indeed satisfied. Moreover, for every choice of signs $\theta\in \{ \pm 1 \}^{\mathcal{B}_I}$, the function $\tilde{h}_I = \sum_{K\in \mathcal{B}_I} \theta_Kh_K$ satisfies
  \begin{equation}\label{eq:10}
    \max_{J\in \mathcal{D} : J<I} |\langle \tilde{h}_J, T \tilde{h}_I \rangle| < \varepsilon_I
    \qquad \text{and} \qquad
    \max_{J\in \mathcal{D} : J<I} |\langle \tilde{h}_I, T \tilde{h}_J \rangle| < \varepsilon_I.
  \end{equation}
\end{itemize}

  After constructing $\mathcal{B}_I$ for a given $I\in \mathcal{D}$, we choose the signs $\theta\in \{ \pm 1 \}^{\mathcal{B}_I}$ uniformly at random and note that since $\mathcal{B}_I\subset \mathcal{A}_1$, the randomized function $\tilde{h}_I = \sum_{K\in \mathcal{B}_I} \theta_Kh_K$ satisfies
  \begin{equation*}
    \mathbb{E}_{\theta\in \{ \pm 1 \}^{\mathcal{B}_I}} \langle \tilde{h}_I, T \tilde{h}_I \rangle = \sum_{K\in \mathcal{B}_I} \langle h_K, T h_K \rangle \ge \delta |\mathcal{B}_I^{*}|.
  \end{equation*}
  Hence, we can choose at least one sign vector $\theta\in \{ \pm 1 \}^{\mathcal{B}_I}$ such that the resulting function $\tilde{h}_I$ satisfies
  \begin{equation}\label{eq:11}
    \langle \tilde{h}_I, T \tilde{h}_I \rangle \ge \delta |\mathcal{B}_I^{*}|.
  \end{equation}
  Having constructed the $(\eta_I)_{I\in \mathcal{D}}$-faithful Haar system $(\tilde{h}_I)_{I\in \mathcal{D}}$, we now use \Cref{thm:operators-AB-copy}, \eqref{eq:10}, and \Cref{lem:close-to-diagonal-operator} to conclude that the associated operators $A,B$ and the Haar multiplier $D\colon Y\to Y$ defined by
  \begin{equation*}
    Dh_I = \frac{\langle \tilde{h}_I, T \tilde{h}_I \rangle}{|\mathcal{B}_I^{*}|} h_I,\qquad I\in \mathcal{D},
  \end{equation*}
  are bounded and satisfy $\|ATB - D\|\le \varepsilon(1-\sigma)^{-1}|\mathcal{B}_{[0,1)}^{*}|^{-1} < \delta C^{-1}$. Moreover, by~\eqref{eq:11}, $D$ has $\delta$-large positive entries, so its inverse satisfies $\|D^{-1}\| \le \delta^{-1}C$.
  Thus, $\|D^{-1}ATB - I_Y\|\le \varepsilon \delta^{-1} C < 1$, so $D^{-1}ATB$ is invertible, and the identity $I_Y$ factors through $T$. The subspace $Z$ can be chosen as $B(Y)$.
\end{proof}

\begin{rem}\label{rem:assumptions-factorization-property++}
  The statement of \Cref{thm:factorization-property++} also holds without assuming that the Haar system is unconditional in $Y$; it suffices that the standard Rademacher sequence is weakly null in~$Y$. This can be proved by first reducing $T$ to an operator with $\delta$-large diagonal as above and then applying \cite[Theorem~3.2]{lechner:speckhofer:2025}, i.e., the factorization property of the Haar basis of~$Y$.
\end{rem}

As a consequence, we immediately obtain the primary factorization property of $Y$ (which corresponds to \cite[Corollary~1.8]{bourgain:rosenthal:schechtman:1981} and has also been proved in~\cite{lechner:speckhofer:2025} without assuming unconditionality).
\begin{cor}\label{cor:primary-factorization-property}
  Let $X$ be a Haar system space and $Y = X^0$ or $H_X$. Suppose that the Haar system is unconditional in~$Y$.
  Then $Y$ has the primary factorization property.
\end{cor}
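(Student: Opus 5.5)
The plan is to deduce the primary factorization property directly from \Cref{thm:factorization-property++}, applied once to $T$ and once to $I_Y - T$. Let $T\colon Y\to Y$ be bounded. For every $I\in\mathcal{D}$ we have
\[\langle h_I, Th_I\rangle + \langle h_I,(I_Y-T)h_I\rangle = \langle h_I,h_I\rangle = |I|,\]
so at least one of the two diagonal entries has absolute value at least $|I|/2$. Put $\delta = 1/2$ and set
\[\mathcal{A}_T = \{ I\in\mathcal{D} : |\langle h_I,Th_I\rangle|\ge \delta|I|\},\qquad \mathcal{A}_{I_Y-T} = \{ I\in\mathcal{D} : |\langle h_I,(I_Y-T)h_I\rangle|\ge \delta|I|\}.\]
Then $\mathcal{A}_T\cup\mathcal{A}_{I_Y-T} = \mathcal{D}$.

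Next we use the fact that $\limsup$ of a union is the union of the $\limsup$'s: a point lies in infinitely many intervals of $\mathcal{A}_T\cup\mathcal{A}_{I_Y-T}$ if and only if it lies in infinitely many intervals of one of the two collections. Every $t\in[0,1)$ belongs to exactly one interval of $\mathcal{D}^n$ for each $n$, so $\limsup\mathcal{D}=[0,1)$. Hence
\[\limsup \mathcal{A}_T\cup \limsup\mathcal{A}_{I_Y-T} = [0,1),\]
and one of the two sets has measure at least $1/2$, in particular positive measure.

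Now apply \Cref{thm:factorization-property++} (whose hypotheses are satisfied, since the Haar system is unconditional in $Y$) to whichever of $T$ and $I_Y-T$ has $\limsup$ of positive measure, with $\delta=1/2$. It gives that $I_Y$ factors through that operator, which is the primary factorization property.

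There is no real obstacle here; the work is all in \Cref{thm:factorization-property++}. The one point to check is the pointwise identity for $\limsup$ of a union of two collections, which is elementary.
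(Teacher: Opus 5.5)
Your proposal is correct and follows essentially the same route as the paper: split $\mathcal{D}$ according to whether $T$ or $I_Y-T$ has diagonal entry at least $\tfrac12|I|$, conclude that one of the two resulting collections has $\limsup$ of positive measure, and apply \Cref{thm:factorization-property++} with $\delta=\tfrac12$. You also spell out the two facts the paper leaves implicit: that $\limsup$ commutes with finite unions, and that $\limsup\mathcal{D}=[0,1)$.
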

\begin{proof}
  Let $T\colon Y\to Y$ be a bounded linear operator. For every $I\in \mathcal{D}$, we either have $\langle h_I, Th_I \rangle \ge \frac12 |I|$ or $\langle h_I, (I_Y - T)h_I \rangle \ge \frac12 |I|$. Hence, for either $S = T$ or $S = I_Y - T$, the collection
$    \mathcal{A} = \{ I\in \mathcal{D} : \langle h_I, S h_I \rangle \ge \tfrac12 |I| \}$
  satisfies $|\limsup \mathcal{A}| > 0$. Now apply \Cref{thm:factorization-property++} to $\mathcal{A}$.
\end{proof}

\begin{lem}\label{lem:sublemma-1-copy}
  Let $X$ be a Haar system space and $Y = X^0$ or $H_X$. Suppose that the Haar system is unconditional in~$Y$ and that $Y$ is isomorphic, via an embedding $\iota\colon Y\to Z$, to a complemented subspace of a Banach space $Z$ with an unconditional Schauder decomposition $(Z_i)_{i=1}^{\infty}$. Let $U\colon Z\to Y$ be an operator with $U\iota = I_Y$. Moreover, let $Q_i\colon Z\to Z_i$ denote the natural projections associated with $(Z_i)_{i=1}^{\infty}$, and for $F\subset \mathbb{N}$, put $Q_F = \sum_{i\in F} Q_i$. Assume that no $Z_i$ contains a complemented subspace isomorphic to $Y$. Then, for every $n\in \mathbb{N}$, the collection
  \begin{equation*}
    \mathcal{A} = \{ I\in \mathcal{D} : \langle h_I, UQ_{[n,\infty)}\iota h_I\rangle > \tfrac{1}{2} |I| \}
  \end{equation*}
  satisfies $|\limsup \mathcal{A}| = 1$.
\end{lem}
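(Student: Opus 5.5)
Argue by contradiction. Fix $n$ and suppose $|\limsup \mathcal{A}| < 1$. Write $T_1 = UQ_{[1,n)}\iota$ and $T_2 = UQ_{[n,\infty)}\iota$. Since $Q_{[1,n)} + Q_{[n,\infty)} = I_Z$ and $U\iota = I_Y$, we have $T_1 + T_2 = I_Y$. Hence for every $I\in\mathcal{D}$,
\[
\langle h_I, T_1 h_I\rangle + \langle h_I, T_2 h_I\rangle = \langle h_I, h_I\rangle = |I| .
\]
So if $I\notin\mathcal{A}$, then $\langle h_I, T_1 h_I\rangle \ge \tfrac12|I|$. Put $\mathcal{A}' = \{ I\in\mathcal{D} : \langle h_I, T_1 h_I\rangle \ge \tfrac12|I|\}$, so that $\mathcal{A}'\supset \mathcal{D}\setminus\mathcal{A}$.

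The key observation is that $\limsup(\mathcal{D}\setminus\mathcal{A})$ has positive measure. Let $t\in[0,1)\setminus\limsup\mathcal{A}$ with $t$ not a dyadic endpoint; by assumption these points form a set of positive measure. Such a $t$ lies in exactly one interval of $\mathcal{D}^k$ for each $k$, hence in infinitely many dyadic intervals, of which only finitely many belong to $\mathcal{A}$. Therefore $t$ lies in infinitely many intervals of $\mathcal{D}\setminus\mathcal{A}\subset\mathcal{A}'$. This gives $|\limsup\mathcal{A}'|\ge 1-|\limsup\mathcal{A}|>0$.

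Now apply \Cref{thm:factorization-property++} to $T_1$ with $\delta=\tfrac12$ (the Haar system is unconditional in $Y$ by hypothesis). The identity on $Y$ factors through $T_1$: $I_Y = A\,UQ_{[1,n)}\iota\,B$. Then $j = Q_{[1,n)}\iota B\colon Y\to Z_1\oplus\cdots\oplus Z_{n-1}$ satisfies $(AU)j = I_Y$. Thus $Y$ is isomorphic to a complemented subspace of $Q_{[1,n)}(Z)$, with projection $j\,AU$ restricted to $Q_{[1,n)}Z$. This space is isomorphic to the finite direct sum $Z_1\oplus\cdots\oplus Z_{n-1}$, since the decomposition projections are bounded.

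By \Cref{cor:primary-factorization-property}, $Y$ has the primary factorization property. \Cref{prop:pfp-complemented-subspace-direct-sum} then gives some $i<n$ with $Y$ isomorphic to a complemented subspace of $Z_i$, contradicting the hypothesis. Hence $|\limsup\mathcal{A}|=1$.

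Two edge cases need separate mention. If $n=1$, then $Q_{[1,\infty)}=I_Z$, so every $I$ is in $\mathcal{A}$ and the claim is trivial. The main point to check carefully is the measure bound $|\limsup\mathcal{A}'|>0$, which rests on each point lying in infinitely many dyadic intervals. Everything else is a direct application of the earlier results.
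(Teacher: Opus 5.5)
Your proof is correct and matches the paper's argument: the sets $\mathcal{A}$ and $\{I:\langle h_I,UQ_{[1,n)}\iota h_I\rangle\ge\frac12|I|\}$ cover $\mathcal{D}$, you apply \Cref{thm:factorization-property++} to $UQ_{[1,n)}\iota$, and you conclude with \Cref{cor:primary-factorization-property} and \Cref{prop:pfp-complemented-subspace-direct-sum}. The only differences are cosmetic: you build the projection $jAU$ directly from the factorization $I_Y=A\,UQ_{[1,n)}\iota\,B$, whereas the paper uses the subspace $E$ and the projection $Q_{[1,n)}\iota(T|_E)^{-1}PU$; also, excluding dyadic endpoints is unnecessary, since each level $\mathcal{D}^k$ partitions $[0,1)$.
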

\begin{proof}
  Suppose that $|\limsup \mathcal{A}| < 1$. Let $n\in \mathbb{N}$, and put
  \begin{equation*}
    \mathcal{B} = \{ I\in \mathcal{D} : \langle h_I, UQ_{[1,n)}\iota h_I\rangle  \ge \tfrac{1}{2} |I| \}.
  \end{equation*}
  Then, since $U(Q_{[1,n)} + Q_{[n,\infty)})\iota = I_Y$, we see that $\mathcal{A}\cup \mathcal{B} = \mathcal{D}$, and hence, $\limsup\mathcal{A}\cup \limsup\mathcal{B} = [0,1)$. Since $|\limsup \mathcal{A}| < 1$, this implies $|\limsup\mathcal{B}| > 0$. Hence, $T = UQ_{[1,n)}\iota$ satisfies the hypotheses of \Cref{thm:factorization-property++}. Thus, there is a subspace $E$ of $Y$ with $E$ isomorphic to $Y$, $T|_E = UQ_{[1,n)}\iota|_E$ an isomorphism, and $T(E)$ complemented in $Y$. It follows easily that $Q_{[1,n)}\iota(E)$ is complemented in $Z$ and hence in $Q_{[1,n)}(Z)\simeq  Z_1 \oplus \dots \oplus Z_{n-1}$: Indeed, if $P$ is a projection from~$Y$ onto $T(E)$, then a projection from~$Z$ onto $Q_{[1,n)}\iota(E)$ is given by $\Pi = Q_{[1,n)}\iota(T|_E)^{-1}PU$.
  But this implies that $Y$ is isomorphic to a complemented subspace of $Z_1\oplus \dots\oplus Z_{n-1}$, so by \Cref{cor:primary-factorization-property} and \Cref{prop:pfp-complemented-subspace-direct-sum}, $Y$ is isomorphic to a complemented subspace of $Z_i$ for some~$i$, contradicting the hypothesis. Hence, we must have $|\limsup \mathcal{A}| = 1$.
\end{proof}

\begin{proof}[\normalfont\textbf{Proof of \Cref{thm:1.1}}]
Assume that $Y$ is isomorphic, via an embedding $\iota\colon Y\to Z$, to a complemented subspace of $Z$, and let $U\colon Z\to Y$ be an operator with $U\iota = I_Y$. Let $C$ denote the unconditional constant of the basis $(h_I)_I$ of $Y$. Let $(Z_i)_{i=1}^{\infty}$ be an unconditional decomposition of $Z$ with associated projections $(Q_i)$. Suppose that there is no $i$ with $Y$ isomorphic to a complemented subspace of~$Z_i$. Our goal is to construct a block sequence of the spaces $Z_i$ which is equivalent to the Haar system in $Y$ and has closed linear span complemented in $Z$.

To this end, we will construct a ``blocking'' of the decomposition $(Z_i)_{i=1}^{\infty}$, resulting in a new decomposition with projections $\bar{Q}_k$, $k\in \mathbb{N}$. To be precise, we will choose integers $1 = m_0 < m_1 < m_2 < \dots$ and put $\bar{Q}_k = Q_{[m_{k-1},m_k)} = \sum_{l = m_{k-1}}^{m_k - 1} Q_l$ for $k\in \mathbb{N}$. This defines an unconditional Schauder decomposition of $Z$ with the same unconditional constant. We will relabel the projections as $(\bar{Q}_I)_{I\in \mathcal{D}}$ via the usual bijection between $\mathbb{N}$ and $\mathcal{D}$.
At the same time, we will construct an almost faithful Haar system $(\tilde{h}_I)_{I\in \mathcal{D}}$ and an unconditional Schauder decomposition of $Y$ with associated projections $(P_I)_{I\in \mathcal{D}}$, with $P_I \tilde{h}_J = \delta_{I,J} \tilde{h}_J$ for all $I,J\in \mathcal{D}$, such that:
\begin{enumerate}[label=(\alph*),]
  \item\label{cond:i} $(\tilde{h}_I)_{I\in \mathcal{D}}$ satisfies the assumptions of \Cref{thm:operators-AB-copy}, and thus, it is equivalent to the Haar system $(h_I)_{I\in \mathcal{D}}$ in $Y$ and $[\tilde{h}_I]_{I\in \mathcal{D}}$ is complemented in $Y$.
  \item\label{cond:ii} $\langle \tilde{h}_I, U \bar{Q}_I \iota\tilde{h}_I \rangle > \frac{1}{2}|\mathcal{B}_I^{*}|$ for all $I\in \mathcal{D}$, where $\mathcal{B}_I$ is the Haar support of $\tilde{h}_I$.
  \item\label{cond:iii} $(\tilde{h}_I)_{I\in \mathcal{D}}$ ``almost diagonalizes'' the operator $UV$, where $V = \sum_I \bar{Q}_I \iota P_I$.
\end{enumerate}
To this end, fix a family $(\eta_I)_{I\in \mathcal{D}}$ in $[0,1)$ such that $\sigma < 1$, where $\sigma$ is defined as in~\eqref{eq:3}.
We also fix $0 < \varepsilon <  C^{-1}(1-\sigma)/2$ and a family $(\varepsilon_{I,J})_{I,J\in \mathcal{D}}$ of positive numbers satisfying~\eqref{eq:13}. For each $I\in \mathcal{D}$, we choose $\varepsilon_I > 0$ such that $\varepsilon_I < \max(\varepsilon_{I,J},\varepsilon_{J,I})$ for all $J\in \mathcal{D}$ with $J < I$.
Now let $k\in \mathbb{N}$, and let $I$ be the $k$th dyadic interval in the usual enumeration of $\mathcal{D}$.\medskip

\noindent\textbf{Case $k = 1$.}
If $k = 1$, i.e., $I =  [0,1)$, we simply put $\mathcal{B}_{[0,1)} = \{ [0,1) \}$ and $\tilde{h}_{[0,1)} = h_{[0,1)}$. Then $h_{[0,1)} = U\iota h_{[0,1)} = \lim_{n\to \infty} U Q_{[1,n)}\iota h_{[0,1)}$. So we can choose $m_1 > 1$ such that $\|UQ_{[1,m_1)}\iota h_{[0,1)} - h_{[0,1)}\| < 1/2$, hence $\langle h_{[0,1)}, U \bar{Q}_{[0,1)}\iota h_{[0,1)} \rangle > 1/2$, where $\bar{Q}_{[0,1)}=Q_{[1,m_1)}$ (note that $\|h_{[0,1)}\|_{Y^{*}} = 1$).\medskip

\noindent\textbf{Case $k > 1$.} Assume that we have already chosen $m_0 < m_1 < \dots < m_{k-1}$ and constructed $\tilde{h}_J$ for all $J<I$. Moreover, put $\Gamma = [\tilde{h}_{\pi(I)}= \varepsilon(I)]$. We will use the abbreviation $n = m_{k-1}$.
  Now define
  \begin{equation*}
    \mathcal{A} = \{ K\in \mathcal{D} : \langle h_K, UQ_{[n,\infty)}\iota h_K \rangle > \tfrac12 |K| \}.
  \end{equation*}
  Since we assumed that there is no $i$ with $Y$ isomorphic to a complemented subspace of $Z_i$, \Cref{lem:sublemma-1-copy} implies that $|\limsup (\mathcal{A})| = 1$. Now we apply \Cref{lem:choice-of-B_I} to $\mathcal{A}$ and $\Gamma$ as defined above, $\tilde{\eta} = \eta_{\pi(I)}$, $\mathcal{E} = \bigcup_{J < I} \mathcal{B}_J$, $F = \{ U \bar{Q}_J \iota\tilde{h}_J : J < I \}$, $G = \{ (UQ_{[n,\infty)}\iota)^{*} \tilde{h}_J : J < I \}$, $\eta = 1$, and $\varepsilon = \varepsilon_I$. This yields a finite collection $\mathcal{B}_I\subset \mathcal{A}\setminus \mathcal{E}$ with $\mathcal{B}_I^{*}\subset \Gamma$ and $|\mathcal{B}_I^{*}| \ge (1-\eta_{\pi(I)})|\Gamma| = (1-\eta_{\pi(I)})\cdot \frac12 |\mathcal{B}_{\pi(I)}^{*}|$. Hence, conditions~\ref{cond:faithful-i-copy},~\ref{cond:faithful-ii-copy} of \Cref{dfn:faithful-HS} are indeed satisfied. Moreover, for every choice of signs $\theta\in \{ \pm 1 \}^{\mathcal{B}_I}$, the function $\tilde{h}_I = \sum_{K\in \mathcal{B}_I} \theta_Kh_K$ satisfies
  \begin{equation}\label{eq:14}
    \max_{J\in \mathcal{D} : J<I} |\langle \tilde{h}_J, UQ_{[n,\infty)}\iota\tilde{h}_I \rangle| < \varepsilon_I
    \qquad \text{and} \qquad
    \max_{J\in \mathcal{D} : J<I} |\langle \tilde{h}_I, U \bar{Q}_J \iota\tilde{h}_J \rangle| < \varepsilon_I.
  \end{equation}

  Next, since $\mathcal{B}_I\subset  \mathcal{A}$, as in \Cref{thm:factorization-property++}, we can find a sign vector $\theta\in \{ \pm 1 \}^{\mathcal{B}_I}$ such that $\tilde{h}_I = \sum_{K\in \mathcal{B}_I} \theta_Kh_K$ satisfies
  \begin{equation}\label{eq:15}
    \langle \tilde{h}_I, UQ_{[n,\infty)}\iota\tilde{h}_I \rangle > \tfrac12  |\mathcal{B}_I^{*}|.
  \end{equation}

  Finally, since $Q_{[n,\infty)}y = \lim_{m\to \infty} Q_{[n,m)}y$ for all $y\in Z$, we can choose $m_k\in \mathbb{N}$ sufficiently large such that $m_k > n = m_{k-1}$ and such that after putting $\bar{Q}_I = Q_{[m_{k-1},m_k)}$, we still have
  \begin{equation}\label{eq:16}
    \max_{J\in \mathcal{D} : J<I} |\langle \tilde{h}_J, U \bar{Q}_I \iota\tilde{h}_I \rangle| < \varepsilon_I
    \qquad \text{and} \qquad
    \max_{J\in \mathcal{D} : J<I} |\langle \tilde{h}_I, U \bar{Q}_J \iota\tilde{h}_J \rangle| < \varepsilon_I
  \end{equation}
  as well as
  \begin{equation}\label{eq:17}
    \langle \tilde{h}_I, U \bar{Q}_I \iota\tilde{h}_I \rangle > \tfrac12  |\mathcal{B}_I^{*}|,
  \end{equation}
  cf.~\eqref{eq:14} and~\eqref{eq:15}. This completes the construction of the system $(\tilde{h}_I)_{I\in \mathcal{D}}$ and the projections $(\bar{Q}_I)_{I\in \mathcal{D}}$. Finally, choose a partition of $\mathcal{D}$ into finite subcollections $\mathcal{E}_I\subset \mathcal{D}$, $I\in \mathcal{D}$, such that $\mathcal{B}_I\subset \mathcal{E}_I$ for all $I\in \mathcal{D}$. The collections $\mathcal{E}_I$ define a $C$-unconditional Schauder decomposition of $Y$ with associated projections $P_I$ such that $P_I \tilde{h}_J = \delta_{I,J} \tilde{h}_J$ for all $I,J\in \mathcal{D}$.

  Next, consider the operator $V = \sum_I \bar{Q}_I \iota P_I$ and note that $V$ is bounded according to \Cref{lem:1.3}. Observe that $V \tilde{h}_I = \bar{Q}_I \iota\tilde{h}_I$ for all $I\in \mathcal{D}$. Hence, we may replace both $U \bar{Q}_I \iota$ and $U \bar{Q}_J \iota$ by the operator $UV$ in~\eqref{eq:16} and~\eqref{eq:17} (which also holds for $I = [0,1)$). Thus, we can apply \Cref{lem:close-to-diagonal-operator}, which implies that the Haar multiplier $D$ defined by
  \begin{equation*}
    D h_I = \frac{\langle \tilde{h}_I, UV \tilde{h}_I \rangle}{|\mathcal{B}_I^{*}|} h_I,\qquad I\in \mathcal{D},
  \end{equation*}
  is bounded on $Y$ and satisfies $\|AUVB - D\| < \varepsilon (1-\sigma)^{-1} < C^{-1}/2$, where $A,B$ are the operators associated with $(\tilde{h}_I)_{I\in \mathcal{D}}$. Moreover, by~\eqref{eq:17}, the entries of $D$ are larger than $1/2$, which implies that $D^{-1}$ is bounded and satisfies $\|D^{-1}\|\le 2C$. Hence, we have $\|D^{-1}AUVB - I_Y\| < 1$, so there exists an operator $R$ on $Y$ such that $I_Y = RD^{-1}AUVB$. This implies that $(VBh_I)_{I\in \mathcal{D}} = (\bar{Q}_I \iota\tilde{h}_I)_{I\in \mathcal{D}}$ is equivalent to $(h_I)_{I\in \mathcal{D}}$ in $Y$, and its closed linear span is complemented in $Z$ by the projection $VBRD^{-1}AU$.
\end{proof}

\section{Proof of the main result}\label{sec:Proof of the main result}

{
We now prove our main dichotomy mentioned in the introduction (\Cref{thmA:alternatives}) and the corresponding result for Hardy spaces. 
The proof proceeds by distinguishing two cases. If, for some $\alpha < \omega_1$, the space $R_\alpha^{Y}$ contains a complemented copy of~$Y$, then we use the result on complemented embeddings into spaces with unconditional decompositions (\Cref{thm:1.1}) to show that all spaces $R_\alpha^Y$, $\omega \le  \alpha < \omega_1$, are isomorphic to~$Y$. Otherwise, we extract a subfamily $(R_{\alpha_\xi}^Y)_{\xi < \omega_1}$ along which the complemented $Y$-index from \Cref{complemented ordinal section} is strictly increasing, which implies that these spaces are pairwise non-isomorphic.
}
\begin{thm}\label{thm:alternatives-ri}
  Let $X$ be a Haar system space with the Fatou property and non-trivial Boyd indices, and let $Y = X^0$. Then, one of the following exclusive alternatives holds:
  \begin{enumerate}[label=(\alph*),]
    \item\label{thm:alternatives:a} There exists a strictly increasing transfinite sequence of countable ordinals $(\alpha_\xi)_{\xi< \omega_1}$ such that the spaces $R_{\alpha_\xi}^{Y}$, $\xi < \omega_1$, are pairwise non-isomorphic.
    \item\label{thm:alternatives:b} For every ordinal $\omega \le \alpha < \omega_1$, the space $R_\alpha^{Y}$ is isomorphic to $Y$.
  \end{enumerate}
  Moreover,~\ref{thm:alternatives:b} is equivalent to~\ref{thm:alternatives:c} and~\ref{thm:alternatives:d} below.
    \begin{enumerate}[label=(\alph*),start=3]
    \item\label{thm:alternatives:c} For some $\alpha < \omega_1$, the space $R_\alpha^{Y}$ contains a complemented copy of~$Y$.
    \item\label{thm:alternatives:d} There exists an independent sequence of functions in $Y$ equivalent to the Haar system whose closed linear span is complemented in $Y$.
  \end{enumerate}
\end{thm}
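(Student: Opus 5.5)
The plan is to establish the cycle (b)$\Rightarrow$(c)$\Rightarrow$(d)$\Rightarrow$(b), and then show that the failure of (c) forces (a). Exclusivity of (a) and (b) is immediate: under (b) all $R_\alpha^Y$ with $\alpha\ge\omega$ are mutually isomorphic. A strictly increasing family $(\alpha_\xi)_{\xi<\omega_1}$ contains at least two indices $\ge\omega$, so (a) fails.

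The implication (b)$\Rightarrow$(c) is trivial. For (c)$\Rightarrow$(d), I use \Cref{rem:BRS-equals-tree-BRS} and pass to the least $\alpha$ for which $R_\alpha^Y$ contains a complemented copy of $Y$.
\begin{enumerate}[label=(\roman*)]
\item $\alpha$ cannot be finite, since $R_n^Y$ is finite-dimensional.
\item If $\alpha=\beta+1$, then $R_\alpha^Y\simeq\mathbb{R}\oplus R_\beta^Y\oplus R_\beta^Y$ by \Cref{disjoint sum is really a sum}. \Cref{prop:pfp-complemented-subspace-direct-sum} together with \Cref{cor:primary-factorization-property} then gives a complemented copy of $Y$ in $R_\beta^Y$ (the summand $\mathbb{R}$ is excluded), contradicting minimality. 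This step needs unconditionality of the Haar system in $Y=X^0$, which holds by the non-trivial Boyd indices and \Cref{boyd vs unconditionality}.
\item Hence $\alpha$ is a limit ordinal, and $R_\alpha^Y=[(J_\beta^\alpha(R_\beta^Y))_{\beta<\alpha}]$ has an unconditional Schauder decomposition by \Cref{independent sums are unconditional sums}. Here $R_\alpha^Y$ is complemented in $Y$ by \Cref{prop:complementation}, and each piece is isometric to $R_\beta^Y$ with $\beta<\alpha$.
\end{enumerate}
By minimality, alternative (a) of \Cref{thm:1.1} fails for the decomposition in (iii). So there is a block sequence $(u_I)$ of the pieces $J^\alpha_\beta(R^Y_\beta)$, equivalent to the Haar system and with closed span complemented in $R_\alpha^Y$, hence in $Y$. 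Because distinct pieces are independent (they live on disjoint sets of Rademachers) and each $u_I$ lies in a different block, the sequence $(u_I)$ is independent. This gives (d). The blocks can be arranged to use disjoint finite groups of pieces by the blocking in the proof of \Cref{thm:1.1}, so independence of $(u_I)$ follows from independence of $\sigma$-algebras of disjoint families.

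For (d)$\Rightarrow$(b), take an independent sequence $(u_I)_{I\in\mathcal{D}}$ equivalent to $(h_I)$ with complemented span $U\simeq Y$. The key observation is that its span is distributionally a copy of the independent sum of the one-dimensional spaces $[u_I]$, and so is $R_\omega^Y$-like. More concretely, I will show that $Y$ embeds complementably into every $R_\alpha^Y$ with $\alpha\ge\omega$, and that every $R_\alpha^Y$ embeds complementably into $Y$ (\Cref{prop:complementation}). Since both $Y$ and $R_\alpha^Y$ are isomorphic to their squares (\Cref{prop:isomorphic-to-square}, \Cref{prop:R-alpha-Y-0-isomorphic-to-its-square}), Pe{\l}czy\'nski's decomposition method then yields $R_\alpha^Y\simeq Y$.

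For the embedding of $Y$ into $R_\alpha^Y$: each $u_I$ is a mean-zero function in $X^0$, and $R_\omega^Y$, being an independent sum of $R_n^Y$, contains distributional copies of dyadic step functions of arbitrary depth. After approximating each $u_I$ by $\sigma_{n_I}$-measurable functions (perturbation, using that $(u_I)$ is a basic sequence with complemented span), I realize independent distributional copies $v_I$ of them in the independent pieces $J_{F_n}(R_n^Y)$ of $R_\omega^Y$. By the probability fact in the proof of \Cref{canonical properties of independent sums}, $(v_I)$ has the same joint distribution as the perturbed $(u_I)$. So by \Cref{lem:distributional-copy-complemented} its span is isomorphic to $Y$ and complemented in $Y$, hence in $R_\omega^Y$. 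Finally, $R_\omega^Y$ is complemented in $R_\alpha^Y$. I expect this approximation step to be the main technical obstacle: the perturbation must preserve independence, which is done by approximating each $u_I$ with a function of the same independent coordinates via conditional expectations on the finitely many generators of a finite subalgebra of $\sigma(u_I)$, and then using \Cref{distroembedding extension}.

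It remains to show that if (c) fails then (a) holds. In that case no $R_\alpha^Y$ contains a complemented $Y$, so $i_Y[R_\alpha^Y]<\omega_1$ for every $\alpha$ by \Cref{index main theorem}(b), using the Fatou property. Since $i_Y[R_\alpha^Y]\ge\alpha$ (\Cref{complementex Y index of tree BRS spaces}), I choose $\alpha_\xi$ by transfinite recursion with $\alpha_\xi>\sup_{\eta<\xi} i_Y[R_{\alpha_\eta}^Y]$ and $\alpha_\xi>\alpha_\eta$ for all $\eta<\xi$; the supremum is countable. Then the indices are strictly increasing in $\xi$. The index is an isomorphism invariant by \Cref{index main theorem}(a), so the spaces $R_{\alpha_\xi}^Y$ are pairwise non-isomorphic.
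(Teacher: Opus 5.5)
\textbf{There is a genuine gap in (d)$\Rightarrow$(b), at the approximation step.} The rest of your outline is sound: exclusivity, (b)$\Rightarrow$(c)$\Rightarrow$(d), and the index recursion showing that failure of (c) forces (a). The paper also obtains (d) ``in passing'' from the independence of the blocks produced by \Cref{thm:1.1}. It proves (c)$\Rightarrow$(b) directly rather than through (d), but routing through (d) is legitimate for $Y=X^0$. Because your route depends on (d)$\Rightarrow$(b), the gap also affects the dichotomy.

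To apply the probability fact from \Cref{canonical properties of independent sums}, your approximants of the $u_I$ must have two properties at once:
\begin{enumerate}[label=(\roman*)]
\item they must be jointly independent;
\item each must be distributed like an element of some $R_n^Y=Y_{n-1}$, i.e.\ a dyadic step function whose level sets have measures in $2^{-n}\mathbb{Z}$.
\end{enumerate}
Taking $\mathbb{E}_{n_I}u_I$ gives (ii) but not (i), because conditional expectations onto dyadic $\sigma$-algebras do not preserve independence. For example, $r_0$ and $g=r_1\chi_{[r_0=1]}+r_2\chi_{[r_0=-1]}$ are independent, yet $[\mathbb{E}_1g=0]=[r_0=-1]$. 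So your claim that $(v_I)$ has the joint distribution of the perturbed $(u_I)$ is unjustified.

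Your proposed remedy gives (i) but not (ii). Independent functions need not depend on disjoint sets of Rademachers (e.g.\ $r_0$ and $r_0r_1$), and $\sigma(u_I)$-measurable simple functions generally have non-dyadic level sets. If, say, $u_I$ takes two values on sets of measure $1/3$ and $2/3$, then the only mean-zero $\sigma(u_I)$-measurable function with a distributional copy in some $R_n^Y$ is $0$. No distributional embedding can place a nonzero such function in $Y_F$ for finite $F$, since all atoms of $\sigma_F$ have dyadic measure.

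\textbf{The missing idea is to transport before approximating.}
\begin{enumerate}[label=(\arabic*)]
\item Realize $R_\omega^Y=[(J_{F_m}(R_m^Y))_m]$ with pairwise disjoint \emph{infinite} $F_m$. Any realization will do, by \Cref{canonical properties of independent sums}.
\item Choose $k_I$ with $\|u_I-\mathbb{E}_{k_I}u_I\|$ small, and choose distinct $m_I>k_I$.
\item The sequence $(J_{F_{m_I}}u_I)_I$ is independent with the same marginals as $(u_I)_I$, so it has the same joint distribution. By \Cref{lem:distributional-copy-complemented}, it is equivalent to the Haar system and its span is complemented in $Y$.
\item Only now approximate. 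The vector $v_I=J_{F_{m_I}}\mathbb{E}_{k_I}u_I$ lies in $J_{F_{m_I}}(R^Y_{m_I})$ and satisfies $\|v_I-J_{F_{m_I}}u_I\|=\|\mathbb{E}_{k_I}u_I-u_I\|$.
\item The principle of small perturbations then gives a complemented copy of $Y$ inside $R_\omega^Y$.
\end{enumerate}
Your vectors $v_I$ are the right ones. They must be compared with $J_{F_{m_I}}u_I$, not with $\mathbb{E}_{n_I}u_I$; this is exactly the paper's argument.
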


The assumption that $X$ has non-trivial Boyd indices guarantees that the Haar system is unconditional in~$X$ and that for every disjointly branching tree $\mathcal{T}$, the tree space $R_{\mathcal{T}}^{Y}$ is complemented in~$Y$. If we consider the Hardy space~$H_X$ instead of~$X^0$, then unconditionality holds automatically, but complementation needs to be imposed as an additional assumption.

\begin{thm}\label{thm:alternatives-hardy}
  Let $X$ be a Haar system space with the Fatou property and $Y = H_X$, and assume that for every disjointly branching tree $\mathcal{T}$, the space $R_{\mathcal{T}}^{Y}$ is complemented in~$Y$. Then, one of the exclusive alternatives~\ref{thm:alternatives:a} and~\ref{thm:alternatives:b} in \Cref{thm:alternatives-ri} holds, and~\ref{thm:alternatives:b} is equivalent to~\ref{thm:alternatives:c}.
\end{thm}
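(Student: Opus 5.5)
The proof of \Cref{thm:alternatives-hardy} is the one for \Cref{thm:alternatives-ri} run with $Y=H_X$. Two hypotheses change status. Unconditionality of the Haar system in $Y$ now holds automatically (\Cref{haar system unconditional in hardy}), so \Cref{thm:1.1} applies. Complementation of all tree spaces is no longer derived from \Cref{prop:complementation}; it is part of the assumptions. The split is by whether some $R_\alpha^Y$ has index $i_Y[R_\alpha^Y]=\omega_1$, or equivalently, by \Cref{index main theorem}~\ref{index main theorem b} (which uses the Fatou property), whether condition~\ref{thm:alternatives:c} holds.

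\textbf{Case 1: $i_Y[R_\alpha^Y]<\omega_1$ for every $\alpha$.} I build $(\alpha_\xi)_{\xi<\omega_1}$ by transfinite recursion. Given $\alpha_\eta$ for all $\eta<\xi$, let $\gamma=\sup_{\eta<\xi}\big(\max(\alpha_\eta,i_Y[R_{\alpha_\eta}^Y])+1\big)<\omega_1$ and set $\alpha_\xi=\gamma$. By \Cref{complementex Y index of tree BRS spaces}, $i_Y[R_{\alpha_\xi}^Y]\ge\alpha_\xi>i_Y[R_{\alpha_\eta}^Y]$ for every $\eta<\xi$. The index is an isomorphism invariant by \Cref{index main theorem}~\ref{index main theorem a}, so these spaces are pairwise non-isomorphic and~\ref{thm:alternatives:a} holds.

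\textbf{Case 2: some $R_{\alpha_0}^Y$ contains a complemented copy of $Y$.} The goal is~\ref{thm:alternatives:b}.
\begin{enumerate}
\item Push the copy down to $R_\omega^Y$. For every $\beta\ge\alpha_0$, $R_{\alpha_0}^Y$ is complemented in $R_\beta^Y$ (the remark following \Cref{complementex Y index of tree BRS spaces}), so $Y$ is complemented in $R_\beta^Y$ for all large $\beta$, in particular for some limit $\beta$. I then show by downward induction that the smallest $\beta\ge\omega$ with $Y\hookrightarrow_c R_\beta^Y$ is $\omega$.
\item Handle limit $\beta$. $R_\beta^Y=[(J_\gamma(R_\gamma^Y))_{\gamma<\beta}]$ carries an unconditional decomposition (\Cref{independent sums are unconditional sums}), and $Y$ has the primary factorization property (\Cref{cor:primary-factorization-property}). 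By \Cref{thm:1.1}, either some $R_\gamma^Y$ with $\gamma<\beta$ contains a complemented $Y$, which contradicts minimality, or a block sequence of the components is equivalent to the Haar system and spans a complemented subspace of $R_\beta^Y$.
\item Handle successor $\beta=\gamma+1$. Here $R_\beta^Y\simeq\mathbb R\oplus R_\gamma^Y\oplus R_\gamma^Y$, so \Cref{prop:pfp-complemented-subspace-direct-sum} gives $Y$ complemented in $R_\gamma^Y$ when $\gamma\ge\omega$. So either $\beta=\omega$ or $\beta$ is a limit in the block-alternative of step~2.
\item In the block alternative, the block vectors lie in independent components. They are therefore an independent Haar-equivalent sequence whose span is complemented in $R_\beta^Y$, hence in $Y$. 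This is condition~\ref{thm:alternatives:d}.
\item Convert~\ref{thm:alternatives:d} into a complemented copy of $Y$ inside $R_\omega^Y$. For this I would check that such independent block sequences are realized (square function-)distributionally inside $R_\omega^Y$, using \Cref{canonical properties of independent sums} and, in the $H_X$ case, the assumed complementation of tree spaces. I have not settled this, and it is the step most likely to need a different route.
\item Conclude via Pe{\l}czy\'nski's decomposition method. $Y$ is complemented in every $R_\alpha^Y$ with $\alpha\ge\omega$ (these contain $R_\omega^Y$ complementably). Conversely $R_\alpha^Y$ is complemented in $Y$: by \Cref{prop:complementation} in the r.i.\ case, by hypothesis in the Hardy case. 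Both $Y$ and $R_\alpha^Y$ are isomorphic to their squares (\Cref{prop:isomorphic-to-square}, \Cref{prop:R-alpha-Y-0-isomorphic-to-its-square}), so $R_\alpha^Y\simeq Y$. This gives~\ref{thm:alternatives:b}.
\end{enumerate}

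\textbf{Exclusivity and the equivalences.} Under~\ref{thm:alternatives:b} every $i_Y[R_\alpha^Y]=\omega_1$ for $\alpha\ge\omega$, so all these spaces are isomorphic to one another and~\ref{thm:alternatives:a} fails. The implication \ref{thm:alternatives:b}$\Rightarrow$\ref{thm:alternatives:c} is trivial, and Case~2 gives \ref{thm:alternatives:c}$\Rightarrow$\ref{thm:alternatives:b}.

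\textbf{Main obstacle.} The hard part is the descent in step~2 when \Cref{thm:1.1} lands in the block alternative rather than the component alternative. Extracting a genuinely complemented copy of $Y$ in a smaller space from that alternative, without $q$-concavity, is where I expect the argument to need the most care. I would first try to show that the block Haar system can be reassembled, via \Cref{thm:operators-AB-copy}, into a faithful system living in a $J_{F}$-copy of $R_\omega^Y$.
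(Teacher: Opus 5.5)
There is a genuine gap in Case~2, at exactly the step you flag. Case~1, the successor reduction, the Pe{\l}czy\'nski step and exclusivity are fine and match the paper. But you never turn the block alternative of \Cref{thm:1.1} into a complemented copy of $Y$ inside $R_\omega^Y$, and the route you propose for $Y=H_X$ is the wrong one. That route passes to condition~\ref{thm:alternatives:d} and then tries to realize it inside $R_\omega^Y$.

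Condition~\ref{thm:alternatives:d} only records that some independent Haar-equivalent sequence has complemented span in $Y$. Moving such a sequence into $R_\omega^Y$ by (square function-)distributional maps requires complementation to survive those maps. For $X^0$ this is \Cref{lem:distributional-copy-complemented}, which is how the paper proves \ref{thm:alternatives:d}$\Rightarrow$\ref{thm:alternatives:b}. It has no Hardy analogue (\Cref{rem:hardy}~\ref{rem:hardy:b}), which is why the equivalence with~\ref{thm:alternatives:d} is claimed only for $Y=X^0$. Abstracting to~\ref{thm:alternatives:d} throws away the structure you need. Reassembling a faithful Haar system via \Cref{thm:operators-AB-copy} is also unnecessary.

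The missing idea is to keep the concrete block representation and relabel components.
\begin{enumerate}
\item Take $\alpha$ minimal; it is necessarily a limit. Write $R^Y_{\mathcal{T}_\alpha}=[(J^\alpha_\beta(R^Y_{\mathcal{T}_\beta}))_{\beta<\alpha}]$ and $x_n=\sum_{\beta\in\sigma_n}J^\alpha_\beta z_\beta$. Let $P$ be a projection of $R^Y_{\mathcal{T}_\alpha}$ onto $[x_n]$.
\item By small perturbation, take each $z_\beta$ to be a finite Haar polynomial. Then $z_\beta\in Y_{k(\beta)-1}=R^Y_{k(\beta)}$ for pairwise distinct $k(\beta)$.
\item Define $U(J^\alpha_\beta z_\beta)=J^\omega_{k(\beta)}z_\beta$. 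This is an isometry into $R^Y_\omega=[(J^\omega_k(R^Y_k))_k]$, because both families of ranges are square function-independent.
\item Define the reverse map $V\colon R^Y_\omega\to Y$ by $V(J^\omega_{k(\beta)}z)=J^\alpha_\beta z$, and $0$ on the remaining components. It is bounded: it is a projection onto some components of an unconditional decomposition followed by an isometric relabeling.
\item $V$ lands in $Y$, not in $R^Y_{\mathcal{T}_\alpha}$, since $J^\alpha_\beta z$ for a general $z\in Y_{k(\beta)-1}$ need not lie in $J^\alpha_\beta(R^Y_{\mathcal{T}_\beta})$. So compose with a projection $Q\colon Y\to R^Y_{\mathcal{T}_\alpha}$. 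This is where the complementation hypothesis of the theorem is actually used.
\item Then $UPQV$ is a projection of $R^Y_\omega$ onto $[Ux_n]$, and $(Ux_n)$ is equivalent to the Haar system.
\end{enumerate}
Without this step or a substitute, your descent never reaches $R_\omega^Y$, and \ref{thm:alternatives:c}$\Rightarrow$\ref{thm:alternatives:b} is not established.
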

\begin{rem}
  In fact, it suffices to assume complementation only for certain realizations of the trees $\mathcal{T}_\alpha$, $\alpha < \omega_1$; see the proof below. However, we do not know if this complementation assumption already implies that $X$ has non-trivial Boyd indices and hence $X^0 = H_X$.
\end{rem}

\begin{proof}[Proof of~\Cref{thm:alternatives-ri} and~\Cref{thm:alternatives-hardy}] Let $X$ be a Haar system space that satisfies the assumptions of \Cref{thm:alternatives-ri} or \Cref{thm:alternatives-hardy}, and let $Y = X^0$ or $H_X$, respectively. Then the Haar system is unconditional in $Y$, and all spaces $R_{\mathcal{T}_\alpha}^{Y}$, $\alpha < \omega_1$, are complemented $Y$ (if $Y = X^0$, this follows from \Cref{prop:complementation}).
\begin{itemize}[leftmargin=1.5em, labelsep=1.5em, align=left, itemsep=0.5em]
  \item[\ref{thm:alternatives:b}$\Longrightarrow$\ref{thm:alternatives:c}.] This is immediate.
  \item[\ref{thm:alternatives:c}$\Longrightarrow$\ref{thm:alternatives:b}.]
        Suppose that~\ref{thm:alternatives:c} is true. We will prove that $R_\omega^{Y}$ contains a complemented copy of~$Y$. Then, as $R_\omega^{Y}$ also embeds complementably into $R_\beta^{Y}$ for all $\omega \le \beta < \omega_1$ and both~$Y$ and~$R_\beta^{Y}$ are isomorphic to their squares (see \Cref{prop:isomorphic-to-square}, and \Cref{prop:R-alpha-Y-0-isomorphic-to-its-square}), we obtain~\ref{thm:alternatives:b} using Pe{\l}czy\'nski's decomposition method.

  Let $\alpha$ be the smallest countable ordinal such that $R_\alpha^{Y}$ contains a complemented copy of~$Y$. Clearly, $\alpha$ must be infinite, and it cannot be a successor ordinal: If $\alpha = \beta + 1$, we would have $R_\alpha^{Y} = (R_\beta^{Y} \oplus R_\beta^{Y})_{\mathrm{disj},Y}^+ \simeq R_\beta^{Y} \oplus R_\beta^{Y} \oplus \mathbb{R}$, so by \Cref{cor:primary-factorization-property} and \Cref{prop:pfp-complemented-subspace-direct-sum}, the space~$R_\beta^{Y}$ would also contain a complemented copy of $Y$, contradicting the minimality of~$\alpha$.
        Hence, $\alpha$ is a limit ordinal, and by \Cref{showing these are really BRS spaces}, we have $R_\alpha^{Y}\equiv R_{\mathcal{T}_\alpha}^{Y}$. Moreover, by \Cref{rem:BRS-equals-tree-BRS}, the tree $\mathcal{T}_\alpha$ may be chosen such that
        \begin{equation*}
         R_{\mathcal{T}_\alpha}^{Y} = [(J^{\alpha}_\beta(R_{\mathcal{T}_\beta}^{Y}))_{\beta < \alpha}]
       \end{equation*}
        for a suitable choice of (square function-) distributional embeddings $J^{\alpha}_\beta\colon Y\to Y$, $\beta < \alpha$, with (square function-) independent ranges. The spaces $J^{\alpha}_\beta(R_{\mathcal{T}_\beta}^{Y})$, $\beta < \alpha$, form an unconditional Schauder decomposition of $R_{\mathcal{T}_\alpha}^{Y}$. Since none of them contains a complemented copy of $Y$, \Cref{thm:1.1} implies that a block basic sequence $(x_n)_{n=1}^{\infty}$ of the spaces $J^{\alpha}_\beta(R_{\mathcal{T}_\beta}^{Y})$, $\beta < \alpha$, is equivalent to the Haar system in $Y$ and has closed linear span complemented in $R_{\mathcal{T}_\alpha}^{Y}$. In passing, we note that this implies~\ref{thm:alternatives:d} if $Y = X^0$, since $R_{\mathcal{T}_\alpha}^{Y}$ is in turn complemented in $Y$.

        Now write
  \begin{equation*}
    x_n = \sum_{\beta \in \sigma_n}  y_\beta,\qquad n\in \mathbb{N},
  \end{equation*}
  where the sets $\sigma_n$ are pairwise disjoint finite sets of ordinals smaller than $\alpha$ and $y_\beta = J^{\alpha}_\beta z_\beta$ for some $z_\beta \in R_{\mathcal{T}_\beta}^{Y}$. By the principle of small perturbations, we may assume that each vector~$z_\beta$ is a finite linear combination of blocks of the standard Haar system (note that these are dense in $R_{\mathcal{T}_\beta}^{Y}$ by \Cref{prop:basis-of-tree-space}), while $[x_n]_{n\in \mathbb{N}}$ is still complemented in~$R_{\mathcal{T}_\alpha}^{Y}$ by a projection $P\colon R_{\mathcal{T}_\alpha}^{Y}\to R_{\mathcal{T}_\alpha}^{Y}$. In particular, there exist pairwise distinct numbers $k(\beta)\in \mathbb{N}$, $\beta < \alpha$, such that $z_\beta\in Y_{k(\beta) - 1} = R_{k(\beta)}^{Y}$ for all~$\beta$.
  Now recall that $R_\omega^{Y} = [(J^{\omega}_k(R_k^{Y}))_{k=0}^{\infty} ]$ for certain (square function-) distributional embeddings $J_k^{\omega}$ with independent ranges, and define the operator $U\colon [y_\beta]_{\beta < \alpha} \to R_\omega^{Y}$ by
  \begin{equation*}
    Uy_\beta = \tilde{y}_\beta := J_{k(\beta)}^{\omega}z_\beta,\qquad \beta < \alpha.
  \end{equation*}
  Then $U$ is an isometry. In particular, the sequence $(\tilde{x}_n)_{n=1}^{\infty}$ defined by
  \begin{equation*}
    \tilde{x}_n = U x_n = \sum_{\beta \in \sigma_n} \tilde{y}_\beta \in R_\omega^{Y},\qquad  n\in \mathbb{N},
  \end{equation*}
  is equivalent to $(x_n)_{n=1}^{\infty}$ in $R_{\mathcal{T}_\alpha}^{Y}$, which is in turn equivalent to the Haar system in~$Y$.
  Now it only remains to show that $[\tilde{x}_n]_{n=1}^{\infty}$ is complemented in $R_\omega^{Y}$. To this end, we define the operator $V\colon R_\omega^{Y}\to Y$ by setting, for every $k\in \mathbb{N}_0$ and $z\in R_k^{Y}$,
  \begin{equation*}
    V(J^{\omega}_k z) =
    \begin{cases}
      J^{\alpha}_\beta z, & \text{if } k = k(\beta) \text{ for some } \beta < \alpha, \\
      0, & \text{if } k \notin \{ k(\beta) : \beta < \alpha \}.
    \end{cases}
  \end{equation*}
  Then $V$ is bounded, as it is obtained by projecting $R_\omega^{Y}$ onto a subsequence of its canonical unconditional decomposition and then applying (square function-) distributional isomorphisms with (square function-) independent ranges to the individual components. Moreover, note that $V \tilde{y}_\beta = y_\beta$ for all $\beta < \alpha$ and hence,
  \begin{equation*}
    V \tilde{x}_n = x_n,\qquad n\in \mathbb{N}.
  \end{equation*}
  Finally, let $Q$ be a bounded linear projection from $Y$ onto $R_{\mathcal{T}_\alpha}^{Y}$, and define $\widetilde{P}\colon R_\omega^{Y}\to R_\omega^{Y}$ by
  \begin{equation*}
    \widetilde{P} = UPQV.
  \end{equation*}
  Then $\widetilde{P}(R_\omega^{Y}) \subset U([x_n]_{n=1}^{\infty})\subset [\tilde{x}_n]_{n=1}^{\infty}$, and for every $n\in \mathbb{N}$,
  \begin{equation*}
    \widetilde{P} \tilde{x}_n = UPQx_n = Ux_n = \tilde{x}_n.
  \end{equation*}
  Thus, $\widetilde{P}$ is a projection from $R_\omega^{Y}$ onto $[\tilde{x}_n]_{n=1}^{\infty}$.

  \item[\ref{thm:alternatives:c}$\Longleftrightarrow$\ref{thm:alternatives:d} for $Y = X^0$.]
        In the previous part of the proof, we have already seen that \ref{thm:alternatives:c}$\Longrightarrow$\ref{thm:alternatives:d} if $Y = X^0$.
        Next, suppose that $Y = X^0$ and~\ref{thm:alternatives:d} holds, i.e., there exists an independent sequence $(x_n)_{n=1}^{\infty}$ in $Y$ equivalent to the Haar system whose closed linear span is complemented in $Y$. { Let $J_n\colon Y\to Y$, $n\in \mathbb{N}$, be distributional embeddings with independent ranges. The sequence $(x_n')_{n=1}^{\infty}$ in $(\oplus_{n=1}^{\infty} Y)_{Y,\mathrm{ind}}$, defined by $x_n' = J_n x_n$ for all $n$, is also independent and equivalent to the Haar system, and its closed linear span is complemented in $Y$, since distributional embeddings preserve complementation (see \Cref{lem:distributional-copy-complemented}). Now approximate the functions $x_n'$ by elements of~$J_n(R_{k(n)}^Y)$ for sufficiently large natural numbers $k(1)<k(2)<\cdots$. By permuting the components of $(\oplus_{n=1}^{\infty} Y)_{Y,\mathrm{ind}}$ accordingly, we see that $R_\omega^Y = (\oplus_{k=1}^{\infty} R_k^Y)_{Y,\mathrm{ind}}$
        contains a sequence spanning a complemented copy of $Y$, which, as above, implies~\ref{thm:alternatives:b}.}

  \item[\ref{thm:alternatives:a}$\vee $\ref{thm:alternatives:b}.]
        Finally, assume that~\ref{thm:alternatives:b}, and hence~\ref{thm:alternatives:c}, is false. \Cref{index main theorem}~\ref{index main theorem b} implies that $i_Y[R_\alpha^{Y}] < \omega_1$ for all $\alpha < \omega_1$. We construct $(\alpha_\xi)_{\xi\in \omega_1}$ as follows: Put $\alpha_0 = 0$. Now fix $1 \le \xi < \omega_1$, and assume that $\alpha_\eta$ has already been defined for all $\eta < \xi$. Then $\beta := \sup_{\eta < \xi} i_Y[R_{\alpha_\eta}^{Y}]$ is a countable ordinal, and we put $\alpha_\xi = \beta + 1$. By \Cref{complementex Y index of tree BRS spaces}, we have $i_Y[R_{\beta + 1}^{Y}] \ge \beta + 1$. In particular, for all $\eta < \xi$, we have $i_Y[R_{\alpha_\xi}^{Y}] > i_Y[R_{\alpha_\eta}^{Y}]$, which, by \Cref{index main theorem}~\ref{index main theorem a}, implies that $R_{\alpha_\xi}^{Y}$ and $R_{\alpha_\eta}^{Y}$ are non-isomorphic. Hence,~\ref{thm:alternatives:a} is true.\qedhere
\end{itemize}
\end{proof}

\section{Unboundedness of orthogonal projections}
\label{sec:unboundedness}

The main goal of this section is to prove that the Bourgain-Rosenthal-Schechtman tree spaces fail to be complemented in certain Haar system spaces and Hardy spaces because the orthogonal projections onto them are unbounded.
In particular, we prove this result for $L_1$ and $H^1$, and also for a certain Orlicz function space ``close'' to $L_1$ and its associated Hardy space.
In $L_1$, the orthogonal projection is even unbounded as an operator from $L_1$ into $H^1$; this is already hinted at in~\cite[p.~223]{bourgain:rosenthal:schechtman:1981}.
Before stating the main result of this section, we introduce the necessary notation and define the aforementioned Orlicz space.

\begin{ntt}
  For $a > 0$ and two functions $f,g\colon [a,\infty)\to (0,\infty)$, we say $f(s)\sim g(s)$ as $s\to \infty$ if $f(s) = g(s)(1 + o(1))$, where $o(1)$ denotes a function tending to $0$ as $s\to \infty$.
\end{ntt}
\begin{ntt}\label{ntt:orlicz-space}
  Throughout this section, let $\Phi\colon [0,\infty)\to [0,\infty)$ be a  strictly increasing  Young's function (see \cite[Chapter~4, Section~8]{bennett:sharpley:1988} for the definition) with $\Phi(s) \sim s (\log s)^{\alpha}$ as $s \to \infty$, for some $\alpha > 0$. Let $L_{\Phi}$ be the Orlicz function space on $[0,1)$ generated by $\Phi$. Let $X$ be the closed linear span of the Haar system in $L_{\Phi}$. For simplicity, we also denote $X^0$ by $L_{\Phi}^0$ and the associated Hardy space $H_X$ by $H_{L_{\Phi}}$.
\end{ntt}

  The next theorem is the main result of this section. It contains examples of spaces in which which the Bourgain-Rosenthal-Schechtman tree spaces are uncomplemented or where the complementation depends on the realization of the trees, i.e., on the particular choice of the trees~$\mathcal{T}_s'$ forming the disjoint sum in \Cref{disjointly branching tree operations}~\ref{disjointly branching tree operations b}.
  Part~\ref{thm:tree-spaces-uncomplemented:c} of \Cref{thm:tree-spaces-uncomplemented} follows from a result of M\"uller and Schechtman~\cite{mueller:schechtman:1989}.
  For the definition of $o[\mathcal{T}]$, see above \Cref{rem:order-of-ordinal-trees}.

\begin{thm}\label{thm:tree-spaces-uncomplemented}
  Let $Y =L_1^0$, $H^1$, $L_{\Phi}^0$, or $H_{L_{\Phi}}$, where $\Phi$ is as in \Cref{ntt:orlicz-space} for some $0 < \alpha < 1/2$. Then the following hold:
  \begin{enumerate}[label=(\alph*)]
    \item\label{thm:tree-spaces-uncomplemented:a} For every well-founded disjointly branching tree $\mathcal{T}$ with $o[\mathcal{T}]\geq \omega\cdot 2$, the space $R_{\mathcal{T}}^{Y}$ is uncomplemented in $Y$. In particular, this holds for the space $R_{\mathcal{T}_\alpha}^Y$ if $\omega\cdot 2 \le \alpha < \omega_1$.
    \item\label{thm:tree-spaces-uncomplemented:b} For every $n\in \mathbb{N}_0$, there exist realizations of $\mathcal{T}_{\omega + n}$ for which $R_{\mathcal{T}_{\omega + n}}^Y$ is uncomplemented in~$Y$.
    \item\label{thm:tree-spaces-uncomplemented:c} For every $n\in \mathbb{N}_0$, there exist realizations of $\mathcal{T}_{\omega + n}$ (such as the one in \Cref{exa:ordinal-trees}) for which $R_{\mathcal{T}_{\omega + n}}^{H^1}$ is complemented in~$H^1$.
  \end{enumerate}
\end{thm}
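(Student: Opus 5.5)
By \Cref{BRS orthocomplemented or not at all}, $R_\mathcal{T}^Y$ is complemented in $Y$ if and only if $P_\mathcal{T}$ is bounded on $Y$. So parts~\ref{thm:tree-spaces-uncomplemented:a} and~\ref{thm:tree-spaces-uncomplemented:b} reduce to exhibiting functions $g_N$ with $\|P_\mathcal{T}g_N\|_Y/\|g_N\|_Y\to\infty$. Part~\ref{thm:tree-spaces-uncomplemented:c} is a positive statement, which I would take from M\"uller--Schechtman~\cite{mueller:schechtman:1989}.

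For~\ref{thm:tree-spaces-uncomplemented:a}, first a combinatorial reduction. If $o[\mathcal{T}]\ge\omega\cdot2$, then for every $N$ the tree $\mathcal{T}$ contains a node $A$ with $|A|\ge N$ whose branching set $S_A$ is infinite. Moreover, each $l\in S_A$ lies on a branch $A\cup\{l\}\cup\cdots$ of length $\ge N$ beyond $A$ (heights are unbounded in $o[\mathcal{T}^{(\omega)}]\ge\omega$). Using \Cref{tree BRS spaces include sum BRS spaces} and the monotonicity of $P_\mathcal{T}$ under passing to subtrees, which is obtained by compressing with conditional expectations $\mathbb{E}_G$ that commute with $P_\mathcal{T}$, I reduce to a finite model. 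Its shape is a ``broom'': a stem $F=\{k_0<\dots<k_{n}\}$ followed by $N$ disjoint branches $F\cup G_j$, $j=1,\dots,N$, where the sets $G_j$ are mutually disjoint and each has large size.

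The test function is chosen so that its square function is small but its projection has large square function. I would take $g=\prod_{i\le n}(1+r_{k_i})\cdot\sum_j w_{G_j}$-type functions, i.e.\ a dyadic $H^1$-atom concentrated on the atom $[r_{k_0}=\dots=r_{k_n}=1]$ of measure $2^{-n-1}$. Alternatively, I could work dually with the pairing against Walsh functions, mixing indices across different branches so that $g$ has cancellations. The projection $P_\mathcal{T}$ keeps only the Walsh components $w_B$ with $B\subset F\cup G_j$ for a single $j$. This destroys the cancellation, and $S(P_\mathcal{T}g)$ behaves like a sum of $N$ independent pieces of size $\approx2^{n}$ on sets of measure $2^{-n}$. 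The expected outcome is $\|g\|_{L_1}\lesssim1$ and $\|g\|_{H^1}\lesssim1$, while $\|P_\mathcal{T}g\|_{L_1}\gtrsim\log N$ or $\sqrt N$-type growth, as in Rudin's unboundedness argument for projections onto translation-invariant spaces.

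For the Orlicz spaces I would use the fundamental function $\varphi(t)\asymp t\log(1/t)^{\alpha}$ and a general criterion. That criterion, in the spirit of \Cref{prop:unboundedness}, compares $\|\cdot\|_{L_\Phi}$ on functions of the form ``height $2^{n}$ on a set of measure $2^{-n}$, summed over $N$ independent copies''. The resulting ratio is roughly $\sqrt N\,n^{\alpha}/n^{1/2}$-type, and the hypothesis $\alpha<1/2$ is what makes it blow up after choosing $n$ and $N$ linked appropriately. The Hardy versions follow by the same computation applied to square functions, using \Cref{pro:HSHS-1d}.

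For~\ref{thm:tree-spaces-uncomplemented:b} the same broom appears inside $\mathcal{T}_{\omega+n}$ once the realization is chosen badly. I would interleave the labels of the branches of $\mathcal{T}_\omega$, so that the disjoint copies $\mathcal{T}'_m$ use overlapping ranges of integers. Then $\phi$ is far from globally increasing and the square function sees the broom with an arbitrarily long common prefix. The main obstacle is the quantitative estimate of $\|P_\mathcal{T}g\|_Y$ from below and $\|g\|_Y$ from above, especially in $L_\Phi$ where only asymptotics of $\Phi$ are available. I would handle it by computing distributions explicitly via independence (\Cref{rem:faithful-embeddings}) and applying Khintchine-type lower bounds for the Orlicz norm.
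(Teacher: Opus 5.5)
Your first step is sound and matches the paper: reducing to unboundedness of $P_{\mathcal{T}}$ via \Cref{BRS orthocomplemented or not at all}, then compressing by a conditional expectation $\mathbb{E}_G$ to pass to a finite subconfiguration.

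\textbf{The configuration you extract is the gap.} A broom is a non-empty node with infinite branching set followed by $N$ long, pairwise disjoint bristles. Such a configuration does not force unboundedness in the Hardy cases. There, what matters is the relative order of the indices across different bristles, and your reduction does not control it.

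Indeed, the realization of $\mathcal{T}_{\omega+n}$ in \Cref{exa:ordinal-trees} is exactly such a broom: stem $\{0,\dots,n-1\}$, bristles forming consecutive blocks of every length. By part (c) its orthogonal projection is bounded on $H^1$, with a bound independent of $n$ (apply the $\mathcal{T}_\omega$-projection separately on each piece $T_I(\cdot)$, $I\in\mathcal{D}^n$). By your own compression argument, every finite sub-broom, and every increasing relabelling of one, then has uniformly bounded projection on $H^1$.

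If $\mathcal{T}_{\omega\cdot2}$ is built from increasing copies of these realizations, every broom with non-empty stem lies in a single component and is of this kind. So for $Y=H^1$ no test function can succeed on what your reduction returns, and the long stem only rescales the picture. Relatedly, \Cref{tree BRS spaces include sum BRS spaces} cannot be used to normalize the model in the Hardy case, since square function-distributional isomorphisms do not preserve complementation (cf.\ \Cref{rem:hardy}).

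Your one concrete candidate fails anyway. Each $w_{B\cup G_j}$ with $B\subset F$ lies in $Y_{F\cup G_j}\subset R^Y_{\mathcal{T}}$, so $P_{\mathcal{T}}g=g$.

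\textbf{The missing idea is an interleaved family.} Concretely, you need $m=2^k$ pairwise disjoint sets $F_1,\dots,F_m\in\mathcal{T}_F$ for some $F\in\mathcal{T}$, each of size at least $k+1$, together with a threshold $n$ such that every non-maximal element of every $F_j$ is $<n$ and every $\max F_j\ge n$. This is where $o[\mathcal{T}]\ge\omega\cdot2$ is used: you need infinite branching at the \emph{ends} of $m$ different branches, not at a common stem. Take nodes $G_j$ with $o[\mathcal{T}_{G_j}]=\omega+k_j$, $k\le k_1<k_2<\cdots$. Pass by compactness to a subsequence with $G_i\cap G_j=F$. Pick $H_j\in\mathcal{T}_{G_j}$ with $|H_j|=k_j$ and infinite successor set, and choose the last elements beyond $\max(H_1\cup\dots\cup H_m)$.

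Compressing by $\mathbb{E}_{F_1\cup\dots\cup F_m}$ turns $P_{\mathcal{T}}$ into $\sum_j\mathbb{E}_{F_j}$. The test function is $f=\chi_A\sum_j r_{\max F_j}$, where each $f_j$ is a sum of functions $b^{F_j}_I$ with $|\supp f_j|=1/m$, and $A=\bigcap_j\supp f_j$, so $|A|=m^{-m}$. The threshold gives $S(f)=\sqrt m\,\chi_A$. Meanwhile $\sum_j\mathbb{E}_{F_j}f=m^{-(m-1)}\sum_j f_j$ has modulus $m^{-(m-1)}$ on $m$ disjoint sets of measure $\gtrsim 1/m$. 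Hence the ratio is $\gtrsim\sqrt m\,|A|/\varphi_X(|A|)$. For $L^0_\Phi$, use Khintchine (valid since $q_{L_\Phi}<\infty$) and replace $\varphi_X(|A|)$ by $\|T_{[0,|A|)}\|$.

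This ratio equals $\sqrt m$ for $L_1^0$ and $H^1$, and is $\asymp m^{1/2-\alpha}(\log m)^{-\alpha}$ for the Orlicz spaces. That is the real source of the restriction $\alpha<1/2$: the number of branches and the measure of $A$ are tied together. Your heuristic ratio $\sqrt N\,n^{\alpha}/n^{1/2}$ would blow up with $n$ fixed and does not reflect this.

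For $L_1^0$ and $L_\Phi^0$ the order of indices is irrelevant by distributional invariance, so any disjoint long branches would do there. The explicit test function is still missing from your proposal, though.

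In (b), your interleaving idea points the right way, but it must be arranged to produce such families. In (c), you still need to pass from $\mathcal{T}_\omega$ to $\mathcal{T}_{\omega+n}$.
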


\begin{rem}
  Note if $Y = L_1^0$ or $L_{\Phi}^0$, then by \Cref{lem:distributional-copy-complemented}, the conclusion of~\ref{thm:tree-spaces-uncomplemented:b} holds for all realizations of $\mathcal{T}_{\omega + n}$ since the resulting spaces $R_{\mathcal{T}_{\omega + n}}^Y$ are distributionally isomorphic.
\end{rem}

{
\begin{rem}
  In the cases where the tree spaces in \Cref{thm:tree-spaces-uncomplemented} are uncomplemented in~$Y$, this is proved by showing that the corresponding orthogonal projection onto such a subspace is unbounded. Thus, by duality, these results also carry over to the closure of the linear span of the Haar system in~$L_{\infty}$ or in the dual space of~$L_\Phi$.
\end{rem}
}

Before proving this \Cref{thm:tree-spaces-uncomplemented}, we first collect some important properties of the Orlicz space~$L_{\Phi}$, including its Boyd indices and fundamental function.
\begin{dfn}
  Let $X$ be a Haar system space or a rearrangement invariant space. The fundamental function $\varphi_X$ of $X$ is defined by $\varphi_X(t) = \|\chi_{[0,t)}\|_X$ for $t\in (0,1]$.
\end{dfn}

\begin{rem}\label{rem:chat-gpt-orlicz}
  The specific form of the function $\Phi$ in \Cref{ntt:orlicz-space} was suggested by ChatGPT~5.2 when prompted to identify a Young's function for which the first lower bound in \Cref{prop:unboundedness} tends to infinity. For this purpose, ChatGPT also provided the asymptotic expansion of its fundamental function $\varphi_X$, along with a proof sketch that we made rigorous in the following lemma.
\end{rem}

\begin{lem}\label{lem:orlicz-fundamental-function}
  Let $\Phi\colon [0,\infty)\to [0,\infty)$ be a Young's function as in \Cref{ntt:orlicz-space}. Then the fundamental function of the Orlicz space $L_{\Phi}$ satisfies $\varphi_{L_{\Phi}}(t) \sim t (\log \frac{1}{t})^{\alpha}$ as $t \to 0$. Moreover, $p_{L_{\Phi}} = q_{L_{\Phi}} = 1$, and $\|T_{[0,t)}\|_{L_{\Phi}\to L_{\Phi}} \asymp  t(\log\frac{1}{t})^{\alpha}$ for $0 < t < 1/2$, where the implied constants depend only on~$\Phi$.
\end{lem}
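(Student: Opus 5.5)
The plan is to compute everything from the Luxemburg norm, where $\|f\|_{L_\Phi} = \inf\{\lambda>0 : \int_0^1 \Phi(|f|/\lambda)\,dx \le 1\}$. Any equivalent choice of norm would only change constants, but the asymptotic ``$\sim$'' statement is for the Luxemburg norm.

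\textbf{Step 1: the fundamental function.} For an indicator $\chi_{[0,t)}$ the modular condition reads $t\,\Phi(1/\lambda)\le 1$. Since $\Phi$ is strictly increasing and continuous, this gives $\varphi_{L_\Phi}(t) = 1/\Phi^{-1}(1/t)$. Next I invert the asymptotics $\Phi(s)\sim s(\log s)^\alpha$. If $u=\Phi(s)$ with $s\to\infty$, then $\log u = \log s + \alpha\log\log s + o(1) \sim \log s$, so $(\log u)^\alpha \sim (\log s)^\alpha$. Hence $s \sim u/(\log u)^\alpha$, that is, $\Phi^{-1}(u)\sim u(\log u)^{-\alpha}$ as $u\to\infty$. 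Putting $u=1/t$ yields $\varphi_{L_\Phi}(t)\sim t(\log\frac1t)^\alpha$ as $t\to 0$.

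\textbf{Step 2: the norm of $T_{[0,t)}$.} The lower bound is immediate, since $T_{[0,t)}\chi_{[0,1)} = \chi_{[0,t)}$ gives $\|T_{[0,t)}\|\ge \varphi_{L_\Phi}(t)$.

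For the upper bound, fix $f$ with $\|f\|_{L_\Phi}\le 1$, so that $\int\Phi(|f|)\le 1$. By a change of variables,
\[\int_0^1 \Phi\bigl(|T_{[0,t)}f|/\lambda\bigr) = t\int_0^1\Phi(a|f|),\qquad a = 1/\lambda .\]
The key elementary estimate I will prove is that there is a constant $K=K(\Phi)$ such that for all $a\ge 2$ and $s\ge 0$,
\[\Phi(as)\le K\bigl(as(\log a)^\alpha + as(\log^+ s)^\alpha + a\bigr).\]
To get it, I split into the region where $as$ is bounded and the region where it is large. In the bounded region I use only that $\Phi$ is bounded there. In the large region I use $\Phi(x)\lesssim x(\log x)^\alpha$ together with
\[\log(as)\le \log a + \log^+ s \quad\text{and}\quad (x+y)^\alpha\le 2^\alpha(x^\alpha+y^\alpha).\]
Combining this with the bounds $\int|f|\lesssim 1$ and $\int|f|(\log^+|f|)^\alpha \lesssim 1$, both of which follow from $\int\Phi(|f|)\le1$ because $\Phi(s)\gtrsim s(\log s)^\alpha$ and $\Phi(s)\gtrsim s$ for large $s$, I obtain
\[t\int_0^1\Phi(a|f|)\lesssim t\,a(\log a)^\alpha .\]
Now choose $\lambda = C t(\log\frac1t)^\alpha$, so $a = 1/\lambda$. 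Then $\log a\sim\log\frac1t$, so $t\,a(\log a)^\alpha\lesssim 1/C$. For $C$ large (depending only on $\Phi$) the modular is at most $1$, which gives $\|T_{[0,t)}\|\lesssim t(\log\frac1t)^\alpha$ for $0<t<1/2$. Small $t$ are covered by the asymptotic argument. For $t$ bounded away from $0$ both sides are comparable to constants, since $\|T_{[0,t)}\|\le 1$ by \eqref{standard half compressions are 2-embeddings}-type monotonicity.

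\textbf{Step 3: the Boyd indices.} With $t=2^{-n}$, Step 2 gives $\log_2\|T_{[0,2^{-n})}\|^{-1} = n-\alpha\log_2 n + O(1)$. Therefore $q_{L_\Phi} = \lim_n n/(n-\alpha\log_2 n+O(1)) = 1$. For $p_{L_\Phi}$, I use $\|T^{-1}_{[0,2^{-n})}\|\ge\|T_{[0,2^{-n})}\|^{-1}$, which gives $p_{L_\Phi}\le q_{L_\Phi}=1$. The reverse inequality $p_{L_\Phi}\ge1$ holds because $\|T^{-1}_{[0,2^{-n})}\|\le 2^n$; this follows from the convexity bound $\Phi(2^n s)\ge 2^n\Phi(s)$ applied in the modular.

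The main obstacle is the uniform elementary inequality for $\Phi(as)$ in Step 2. The asymptotic $\Phi(s)\sim s(\log s)^\alpha$ only controls $\Phi$ near infinity, so the regimes of small $s$, small $as$, and large $as$ must be handled separately, with constants depending only on $\Phi$. Once that inequality is in place, the rest is bookkeeping.
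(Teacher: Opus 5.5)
Your proof is correct. Step~1 coincides with the paper's argument: the formula $\varphi_{L_\Phi}(t)=1/\Phi^{-1}(1/t)$, followed by inverting $\Phi(s)\sim s(\log s)^{\alpha}$.

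Steps~2 and~3 take a different route. The paper does not compute with the modular. It quotes Bennett--Sharpley (Chapter~4, Theorem~8.18) for $p_{L_\Phi}=q_{L_\Phi}=1$. For the compression norm it imports the inequality $\|T_{[0,t)}\|\le k(2t)$ from that proof, where $k(t)=\sup_{s\ge 1}\Phi^{-1}(s)/\Phi^{-1}(s/t)$. It then bounds $k$ using the two-sided estimate $\Phi^{-1}(s)\asymp s(\log(e+s))^{-\alpha}$ on $[1,\infty)$. So everything is reduced to the inverse function $\Phi^{-1}$, the same object that controls the fundamental function.

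You instead work directly with the modular. You use the pointwise splitting inequality $\Phi(as)\lesssim as(\log a)^{\alpha}+as(\log^+ s)^{\alpha}+a$ together with the $L\log^{\alpha}L$ moment bounds on the unit ball. You then \emph{derive} the Boyd indices from the norm asymptotics and the general bounds $\|T_{[0,2^{-n})}\|^{-1}\le\|T^{-1}_{[0,2^{-n})}\|\le 2^n$.

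What each buys: your route is self-contained and makes transparent that $q_{L_\Phi}=1$ is forced exactly by the logarithmic factor in $\|T_{[0,t)}\|$. The paper's route is shorter, and the imported inequality $\|T_{[0,t)}\|\le k(2t)$ holds for any Young's function, but it relies on external results.

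Three small points:
\begin{itemize}
\item The bound $\|T_{[0,t)}\|\le 1$ follows directly from $\int\Phi(|T_{[0,t)}f|/\lambda)=t\int\Phi(|f|/\lambda)$. The half-compression estimate you cite does not cover it.
\item The phrase ``$\log a\sim\log\frac1t$'' should be replaced by the uniform inequality $\log a\le\log\frac1t$, which holds for $C\ge 1$ and $t<1/e$. That is what gives $t\,a(\log a)^{\alpha}\le 1/C$ uniformly.
\item Your split into small $t$ and $t$ bounded away from $0$ is genuinely necessary. For large $C$ and $t$ near $1/2$ one has $a<2$, where your splitting inequality is not available. You handle this correctly.
\end{itemize}
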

\begin{proof}
  The fundamental function of the Orlicz space $L_{\Phi}$ is given by the formula
  \begin{equation*}
    \varphi_{L_{\Phi}}(t) = \frac{1}{\Phi^{-1}(1/t)}, \qquad t\in (0,1].
  \end{equation*}
  Now let $y = \Phi(s)$ for some $s > 0$. Then, as $s \to \infty$, we have $y \sim  s (\log s)^{\alpha}$, and thus,
  \begin{equation}\label{eq:8}
    \log y = \log s + \alpha \log \log s + o(1).
  \end{equation}
  This implies that $\log y \sim \log s$. Plugging $\log s = \log y\, (1 + o(1))$ into the term $\log \log s$ in~\eqref{eq:8} yields
  \begin{equation*}
    \log y = \log s + \alpha \log\log y + o(1).
  \end{equation*}
  Solving for $s$, we obtain
  \begin{equation*}
    \Phi^{-1}(y) = s \sim y (\log y)^{-\alpha}\qquad \text{as }s\to \infty.
  \end{equation*}
  or, equivalently, as $y\to \infty$.
  This implies the first claimed result. The second statement follows from \cite[Chapter~4, Theorem~8.18]{bennett:sharpley:1988}.

  To prove the last statement, we use~(8.37) from the proof of \cite[Chapter~4, Theorem~8.18]{bennett:sharpley:1988}, i.e., $\|T_{[0,t)}\|_{L_{\Phi}\to L_{\Phi}}\le k(2t)$ for $0 < t < 1/2$, where
  \begin{equation*}
    k(t) = \sup \Bigl\{ \frac{\Phi^{-1}(s)}{\Phi^{-1}(s/t)} : 1\le s < \infty \Bigr\},\qquad 0 < t < 1.
  \end{equation*}
  We know that
  \begin{equation*}
    \Phi^{-1}(s)\sim s (\log s)^{-\alpha} \sim s (\log(e + s))^{-\alpha}\qquad \text{as } s\to \infty,
  \end{equation*}
  so
  \begin{equation*}
    s\mapsto \frac{\Phi^{-1}(s)}{s \log(e + s)^{-\alpha}}
  \end{equation*}
  is a continuous function on $[1,\infty)$ that converges to $1$ as $s\to \infty$. Thus, there exist $m,M > 0$ such that
  \begin{equation*}
    m s (\log(e + s))^{-\alpha} \le  \Phi^{-1}(s) \le M s (\log(e + s))^{-\alpha}, \qquad s\in [1,\infty).
  \end{equation*}
  Hence, for every $0 < t < 1$ and $s\in [1,\infty)$,
 \begin{equation*}
    \frac{\Phi^{-1}(s)}{\Phi^{-1}(s/t)} \le \frac{M}{m} t \biggl[ \frac{\log(e+s/t)}{\log(e+s)} \bigg]^{\alpha}
                                       \le \frac{M}{m} t \biggl[ \frac{\log(e+s) + \log(1/t)}{\log(e+s)} \bigg]^{\alpha}
                                       \le \frac{M}{m} t \Bigl[ 1 + \log(1/t) \Bigr]^{\alpha},
 \end{equation*}
 which implies that $k(t) \lesssim t(1 + \log\frac{1}{t})^{\alpha}$. Thus, for $0 < t <1/2$, we obtain $\|T_{[0,t)}\|_{L_{\Phi}\to L_{\Phi}}\le k(2t) \lesssim t (\log\frac{1}{t})^{\alpha}$. The reverse inequality holds because $\|T_{[0,t)}\|_{L_{\Phi}\to L_{\Phi}} \ge  \phi_{L_{\Phi}}(t) \gtrsim t (\log\frac{1}{t})^{\alpha}$.
\end{proof}

To obtain the main result of this section, we will show that every well-founded disjointly branching tree of sufficiently large order contains a certain structure, defined below.

\begin{dfn}\label{dfn:yardstick-family}
  Let $m,k\in \mathbb{N}$. A family $(F_r)_{r=1}^m$ of pairwise disjoint subsets of $\mathbb{N}_0$ is called a $k$-yardstick family if $|F_r| \ge k + 1$ for all $r$ and there exists an $n\in \mathbb{N}$ such that for all $r=1,\dots,m$ and all $l\in F_r\setminus \{ \max F_r \}$, we have $l < n \le \max F_r$.
\end{dfn}

\begin{figure}[t]
  \centering
 \begin{tikzpicture}[
  x=1cm,
  y=-0.45cm,
  vertex/.style={circle, fill, inner sep=0pt, minimum size=1.8mm},
  tick/.style={line width=0.5pt},
]

\draw[->, thick] (0,0) -- (0,15.0);

\foreach \y in {0,...,14}
  \draw[tick] (-0.10,\y) -- (0.10,\y);

\node[anchor=east] at (-0.18,0) {$0$};
\node[anchor=east] at (-0.18,1) {$1$};
\node[anchor=east] at (-0.18,2) {$2$};
\node[anchor=east] at (-0.18,3) {$\vdots$};
\node[anchor=east] at (-0.18,12) {$n$};
\node[anchor=east] at (-0.18,13) {$\vdots$};

\draw[thick,dashed] (0,12) -- (7.5,12);

\node at (1.6,-0.8) {$F_1$};
\node at (4.0,-0.8) {$F_2$};
\node at (6.4,-0.8) {$F_3$};

\foreach \y in {1,5,8,11,14}
  \node[vertex] at (1.6,\y) {};

\foreach \y in {2,3,7,9,12}
  \node[vertex] at (4.0,\y) {};

\foreach \y in {0,4,6,10,13}
  \node[vertex] at (6.4,\y) {};

\node[anchor=north] at (1.6,14) {$\max F_1$};
\node[anchor=north] at (4.0,12) {$\max F_2$};
\node[anchor=north] at (6.4,13) {$\max F_3$};
  
\end{tikzpicture}
 
  \caption{A $4$-yardstick family $(F_1,F_2,F_3)$}
  \label{fig:yardstick-family}
\end{figure}

\Cref{fig:yardstick-family} shows a $4$-yardstick family with $m = 3$.
Now, the key observation is that if a tree contains yardstick families, then the orthogonal projection onto the associated tree space has a large norm, assuming that the underlying space is sufficiently ``close'' to $L_1$ or $H^1$. This is made precise in the following result.

\begin{prop}\label{prop:unboundedness}
  Let $X$ be a Haar system space with $\|\cdot\|_X\not\asymp\|\cdot\|_\infty$ and $Y = X^0$ or $H_X$.
  Let $k\in \mathbb{N}$ and $m = 2^k$, and let $(F_r)_{r=1}^m$ be a $k$-yardstick family. 
  Then the orthogonal projection $P$ from $\langle \{ h_I : I\in \mathcal{D}  \} \rangle$ onto $Y_{F_1} + \dots + Y_{F_m}$ satisfies the following lower bounds: If $Y = H_X$, then
  \begin{equation}\label{eq:4}
    \min\{ \|P\|_{Y\to Y}, \|P\|_{Y\to X^0} \}
     \gtrsim \frac{m^{-m}}{\varphi_X(m^{-m})} \sqrt{m}.
  \end{equation}
  If $Y = X^0$ and $(r_n)_{n=0}^{\infty}$ in $X$ is equivalent to the unit vector basis of $\ell_2$, then
  \begin{equation}\label{eq:7}
    \|P\|_{Y\to Y} \gtrsim \frac{m^{-m}}{\|T_{[0, m^{-m})}\|_{Y\to Y}} \sqrt{m}.
  \end{equation}
  The implied constants in~\eqref{eq:4} and~\eqref{eq:7} depend only on the underlying space.
\end{prop}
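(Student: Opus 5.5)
The plan is to test $P$ on a single explicit function $g$ that is concentrated on a set of measure $m^{-m}$. For this $g$, the projection $Pg$ will be spread over a set of measure of order one with amplitude $m^{-(m-1)}$.

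\textbf{The test function.} For each $r$, let $G_r$ consist of the $k$ smallest elements of $F_r$. Since $|F_r|\ge k+1$, the set $G_r$ is contained in $F_r\setminus\{\max F_r\}$, so all its elements are $<n$. Let $A_r=\bigcap_{l\in G_r}[r_l=1]$, so $|A_r|=2^{-k}=1/m$. The sets $F_r$ are pairwise disjoint, so the events $A_r$ are independent. Put $B=\bigcap_{r=1}^m A_r$; then $|B|=m^{-m}$. Define
\[ g=\chi_B\sum_{r=1}^m r_{\max F_r}. \]
The set $B$ is $\sigma(r_l:l<n)$-measurable, and the levels $\max F_r\ge n$ are pairwise distinct.

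\textbf{Computing $Pg$.} The spaces $Y_{F_r}$ are mutually orthogonal because the $F_r$ are disjoint. Hence $P=\sum_r \mathbb{E}_{F_r}$ on mean-zero functions, and I will compute $Pg$ using the Walsh expansion together with \Cref{expectations on subsets are a monoid}.

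First take $s\ne r$. The factor $r_{\max F_s}$ is independent of $\sigma_{F_r}\vee\sigma(r_l:l<n)$ and has mean zero, so $\mathbb{E}_{F_r}(r_{\max F_s}\chi_B)=0$.

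Next take $s=r$. Here $r_{\max F_r}$ is $\sigma_{F_r}$-measurable, so it can be pulled out of the expectation. By independence,
\[ \mathbb{E}_{F_r}\chi_B=\chi_{A_r}\prod_{s\ne r}|A_s|=m^{-(m-1)}\chi_{A_r}. \]
Therefore
\[ Pg=m^{-(m-1)}\sum_{r=1}^m r_{\max F_r}\chi_{A_r}. \]

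\textbf{Upper bounds for $\|g\|$.} The $m$ summands of $g$ are martingale differences at the distinct levels $\max F_r$. Therefore $S(g)=\sqrt m\,\chi_B$, which gives $\|g\|_{H_X}=\sqrt m\,\varphi_X(m^{-m})$.

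For $Y=X^0$, the set $B$ is independent of $(r_{\max F_r})_r$. So $g$ has the same distribution as $T_{[0,m^{-m})}\big(\sum_{r=1}^m r_r\big)$. The Rademacher sequence is equivalent to the $\ell_2$ basis in $X$, so
\[ \|g\|_X\lesssim \|T_{[0,m^{-m})}\|\sqrt m. \]

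\textbf{Lower bound for $\|Pg\|$.} Condition on $\sigma(r_l:l<n)$. Given this information, $Pg$ is a Rademacher sum $\sum_r c_r\varepsilon_r$ with coefficients $c_r=m^{-(m-1)}\chi_{A_r}$. Khintchine's inequality in $L_1$ then gives
\[ \|Pg\|_1\gtrsim m^{-(m-1)}\,\mathbb{E}\Big(\sum_r\chi_{A_r}\Big)^{1/2}. \]
The quantity $N=\sum_r\chi_{A_r}$ has the Binomial$(m,1/m)$ distribution. Hence $|[N\ge1]|=1-(1-1/m)^m\ge 1-1/e$, and so $\|Pg\|_1\gtrsim m^{-(m-1)}$.

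This single $L_1$ estimate covers every norm we need. By \Cref{pro:HS-1d}, $\|Pg\|_{X^0}\ge\|Pg\|_1$. Also $S(Pg)=m^{-(m-1)}\sqrt N$, so $\|Pg\|_{H_X}\ge\|S(Pg)\|_1\gtrsim m^{-(m-1)}$ as well. Both target norms appearing in \eqref{eq:4} are therefore controlled.

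\textbf{Conclusion.} Dividing the two estimates, we get $m^{-(m-1)}/(\sqrt m\,\varphi_X(m^{-m}))=\sqrt m\,m^{-m}/\varphi_X(m^{-m})$. This is \eqref{eq:4}. In the $X^0$ case, the same division gives \eqref{eq:7}.

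The main care point is the conditional-independence bookkeeping in the computation of $Pg$. It relies on two facts: all elements of $G_r$ lie below $n\le\max F_r$, and the tops $\max F_r$ are distinct. Without these, $P$ would not act on $g$ term by term.
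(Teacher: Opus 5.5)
Your proposal is correct and follows the paper's proof essentially step by step. Your $g$ is the paper's test function $f=\chi_A\sum_r r_{\max F_r}$, with your $A_r$ being one admissible choice of the paper's $\supp f_r$. You derive the same identity $Pg=m^{-(m-1)}\sum_r r_{\max F_r}\chi_{A_r}$, via conditional independence rather than the paper's inner products $\langle b_I^{F_r}, b_J^{F_s}\chi_A\rangle$, and your bounds on $\|g\|$ are identical to the paper's.

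The only deviation is the lower bound on $\|Pg\|$. There the paper avoids Khintchine: it finds $m$ disjoint sets of measure at least $1/(4m)$ on which $|Pg|=S(Pg)=m^{-(m-1)}$ and then uses $\varphi_X$. This agrees with your $L_1$ estimate up to absolute constants, since $1/4\le\varphi_X(1/4)\le 1$.
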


\begin{proof}
  For $r = 1,\dots,m$, let $|F_r| = k_r + 1$, and let $\max F_r$ denote the largest and $s_r$ the second-largest element of $F_r$.
  Let $n\in \mathbb{N}$ be as in \Cref{dfn:yardstick-family} and note that $s_r \le n - 1$.
  Choose a collection $\mathcal{B}_r\subset \mathcal{D}_{k_r}$ such that the function
  \begin{equation*}
    f_r = \sum_{I\in \mathcal{B}_r} b_I^{F_r},
  \end{equation*}
  satisfies $|\supp f_r| = 2^{-k} =: \rho$. Define
  \begin{equation*}
    A = \bigcap_{r=1}^m \supp f_r.
  \end{equation*}
  Note that for every $I\in \mathcal{D}_{k_r}$, the set $\supp b_I^{F_r} = [b_{\pi(I)}^{F_r} = \varepsilon(I)]$ is $\sigma(\mathcal{D}^{s_r + 1})\subset \sigma(\mathcal{D}^{n})$-measurable, so $\supp f_r$ is $\sigma(\mathcal{D}^n)$-measurable as well for every $r$. Hence, $A$ is $\sigma(\mathcal{D}^n)$-measurable and, by independence, $|A| = \rho^m$. Now define
  \begin{equation*}
    f = \sum_{r=1}^m f_r \chi_A.
  \end{equation*}
  Since $\max F_r \ge n$ for all $r$, we see that every $K$ in the Haar support of $f_r$ is either a subset of $A$ or disjoint from $A$. Thus,
  \begin{equation*}
    f = \sum_{r=1}^m \sum_{\substack{K\in \mathcal{D}^{\max F_r}\\ K\subset A}} h_K = \chi_A\sum_{r=1}^m r_{\max F_r}\qquad \text{and} \qquad S(f) =  \sqrt{m}\, \chi_A.
  \end{equation*}
  This implies that $\|f\|_Y = \sqrt{m}\, \|\chi_A\|_X$ if $Y = H_X$, and $\|f\|_Y \lesssim  \sqrt{m}\, \|T_{[0,|A|)}\|_{Y\to Y}$ if $Y = X^0$, assuming that $(r_n)_{n=1}^{\infty}$ in $X$ is equivalent to the unit vector basis of $\ell_2$.

  The orthogonal projection onto $Y_{F_1} + \dots + Y_{F_m}$ is given by
  \begin{equation}\label{eq:20}
    Px = \sum_{r=1}^m \mathbb{E}_{F_r}x = \sum_{r=1}^m \sum_{I\in \mathcal{D}_{k_r}} |I|^{-1} \langle b_I^{F_r}, x \rangle b_I^{F_r},\qquad  x\in \langle \{ h_I : I\in \mathcal{D} \} \rangle.
  \end{equation}
  Observe that
  \begin{equation*}
    \langle b_I^{F_r}, b_J^{F_s} \chi_A \rangle = 0
  \end{equation*}
  for $r,s\in \{ 1,\dots,m \}$ and $I\in \mathcal{D}_{k_r}$, $J\in \mathcal{D}_{k_s}$, unless $r = s$ and $I = J$. Thus, for the function~$f$ defined above, we obtain
  \begin{equation*}
    Pf = \sum_{r=1}^m \sum_{I\in \mathcal{B}_r} |I|^{-1} \langle b_I^{F_r}, b_I^{F_r} \chi_A \rangle b_I^{F_r}.
  \end{equation*}
  By independence, we have
  \begin{equation*}
    \langle b_I^{F_r}, b_I^{F_r} \chi_A \rangle = |\supp(b_I^{F_r})\cap A| = |I|\cdot \frac{|A|}{|\supp f_r|} = \rho^{-1}|I| |A|,
  \end{equation*}
  which implies that
  \begin{equation*}
    Pf = \rho^{-1}|A| \sum_{r=1}^m \sum_{I\in \mathcal{B}_r} b_I^{F_r} = \rho^{-1}|A|\sum_{r=1}^m f_r
  \end{equation*}
  and
  \begin{equation*}
    S(Pf) = \rho^{-1}|A| \Bigl( \sum_{r=1}^m |f_r| \Bigr)^{1/2}.
  \end{equation*}
  The functions $|f_r|$ are $\{ 0,1 \}$-valued, independent, and ``almost'' disjointly supported in the following sense: We have $|\supp f_r| = \rho =  \frac{1}{m}$ for all $r=1,\dots,m$, which implies that
  \begin{equation*}
    \bigl|\bigl[|f_r| = 1 \wedge |f_s| = 0\ \forall s\ne r\bigr]\bigr| \ge  \rho \Bigl( 1 - \frac{1}{m}\Bigr)^m \ge \frac{\rho}{4}
  \end{equation*}
  for all $r$. Hence, there exist $m$ pairwise disjoint sets, each of measure at least $\rho/4$, on which
  \begin{equation*}
    |Pf| = S(Pf) = \rho^{-1}|A|.
  \end{equation*}
  Thus, for any Haar system space $X$ and $Y = X^0$ or $H_X$, we have
  \begin{equation*}
    \|Pf\|_Y \ge \rho^{-1}|A|\cdot  \|\chi_{[0,\, m \rho/4 )}\|_X.
  \end{equation*}
  Altogether, if $Y = H_X$, we obtain
  \begin{equation}\label{eq:18}
    \frac{\|Pf\|_Y}{\|f\|_Y} \gtrsim \frac{\rho^{-1}|A|\cdot \|\chi_{[0,m\rho/4)}\|_X}{\sqrt{m}\|\chi_A\|_X} \gtrsim  \frac{\varphi_X(m\rho)/(m\rho)}{\varphi_X(\rho^m)/\rho^m} \sqrt{m}.
  \end{equation}
  Since $m\rho = 1$, the last inequality simplifies to~\eqref{eq:4}.
  In passing, we note that $\varphi_X(t)/t = \varphi_{X'}(t)$ for $t\in (0,1]$, where $X'$ is the associate space of $X$.
  If $Y = X^0$, we need to replace $\|\chi_A\|_X$ with $\|T_{[0,|A|)}\|_{Y\to Y}$ in~\eqref{eq:18}, which yields~\eqref{eq:7}.
\end{proof}

Note that by \Cref{lem:orlicz-fundamental-function}, we have $q_{L_{\Phi}} < \infty$, which implies that the sequence of Rademacher functions in~$L_{\Phi}$ is equivalent to the unit vector basis of~$\ell_2$ (see the discussion preceding \cite[Theorem~2.b.4]{lindenstrauss:tzafriri:1979:partII}; see also~\cite{rodin:semyonov:1975}).
Thus, combining \Cref{prop:unboundedness} with \Cref{lem:orlicz-fundamental-function}, we immediately obtain the following result:

\begin{cor}\label{cor:norm-P-tends-to-infty}
  If $Y =L_1^0$, $H^1$, $L_{\Phi}^0$, or $H_{L_{\Phi}}$, where $\Phi$ is as in \Cref{ntt:orlicz-space} for some $\alpha < 1/2$, then the projection $P$ in \Cref{prop:unboundedness} satisfies $\|P\|\to \infty$ as $k\to \infty$.
\end{cor}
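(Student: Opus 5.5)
We plug the asymptotics of \Cref{lem:orlicz-fundamental-function} (and their trivial $L_1$ analogues) into the lower bounds \eqref{eq:4} and \eqref{eq:7} of \Cref{prop:unboundedness}. Throughout we set $t = m^{-m}$ with $m = 2^k$, so that
\[
\log\tfrac1t = m\log m = k2^k\log 2 .
\]

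\textbf{Case $Y = L_1^0$ or $H^1$, i.e.\ $X = L_1$.} Here $\varphi_X(t) = t$. Moreover $\|T_{[0,t)}\|_{L_1\to L_1} = t$, since $\|T_If\|_1 = |I|\,\|f\|_1$. The Rademacher sequence in $L_1$ is equivalent to the unit vector basis of $\ell_2$ by Khintchine's inequality. So both \eqref{eq:4} and \eqref{eq:7} give $\|P\|\gtrsim \sqrt m = 2^{k/2}\to\infty$.

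\textbf{Case $Y = L_\Phi^0$ or $H_{L_\Phi}$.} By \Cref{lem:orlicz-fundamental-function}, for small $t$ both $\varphi_{L_\Phi}(t)$ and $\|T_{[0,t)}\|_{L_\Phi\to L_\Phi}$ are $\asymp t(\log\frac1t)^{\alpha}$. We must check that passing from $L_\Phi$ to $X$, the closed span of the Haar system, changes nothing.

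For the fundamental function this is immediate: $\chi_{[0,t)}$ with $t$ dyadic lies in $X$, and $m^{-m}$ is a power of $2$.

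For the compression operator, $T_{[0,t)}$ maps $X$ into $X$ with norm at most its norm on $L_\Phi$. That upper bound is the only direction \eqref{eq:7} requires, because the operator norm sits in the denominator.

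The hypothesis of the $X^0$ case also holds. By \Cref{lem:orlicz-fundamental-function}, $q_{L_\Phi} = 1 < \infty$, so the Rademacher sequence in $L_\Phi$ is equivalent to the $\ell_2$ basis, as the paper notes just before the corollary.

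Hence both bounds give
\[
\|P\|\gtrsim \frac{\sqrt m}{(\log\frac1t)^{\alpha}} = \frac{2^{k/2}}{(k2^k\log 2)^{\alpha}} \asymp 2^{k(1/2-\alpha)}k^{-\alpha}.
\]
This tends to $\infty$ precisely because $\alpha<1/2$. This exponent comparison is the only real point of the argument.

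\textbf{Remaining bookkeeping.} The asymptotics in \Cref{lem:orlicz-fundamental-function} hold for small $t$, and $t = m^{-m}\to0$ as $k\to\infty$, so they apply for all large $k$. The implied constants in \Cref{prop:unboundedness} depend only on the space, not on $k$. The bound in \eqref{eq:4} controls the minimum of $\|P\|_{Y\to Y}$ and $\|P\|_{Y\to X^0}$, so in the Hardy case we also get $\|P\|_{Y\to Y}\to\infty$.
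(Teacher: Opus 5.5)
Your proposal is correct and takes the same route as the paper, which obtains the corollary "immediately" by inserting $\varphi_{L_1}(t)=t$, $\|T_{[0,t)}\|_{L_1\to L_1}=t$ and the asymptotics of \Cref{lem:orlicz-fundamental-function} into the bounds \eqref{eq:4} and \eqref{eq:7} of \Cref{prop:unboundedness}, with the $\ell_2$-equivalence of the Rademacher functions coming from Khintchine's inequality, respectively from $q_{L_\Phi}<\infty$. Your extra checks fill in details the paper leaves implicit: passing from $L_\Phi$ to $X^0$ only decreases the compression norm that appears in the denominator, and the resulting rate is $2^{k(1/2-\alpha)}k^{-\alpha}$.
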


Now it remains to show that trees of sufficiently large order give rise to arbitrarily large yardstick families (which are not necessarily contained in the trees themselves, but related to them). To illustrate the general idea of our construction, we first give a direct proof for the trees~$\mathcal{T}_\alpha$, where $\alpha$ is a limit ordinal, before stating and proving the general result.

\begin{exa}\label{exa:T-alpha-yardstick}
  Let $\alpha \ge \omega\cdot 2$ be a countable limit ordinal. Then for all $m,k \in \mathbb{N}$, there exists a $k$-yardstick family $F_1,\ldots,F_m$ in $\mathcal{T}_\alpha$.
\end{exa}
\begin{proof}
  The construction used in this proof is illustrated in \Cref{fig:yardstick-from-tree}.
  Recall that $\mathcal{T}_\alpha = \sqcup_{0\leq \beta<\alpha}\mathcal{T}_\beta = \cup_{0\le \beta < \alpha} \mathcal{T}_\beta'$, where for each $\beta < \alpha$, $\mathcal{T}_\beta'$ is a disjointly branching tree isomorphic to $\mathcal{T}_\beta$, chosen so that the sets $\cup \mathcal{T}_\beta'$, $\beta < \alpha$, are pairwise disjoint. For $r = 1,\dots,m$, let $G_r$ be the unique element of $\mathcal{T}_{\omega + k + r}'$ with cardinality $k + r$. Then $G_r$ has an infinite successor set in $\mathcal{T}_{\omega + k + r}'$, as follows from the structure of $\mathcal{T}_{\omega + k + r}$. Thus, for every $r$, we can find $g_r\in \mathbb{N}$ such that $g_r > \max(G_1\cup \dots\cup G_m)$ and $F_r := G_r\cup \{ g_r \}\in \mathcal{T}_{\omega + k + r}'$. Then $F_1,\dots,F_m$ is a $k$-yardstick family in $\mathcal{T}_\alpha$.
\end{proof}

\begin{figure}[t]
  \centering
\begin{tikzpicture}[
  vertex/.style={circle, draw, fill=none, inner sep=0pt, minimum size=1.8mm},
  marked/.style={vertex, fill=black},
  edge/.style={draw, thick},
  plain/.style={draw=none, fill=none, inner sep=0pt, minimum size=0pt},
  every label/.style={plain, font=\small},
]

\node[plain] at (2.2,0.7) {$\mathcal{T}'_{\omega+2}$};
\node[marked, label={[xshift=2pt]right:$0$}] (a0) at (2.2,0) {};
\node[marked, label={[xshift=2pt]above right:$1$}] (a1) at (2.2,-0.7) {};
\node[vertex, label={[xshift=2pt]below right:$5$}] (a2) at (0.7,-1.4) {};
\node[vertex, label={[xshift=2pt]right:$9$}] (a3) at (1.7,-1.4) {};
\node[vertex, label={[xshift=2pt]right:$10$}] (a4) at (1.7,-2.1) {};
\node[vertex, label={[xshift=2pt]right:$11$}] (a5) at (2.7,-1.4) {};
\node[vertex, label={[xshift=2pt]right:$12$}] (a6) at (2.7,-2.1) {};
\node[vertex, label={[xshift=2pt]right:$13$}] (a7) at (2.7,-2.8) {};
\node[marked, label={[xshift=2pt]right:$18$}] (a8) at (3.7,-1.4) {};
\node[vertex, label={[xshift=2pt]right:$19$}] (a9) at (3.7,-2.1) {};
\node[vertex, label={[xshift=2pt]right:$20$}] (a10) at (3.7,-2.8) {};
\node[vertex, label={[xshift=2pt]right:$21$}] (a11) at (3.7,-3.5) {};
\coordinate (adots) at (4.7,-1.4);
\draw[edge] (a0) -- (a1);
\draw[edge] (a1) -- (a2);
\draw[edge] (a1) -- (a3) -- (a4);
\draw[edge] (a1) -- (a5) -- (a6) -- (a7);
\draw[edge] (a1) -- (a8) -- (a9) -- (a10) -- (a11);
\draw[edge] (a1) -- (adots);
\node[plain] at (4.7,-1.8) {$\ddots$};

\begin{scope}[xshift=-7mm]
\node[plain] at (8.6,0.7) {$\mathcal{T}'_{\omega+3}$};
\node[marked, label={[xshift=2pt]right:$2$}] (b0) at (8.6,0) {};
\node[marked, label={[xshift=2pt]right:$3$}] (b1) at (8.6,-0.7) {};
\node[marked, label={[xshift=2pt]above right:$4$}] (b2) at (8.6,-1.4) {};
\node[vertex, label={[xshift=2pt]below right:$6$}] (b3) at (7.1,-2.1) {};
\node[vertex, label={[xshift=2pt]right:$7$}] (b4) at (8.1,-2.1) {};
\node[vertex, label={[xshift=2pt]right:$8$}] (b5) at (8.1,-2.8) {};
\node[marked, label={[xshift=2pt]right:$22$}] (b6) at (9.1,-2.1) {};
\node[vertex, label={[xshift=2pt]right:$23$}] (b7) at (9.1,-2.8) {};
\node[vertex, label={[xshift=2pt]right:$24$}] (b8) at (9.1,-3.5) {};
\node[vertex, label={[xshift=2pt]right:$28$}] (b9) at (10.1,-2.1) {};
\node[vertex, label={[xshift=2pt]right:$29$}] (b10) at (10.1,-2.8) {};
\node[vertex, label={[xshift=2pt]right:$30$}] (b11) at (10.1,-3.5) {};
\node[vertex, label={[xshift=2pt]right:$31$}] (b12) at (10.1,-4.2) {};
\coordinate (bdots) at (11.1,-2.1);
\draw[edge] (b0) -- (b1) -- (b2);
\draw[edge] (b2) -- (b3);
\draw[edge] (b2) -- (b4) -- (b5);
\draw[edge] (b2) -- (b6) -- (b7) -- (b8);
\draw[edge] (b2) -- (b9) -- (b10) -- (b11) -- (b12);
\draw[edge] (b2) -- (bdots);
\node[plain] at (11.1,-2.5) {$\ddots$};
\end{scope}

\begin{scope}[xshift=-14mm]
\node[plain] at (15,0.7) {$\mathcal{T}'_{\omega+4}$};
\node[marked, label={[xshift=2pt]right:$14$}] (c0) at (15,0) {};
\node[marked, label={[xshift=2pt]right:$15$}] (c1) at (15,-0.7) {};
\node[marked, label={[xshift=2pt]right:$16$}] (c2) at (15,-1.4) {};
\node[marked, label={[xshift=2pt]above right:$17$}] (c3) at (15,-2.1) {};
\node[marked, label={[xshift=2pt]below right:$25$}] (c4) at (13.5,-2.8) {};
\node[vertex, label={[xshift=2pt]right:$26$}] (c5) at (14.5,-2.8) {};
\node[vertex, label={[xshift=2pt]right:$27$}] (c6) at (14.5,-3.5) {};
\node[vertex, label={[xshift=2pt]right:$32$}] (c7) at (15.5,-2.8) {};
\node[vertex, label={[xshift=2pt]right:$33$}] (c8) at (15.5,-3.5) {};
\node[vertex, label={[xshift=2pt]right:$34$}] (c9) at (15.5,-4.2) {};
\node[vertex, label={[xshift=2pt]right:$35$}] (c10) at (16.5,-2.8) {};
\node[vertex, label={[xshift=2pt]right:$36$}] (c11) at (16.5,-3.5) {};
\node[vertex, label={[xshift=2pt]right:$37$}] (c12) at (16.5,-4.2) {};
\node[vertex, label={[xshift=2pt]right:$38$}] (c13) at (16.5,-4.9) {};
\coordinate (cdots) at (17.5,-2.8);
\draw[edge] (c0) -- (c1) -- (c2) -- (c3);
\draw[edge] (c3) -- (c4);
\draw[edge] (c3) -- (c5) -- (c6);
\draw[edge] (c3) -- (c7) -- (c8) -- (c9);
\draw[edge] (c3) -- (c10) -- (c11) -- (c12) -- (c13);
\draw[edge] (c3) -- (cdots);
\node[plain] at (17.5,-3.2) {$\ddots$};
\end{scope}
\end{tikzpicture}
  \caption{Constructing a $2$-yardstick family (labels of the black nodes) in a tree~$\mathcal{T}_\alpha$}
  \label{fig:yardstick-from-tree}
\end{figure}

In general, to obtain an arbitrarily large yardstick family from a tree $\mathcal{T}$ of order at least $\omega\cdot 2$, we have to pass to the extension tree $\mathcal{T}_F$ of a suitable element $F\in \mathcal{T}$.

\begin{dfn}
Let $\mathcal{T}$ be a disjointly branching tree. For every $F\in\mathcal{T}$, we define
\[\mathcal{T}_F = \{G\in[\mathbb{N}_0]^{<\infty}: F\cap G = \emptyset \text{ and }F\cup G\in\mathcal{T}\}.\]
\end{dfn}

\begin{prop}\label{prop:existence-of-yardstick-families-in-trees}
  Let $\mathcal{T}$ be a well-founded disjointly branching tree such that $o[\mathcal{T}]\geq \omega\cdot 2$. Then, for every $k,m\in\mathbb{N}$, there exist $F\in\mathcal{T}$ and a $k$-yardstick family $F_1,\ldots,F_m$ in $\mathcal{T}_F$.
\end{prop}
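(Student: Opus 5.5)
The plan is to use the derivative/rank structure of $\mathcal{T}$, choose a branching node $F$, and then grow $m$ disjoint branches above it. Each branch should be long enough and should end at a node with infinitely many successors.

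For $G\in\mathcal{T}$ let $\rho(G)$ be its rank, i.e.\ the largest $\beta$ with $G\in\mathcal{T}^{(\beta)}$, so that
\[
\rho(G)=\sup\{\rho(G\cup\{s\})+1: s\in S_G\},
\]
with the convention $\sup\emptyset=0$. This is well defined because $\mathcal{T}$ is well-founded, and $\rho(\emptyset)=o[\mathcal{T}]\ge\omega\cdot 2$. Two elementary consequences of the formula will be used repeatedly.
\begin{enumerate}[label=(\roman*)]
\item If $\rho(G)\ge \omega+j$ with $j\ge 1$, then some successor $G\cup\{s\}$ has $\rho\ge \omega+j-1$.
\item If $\rho(G)\ge\omega$ and $S_G$ is finite, then some successor has $\rho\ge\omega$, since the supremum of finitely many finite ordinals is finite.
\end{enumerate}

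\textbf{Step 1: choosing $F$.} Start at $\emptyset$. While the current node $G$ has a successor of rank $\ge\omega\cdot2$, move to such a successor. By well-foundedness this stops at some $F\in\mathcal{T}$ with $\rho(F)\ge\omega\cdot 2$ all of whose successors have rank $<\omega\cdot 2$. The supremum formula then forces infinitely many successors $s\in S_F$ with $\rho(F\cup\{s\})\ge\omega+k$. Fix $m$ distinct such successors $s_1,\dots,s_m$.

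\textbf{Step 2: growing a branch above each $s_r$.} Starting from $F\cup\{s_r\}$, apply (i) $k-1$ times. This gives $H_r\in\mathcal{T}$ extending $F\cup\{s_r\}$ with $|H_r\setminus F|=k$ and $\rho(H_r)\ge\omega$. Next, while the current node has finite successor set, use (ii) to pass to a successor of rank $\ge\omega$. Well-foundedness makes this terminate at some $G_r\sqsupseteq H_r$ with $S_{G_r}$ infinite and $|G_r\setminus F|\ge k$.

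\textbf{Step 3: assembling the yardstick family.} Choose $n>\max(G_1\cup\dots\cup G_m)$. Since each $S_{G_r}$ is infinite, pick $g_r\in S_{G_r}$ with $g_r\ge n$, and set
\[
F_r=(G_r\setminus F)\cup\{g_r\}.
\]
Then $F\cup F_r=G_r\cup\{g_r\}\in\mathcal{T}$ and $F\cap F_r=\emptyset$, so $F_r\in\mathcal{T}_F$. Moreover $|F_r|\ge k+1$, $\max F_r=g_r\ge n$, and every other element of $F_r$ is $<n$.

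For pairwise disjointness I would use property~\ref{definition disjointly branching b} of \Cref{definition disjointly branching}, in its equivalent form that the branching sets $S_A$ are pairwise disjoint. The sets $G_r\cup\{g_r\}$ extend $F$ through the distinct successors $s_r$, so $F$ is a maximal common initial segment of any two of them. Their intersection is an initial segment of each, and it cannot contain $s_r$ or $s_{r'}$ (since $s_r\notin F\cup F_{r'}$ by disjointness of the branchings), so it equals $F$. Hence the sets $F_r$ are pairwise disjoint.

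\textbf{Expected main obstacle.} The delicate point is Step 1: ensuring that a single node has \emph{many} successors of rank $\ge\omega+k$ simultaneously. This is exactly where $o[\mathcal{T}]\ge\omega\cdot 2$ is used, rather than merely $o[\mathcal{T}]\ge\omega+k$. The remaining verifications, namely disjointness via disjoint branching and the sizes and position of the maxima, should be routine bookkeeping.
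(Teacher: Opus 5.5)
Your proof is correct, but it locates the branching node $F$ by a different mechanism than the paper.

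The paper first proves three claims about orders:
\begin{enumerate}
\item $o[\mathcal{T}_G]$ equals the rank of $G$;
\item every $\alpha\le o[\mathcal{T}]$ occurs as some $o[\mathcal{T}_G]$;
\item a tree of order $\omega+n$ has a node of cardinality $n$ with infinite successor set.
\end{enumerate}
It then picks nodes $G_n$ with $o[\mathcal{T}_{G_n}]=\omega+k_n$, where $k\le k_1<k_2<\cdots$. Using compactness of $\{0,1\}^{\mathbb{N}_0}$ together with well-foundedness, it passes to a subsequence with $G_n\cap G_{n'}=F$ for $n\neq n'$, where $F$ is finite. Finally it completes each branch inside $\mathcal{T}_{G_n}$ via the third claim.

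You instead descend from $\emptyset$ along nodes of rank $\ge\omega\cdot 2$ until you reach a node $F$ all of whose children have rank $<\omega\cdot 2$. Then $\rho(F)=\omega\cdot 2$ is a limit that no child attains. So the children's ranks are cofinal in $\omega\cdot 2$, which gives infinitely many children of rank $\ge\omega+k$ at once. Because you grow the branches through distinct children $s_r$, pairwise disjointness of the $F_r$ follows directly from disjoint branching, with no subsequence extraction. Your recursion $\rho(G)=\sup\{\rho(G\cup\{s\})+1\}$ also replaces the third claim. You only obtain $|G_r\setminus F|\ge k$ rather than an exact count, but that is all the yardstick definition needs.

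What each route buys: yours is more elementary and finitary, and it isolates exactly where the hypothesis $o[\mathcal{T}]\ge\omega\cdot 2$ enters (a limit rank not attained by any child). The paper's argument is less direct, but it packages the needed facts as reusable statements about the orders $o[\mathcal{T}_G]$.

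The step you state tersely, that $s_r\notin G_{r'}\cup\{g_{r'}\}$, is fine. Otherwise $s_r$ would lie both in $S_F$ and in $S_B$, where $B\sqsupseteq F$ is the initial segment of $G_{r'}\cup\{g_{r'}\}$ preceding $s_r$. Disjointness of the branching sets then forces $B=F$ and hence $s_r=s_{r'}$, a contradiction.
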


\begin{proof}
We prove the claimed result in several steps.
\setcounter{clm}{0}
\begin{clm}
\label{computing by pruning}
For every $F\in\mathcal{T}$,
\[o[\mathcal{T}_F] = \max\{\alpha:F\in\mathcal{T}^{(\alpha)}\}.\]
In particular, $o[\mathcal{T}]\geq \#F+o[\mathcal{T}_F]$.
\end{clm}
\noindent \emph{Proof.} This is proved with an induction on $\mathcal{T}$ starting at the leaves. The second part is then obvious.\hfill$\scriptstyle\square$

\begin{clm}
For every $\alpha\leq o[\mathcal{T}]$ there exists $F\in\mathcal{T}$ such that $o[\mathcal{T}_F] = \alpha$.
\end{clm}

\noindent \emph{Proof.} One just picks a maximal $F\in\mathcal{T}^{(\alpha)}$.\hfill$\scriptstyle\square$

\begin{clm}
\label{infinitely many successors}
If $n\in\mathbb{N}_0$ and $o[\mathcal{T}] = \omega+n$ then there exists $F\in\mathcal{T}$ such that $\#F = n$ and the set $S_F = \{k>\max(F):F\cup\{k\}\in \mathcal{T}\}$ is infinite.
\end{clm}

\noindent \emph{Proof.} If $n = 0$, this means that there are infinitely many singletons in $\mathcal{T}$. If $\mathcal{T}$ contained only finitely many singletons, say defined by  $k_1,\ldots,k_n$, then, letting $\mathcal{T}_i = \{F\in\mathcal{T}:\min(F) = k_i\}$, we would have
\[o[\mathcal{T}] = \max_{1\leq i\leq n} o[\mathcal{T}_i] \geq \max_{1\leq i\leq n}o[\mathcal{T}_{\{k_i\}}]+1,\]
which is an infinite successor ordinal.

If $o[\mathcal{T}] = \omega+n$, then $o[\mathcal{T}^{(\omega)}] = n$, which yields a maximal element $F\in\mathcal{T}^{(\omega)}$ with $\#F = n$. In particular, by \Cref{computing by pruning}, $o[\mathcal{T}_F] =\omega$, which by the case $n=0$ yields that $S_F$ is infinite.\hfill$\scriptstyle\square$\medskip

Now, for every $n\in\mathbb{N}$ let  $G_n\in\mathcal{T}$ such that  $o[\mathcal{T}_{G_n}] = \omega+k_n$, with $k\leq k_1<k_2<\cdots$. We note that, in particular, $G_n\neq G_m$, for $n\neq m$. By the compactness of $[\mathbb{N}_0]$ (identified, as usual, with $\{0,1\}^{\mathbb{N}_0}$, via $A\mapsto\chi_A$), and by passing to a subsequence, there exists $F\in[\mathbb{N}_0]$ such that
\begin{equation}\label{eq:19}
  G_n\cap\{0,\ldots,n\} = F\cap\{0,1\ldots,n\} \qquad \text{for all } n\in \mathbb{N}.
\end{equation}
We deduce $F\cap\{0,1\ldots,n\}\in\mathcal{T}$, and because $\mathcal{T}$ is well-founded, $F$ must be finite. By passing to a further subsequence, we may now assume that $G_n\cap G_m = F$, for all $n\neq m$. For all $n=1,\ldots,m$ pick $H_n\in\mathcal{T}_{G_n}$ such that $\#H_n = k_n\geq k$ and $S_{H_n}$ is infinite. Let $N = \max\{H_1\cup\cdots\cup H_m\}$ and, for $1\leq n\leq m$, choose $\ell_n\in S_{H_n}\cap\{N+1,\ldots\}$. Define $F_n = (G_n\setminus F)\cup H_n\cup\{\ell_n\}\in\mathcal{T}_F$, and note that the sets $F_1,\dots,F_m$ are pairwise disjoint.
\end{proof}

\begin{proof}[\normalfont\textbf{Proof of \Cref{thm:tree-spaces-uncomplemented}}]\phantom{}
  \begin{enumerate}[label=(\alph*)]
  \item[\ref{thm:tree-spaces-uncomplemented:a}]
  Let $Y$ be one of the spaces listed in the theorem, and let $\mathcal{T}$ be a well-founded disjointly branching tree with $o[\mathcal{T}] \ge \omega\cdot 2$. By \Cref{prop:existence-of-yardstick-families-in-trees}, for every $k\in \mathbb{N}$ and $m = 2^k$, we can find $F\in\mathcal{T}$ and a $k$-yardstick family $F_1,\ldots,F_m$ in $\mathcal{T}_F$. Now let $P_{\mathcal{T}}$ denote the orthogonal projection from $\langle\{  h_I : I\in \mathcal{D}  \}\rangle$ onto $\langle Y_G : G\in \mathcal{T} \rangle$. We claim that
  \begin{equation}\label{eq:21}
    \|P_\mathcal{T}\| \geq \Big\|\mathbb{E}_{\cup_{n=1}^m F_n}P_\mathcal{T}\Big\| = \Big\|\sum_{n=1}^m \mathbb{E}_{F_n}\Big\|.
  \end{equation}
  Indeed, the last equality follows from \Cref{expectations on subsets are a monoid}: Let $H = \cup _{n=1}^m F_n$, then for $g\in \langle \{ h_I : I\in \mathcal{D} \} \rangle$, we have
  \begin{equation*}
    \mathbb{E}_H P_\mathcal{T}g = \sum_{G\in \mathcal{T}\setminus \{ \emptyset \}} \mathbb{E}_H (\mathbb{E}_G - \mathbb{E}_{G'})(g) = \sum_{G\in \mathcal{T}\setminus \{ \emptyset \}} (\mathbb{E}_{H\cap G} - \mathbb{E}_{H\cap G'})(g).
  \end{equation*}
  Note that $H\cap G = H\cap G'$ unless $\max G \in H$, i.e., $\max G \in F_n$ for some $n\in \{ 1,\dots,m \}$. Thus, we obtain
  \begin{equation*}
    \mathbb{E}_H P_{\mathcal{T}}g = \sum_{n=1}^m \sum_{\substack{K\in \mathcal{T}_F\\ K\sqsubseteq F_n}} (\mathbb{E}_K - \mathbb{E}_{K'})(g) = \sum_{n=1}^m \mathbb{E}_{F_n}g,
  \end{equation*}
  which proves~\eqref{eq:21}.

  Next, observe that the right-hand side of~\eqref{eq:21} is the norm of the orthogonal projection from $\langle \{ h_I : I\in \mathcal{D} \} \rangle$ onto $Y_{F_1} + \dots + Y_{F_m}$, see~\eqref{eq:20}. By \Cref{cor:norm-P-tends-to-infty}, this norm tends to infinity as $k\to \infty$. Thus, $P_{\mathcal{T}}$ is unbounded and therefore does not extend to a bounded linear operator on $Y$. By \Cref{BRS orthocomplemented or not at all}, this implies that $R_{\mathcal{T}}^{Y}$ is uncomplemented in~$Y$. In particular, the conclusion holds for $\mathcal{T} = \mathcal{T}_\alpha$ if $\omega \cdot 2 \le \alpha < \omega_1$, since $o[\mathcal{T}_\alpha] = \alpha$ (see \Cref{rem:order-of-ordinal-trees}).

  \item[\ref{thm:tree-spaces-uncomplemented:b}]
  It is clear that the tree $\mathcal{T}_{\omega}$ can be constructed in such a way that for every $k,m \in \mathbb{N}$, there exists a $k$-yardstick family $F_1,\dots,F_m$ in $\mathcal{T}_\omega$ (this is evident from the structure of the tree and a suitable labeling to its nodes).
  Analogously, every tree $\mathcal{T}_{\omega + n}$, $n\in \mathbb{N}_0$, can be constructed so that for every $k,m \in \mathbb{N}$, there exist $F\in \mathcal{T}_{\omega + n}$ and a $k$-yardstick family $F_1,\dots,F_m$ in $(\mathcal{T}_\omega)_F$.
Hence, as above, $R_{\omega + n}^Y$ is uncomplemented in $Y$.

\item[\ref{thm:tree-spaces-uncomplemented:c}]
          For $n = 0$, let $\mathcal{T}_\omega$ be constructed as in \Cref{exa:ordinal-trees}. Then, by a result of M\"uller and Schechtman~\cite{mueller:schechtman:1989}, $R_\omega^{H^1}$ is complemented in $H^1$ by a bounded linear projection~$P$. For $n = 1,2,\dots$, note that by iterating~\eqref{showing these are really BRS spaces eq1} in \Cref{showing these are really BRS spaces},
          \begin{equation*}
            R_{\mathcal{T}_{\omega + n}}^{H^1} = \langle \{ h_I : I\in \mathcal{D}_{n-1} \} \rangle + \sum_{I\in \mathcal{D}^n} T_I(R_{\mathcal{T}_\omega}^{H^1}).
          \end{equation*}
          For $y\in \langle \{ h_I : I\in \mathcal{D}_{n-1} \} \rangle$ and $x_I \in H^1$, $I\in \mathcal{D}^n$, put
          \begin{equation*}
            \widetilde{P}\Bigl( y + \sum_{I\in \mathcal{D}^n} T_I x_I \Bigr) = y + \sum_{I\in \mathcal{D}^n} T_I Px_I.
          \end{equation*}
          Then $\widetilde{P}$ is a bounded linear projection from~$H^1$ onto $R_{\mathcal{T}_{\omega + n}}^{H^1}$ with $\|\widetilde{P}\|\le 1 + 2\|P\|$.
\qedhere
\end{enumerate}
\end{proof}

\section{Open problems}

Recall that alternative~\ref{thm:alternatives:a} in our main result \Cref{thm:alternatives-ri} does indeed occur if $Y$ is isomorphic to $L_2$. Within the $L_p$ scale and the class of reflexive Orlicz spaces on $[0,1]$, these are the only examples for which this alternative holds. This naturally leads to the following question.
\begin{problem}
  Are there any non-Hilbertian Haar system spaces for which alternative~\ref{thm:alternatives:a} in \Cref{thm:alternatives-ri} holds?
\end{problem}

In the case of $L_p$, $1 < p < \infty$, $p \ne 2$, Alspach~\cite{alspach:1999} further analyzed the structure of the Bourgain-Rosenthal-Schechtman spaces. In particular, he proved that are all the \emph{limit} Bourgain-Rosenthal-Schechtman spaces are pairwise non-isomorphic. It is therefore natural to ask whether this also holds in the general case, thus yielding a stronger conclusion than~\ref{thm:alternatives:a} in \Cref{thm:alternatives-ri}.
\begin{problem}
  If alternative~\ref{thm:alternatives:a} in \Cref{thm:alternatives-ri} or \Cref{thm:alternatives-hardy} holds, are all spaces $R_\alpha^Y$, where $\alpha$ ranges over the countable limit ordinals, pairwise non-isomorphic?
\end{problem}

Our proof of the complementation of the Bourgain-Rosenthal-Schechtman spaces in~$Y$ (see \Cref{prop:complementation}) relies on the assumption that the underlying Haar system space~$X$ has non-trivial Boyd indices, which is equivalent to the Haar system being unconditional. Moreover, both the complementation and unconditionality are used in the proof of our main result, \Cref{thm:alternatives-ri}.
In view of the counterexamples in \Cref{thm:tree-spaces-uncomplemented}, where complementation fails, it is thus natural to ask the following questions.
\begin{problem}
  Is the complementation of the Bourgain-Rosenthal-Schechtman spaces in~$X^0$ or~$H_X$ equivalent to $X$ having non-trivial Boyd indices?
\end{problem}
\begin{problem}
  Does the dichotomy in \Cref{thm:alternatives-ri} still hold if one of the Boyd indices of~$X$ is trivial?
\end{problem}
One step towards answering the above problem is to prove a version of the result on unconditional decompositions (\Cref{thm:1.1}) for spaces in which the Haar system is conditional.  As mentioned in \Cref{sec:complemented-embeddings}, the work of Enflo and Starbird~\cite{enflo:starbird:1979} yields precisely this result in~$L_1$, where conditionality forces only alternative~\ref{thm:1.1:a} to occur. 
\begin{problem}
  For which Haar system spaces with at least one trivial Boyd index does the conclusion of \Cref{thm:1.1} hold?
\end{problem}
For such spaces, we see that alternative~\ref{thm:alternatives:c} in \Cref{thm:alternatives-ri} is impossible, by the minimality argument used in the proof of~\ref{thm:alternatives:c}$\Longrightarrow$\ref{thm:alternatives:b}. The proof of~\ref{thm:alternatives:a}$\vee $\ref{thm:alternatives:b} then implies that the Bourgain--Rosenthal--Schechtman spaces are pairwise non-isomorphic (even though they may be uncomplemented).

Of course, we may ask analogous questions in the Hardy case, in particular:
\begin{problem}
  Does the dichotomy in \Cref{thm:alternatives-hardy} still hold if the Bourgain-Rosenthal-Schechtman tree spaces are uncomplemented in~$H_X$?
\end{problem}

We conclude with the following question about factorization properties in Bourgain-Rosenthal-Schechtman spaces.

\begin{problem}
  Does the factorization property, proved in \cite{konstantos:motakis:brs} for the standard bases of the limit Bourgain-Rosenthal-Schechtman spaces in $L_p$, $1 < p < \infty$, also hold if the underlying space is an arbitrary Haar system space or associated Hardy space? Do the spaces $R_\alpha^Y$, $\omega \le \alpha < \omega_1$, satisfy the primary factorization property?
\end{problem}

\noindent\textbf{Acknowledgement.}
We would like to thank Richard Lechner and Paul M\"uller for useful discussions and especially for drawing our attention to \cite[Remark~4, p.~223]{bourgain:rosenthal:schechtman:1981}, which inspired us to develop the material in \Cref{sec:unboundedness}.

\bibliographystyle{abbrv}
\bibliography{bibliography}
\end{document}